\newtheorem{dfn}{Definition}[section]
\newtheorem{thm}[dfn]{Theorem}
\newtheorem{lem}[dfn]{Lemma}
\newtheorem{cor}[dfn]{Corollary}
\newtheorem{rem}[dfn]{Remark}
\newtheorem{proc}[dfn]{Procedure}
\newcommand{\R}{\mathbb{R}}
\newcommand{\Z}{\mathbb{Z}}
\newcommand{\N}{\mathbb{N}}
\newcommand{\C}{\mathbb{C}}
\newcommand{\bx}{\bar{x}}
\newcommand{\ba}{\bar{a}}
\newcommand{\bU}{\bar{U}}
\newcommand{\cB}{\mathcal{B}}
\newcommand{\cD}{\mathcal{D}}
\newcommand{\cE}{\mathcal{E}}
\newcommand{\cG}{\mathcal{G}}
\newcommand{\cI}{\mathcal{I}}
\newcommand{\cL}{\mathcal{L}}
\newcommand{\cN}{\mathcal{N}}
\newcommand{\cP}{\mathcal{P}}
\newcommand{\cQ}{\mathcal{Q}}
\newcommand{\cS}{\mathcal{S}}
\newcommand{\cT}{\mathcal{T}}
\newcommand{\cX}{\mathcal{X}}
\newcommand{\ta}{\tilde{a}}
\newcommand{\bydef}{\,\stackrel{\mbox{\tiny\textnormal{\raisebox{0ex}[0ex][0ex]{def}}}}{=}\,} 
\newcommand{\im}{\mathrm{i}}
\newcommand{\bfj}{{\boldsymbol j }}
\newcommand{\bfi}{{\boldsymbol i }}
\newcommand{\bk}{{\bm{k}}}
\newcommand{\bj}{{\bm{j}}}
\newcommand{\bL}{{\bm{L}}}
\newcommand{\bell}{{\bm{\ell}}}
\newcommand{\bN}{{\bm{N}}}
\newcommand{\bM}{{\bm{M}}}
\newcommand{\bn}{{\bm{n}}}
\newcommand{\bbm}{{\bm{m}}}
\newcommand{\Fm}{{\bm{F_m}}}
\newcommand{\FM}{{\bm{F_M}}}
\newcommand{\FN}{{\bm{F_N}}}
\newcommand{\kk}{\text{\textbf{k}}}
\numberwithin{equation}{section}
\newcommand{\ak}[1]{{#1}}
\newcommand{\jp}[1]{{#1}}
\newcommand{\doverline}[1]{\overline{\overline{#1}}}
\begin{document}
\title{
A rigorous integrator and global existence for higher-dimensional semilinear parabolic PDEs via semigroup theory}
\author{
Gabriel William Duchesne \thanks{Department of Mathematics and Statistics, McGill University, 805 Sherbrooke West, Montreal, QC H3A 0B9, Canada (\texttt{gabriel.duchesne@mail.mcgill.ca})
}
\and
Jean-Philippe~Lessard \thanks{Department of Mathematics and Statistics, McGill University, 805 Sherbrooke West, Montreal, QC H3A 0B9, Canada (\texttt{jp.lessard@mcgill.ca})
}
\and
Akitoshi~Takayasu \thanks{Institute of Systems and Information Engineering, University of Tsukuba, 1-1-1 Tennodai, Tsukuba, Ibaraki 305-8573, Japan (\texttt{takitoshi@risk.tsukuba.ac.jp})
}
}
\date{\today}
\maketitle

\begin{abstract}
In this paper, we introduce a general constructive method to compute solutions of initial value problems of semilinear parabolic partial differential equations \jp{on hyper-rectangular domains} via semigroup theory and computer-assisted proofs. Once a numerical candidate for the solution is obtained via a finite dimensional projection, Chebyshev series expansions are used to solve the linearized equations about the approximation from which a solution map operator is constructed. Using the solution operator (which exists from semigroup theory), we define an infinite dimensional contraction operator whose unique fixed point together with its rigorous bounds provide the local inclusion of the solution. Applying this technique for multiple time steps leads to constructive proofs of existence of solutions over long time intervals. As applications, we study the 3D/2D Swift-Hohenberg, where we combine our method with explicit constructions of trapping regions to prove global existence of solutions of initial value problems converging asymptotically to nontrivial equilibria. A second application consists of the 2D Ohta-Kawasaki equation, providing a framework for handling derivatives in nonlinear terms.
\end{abstract}

\paragraph{Keywords:} Semilinear parabolic PDEs, Initial value problems, Spectral methods, Semigroup theory, Computer-assisted proofs, Global existence

\paragraph{AMS subject classifications:} 65M15, 65G40, 65M70, 35A01, 35B40, 35K91

%%%%%%%%%%%%%%%%
%\tableofcontents
%
%%%%%%%%%%%%%%%
%%% INTRODUCTION %%%
%%%%%%%%%%%%%%%

\section{Introduction} \label{sec:intro}

Studying global dynamics and global existence (forward in time) of solutions of higher-dimensional semilinear parabolic partial differential equations (PDEs) is a central problem in mathematics. Perhaps the most striking example is the famous millennium prize problem which raises the question of global existence of solutions of initial value problems (IVPs) in the three-dimensional Navier-Stokes equations~\cite{MR2238274}. From the perspective of dynamical systems and as made clear from the pioneering work of Poincaré on the three-body problem \cite{Poincare}, to understand the global dynamics of a given nonlinear differential equation, it is crucial to grasp the existence of asymptotic objects such as equilibria, periodic orbits and connections between them. This is especially true for parabolic PDEs modeling phenomena in material science, spatial ecology and fluids mechanics, where patterns arise as asymptotic limits, often within global attractors, of solutions of initial value problems. For all its importance and popularity, the problem consisting of understanding the global dynamics of a given PDE is a notoriously difficult task since the model is nonlinear and naturally lead to the notion of an infinite dimensional dynamical system. 

In this paper, we introduce a constructive method to solve rigorously initial value problems of \jp{scalar} semilinear parabolic PDEs of the form 
\begin{equation} \label{eq:general_PDE}
u_t = (\lambda_0  + \lambda_1 \Delta  + \lambda_2 \Delta^2)u + \jp{\Delta^p N(u)}
\end{equation}
where $p \in \{0,1\}$ and $N$ is a polynomial satisfying \ak{both $N(0)=0$ and its Fr\'echet derivative $DN(0)=0$}, and we provide a computational framework to show that solutions exist globally forward in time. While in this work we focus on the scalar case, that is $u=u(t,x) \in \R$, extending our method to systems (i.e. when $u=u(t,x) \in \R^n$) should be rather straightforward, the main limitation essentially being the computational cost. Given a dimension $d \in \{1,2,3\}$ and domain sizes $L_1,\dots,L_d$, we assume that the domain of the equation \eqref{eq:general_PDE} is given by the \jp{hyper-}rectangular domain 
\[
\Omega \bydef \prod_{j=1}^d \left[ -\frac{\pi}{L_j},\frac{\pi}{L_j} \right] \subset \R^d
\]
which we supplement with periodic boundary conditions. In other words, the geometry of the domain on which we solve the PDE \eqref{eq:general_PDE} is the $d$-torus $\mathbb{T}^d$. It is worth mentioning that one case of the Navier-Stokes millennium prize  problem considers the model defined on the domain $\mathbb{T}^3$.  The parameters $\lambda_0,\lambda_1,\lambda_2 \in \R$ in \eqref{eq:general_PDE} are chosen so that the equation is parabolic. The assumption of the PDE being semilinear implies that the degree \jp{$p$ of the Laplacian in front of} the nonlinear term $N$ is less than the one of the differential linear operator $\lambda_0  + \lambda_1 \Delta  + \lambda_2 \Delta^2$. While slightly restrictive, \jp{the class of equations \eqref{eq:general_PDE}} includes many known models including the Swift-Hohenberg, Cahn-Hilliard, Ohta-Kawasaki and phase-field-crystal (PFC) equations. Moreover, to simplify the presentation and the computations, we impose the even symmetry $u(t, -x) = u(t, x)$ on the solutions. Note that our approach can readily be adapted to the case of odd boundary conditions $u(t, -x) = -u(t, x)$ or to the general case of periodic boundary conditions without symmetries. %Indeed, in a recent series of papers \cite{MR4388950,MR4444839,MR4356641}, we studied the global dynamics of the one-dimensional nonlinear heat equation and {S}chr\"{o}dinger equation using a similar approach and without assuming any symmetries. 

The general method that we introduce in this paper falls in the category of computer-assisted proofs (CAPs) in nonlinear analysis and dynamical systems.

As exemplified by the early pioneering works on the Feigenbaum conjectures \cite{MR648529} and on the existence of chaos and global attractor in the Lorenz equations \cite{MR1276767,MR1870856,MR1701385}, and by the more recent works on Jones' and Wright’s conjectures in delay equations \cite{MR3912700,MR3779642}, chaos in the 1D Kuramoto-Sivashinsky PDE \cite{MR4113209}, \jp{instability proof in Poiseuille flow \cite{MR2492179}, bifurcating solutions for 3D Rayleigh-Bénard problems \cite{MR2470145}, equilibria in the 3D Navier-Stokes (NS) equations \cite{MR4372115}, solutions of NS on unbounded strips with obstacle \cite{Wunderlich2022_1000150609}}, 3D gyroids patterns in materials \cite{MR3904424}, periodic orbits in NS \cite{BerBreLesVee21}, blowup in Euler equations on the cylinder \cite{CheHou22} and imploding solutions for 3D compressible fluids \cite{BucCaoGom22}, it is undeniable that the role of CAPs is starting to play a major role in the analysis of differential equations, dynamical systems and PDEs. We refer the interested reader to the book \cite{MR3971222} and the survey papers \cite{MR1420838,notices_jb_jp,MR3990999} for more details. 

Due to the interest in understanding global dynamics in semilinear parabolic PDEs and perhaps motivated by the Navier-Stokes millenium problem, it is not surprising that developing CAPs techniques for initial value problems has been a rather active field in the last twenty years. Examples consist of the topological method based on covering relations and self-consistent bounds \cite{MR4113209,MR2049869,MR2788972,MR3167726,MR3408840,MR3338669}, the $C^1$ rigorous integrator of \cite{MR2728184}, the evolution operator and semigroup approach of \cite{MR3639578,MR3683781,MR4444839,MR4388950}, Nakao’s projection method \cite{MR4134381}, the finite element and self-consistent bounds approach \cite{MR4257868}, the fully spectral Fourier-Chebyshev approach \cite{MR4379799}, the Chebyshev interpolation in time method \cite{JBMaxime} and the finite element discretization based approach of \cite{MR3971222,MR4015319}.

To the best of our knowledge, all the above methods (expect \cite{MR3639578} which considers 2D examples) have focused on PDEs defined on one-dimensional domains. Our contribution here is the presentation of a general constructive method to compute solutions of IVPs for higher-dimensional semilinear parabolic equations via semigroup theory, a method which can then be used to prove rigorously global existence (in time) of solutions. Our approach begins by using the periodic boundary conditions on $\Omega$ and expand solutions of \eqref{eq:general_PDE} with Fourier expansions in space leading to an equivalent infinite-dimensional system of ordinary differential equations of the form 
\[
\dot a(t) = f(a(t)),\quad t>0
\]
whose phase space is given by a Banach algebra of Fourier coefficients endowed with analytic norms. For a given time step $\tau>0$, solving an IVP on a time interval $J\bydef(0,\tau]$ with initial condition $a(0) = \varphi$ reduces to find a zero of the map 
\[
F(a(t))=(\dot a(t) - f(a(t)),a(0)-\varphi), \quad t \in J.
\]
Using a finite dimensional projection of the map $F$, we compute a numerical approximation $\ba(t)$ of the IVP using Chebyshev series expansions in time, that is $F(\ba(t)) \approx 0$. We then develop estimates to obtain CAPs for the existence of the solutions of the linearized equations about the approximation $\ba(t)$ from which a solution map operator is obtained. While semigroup theory (cf.\ \cite{pazy1983semigroups}, e.g.) guarantees the existence of the solution operator, our computational and rigorous construction provides an explicit control over this operator, a feature which is new in the context of higher-dimensional PDEs. Using the solution operator, the linear operator $DF(\ba(t))$ is inverted ``by hand" and then used to define the Newton-like operator $T(a(t)) = DF(\ba(t))^{-1}(DF(\ba(t))a(t)-F(a(t)))$. Constructing explicit and computable estimates, and using the time step $\tau$ to control the contraction rate, we derive a sufficient condition in the form of a polynomial inequality to demonstrate rigorously that $T$ is a self-map and a contraction on a ball centered at $\ba(t)$ in an appropriate Banach space of time-dependent Fourier coefficients. The unique fixed point together with its rigorous bounds provide the local inclusion of the solution. Applying this technique for multiple time steps leads to constructive proofs of existence of solutions over long time intervals and, when combined with explicit constructions of trapping regions, can be used to prove global existence. While our computer-assisted approach is an extension of the works \cite{MR3639578,MR4444839,MR4388950,MR4356641} to the case of higher-dimensional semilinear evolution equations, we believe it provides nontrivial contributions. 

First, we believe this is the first time that CAPs techniques are used to prove global existence of solutions of three-dimensional PDEs with initial conditions far from equilibrium. %These results should provide strong motivation to pursue this line of research and to extend this work to the case of systems with the ultimate goal of studying global existence in the 3D Navier-Stokes equations on $\mathbb{T}^3$. 
Second, %while this method builds upon previous work (especially \cite{MR3639578}), 
the evolution operator approach presented here is generalized to PDEs with derivatives in the nonlinear terms, which is made possible thanks to the introduction of geometric weights in the Banach algebra. Third, the smoothing property of the evolution operator controls wrapping effects, allowing the method to be applied for multiple time steps to get existence of solutions over long time intervals. Last by not least, the semigroup approach provides a cost-effective way to generalize our approach to multi-steps (or domain decompositions). More precisely, solving the linearized problems at each step can be made a-priori and independently (i.e. the process is naturally {\em parallelizable}), which implies that the computational cost is additive in time rather than multiplicative. 

The paper is organized as follows. In Section~\ref{sec:Newton}, we introduce the fixed point operator $T$ whose fixed point provides a solution to a given initial value problem. The operator $T$ requires inverting the linearization about the numerical solution, which is done in Section~\ref{sec:solving_IVP}, where we construct the evolution operator via a solution map of the linearized problem. In Section~\ref{sec:time_stepping}, we generalize our method to a multi-step approach by considering a coupled system of the zero-finding problem over multiple time steps. In Section~\ref{sec:global_existence}, we propose a strategy to demonstrate global existence of solutions to IVPs via the mechanism of convergence towards an asymptotically stable equilibrium solution. This approach is exemplified by the application of the Swift-Hohenberg equation, illustrating the efficacy of computer-assisted proofs of global existence for higher dimensional PDEs. Section~\ref{sec:OK} is devoted to presenting the result using our rigorous integrator for IVPs of the 2D Ohta-Kawasaki equation, in particular addressing the handling of derivatives in the nonlinear term. 

%%%%%%%%%%%%
%%%   SET-UP   %%%
%%%%%%%%%%%%

\section{Newton-like operator to solve initial value problems} \label{sec:Newton}
In this section, we consider an initial value problem (IVP) associated to \eqref{eq:general_PDE}, that is 
\begin{equation} \label{eq:IVP_PDE}
	\begin{cases}
		u_t = (\lambda_0  + \lambda_1 \Delta  + \lambda_2 \Delta^2)u + \jp{\Delta^p N(u)}, & t>0,\quad x\in \Omega,\\
		u(0,x) = u_0(x),& x \in \Omega,
	\end{cases}
\end{equation}
where $u_0(x)$ is a given initial data. Supplementing the PDE with periodic boundary conditions with the even symmetry $u(t, -x) = u(t, x)$ implies that the unknown solution of the IVP is expanded by the Fourier series in space variables
\begin{equation}\label{eq:FourierSeries}
u(t, x)=\sum_{\bk\in \mathbb{Z}^{d}} a_{\bk }(t) e^{\im (k_1 L_1 x_1 + \cdots + k_d L_d x_d)}, \qquad a_{-\bk} = a_{\bk}	
\end{equation}
where $\bk=(k_1,\dots,k_d)$ and  $x=(x_1,\dots,x_d)$. The symmetry assumption on $u$ implies that the function $u$ is represented by the cosine series 
	\[
	u(t,x) = \sum_{\bk\ge 0} \alpha_{\bk} a_{\bk}(t) \cos(k_1 L_1 x_1)\cdots\cos(k_d L_d x_d),
	\]
	with $\alpha_{\bk}=\alpha_{k_1,\dots,k_d} \bydef 2^{\delta_{k_1,0}}\cdots 2^{\delta_{k_d,0}}$, 
where $\delta_{i,j}$ is the Kronecker delta function. From now on we always assume that $a_{\bk} = a_{|\bk|}$ holds for all $\bk\in\Z^d$ from the cosine symmetry and use the notation of the component-wise absolute value, that is $|\bk |\bydef(|k_1|,\dots,|k_d|)$.
	
\begin{rem}
Boldface characters will be used throughout this paper to denote multi-indices, with $\bk =(k_1,\dots,k_d)$ representing (non-negative) integers $k_j$ for $j=1,\dots,d$. Furthermore, component-wise inequalities are used such that $\bk < \bn$ denotes $k_j<n_j$ for all $j=1,\dots,d$. Similarly, the notations $\bk\le \bn$, $\bk>\bn$, and $\bk\ge \bn$ mean component-wise inequalities.	
\end{rem}

%\begin{rem}[{\bf Assumption on the nonlinearity}] \label{rem:assumption_on_N}
Plugging the Fourier expansion \eqref{eq:FourierSeries} in the nonlinearity \jp{$\Delta^p N(u)$} of the general model \eqref{eq:general_PDE} results in 
\begin{equation} \label{eq:assumption_on_N}
\jp{\Delta^p N(u) = 
\sum_{\bk\in \mathbb{Z}^{d}} \jp{\im^q} (\kk \bL)^q \cN_{\bk}(a) e^{\im (k_1 L_1 x_1 + \cdots + k_d L_d x_d)},}
\end{equation}
where $q=2p \in \{0,2\}$ and 
\[
\kk \bL \bydef \left((k_1 L_1)^2+\dots + (k_d L_d)^2\right)^{1/2},
\]
$a = (a_{\bk })_{\bk\ge 0}$, and $\cN_{\bk}(a)$ is a nonlinear term involving discrete convolutions of $a$ such that $\cN_{\bk}(a) = \cN_{|\bk|}(a)$, $\cN_{\bk}(0)=0$ and $D \cN_{\bk}(0)=0$.

%\jp{Mention that in the 1D case, Burgers' and Kuramoto-Sivashinsky could be tackled using a slight modification. Mention also that other nonlinearities of the form $b \cdot \nabla u$ are not considered here, but we believe that our approach can be modified}
%\end{rem}

Plugging the Fourier series \eqref{eq:FourierSeries} into \eqref{eq:general_PDE} and assuming \eqref{eq:assumption_on_N} holds, we obtain the following general infinite-dimensional system of ordinary differential equations (ODEs)
\begin{equation} \label{eq:the_ODEs}
\dot{a}_{\bk}(t) = \mu_{\bk} a_{\bk}(t) + \jp{\im^q} (\bk \bL)^q \cN_{\bk}(a(t)),\quad t>0
\end{equation}
where 
\[
\mu_{\bk} = \lambda_0  - \lambda_1 (\bk \bL)^2  + \lambda_2 (\bk \bL)^4
\]
is determined by the linear term $\lambda_0  + \lambda_1 \Delta  + \lambda_2 \Delta^2$ of \eqref{eq:general_PDE}. 
	
Our goal is to determine the time-dependent Fourier coefficients $a(t)\bydef (a_{\bk }(t))_{\bk\ge 0}$ that solve \eqref{eq:the_ODEs} with an initial condition $a(0)=(\varphi_\bk)_{\bk\ge 0}$. The function $a(t)$ will then provide, via the Fourier expansion \eqref{eq:FourierSeries}, the solution of the IVP \eqref{eq:IVP_PDE} with $u_0(x) = \sum_{\bk\ge 0} \alpha_{\bk}  \varphi_{\bk} \cos(k_1 L_1 x_1)\cdots\cos(k_d L_d x_d)$.
	
Before considering the infinite-dimensional problem, we explain how to obtain a numerical approximation of $a(t)$ via a finite-dimensional truncation of the ODEs \eqref{eq:the_ODEs}. To this end, we define a finite set of indices of ``size $\bN=(N_1,\dots,N_d)$'' as $\FN \bydef \{\bk\ge 0:\bk<\bN\}$.
We note that $\FN=F_{N_1}\times\dots \times F_{N_d}$, where $F_{N_j}\bydef\{k_j\ge 0:k_j<N_j\} = \{0,\dots,N_j - 1\}$. 
	
Truncating \eqref{eq:the_ODEs} to the size $\bN$, we fix a step size $\tau>0$ and numerically solve the following truncated $N_1N_2 \cdots N_d$-dimensional IVP
\begin{equation} \label{eq:ODEs_finite}
	\begin{cases}
		\dot{a}_{\bk}(t) = \mu_{\bk}a_{\bk}(t) + \jp{\im^q} (\bk \bL)^q\cN_{\bk}(a(t)), &t \in (0,\tau],\\
		a_{\bk}(0)=\varphi_{\bk},
	\end{cases}\quad\quad (\bk\in\FN).
\end{equation}
The numerical solution of \eqref{eq:ODEs_finite} is denoted by $(\ba^{(\bN)}_{\bk }(t))_{\bk\in\FN}$ and has Fourier expansion
\[
\bar u^{(\bN)}(t,x) = \sum_{\bk \in \FN} \alpha_{\bk} \ba^{(\bN)}(t) \cos(k_1 L_1 x_1)\cdots\cos(k_d L_d x_d).
\]
Moreover,	we assume that $\ba^{(\bm{N})}_{\bk }(t)$ is \jp{ a polynomial given by}
\begin{align}\label{eq:appsol}
\bar{a}^{(\bm{N})}_{\bk }(t)=\bar{a}_{0, \bk }+2 \sum_{\ell=1}^{n-1} \bar{a}_{\ell, \bk } T_{\ell}(t),
\end{align}
where $T_\ell$ is the $\ell$-th order Chebyshev polynomial of the first kind defined on $[0,\tau]$ and $n$ denotes the size of Chebyshev coefficients. To get the coefficients $(\ba_{\ell,\bk})_{0\le \ell<n,\atop\bk\in\FN}$ we use a standard Chebyshev interpolation technique (see Appendix \ref{appA:chebyshev} for more details).
	
Using the numerical solution \eqref{eq:appsol}, we define an approximate solution $\ba_{\bk}(t)$ of \eqref{eq:ODEs_finite} as a natural extension of $(\ba_{\bk }^{(\bm{N})}(t))_{\bk\in\FN}$, that is
\begin{equation} \label{eq:a_bar}
\ba_{\bk}(t) \bydef 
\begin{cases}
\bar{a}^{(\bm{N})}_{\bk}(t),& \bk\in\FN 
\\
0, & \bk\not\in\FN.
\end{cases}
\end{equation}

\jp{
\begin{rem}[Choice of basis functions] 
It is worth noting that by focusing our analysis on PDEs defined over hyper-rectangular domains with periodic boundary conditions, the use of a Fourier expansion in space is a natural choice. Extending this to alternative spatial expansions would require a non-trivial generalization, which lies beyond the scope of this work. On the other hand, we chose to solve the finite-dimensional IVP \eqref{eq:ODEs_finite} using a Chebyshev expansion in time due to the favorable approximation properties of Chebyshev polynomials and our positive experience with them. However, other expansions, such as Taylor series, could also be employed to solve \eqref{eq:ODEs_finite}.
\end{rem}
}

\subsection{Function space and Banach algebra}

Having introduced the finite dimensional IVP \eqref{eq:ODEs_finite} together with the representation of its numerical approximation \eqref{eq:appsol} expressed as a finite linear combination of Chebyshev  polynomials, we are now ready to introduce several function spaces to validate the numerical approximation \eqref{eq:a_bar}. 

Let us define a Banach space of multi-index sequences as
\begin{align}\label{eq:ell-one-space}
	\ell_{\omega}^1\bydef\left\{a=(a_{\bk })_{\bk\ge 0}:a_{\bk}\in \R,~\|a\|_{\omega}\bydef\sum_{\bk\ge 0}|a_{\bk }|\omega_{\bk}<+\infty\right\},
\end{align}
	where the weight $\omega_{\bk}$ for $\bk\ge 0$ is defined by
	\begin{align}\label{eq:weight_omega}
		\omega_{\bk}\bydef\alpha_{\bk}\nu_{\jp{{F}}}^{k_1+\dots+k_d},\quad\nu_{\jp{{F}}}\ge 1.
	\end{align}
	The choice of the weight in \eqref{eq:weight_omega} is to ensure that $\ell_{\omega}^1$ is \textit{Banach algebra} under the discrete convolution, that is
	\begin{align}\label{eq:Banach_algebra}
		\|a\ast b\|_{\omega}\le \|a\|_{\omega}\|b\|_{\omega}
	\end{align}
	holds for all $a, b\in \ell_{\omega}^1$, where the discrete convolution of $a$ and $b$ is defined by
	\begin{align}\label{eq:discrete_convolution}
		\left(a\ast b\right)_{\bk}\bydef\sum_{\bk_1+\bk_2= \bk \atop \bk_1,\bk_2\in \Z^d}a_{|\bk_1|}b_{|\bk_2|}.
	\end{align}
	
Denote the time step $J=(0,\tau]$ for the fixed step size $\tau>0$ and define a function space of the time-dependent sequences as
	$C(J;\ell_{\omega}^1)$,
	which is the Banach space with the norm
\begin{align}\label{eq:Xnorm}
	\|a\|\bydef\sup _{t \in J}\|a(t)\|_{\omega}.
\end{align}

	The operator norm for a bounded linear operator $\cB$ from $\ell_{\omega}^1$ to $\ell_{\omega}^1$ is defined by
	\begin{align}
		\|\cB\|_{B(\ell_{\omega}^1)}\bydef \sup_{\phi\in\ell_{\omega}^1\setminus\{0\}}\frac{\|\cB\phi\|_{\omega}}{\|\phi\|_{\omega}}.
	\end{align}

\jp{
We also introduce subspaces of all multi-index sequences denoted by
\[
	\ell_{\omega,-q}^1\bydef\left\{a=(a_{\bk })_{\bk\ge 0}:a_{\bk}\in \R,~\|a\|_{\omega,-q}\bydef\sum_{\bk\ge 0}|a_{\bk}|\frac{\omega_{\bk}}{|\bk|_\infty^q}<+\infty\right\},\quad |\bk|_\infty\bydef \max(1,\max_{j=1,\dots,d}|k_j|).
\]
Note that these subspaces are also Banach spaces, but are not Banach algebras under the discrete convolution. In addition, $\ell_{\omega}^1=\ell_{\omega,0}^1$ ($q=0$).
}

	%%%%
	%NOTE: 
	%%%%
	
\subsection{The zero-finding problem for the initial value problem}\label{sec:fixed-point-form}

To solve rigorously the infinite-dimensional set of ODEs \eqref{eq:the_ODEs} on the sequence space \eqref{eq:ell-one-space}, let us define three operators as follows. First, define $\cL:D(\cL)\subset \ell_{\omega}^1\to \ell_{\omega}^1$ to be a densely defined closed operator 
acting on a sequence $\phi=(\phi_{\bk})_{\bk\ge0}$ as
\[
\left(\cL \phi\right)_{\bk}\bydef\mu_{\bk}\phi_{\bk} = (\lambda_0  - \lambda_1 (\bk \bL)^2  + \lambda_2 (\bk \bL)^4) \phi_{\bk},
\]
where the domain of the operator $\cL$ is defined by $D(\cL)\bydef\left\{\phi=(\phi_{\bk})_{\bk\ge 0}:\cL \phi\in \ell_{\omega}^1\right\}$.
Second, define \jp{$\cQ:\ell_{\omega}^1\to \ell_{\omega,-q}^1$} to be a multiplication operator acting on a sequence $\phi=(\phi_{\bk})_{\bk\ge0}$ as
\begin{align}\label{eq:def_Q}
	(\cQ \phi)_{\bk} \bydef \jp{\im^q} (\bk \bL)^q\phi_{\bk}.
\end{align}
%where the domain of the operator $\cQ$ is defined by $D(\cQ)\bydef\left\{\phi=(\phi_{\bk})_{\bk\ge 0}:\cQ \phi\in \ell_{\omega}^1\right\}$.
%From the assumption that the degree of derivative in the nonlinear term is less than that of the differential linear operator, $D(\cL)\subset D(\cQ)\subset \ell_{\omega}^1$ holds.
%In the case of $q=0$, we denote the operator $\cQ$ as the identity operator, i.e., $\cQ=\mathrm{Id}$ ($q=0$) and $D(\cQ)=\ell_{\omega}^1$.
%
%
Third, define $\mathcal{N}:\ell_{\omega}^1\to \ell_{\omega}^1$ to be a Fr\'echet differentiable nonlinear operator acting on a sequence $\phi=(\phi_{\bk})_{\bk\ge0}$ as
	\[
	\left(\cN(\phi)\right)_{\bk}\bydef \cN_{\bk}(\phi),
	\]
	which depends on the nonlinear term of the target PDE, and satisfies $\cN(0)$ and $D\cN(0)=0$. Hence, for any fixed $\psi \in \ell_{\omega}^1$, there exists a non-decreasing function $g:(0,\infty)\to(0,\infty)$ such that
\begin{equation} \label{eq:N_assumption}
\left\|D\cN(\psi)\phi\right\|_{\omega}\le g(\| \psi \|_{\omega})\|\phi\|_{\omega},\quad\forall\phi\in\ell_{\omega}^1.
\end{equation}
	
Using the above operators, we define a zero-finding problem whose root provide a solution of the ODEs \eqref{eq:the_ODEs} with initial condition $\varphi \in \ell_{\omega}^1$.
Let $X\bydef C(J;\jp{\ell_{\omega}^1})$, $Y\bydef C(J;\jp{\ell_{\omega,-q}^1})$ and $\cD\bydef C\left(J; D(\cL)\right)\cap C^1\left(\jp{(0,\tau)};\ell_{\omega}^1\right)\jp{\cap\,C(J;\ell_{\omega}^1)}$. Note that $X$ is the Banach space with the norm defined in \eqref{eq:Xnorm}.
The map $F$ acting on $a\in \jp{\cD}$ is defined by
\begin{equation}\label{ODE_Cauchy}
	F(a)\bydef\left(\dot{a}-\cL a-\cQ\,\cN(a),~a(0)-\varphi\right)\in Y\times \ell_{\omega}^1.
\end{equation}
We find it convenient to denote the first and second components of \eqref{ODE_Cauchy} respectively by $(F(a))_1=\dot{a}-\cL a-\cQ\,\cN(a)$ and $(F(a))_{2}=a(0)-\varphi$. From the above construction, solving the initial value problem \eqref{eq:IVP_PDE} reduces looking for the solution of the zero-finding problem for the map $F$ given in \eqref{ODE_Cauchy}. 

As it is often the case in nonlinear analysis, the strategy to prove existence (constructively) of a solution of $F=0$ is to turn the problem into an equivalent fixed-point problem, which we solve using a Newton-Kantorovich type theorem. The idea is to look for the fixed point of a Newton-like operator of the form $a \mapsto a - DF(\ba)^{-1} F(a)$, where $DF(\ba)$ is the Fréchet derivative of the map $F$ at the numerical approximation $\ba$ given component-wise by \eqref{eq:a_bar}. This construction is done next.  

\subsection{Exact inverse of the linearization and definition of the Newton-like operator}\label{sec:inverse_DF}
	To use a Newton-like operator to validate the root of $F$, we consider first the Fr\'echet derivative of the map $F$ at $\ba$ defined in \eqref{eq:a_bar}.
For any $h \in \jp{\cD}$, the Fr\'echet derivative is given by
\begin{equation} \label{eq:DFa_bar}
DF(\ba) h = \left(\dot{h} - \cL h-\cQ D\cN(\ba)h,~h(0)\right),\quad DF(\ba):\jp{\cD\subset X}\to Y\times \ell_{\omega}^1,
\end{equation}
where $D\mathcal{N}(\ba)$ is the Fr\'echet derivative of $\cN$ at $\ba$.
		
To construct the inverse of $DF(\ba)$, consider $(p,\phi)\in Y\times \ell_{\omega}^1$ and note that the unique solution of $DF(\ba) h = (p,\phi)$, that is of the linearized problem 
\begin{equation}\label{eq:LinearizedProb}
	\dot{h} - \cL h-\cQ D\cN(\ba)h=p,\quad h(0)=\phi
\end{equation}
%
%which implies that $DF(\ba)^{-1}(p,\phi)=h(t)$. 
is given by the variation of constants formula
\begin{equation}\label{eq:variation_of_constant}
h(t) = U(t,0)\phi + \int_{0}^tU(t,s)p(s)ds.
\end{equation}

In the previous formula, $\{U(t, s)\}_{0\le s\le t\le \tau}$ denotes the evolution operator (cf.\ \cite{pazy1983semigroups}) defined by a solution map of the first equation with $p=0$ in \eqref{eq:LinearizedProb}. More precisely, we consider an initial value problem which is represented by
	\begin{align}\label{LinearizedProb}
		\dot{b}(t)=\cL b(t)+\cQ D\cN(\ba(t))b(t),\quad b(s)=\phi, \quad (t>s),
	\end{align}
	where $\phi\in\ell_{\omega}^1$ is any initial data.
	Here $s$ is also a parameter since the evolution operator $U(t,s)$ is a two parameter family of bounded operators.
	We define the evolution operator by $U(t,s)\phi\bydef b(t)$ for $0\le s\le t$, where $b$ is the solution of \eqref{LinearizedProb}. Furthermore, it is known that the evolution operator satisfies the following properties: 
	\begin{enumerate}\renewcommand{\labelenumi}{(\roman{enumi})}
		\item $U(s,s)=\mathrm{Id}$ (identity operator) and $U(t,r)U(r,s)=U(t,s)$ for $0\le s\le r\le t\le \tau$
		\item $(t,s)\mapsto U(t,s)$ is strongly continuous for $0\le s\le t\le \tau$.
	\end{enumerate}
	%Moreover, letting $A(t)\bydef \cL + \cQ D\cN(\ba(t))$, the relations between $A(t)$ and $U(t,s)$ is determined by
	%\begin{align}\label{eq:property_evo_op}
	%	\frac{\partial U(t,s)}{\partial t} = A(t)U(t,s),\quad
	%	\frac{\partial U(t,s)}{\partial s} = -U(t,s)A(s).
	%\end{align}
	%
We show how to construct rigorously the evolution operator in Section~\ref{sec:solution_map}. Assuming that this is done, the formula \eqref{eq:variation_of_constant} gives us the inverse of $DF(\ba)$ via the formula
	\begin{align}\label{eq:invDF}
		DF(\ba)^{-1}(p,\phi) \bydef U(t,0)\phi + \int_{0}^tU(t,s)p(s)ds.
	\end{align}
Using the explicit expression for $DF(\ba)^{-1}$, we define the Newton-like operator by
	\begin{align}\label{eq:opT}
		T(a) &\bydef DF(\ba)^{-1}\left(DF(\ba)a-F(a)\right),\quad T:X\to \jp{\cD}\subset X.
	\end{align}
	
	\begin{rem}
		The Newton-like operator defined in \eqref{eq:opT} seems to be defined only on $\jp{\cD\subset X}$ by the formula $a \mapsto a-DF(\ba)^{-1}F(a)$, but it can be defined on $X$ using the following fact: 
		\[
		DF(\ba)a - F(a) =\left(\cQ \cN(a)-\cQ D\cN(\ba)a,~\varphi\right)\in Y\times\ell_{\omega}^1,
		\]
		where the linear part including derivatives is canceled out.
%		Additionally, if $a\in C(J;D(\cQ))$, then $DF(\ba)a - F(a)\in X\times \ell_{\omega}^1$ holds.
		From the property of $DF(\ba)^{-1}:Y\times\ell_{\omega}^1\to \jp{\cD}$, a fixed-point, say $\tilde{a}$, of $T$ satisfies $\tilde{a}\in \jp{\cD}$. % if such a fixed-point is obtained.
		Combining this fact with the continuous property of the evolution operator, the resulting zero of the map defined in \eqref{ODE_Cauchy} also satisfies $\tilde{a}\in C([0,\tau];\ell_{\omega}^1)$.

	\end{rem}
	\begin{rem}\label{rem:nu_val}
	\jp{
		The inverse map $DF(\ba)^{-1}$ restores the smoothness of the solution.  To demonstrate the recovery of smoothness from $Y$ to $X$, letting $\phi=0$ and $p=\cQ \psi\in Y$ ($\psi\in X$) in \eqref{eq:invDF}, one can show the inverse map denoted by
		\[
		\left(DF(\ba)^{-1}\left(\cQ \psi,0\right)\right)(t)  = \int_0^tU(t,s)\cQ \psi(s)ds
		\]
		is bounded in $X$.
		By using the uniform control of the evolution operator given in \eqref{eq:Wh_bound}, which is constructed in Section \ref{sec:solution_map}, it follows that
		\begin{align*}
			\left\|DF(\ba)^{-1}\left(\cQ \psi,0\right)\right\|\le \sup_{t\in J}\int_0^t\|U(t,s)\mathcal{Q}\psi(s)\|_\omega ds
			\le \frac{\tau^{1-\gamma}\bm{{W_q^{\cS_{J}}}}}{1-\gamma} \|\psi\|<+\infty.
		\end{align*}
		Moreover, it is obvious that the function $b$ satisfying \eqref{eq:LinearizedProb} belongs to both $C\left(J; D(\cL)\right)$ and $C^1\left((0,\tau);\ell_{\omega}^1\right)$. This fact yields the smoothing recovery of the inverse map $DF(\ba)^{-1}:Y\times\ell_{\omega}^1\to \jp{\cD}$.
	}
	\end{rem}
	
\jp{	
\begin{rem}
	It is also worth mentioning that there is another natural way of obtaining the operator $T$ directly considering the IVP written by
	\begin{align}
		\begin{cases}
		\dot{a} - (\cL+\cQ D\cN(\ba))a = \cQ(\cN(a)-D\cN(\ba)a)\\
		a(0) = \varphi.
		\end{cases}
	\end{align}
	Using the evolution operator, it follows that
	\begin{align}
		a(t)=(T(a))(t)\bydef U(t,0)\varphi + \int_0^tU(t,s)\cQ(\cN(a(s))-D\cN(\ba(s))a(s))ds,
	\end{align}
	and this formulation directly makes sense on $X$.
\end{rem}}

Now that the Newton-like operator $T$ is defined, the remaining task in validating the local existence of a solution of $F=0$ given in \eqref{ODE_Cauchy} is to show that $T:B_{J}(\ba, \varrho) \to B_{J}(\ba, \varrho)$ is a contraction mapping, for some $\varrho>0$, where
\begin{align}\label{eq:theBall}
	B_{J}(\ba, \varrho) \bydef \left\{a \in X:\|a-\bar{a}\| \leq \varrho\right\}.
\end{align}
Note that $B_{J}(\ba, \varrho) \subset X$ is the ball of radius $\varrho$ in $X$ centered at the numerical approximation $\ba(t) = (\ba_{\bk}(t))_{\bk\ge 0}\in X$ defined in \eqref{eq:a_bar}.

\section{Rigorous integration of initial value problems} \label{sec:solving_IVP}
\jp{
In this section, we provide a numerical method to validate the solution of the zero-finding problem defined in \eqref{ODE_Cauchy}.
The main theorems for this method are based on Theorem \ref{thm:solutionmap} in Section \ref{sec:solution_map} and Theorem \ref{thm:local_inclusion} in Section \ref{sec:local_inclusion}.
The techniques presented here are closely related to the work in \cite{MR4444839}.

First, in Section \ref{sec:solution_map}, we obtain a uniform bound for the evolution operator over the simplex $\cS_{J} \bydef \{(t, s) : 0 < s\,\jp{<} \,t\le \tau\}$, which is presented in Theorem \ref{thm:solutionmap} (and Corollary \ref{cor:solutionmap}). 
More precisely, by setting two initial data for \eqref{LinearizedProb} as $b(s) = \phi$ and ${\cQ} \psi\in\ell_{\omega,\jp{-q}}^1$, we obtain two computable constants $\bm{W^{\cS_{J}}},~\bm{W_q^{\cS_{J}}}>0$ such that
\begin{align}\label{eq:Wh_bound_classic}
	\|b\|=\sup _{(t, s) \in \cS_{J}}\|U(t, s) \phi\|_{\omega} &\le \bm{W^{\cS_{J}}}\|\phi\|_{\omega}, \quad \forall \phi \in\ell_{\omega}^1\\
	\label{eq:Wh_bound}
	\|b\|_{{\cX}}\bydef\sup _{(t, s) \in \cS_{J}}{(t-s)^\gamma}\|U(t, s) {{\cQ}}\psi\|_{\omega} &\le \bm{{W_q^{\cS_{J}}}}\|\psi\|_{\omega}, \quad \forall \psi \in \jp{\ell_{\omega}^1},
\end{align}
where $\gamma\in(0,1)$ is a parameter to be determined in Section \ref{sec:evolution_op}. If $q=0$ in the target ODEs \eqref{eq:the_ODEs}, one can take $\gamma=0$,  resulting in $\bm{W^{\cS_{J}}}=\bm{W_{0}^{\cS_{J}}}$. Note that the bound \eqref{eq:Wh_bound} simply shows that by introducing the weight $(t-s)^\gamma$ in time, the loss of regularity in space is compensated by the evolution operator. See also Remark~\ref{rem:nu_val}.

Second, in Section \ref{sec:local_inclusion}, the above computable constants $\bm{W^{\cS_{J}}}$ and $\bm{{W_q^{\cS_{J}}}}$ are used to bound the action of the linear operator $DF(\ba)^{-1}$ (see Lemma~\ref{lem:DFinv_bounds}). This control is then used to verify that the Newton-like operator \eqref{eq:opT} has a unique fixed point in $B_{J}(\ba, \varrho)$, as defined in \eqref{eq:theBall}. This yields a rigorous validation of the local existence of the solution to the IVP \eqref{eq:IVP_PDE}.
This result is formalized in our second main theorem, Theorem~\ref{thm:local_inclusion}, which provides a hypothesis guaranteeing the local existence of the solution to the zero-finding problem in \eqref{ODE_Cauchy}.
This hypothesis can be numerically verified using interval arithmetic. Therefore, in the present method, we numerically confirm that the hypothesis holds, thereby validating the solution of the zero-finding problem.

}

\subsection{\jp{Explicit uniform bounds of the} evolution operator}\label{sec:solution_map}

\jp{
In this section, we introduce explicit uniform bounds for the evolution operator. We present an explicit computable formula (see Equation~\eqref{eq:W_tau_q_bound}) for the bound $\bm{{W_q^{\cS_{J}}}}$ satisfying \eqref{eq:Wh_bound} (see Theorem~\ref{thm:solutionmap}). Then, in Corollary~\ref{cor:solutionmap}, we also present an explicit formula (see Equation~\eqref{eq:W_tau_bound}) the bound $\bm{W^{\cS_{J}}}$ satisfying \eqref{eq:Wh_bound_classic}.

\begin{rem}
While the theoretical existence of the evolution operator $U(t,s)$ is well-established  (cf.\ \cite[\jp{Section 5.6, Theorem 6.1}]{pazy1983semigroups}), obtaining a rigorous enclosure of its action for a given $\tau$ is difficult in practice. In practical computations, for a specified  $\ba$ and $\tau$, an explicit and rigorous bound on $U(t,s)$ is achieved first by employing computer-assisted proofs to solve a finite dimensional projection of the linearized problem \eqref{LinearizedProb} and then by using theoretical estimates on the tail. This methodology yields the uniform bounds \eqref{eq:Wh_bound_classic} and \eqref{eq:Wh_bound}, used to solve the Cauchy problem. 
\end{rem}
}

Let $\bbm = (m_1,\dots,m_d)\ge 0$ be a size of the finite set $\Fm$, where we suppose that each component $m_i$ is small such that $\bbm<\bN$.
We define a finite-dimensional (Fourier) projection $\varPi^{(\bm{m})}: \ell_{\omega}^1 \rightarrow \ell_{\omega}^1$ acting on $\phi\in\ell_{\omega}^1$ as
\begin{align}\label{eq:Fourier_proj}
	\left(\varPi^{(\bm{m})} \phi\right)_{\bk}\bydef \begin{cases}
		\phi_{\bk }& (\bk\in\Fm) \\
		0 & (\bk\not\in\Fm).
	\end{cases}
\end{align}
We denote $\phi^{(\bm{m})}\bydef\varPi^{(\bm{m})} \phi\in\ell_{\omega}^1$ and $\phi^{(\infty)}\bydef(\mathrm{Id}-\varPi^{(\bm{m})}) \phi\in\ell_{\omega}^1$. Thus, any $\phi\in\ell_{\omega}^1$ is uniquely decomposed by $\phi=\phi^{(\bm{m})}+\phi^{(\infty)}$.
%We also note that this projection works for any $\phi\in\ell_{\omega}^1$ since $\ell_{\omega}^1\subset \ell_{\omega}^1$.

We decompose $b\in X$ in \eqref{LinearizedProb} into a finite part $b^{(\bbm)}$ and an infinite part $b^{(\infty)}$ using the Fourier projection defined in \eqref{eq:Fourier_proj}, we construct the solution $b$. 
{In the followings, two initial conditions are considered, that is $b(s)=\phi$ and $b(s)=\cQ\psi$, but only the case of $b(s)=\cQ\psi$ is presented, because the other case is derived as a corollary to the presented case.}%
For this purpose we consider yet another Cauchy problem with respect to the sequence $c(t)=\left(c_{\bk }(t)\right)_{\bk\ge 0}$.

\begin{align}
&\dot{c}^{(\bm{m})}=\cL c^{(\bm{m})}+\varPi^{(\bm{m})} \cQ D\mathcal{N}(\ba)c^{(\bm{m})}\label{eq:cm}\\
&\dot{c}^{(\infty)}=\cL c^{(\infty)}+(\mathrm{Id}-\varPi^{(\bm{m})})\cQ D\mathcal{N}(\ba)c^{(\infty)}\label{eq:cinf}
\end{align}
with the initial data $c^{(\bbm)}(s)={{\cQ}}\psi^{(\bbm)}$ and $c^{(\infty)}(s)={{\cQ}}\psi^{(\infty)}$.
We remark that one can independently separate these system into the finite part and the infinite part.
So we define the solution operator of \eqref{eq:cm} by extending the finite dimensional fundamental solution matrix $C^{(\bbm)}(t,s)\bydef\left(C^{(\bbm)}_{\bk,\bj}(t,s)\right)_{\bk,\bj\in\Fm}$ satisfying $\sum_{\bj\in\Fm} C^{(\bbm)}_{\bk,\bj}(t,s){\left({\cQ}\psi\right)}_{\bj}= c_{\bk}(t)$.
Similarly, the solution operator of \eqref{eq:cinf} is defined by using the solution map $C^{(\infty)}(t,s){{\cQ}\psi^{(\infty)}}\bydef (c_{\bk}(t))_{\bk\notin\Fm}$ for $(t,s)\in\jp{\cS_{J}}$.
We extend the action of the operator $C^{(\bbm)}(t,s)$ (resp.\ $C^{(\infty)}(t,s)$) on $\ell_{\omega}^1$ by introducing the operator $\bU^{(\bbm)}(t,s)$ (resp.\ $\bU^{(\infty)}(t,s)$) given by
\begin{align}
	\left(\bU^{(\bbm)}(t,s)\phi\right)_{\bk}&\bydef\begin{cases}
		\left(C^{(\bbm)}(t,s)\phi^{(\bbm)}\right)_{\bk} & (\bk\in\Fm)\\
		0 & (\bk\notin \Fm)
	\end{cases}\label{eqn:bUm}\\[1mm]
	\left(\bU^{(\infty)}(t,s)\phi\right)_{\bk}&\bydef\begin{cases}
		0 & (\bk\in\Fm)\\
		\left(C^{(\infty)}(t,s)\phi^{(\infty)}\right)_{\bk} & (\bk\notin \Fm).
	\end{cases}\label{eqn:bUinf}
\end{align}
At this point, the existence of the operator %{ s $C^{(\bbm)}(t,s)$ and }% 
$C^{(\infty)}(t,s)$ is not guaranteed, but will be verified via a method of rigorous numerics for the fundamental solution matrix introduced in Sec \ref{sec:fundamental_sol} and via an analytic approach for the evolution operator in Sec \ref{sec:evolution_op}, respectively.
Using the solution operators \eqref{eqn:bUm} and \eqref{eqn:bUinf}, we present the solutions of \eqref{eq:cm} and \eqref{eq:cinf} as
\[
\begin{cases}
c^{(\bm{m})}(t) = \bU^{(\bm{m})}(t,s){{\cQ}}\psi^{(\bm{m})}\\[1mm]
c^{(\infty)}(t)=\bU^{(\infty)}(t,s){{\cQ}}\psi^{(\infty)}.
\end{cases}
\]
%where $\bU^{(\bm{m})}$ is the fundamental solution of the finite-dimensional system and $\bU^{(\infty)}$ is the evolution operator of the infinite-dimensional part. The fundamental solution will be obtained via a method of rigorous numerics presented in Section \ref{sec:fundamental_sol}. The existence of $\bU^{(\infty)}$ will be introduced in Section \ref{sec:evolution_op}, which is generally derived by hands.

%Let's get back to consider the solution $b(t)$. T
Returning to consider the solution $b(t)$, the solutions of \eqref{LinearizedProb} is represented by using the Fourier projection
\begin{align*}
\begin{cases}
\dot{b}^{(\bm{m})}=\cL b^{(\bm{m})}+\varPi^{(\bm{m})} \cQ D\mathcal{N}(\ba)b\\[1mm]
\dot{b}^{(\infty)}=\cL b^{(\infty)}+(\mathrm{Id}-\varPi^{(\bm{m})} )\cQ D\mathcal{N}(\ba)b.
\end{cases}
\end{align*}
Here, from the variation of constants, we have 
\begin{align}
&\begin{cases}
\dot{b}^{(\bm{m})}=\cL b^{(\bm{m})}+\varPi^{(\bm{m})} \cQ D\mathcal{N}(\ba)b\\[1mm]
\dot{b}^{(\infty)}=\cL b^{(\infty)}+(\mathrm{Id}-\varPi^{(\bm{m})} )\cQ D\mathcal{N}(\ba)b
\end{cases}\\[4mm]
&\iff
\begin{cases}
\dot{b}^{(\bm{m})}-\cL b^{(\bm{m})}-\varPi^{(\bm{m})} \cQ D\mathcal{N}(\ba)b^{(\bm{m})}=\varPi^{(\bm{m})} \cQ D\mathcal{N}(\ba)b^{(\infty)}\\[1mm]
\dot{b}^{(\infty)}-\cL b^{(\infty)}-(\mathrm{Id}-\varPi^{(\bm{m})} )\cQ D\mathcal{N}(\ba)b^{(\infty)}=(\mathrm{Id}-\varPi^{(\bm{m})} )\cQ D\mathcal{N}(\ba)b^{(\bm{m})}
\end{cases}\\[4mm]
&\iff
\begin{cases}
\displaystyle b^{(\bm{m})}(t)=\bU^{(\bm{m})}(t,s){{\cQ}}\psi^{(\bm{m})}+\int_s^t\bU^{(\bm{m})}(t,r)\varPi^{(\bm{m})} \cQ D\mathcal{N}(\ba(r))b^{(\infty)}(r)dr\\[4mm]
\displaystyle b^{(\infty)}(t)=\bU^{(\infty)}(t,s){{\cQ}}\psi^{(\infty)}+\int_s^t\bU^{(\infty)}(t,r)(\mathrm{Id}-\varPi^{(\bm{m})} )\cQ D\mathcal{N}(\ba(r))b^{(\bm{m})}(r)dr.
\end{cases}\label{eq:b_form}
\end{align}
By using this formula \eqref{eq:b_form}, the following theorem \jp{provides a sufficient condition (see Equation~\eqref{eq:condition_kappa}) under which we have an explicit formula for the uniform bound of $\bm{\jp{W_q^{\cS_{J}}}}$ satisfying \eqref{eq:Wh_bound}.}

\begin{thm} \label{thm:solutionmap}
	Let $(t, s) \in \jp{\cS_{J}}$ and $\ba$ be fixed.
	Let $\bbm\ge 0$ be the size of the Fourier projection such that $\mu_{\bk}<0$ holds for $\bk\not\in\Fm$.
	For such $\bbm\ge 0$, define $\mu_\ast$ as
%	Assume that there exist $\bbm\ge 0$ such that
	\begin{align}\label{eq:ass_m}
		\mu_\ast \bydef\max_{\bk\not\in\Fm}\left\{\mu_{\bk}\right\}
%		{{\mu_{\mm} } \ge {\mu_{\bk}},\quad},\quad\forall \bk\not\in\Fm.
	\end{align}
	and assume that there exists a constant $\jp{W_{m,q}^{\cS_{J}}}>0$ such that
	\begin{align}\label{eq:Wm_bound}
	\sup_{(t, s) \in \jp{\cS_{J}}}\left\| \bU^{(\bm{m})}(t,s){{\cQ}\psi^{(\bbm)}}\right\|_{\omega}\le \jp{W_{m,q}^{\cS_{J}}}\|\psi^{(\bbm)}\|_{\omega},\quad \forall\psi\in \jp{\ell_{\omega}^1}.
	\end{align}
	Assume also that $\bU^{(\infty)}(t,s)$ satisfies
	\begin{align}\label{eq:Winfts_bound}
	\left\| \bU^{(\infty)}(t,s){{\cQ}\psi^{(\infty)}}\right\|_{\omega}\le W^{(\infty)}_{{q}}(t,s)\|\psi^{(\infty)}\|_{\omega},\quad \forall\psi\in \jp{\ell_{\omega}^1}.
	\end{align}
	Moreover, let us assume the existence of constants \jp{${W_{\infty,q}^{\cS_{J}}}>0$}, \jp{$\overline{W}_{\infty,q}^{\cS_{J}}\ge0$, $\doverline{W}_{\infty,q}^{\cS_{J}}\ge0$}  satisfying
	\begin{align}
			\label{eq:ineqW_infty_sup}
	\sup_{(t,s)\in \jp{\cS_{J}}} {(t-s)^\gamma} W^{(\infty)}_{{q}}(t,s)  & \le \jp{W_{\infty,q}^{\cS_{J}}}\\
			\label{eq:ineqW_infty}
	\sup _{(t,s)\in \jp{\cS_{J}}} {(t-s)^\gamma} \int_{s}^{t} W^{(\infty)}_{{q}}(r, s) d r,\quad\sup _{(t,s)\in \jp{\cS_{J}}} {(t-s)^\gamma} \int_{s}^{t} W^{(\infty)}_{{q}}(t, r) d r &\le \jp{\overline{W}_{\infty,q}^{\cS_{J}}}\\
	\sup _{(t,s)\in \jp{\cS_{J}}} {(t-s)^\gamma} \int_{s}^{t} \int_{s}^{r} W^{(\infty)}_{{q}}(r, \sigma){(\sigma-s)^{-\gamma}} d \sigma d r,\qquad\\\label{eq:ineqbarW_infty}
	 \sup _{(t,s)\in \jp{\cS_{J}}} {(t-s)^\gamma} \int_{s}^{t} W^{(\infty)}_{{q}}(t, r){\int_{s}^r (\sigma-s)^{-\gamma} d\sigma}d r  &\le  \jp{\doverline{W}_{\infty,q}^{\cS_{J}}},
	\end{align}
respectively.
Assume also the existence of two constants \jp{$\cE_{m,\infty}^{J}$} and $\jp{\cE_{\infty,m}^{J}}$ such that
\jp{\begin{align}
\sup_{t \in J}\left\|\varPi^{(\bm{m})} D\mathcal{N}(\ba(t))(\mathrm{Id}-\varPi^{(\bm{m})} )\right\|_{B(\ell_{\omega}^1)} &\le \cE_{m,\infty}^{J}\label{eq:Cm_bound}\\[1mm]
\sup_{t \in J}\left\|(\mathrm{Id}-\varPi^{(\bm{m})} ) D\mathcal{N}(\ba(t))\varPi^{(\bm{m})} \right\|_{B(\ell_{\omega}^1)} &\le \cE_{\infty,m}^{J}\label{eq:Cinf_bound}
\end{align}}
hold for each $t\in J$.
Then, if 
\begin{equation} \label{eq:condition_kappa}
\kappa\bydef 1-\jp{W_{m,q}^{\cS_{J}}}\jp{\doverline{W}_{\infty,q}^{\cS_{J}}} \jp{\cE_{m,\infty}^{J}} \jp{\cE_{\infty,m}^{J}} >0,
\end{equation}
\jp{a uniform bound of the evolution operator $U(t,s)$, that is $\bm{\jp{W_q^{\cS_{J}}}}>0$ in \eqref{eq:Wh_bound}, is obtained by}
\begin{align}\label{eq:W_tau_q_bound}
\bm{\jp{W_q^{\cS_{J}}}} \bydef\left\|\begin{pmatrix}
{\tau^{\gamma}}\jp{W_{m,q}^{\cS_{J}}} & \jp{W_{m,q}^{\cS_{J}}}\jp{\overline{W}_{\infty,q}^{\cS_{J}}} \jp{\cE_{m,\infty}^{J}} \\
\jp{W_{m,q}^{\cS_{J}}}\jp{\overline{W}_{\infty,q}^{\cS_{J}}} \jp{\cE_{\infty,m}^{J}}  & \jp{W_{\infty,q}^{\cS_{J}}}
\end{pmatrix}\right\|_1\kappa^{-1},
\end{align}	
where $\|\cdot\|_1$ denotes the matrix 1-norm.
\end{thm}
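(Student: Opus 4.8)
The plan is to solve the coupled integral system \eqref{eq:b_form} by a fixed-point/bootstrap argument in the weighted space with the $(t-s)^\gamma$-norm, and then to collect the resulting estimates into the claimed matrix bound. First I would treat the finite part $b^{(\bm{m})}$ and infinite part $b^{(\infty)}$ as unknowns in the space $\cX$ with norm $\|b\|_{\cX}=\sup_{(t,s)\in\cS_J}(t-s)^\gamma\|b(t)\|_\omega$, and substitute one equation of \eqref{eq:b_form} into the other. Concretely, plug the expression for $b^{(\infty)}(r)$ into the integral in the $b^{(\bm{m})}$-equation; this yields a closed equation for $b^{(\bm{m})}$ of the form $b^{(\bm{m})} = (\text{forcing}) + \cK b^{(\bm{m})}$, where $\cK$ is a linear integral operator whose kernel involves $\bU^{(\bm{m})}$, $\bU^{(\infty)}$, and the two ``off-diagonal'' derivative blocks $\varPi^{(\bm{m})}D\cN(\ba)(\mathrm{Id}-\varPi^{(\bm{m})})$ and $(\mathrm{Id}-\varPi^{(\bm{m})})D\cN(\ba)\varPi^{(\bm{m})}$. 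Estimating $\cK$ using \eqref{eq:Wm_bound}, \eqref{eq:ineqbarW_infty}, \eqref{eq:Cm_bound}, \eqref{eq:Cinf_bound} produces the operator norm bound $W_{m,q}^{\cS_J}\doverline{W}_{\infty,q}^{\cS_J}\cE_{m,\infty}^{J}\cE_{\infty,m}^{J}$, so that condition \eqref{eq:condition_kappa}, i.e. $\kappa>0$, is exactly the statement that $\|\cK\|<1$ and hence $(\mathrm{Id}-\cK)^{-1}$ exists with norm $\le \kappa^{-1}$ by Neumann series.

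Next I would track the forcing terms carefully, since they carry the dependence on the initial datum $\cQ\psi$. The forcing for $b^{(\bm{m})}$ has two pieces: $\bU^{(\bm{m})}(t,s)\cQ\psi^{(\bm{m})}$, bounded via \eqref{eq:Wm_bound} by $W_{m,q}^{\cS_J}\|\psi^{(\bm{m})}\|_\omega$ (with a $\tau^\gamma$ to convert from the sup-norm to the $\cX$-norm, explaining the $\tau^\gamma W_{m,q}^{\cS_J}$ entry), and the term coming from $\bU^{(\infty)}(t,r)\cQ\psi^{(\infty)}$ fed through $\varPi^{(\bm{m})}\cQ D\cN(\ba)$, bounded using \eqref{eq:ineqW_infty} and \eqref{eq:Cm_bound} by $W_{m,q}^{\cS_J}\overline{W}_{\infty,q}^{\cS_J}\cE_{m,\infty}^{J}\|\psi^{(\infty)}\|_\omega$. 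Analogously, once $b^{(\bm{m})}$ is controlled, plugging it back into the $b^{(\infty)}$-equation and using \eqref{eq:ineqW_infty_sup}, \eqref{eq:ineqW_infty}, \eqref{eq:Cinf_bound} gives a bound on $\|b^{(\infty)}\|_{\cX}$ by $\bigl(W_{m,q}^{\cS_J}\overline{W}_{\infty,q}^{\cS_J}\cE_{\infty,m}^{J}\bigr)\|\psi^{(\bm{m})}\|_\omega + W_{\infty,q}^{\cS_J}\|\psi^{(\infty)}\|_\omega$. Writing these two inequalities as a single $2\times 2$ system acting on the vector $(\|\psi^{(\bm{m})}\|_\omega,\|\psi^{(\infty)}\|_\omega)^{\!\top}$, and using $\|\psi^{(\bm{m})}\|_\omega+\|\psi^{(\infty)}\|_\omega=\|\psi\|_\omega$ together with $\|b\|_{\cX}\le\|b^{(\bm{m})}\|_{\cX}+\|b^{(\infty)}\|_{\cX}$, the supremum over the vector of unit $\ell^1$-norm is precisely the matrix $1$-norm appearing in \eqref{eq:W_tau_q_bound}, with the overall $\kappa^{-1}$ from the Neumann series. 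Since $b(t)=U(t,s)\cQ\psi$ by definition of the evolution operator, this is exactly the bound \eqref{eq:Wh_bound} with $\bm{W_q^{\cS_J}}$ as claimed.

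A few technical points need care along the way. One must justify that the integral operators act boundedly on $\cX$ — here the integrability of the singular weights $(\sigma-s)^{-\gamma}$ for $\gamma\in(0,1)$ is what makes the double-integral quantities in \eqref{eq:ineqbarW_infty} finite; this is where the choice of the $(t-s)^\gamma$-weighted norm is essential and where the regularity loss from $\cQ$ is absorbed. One also needs the semigroup-type estimate for the infinite tail $\bU^{(\infty)}$, namely a pointwise bound $W^{(\infty)}_q(t,s)$, which is assumed in \eqref{eq:Winfts_bound} (and constructed later in Sections \ref{sec:fundamental_sol}–\ref{sec:evolution_op}); given that, the suprema \eqref{eq:ineqW_infty_sup}–\eqref{eq:ineqbarW_infty} are taken as hypotheses, so no further work is needed here. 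Finally, one should check that the fixed point obtained in $\cX$ genuinely coincides with the solution $b$ of \eqref{LinearizedProb} — this follows from uniqueness of solutions to the Volterra system \eqref{eq:b_form}, which is itself equivalent to \eqref{LinearizedProb} via the variation-of-constants identities already displayed.

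I expect the main obstacle to be the bookkeeping in the second paragraph: correctly identifying which of the four constants $W_{\infty,q}^{\cS_J}$, $\overline{W}_{\infty,q}^{\cS_J}$, $\doverline{W}_{\infty,q}^{\cS_J}$ multiplies which block after the substitution, and in particular distinguishing the single-integral contributions (which land in the off-diagonal entries of the matrix, via $\overline{W}_{\infty,q}^{\cS_J}$) from the double-integral contribution (which lands only in $\kappa$, via $\doverline{W}_{\infty,q}^{\cS_J}$) and from the ``no-integral'' supremum (the $W_{\infty,q}^{\cS_J}$ diagonal entry). Getting the $\tau^\gamma$ normalization factors consistent between the sup-norm bound \eqref{eq:Wm_bound} and the $\cX$-norm is the other place where a sign or exponent error is easy to make. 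The contraction estimate itself is then a routine Neumann-series argument once $\|\cK\|<1$ is identified with $\kappa>0$.
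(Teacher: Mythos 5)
Your overall strategy is the same as the paper's: take norms in the coupled Volterra system \eqref{eq:b_form} in the weighted $\cX$-norm, substitute one component's estimate into the other's, identify the double-integral contribution $\jp{W_{m,q}^{\cS_{J}}}\jp{\doverline{W}_{\infty,q}^{\cS_{J}}}\jp{\cE_{m,\infty}^{J}}\jp{\cE_{\infty,m}^{J}}$ as the contraction factor so that $\kappa>0$ lets you solve for the norms, and then assemble the two resulting inequalities into the matrix $1$-norm bound \eqref{eq:W_tau_q_bound}; whether you phrase the last step as a Neumann series for an operator $\cK$ or as solving the scalar inequality $x\le c+(1-\kappa)x$ is immaterial.

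One concrete bookkeeping slip, exactly at the place you predicted trouble: your second paragraph handles $b^{(\infty)}$ asymmetrically, by plugging the already-solved bound for $\|b^{(\bbm)}\|_{\cX}$ back into the $b^{(\infty)}$-equation and claiming the result follows from \eqref{eq:ineqW_infty_sup}, \eqref{eq:ineqW_infty}, \eqref{eq:Cinf_bound}. But to use $\|b^{(\bbm)}(r)\|_{\omega}\le (r-s)^{-\gamma}\|b^{(\bbm)}\|_{\cX}$ inside $\int_s^t W^{(\infty)}_{q}(t,r)\,\cdot\,dr$ you would need a bound on $\sup_{(t,s)}(t-s)^{\gamma}\int_s^t W^{(\infty)}_{q}(t,r)(r-s)^{-\gamma}dr$, which is not among the hypotheses (the assumed quantities are the unweighted single integrals in \eqref{eq:ineqW_infty} and the weighted double integrals in \eqref{eq:ineqbarW_infty}); moreover the resulting bound would carry $\kappa^{-1}$ only through the $b^{(\bbm)}$ term, not in the clean form $\kappa^{-1}\bigl(\jp{W_{m,q}^{\cS_{J}}}\jp{\overline{W}_{\infty,q}^{\cS_{J}}}\jp{\cE_{\infty,m}^{J}}\|\psi^{(\bbm)}\|_{\omega}+\jp{W_{\infty,q}^{\cS_{J}}}\|\psi^{(\infty)}\|_{\omega}\bigr)$ stated in the theorem. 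The paper avoids this by treating the two components symmetrically: it substitutes the \emph{unsolved} $b^{(\bbm)}$-inequality into the $b^{(\infty)}$-inequality, so that the constant forcing $\jp{W_{m,q}^{\cS_{J}}}\|\psi^{(\bbm)}\|_{\omega}$ meets only the unweighted integral of \eqref{eq:ineqW_infty}, while the singular weight $(\sigma-s)^{-\gamma}$ lands inside the double integral covered by the second bound in \eqref{eq:ineqbarW_infty}; each component then carries the same contraction factor and the same $\kappa^{-1}$. With that symmetric substitution your argument matches the paper's proof; you should also note (as the paper does implicitly via Theorem \ref{thm:ev_op} and the finite-dimensionality of the $\bbm$-part) that the $\cX$-norms being solved for are finite a priori, which is needed to pass from $x\le c+(1-\kappa)x$ to $x\le c/\kappa$.
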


\jp{
Note that the bounds $\jp{\overline{W}_{\infty,q}^{\cS_{J}}}$ and $\jp{\doverline{W}_{\infty,q}^{\cS_{J}}}$ become smaller if an appropriate size of the Fourier projection $\bbm\ge 0$ is chosen. In particular, the fact that $\jp{\doverline{W}_{\infty,q}^{\cS_{J}}}$ can be made small by increasing $\bbm$ helps insuring that condition \eqref{eq:condition_kappa} of Theorem \ref{thm:solutionmap} is satisfied. This can be intuitively understood from the following simplified case: if we assume $W^{(\infty)}_{{q}}(t,s)\sim e^{\mu_*(t-s)}$ (where $\mu_*$ is defined in \eqref{eq:ass_m}), then $\jp{\overline{W}_{\infty,q}^{\cS_{J}}}\sim 1/|\mu_*|$ and $\jp{\doverline{W}_{\infty,q}^{\cS_{J}}}\sim 1/\mu_*^2$ hold. Therefore, as the absolute value of $\mu_*$ become large, it is more likely that condition \eqref{eq:condition_kappa} will be satisfied.
}

\begin{rem}
	Theorem \ref{thm:solutionmap} asserts that there exists the evolution operator $\{U(t, s)\}_{0\le s\le t\le \tau}$ on $\ell_{\omega}^1$ and it is uniformly bounded in the sense of \eqref{eq:Wh_bound} over the simplex $\jp{\cS_{J}}$.
%	Each of these specific bounds is presented in the following sections of the paper. 
	The specific bounds, including the $\jp{W_{m,q}^{\cS_{J}}}$, $W^{(\infty)}_{{q}}(t,s)$, $\jp{W_{\infty,q}^{\cS_{J}}}$, $\jp{\overline{W}_{\infty,q}^{\cS_{J}}}$, and $\jp{\doverline{W}_{\infty,q}^{\cS_{J}}}$ bounds, are presented in the following sections of the paper.
	The $\jp{W_{m,q}^{\cS_{J}}}$ bound is discussed in Section \ref{sec:fundamental_sol} via rigorous numerics for the  fundamental solution matrix of \eqref{eq:cm}. The $W^{(\infty)}_{{q}}(t,s)$ bound is presented in Section \ref{sec:evolution_op} considering the solution map of \eqref{eq:cinf}. The remaining bounds are presented in Section \ref{sec:other_bounds}. Additionally, the $\jp{\cE_{m,\infty}^{J}} $ and $\jp{\cE_{\infty,m}^{J}} $ bounds are determined for each case. These bounds are further introduced in Section \ref{sec:results}.
\end{rem}

\begin{proof}
	Taking the $\ell_{\omega}^1$ norm of \eqref{eq:b_form}, it follows from \eqref{eq:Wm_bound}, \eqref{eq:Winfts_bound}, \eqref{eq:Cm_bound}, and \eqref{eq:Cinf_bound} that
\begin{align}
\left\|b^{(\bm{m})}(t)\right\|_{\omega}&\le \jp{W_{m,q}^{\cS_{J}}}\left\|\psi^{(\bm{m})}\right\|_{\omega}+\jp{W_{m,q}^{\cS_{J}}}\jp{\cE_{m,\infty}^{J}} \int_s^t\left\|b^{(\infty)}(r)\right\|_{\omega} dr\label{eq:bm_omegaq}\\
\left\|b^{(\infty)}(t)\right\|_{\omega}&\le W^{(\infty)}_{{q}}(t,s)\left\|\psi^{(\infty)}\right\|_{\omega}+\jp{\cE_{\infty,m}^{J}} \int_s^tW^{(\infty)}_{{q}}(t,r)\left\|b^{(\bm{m})}(r)\right\|_{\omega} dr.\label{eq:binf_omegaq}
\end{align}
Plugging each estimate in the other one, we obtain
\begin{align}
&{(t-s)^\gamma}\left\|b^{(\bm{m})}(t)\right\|_{\omega}\\
&\le {(t-s)^\gamma}\jp{W_{m,q}^{\cS_{J}}}\left\|\psi^{(\bm{m})}\right\|_{\omega}+\jp{W_{m,q}^{\cS_{J}}}\jp{\cE_{m,\infty}^{J}} {(t-s)^\gamma}\int_s^t\left\|b^{(\infty)}(r)\right\|_{\omega} dr\\
&\le {(t-s)^\gamma}\jp{W_{m,q}^{\cS_{J}}}\left\|\psi^{(\bm{m})}\right\|_{\omega}\\
&\hphantom{\le}\quad+\jp{W_{m,q}^{\cS_{J}}}\jp{\cE_{m,\infty}^{J}} {(t-s)^\gamma}\int_s^t\left(W^{(\infty)}_{{q}}(r,s)\left\|\psi^{(\infty)}\right\|_{\omega}+\jp{\cE_{\infty,m}^{J}} \int_s^rW^{(\infty)}_{{q}}(r,\sigma)\left\|b^{(\bm{m})}(\sigma)\right\|_{\omega} d\sigma\right) dr\\
&\le {(t-s)^\gamma}\jp{W_{m,q}^{\cS_{J}}}\left\|\psi^{(\bm{m})}\right\|_{\omega}+\jp{W_{m,q}^{\cS_{J}}}\jp{\overline{W}_{\infty,q}^{\cS_{J}}} \jp{\cE_{m,\infty}^{J}} \left\|\psi^{(\infty)}\right\|_{\omega}+\jp{W_{m,q}^{\cS_{J}}}\jp{\doverline{W}_{\infty,q}^{\cS_{J}}} \jp{\cE_{m,\infty}^{J}} \jp{\cE_{\infty,m}^{J}} \left\|b^{(\bm{m})}\right\|_{{\cX}}
\end{align}
and
\begin{align}
&{(t-s)^\gamma}\left\|b^{(\infty)}(t)\right\|_{\omega}\\
&\le {(t-s)^\gamma}W^{(\infty)}_{{q}}(t,s)\left\|\psi^{(\infty)}\right\|_{\omega}+\jp{\cE_{\infty,m}^{J}} {(t-s)^\gamma}\int_s^tW^{(\infty)}_{{q}}(t,r)\left\|b^{(\bm{m})}(r)\right\|_{\omega} dr\\
&\le {(t-s)^\gamma}W^{(\infty)}_{{q}}(t,s)\left\|\psi^{(\infty)}\right\|_{\omega}\\
&\hphantom{\le}\quad+\jp{\cE_{\infty,m}^{J}} {(t-s)^\gamma}\int_s^tW^{(\infty)}_{{q}}(t,r)\left(\jp{W_{m,q}^{\cS_{J}}}\left\|\psi^{(\bm{m})}\right\|_{\omega}+\jp{W_{m,q}^{\cS_{J}}}\jp{\cE_{m,\infty}^{J}} \int_s^r\left\|b^{(\infty)}(\sigma)\right\|_{\omega} d\sigma\right) dr\\
&\le {(t-s)^\gamma}W^{(\infty)}_{{q}}(t,s)\left\|\psi^{(\infty)}\right\|_{\omega}+\jp{W_{m,q}^{\cS_{J}}}\jp{\overline{W}_{\infty,q}^{\cS_{J}}} \jp{\cE_{\infty,m}^{J}} \left\|\psi^{(\bm{m})}\right\|_{\omega} + \jp{W_{m,q}^{\cS_{J}}}\jp{\doverline{W}_{\infty,q}^{\cS_{J}}} \jp{\cE_{m,\infty}^{J}} \jp{\cE_{\infty,m}^{J}} \left\|b^{(\infty)}\right\|_{{\cX}}.
\end{align}
Since $\kappa=1-\jp{W_{m,q}^{\cS_{J}}}\jp{\doverline{W}_{\infty,q}^{\cS_{J}}} \jp{\cE_{m,\infty}^{J}} \jp{\cE_{\infty,m}^{J}} >0$ holds from the sufficient condition of the theorem, $\cX$ norm bounds of $b^{(\bm{m})}$ and $b^{(\infty)}$ are given by
\begin{align}
	\left\|b^{(\bm{m})}\right\|_{{\cX}} &\le \frac{{\tau^\gamma}\jp{W_{m,q}^{\cS_{J}}}\left\|\psi^{(\bm{m})}\right\|_{\omega}+\jp{W_{m,q}^{\cS_{J}}}\jp{\overline{W}_{\infty,q}^{\cS_{J}}} \jp{\cE_{m,\infty}^{J}} \left\|\psi^{(\infty)}\right\|_{\omega}}{\kappa}\label{eq:bm_bounds}\\
	\left\|b^{(\infty)}\right\|_{{\cX}} &\le \frac{\jp{W_{\infty,q}^{\cS_{J}}}\left\|\psi^{(\infty)}\right\|_{\omega}+\jp{W_{m,q}^{\cS_{J}}}\jp{\overline{W}_{\infty,q}^{\cS_{J}}} \jp{\cE_{\infty,m}^{J}} \left\|\psi^{(\bm{m})}\right\|_{\omega}}{\kappa}.\label{eq:binf_bounds}
\end{align}
%respectively.
Finally, we have 
\begin{align*}
	\left\|b\right\|_{{\cX}} &\le \left\|b^{(\bm{m})}\right\|_{{\cX}} + \left\|b^{(\infty)}\right\|_{{\cX}}\\
	&=\left\|
	\begin{pmatrix}
		\left\|b^{(\bm{m})}\right\|_{{\cX}}\\
		\left\|b^{(\infty)}\right\|_{{\cX}}
	\end{pmatrix}\right\|_1\\
	&=\kappa^{-1}\left\|\begin{pmatrix}
		{\tau^\gamma}\jp{W_{m,q}^{\cS_{J}}} & \jp{W_{m,q}^{\cS_{J}}}\jp{\overline{W}_{\infty,q}^{\cS_{J}}} \jp{\cE_{m,\infty}^{J}} \\
		\jp{W_{m,q}^{\cS_{J}}}\jp{\overline{W}_{\infty,q}^{\cS_{J}}} \jp{\cE_{\infty,m}^{J}}  & \jp{W_{\infty,q}^{\cS_{J}}}
	\end{pmatrix}\begin{pmatrix}
	\left\|\psi^{(\bm{m})}\right\|_{\omega}\\\left\|\psi^{(\infty)}\right\|_{\omega}
\end{pmatrix}
\right\|_1\\
	&\le \bm{\jp{W_q^{\cS_{J}}}}\|\psi\|_{\omega}. \qedhere
\end{align*}
\end{proof}

\begin{cor}\label{cor:solutionmap}
	Under the same assumptions in Theorem \ref{thm:solutionmap}, assume that $\bU^{(\infty)}(t,s)$ satisfies
	\begin{align}\label{eq:Winfts_bound_classic}
		\left\| \bU^{(\infty)}(t,s){\phi^{(\infty)}}\right\|_{\omega}\le W^{(\infty)}(t,s)\|\phi^{(\infty)}\|_{\omega},\quad \forall\phi\in\ell_{\omega}^1.
	\end{align}
	Assume also that there exists $\jp{W_{\infty}^{\cS_{J}}}>0$, $\jp{\overline{W}_{\infty}^{\cS_{J}}}\ge0$, $\jp{{\overline{W}}_{\infty,q}^{\prime\cS_{J}}}\ge0$, $\jp{{\doverline{W}}_{\infty,q}^{\prime\cS_{J}}}\ge0$ satisfying
	\begin{align}
		\label{eq:ineqW_infty_sup_classic}
		\sup_{(t,s)\in \jp{\cS_{J}}} W^{(\infty)}(t,s)  & \le \jp{W_{\infty}^{\cS_{J}}},\\
		\label{eq:ineqW_infty_classic}
		\sup _{(t,s)\in \jp{\cS_{J}}} \int_{s}^{t} W^{(\infty)}(r, s) d r &\le \jp{\overline{W}_{\infty}^{\cS_{J}}},\\
		\label{eq:ineqhatW_infty_classic}
		\sup _{(t,s)\in \jp{\cS_{J}}} \int_{s}^{t} W^{(\infty)}_{{q}}(t, r) d r &\le \jp{{\overline{W}}_{\infty,q}^{\prime\cS_{J}}},\\
		\label{eq:ineqbarW_infty_classic}
		\sup _{(t,s)\in \jp{\cS_{J}}} \int_{s}^{t} \int_{s}^{r} W^{(\infty)}_{{q}}(r, \sigma)d \sigma d r,~
		\sup _{(t,s)\in \jp{\cS_{J}}} \int_{s}^{t} W^{(\infty)}_{{q}}(t, r)(r-s)d r  &\le  \jp{{\doverline{W}}_{\infty,q}^{\prime\cS_{J}}},
	\end{align}
	respectively.
	Then, taking $\tilde{\kappa}\bydef 1-\jp{W_{m,q}^{\cS_{J}}}\jp{{\doverline{W}}_{\infty,q}^{\prime\cS_{J}}} \jp{\cE_{m,\infty}^{J}} \jp{\cE_{\infty,m}^{J}} $,
	the uniform bound $\bm{\jp{W^{\cS_{J}}}}>0$ in \eqref{eq:Wh_bound_classic} is given by
	\begin{align}\label{eq:W_tau_bound}
	\bm{\jp{W^{\cS_{J}}}} \bydef\left\|\begin{pmatrix}
		\jp{W_{m,0}^{\cS_{J}}} & \jp{W_{m,q}^{\cS_{J}}}\jp{\overline{W}_{\infty}^{\cS_{J}}} \jp{\cE_{m,\infty}^{J}} \\
		\jp{W_{m,0}^{\cS_{J}}}\jp{{\overline{W}}_{\infty,q}^{\prime\cS_{J}}} \jp{\cE_{\infty,m}^{J}}  & \jp{W_{\infty}^{\cS_{J}}}
	\end{pmatrix}\right\|_1\tilde{\kappa}^{-1}.
	\end{align}
\end{cor}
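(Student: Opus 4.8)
The plan is to repeat, essentially line by line, the proof of Theorem~\ref{thm:solutionmap}, changing only two things: the initial datum for the auxiliary problem \eqref{LinearizedProb} is now $b(s)=\phi\in\ell_{\omega}^1$ instead of $b(s)=\cQ\psi$, and the target norm is the unweighted $\|b\|=\sup_{(t,s)\in\cS_{J}}\|b(t)\|_{\omega}$ of \eqref{eq:Wh_bound_classic} instead of the $\gamma$-weighted $\cX$-norm. Concretely, I would first decompose $b=b^{(\bbm)}+b^{(\infty)}$ via $\varPi^{(\bbm)}$ and write the variation-of-constants representation, the analogue of \eqref{eq:b_form}:
\begin{align*}
b^{(\bbm)}(t)&=\bU^{(\bbm)}(t,s)\phi^{(\bbm)}+\int_s^t\bU^{(\bbm)}(t,r)\varPi^{(\bbm)}\cQ D\cN(\ba(r))b^{(\infty)}(r)\,dr,\\
b^{(\infty)}(t)&=\bU^{(\infty)}(t,s)\phi^{(\infty)}+\int_s^t\bU^{(\infty)}(t,r)(\mathrm{Id}-\varPi^{(\bbm)})\cQ D\cN(\ba(r))b^{(\bbm)}(r)\,dr.
\end{align*}

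The only structural point needing attention is that the two coupling integrals must still be estimated through the $q$-dependent bounds, while the homogeneous terms now use the ``classic'' ($q=0$) bounds. Since $\cQ$ is a diagonal multiplication operator it commutes with $\varPi^{(\bbm)}$ and $\mathrm{Id}-\varPi^{(\bbm)}$, so each coupling integrand equals $\cQ$ applied to $\varPi^{(\bbm)}D\cN(\ba(r))(\mathrm{Id}-\varPi^{(\bbm)})$ acting on $b^{(\infty)}(r)$ (respectively $(\mathrm{Id}-\varPi^{(\bbm)})D\cN(\ba(r))\varPi^{(\bbm)}$ acting on $b^{(\bbm)}(r)$); this is precisely the form ``$\bU$ applied to $\cQ(\cdot)$'', so \eqref{eq:Wm_bound} and \eqref{eq:Winfts_bound} apply, with the operator norms controlled by $\cE_{m,\infty}^{J}$ and $\cE_{\infty,m}^{J}$ from \eqref{eq:Cm_bound}--\eqref{eq:Cinf_bound}. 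The homogeneous terms $\bU^{(\bbm)}(t,s)\phi^{(\bbm)}$ and $\bU^{(\infty)}(t,s)\phi^{(\infty)}$ are bounded instead by the $q=0$ instance of \eqref{eq:Wm_bound} — which exists automatically, $\bU^{(\bbm)}$ being finite-dimensional — giving $W_{m,0}^{\cS_{J}}$, and by the classic bound \eqref{eq:Winfts_bound_classic}, giving $W^{(\infty)}(t,s)$ (uniformly bounded by $W_{\infty}^{\cS_{J}}$ via \eqref{eq:ineqW_infty_sup_classic}). Taking $\ell_{\omega}^1$-norms then produces a coupled pair of Gronwall-type inequalities for $\|b^{(\bbm)}(t)\|_{\omega}$ and $\|b^{(\infty)}(t)\|_{\omega}$.

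From here the computation mirrors that of Theorem~\ref{thm:solutionmap}: I would substitute each inequality into the other, bound the single time integrals $\int_s^t W^{(\infty)}(r,s)\,dr$ and $\int_s^t W^{(\infty)}_{q}(t,r)\,dr$ by $\overline{W}_{\infty}^{\cS_{J}}$ and $\overline{W}_{\infty,q}^{\prime\cS_{J}}$ using \eqref{eq:ineqW_infty_classic}--\eqref{eq:ineqhatW_infty_classic}, and bound the two iterated integrals — after replacing $\int_s^r\|b^{(\infty)}(\sigma)\|_{\omega}\,d\sigma$ by $(r-s)\|b^{(\infty)}\|$ in the one that calls for it — by $\doverline{W}_{\infty,q}^{\prime\cS_{J}}$ using \eqref{eq:ineqbarW_infty_classic}. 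This leaves $\|b^{(\bbm)}\|$ and $\|b^{(\infty)}\|$ appearing on both sides with common coefficient $W_{m,q}^{\cS_{J}}\doverline{W}_{\infty,q}^{\prime\cS_{J}}\cE_{m,\infty}^{J}\cE_{\infty,m}^{J}$; since $\tilde{\kappa}>0$ by hypothesis I move these to the left-hand side (both sup norms being a priori finite by the finite-dimensionality of $b^{(\bbm)}$ together with the assumed bounds on $\bU^{(\infty)}$), take $\sup_{(t,s)\in\cS_{J}}$, add the two resulting estimates, recognize the right-hand side as a $2\times 2$ matrix acting on $(\|\phi^{(\bbm)}\|_{\omega},\|\phi^{(\infty)}\|_{\omega})^{\top}$, bound it by the matrix $1$-norm, and use $\|\phi^{(\bbm)}\|_{\omega}+\|\phi^{(\infty)}\|_{\omega}=\|\phi\|_{\omega}$ to conclude $\|b\|\le\bm{W^{\cS_{J}}}\|\phi\|_{\omega}$ with $\bm{W^{\cS_{J}}}$ as in \eqref{eq:W_tau_bound}. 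I do not anticipate a genuine obstacle: the statement is a mechanical variant of Theorem~\ref{thm:solutionmap}, and the only things to watch are the bookkeeping of which bound ($W_{m,q}^{\cS_{J}}$ or $W_{m,0}^{\cS_{J}}$, $W^{(\infty)}$ or $W^{(\infty)}_{q}$, and which iterated-integral constant) is used where, and the reason no $(t-s)^{\gamma}$ weight is needed here, namely that each coupling integrand, although it may be singular like $(t-r)^{-\gamma}$ as $r\to t$, is integrable in $r$ for $\gamma\in(0,1)$ — which is exactly what makes the primed constants $\overline{W}_{\infty,q}^{\prime\cS_{J}}$ and $\doverline{W}_{\infty,q}^{\prime\cS_{J}}$ finite.
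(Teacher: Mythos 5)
Your proposal is correct and follows essentially the same route as the paper's proof in Appendix~\ref{sec:Wtau_bound}: the same splitting $b=b^{(\bbm)}+b^{(\infty)}$ with initial datum $\phi$, the classic bounds $W_{m,0}^{\cS_{J}}$ and $W^{(\infty)}(t,s)$ on the homogeneous terms, the $q$-dependent bounds together with $\cE_{m,\infty}^{J}$, $\cE_{\infty,m}^{J}$ on the coupling integrals, substitution of each inequality into the other with the primed constants $\overline{W}_{\infty,q}^{\prime\cS_{J}}$, $\doverline{W}_{\infty,q}^{\prime\cS_{J}}$ controlling the single and iterated integrals, and solving via $\tilde{\kappa}>0$ before summing into the matrix $1$-norm bound. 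Your bookkeeping of which constant is used where matches the paper's proof exactly.
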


The proof of this corollary and each bound are presented in Appendix \ref{sec:Wtau_bound}. 
We only note here that \jp{$\tilde{\kappa}>0$ holds under the same assumptions as in Theorem \ref{thm:solutionmap}. This result follows from the inequality $\tilde{\kappa}\ge\kappa\,(>0)$, which is confirmed by the relation ${{\doverline{W}}_{\infty,q}^{\prime\cS_{J}}} \le \doverline{W}_{\infty,q}^{\cS_{J}}$ in general (see \eqref{eq:Wbar_infty_q} in Lemma~\ref{lem:other_bounds_q} and \eqref{eq:Wbar_infty} in Lemma~\ref{lem:other_bounds_alt} for their explicit definitions).
}

\subsubsection{Computing \boldmath $\jp{W_{m,q}^{\cS_{J}}}$ \boldmath: rigorous numerics for the fundamental solution \jp{matrix}}\label{sec:fundamental_sol}

Here, we start explaining how we get the $\jp{W_{m,q}^{\cS_{J}}}$ bound in \eqref{eq:Wm_bound}. 
\jp{The method presented here is mostly standard for computer-assisted proofs and is closely connected to that dicussed  in \cite{MR4444839}.
	
From \eqref{eqn:bUm}, the uniformly bounded  evolution operator $\{U(t, s)\}_{0\le s\le t\le \tau}$ corresponds to the fundamental solution matrix $ C^{(\bbm)}(t,s)$ of the finite system \eqref{eq:cm}.  
To construct $C^{(\bbm)}(t,s)$, we use the relation $C^{(\bbm)}(t,s) = \Phi(t)\Phi(s)^{-1}$, where $\Phi(t)$ is the $m_1\dots m_d$ dimensional matrix-valued function such that
\begin{align}\label{eq:Phi_system}
	\frac{d}{dt}\Phi(t)=\cL \Phi(t) +\varPi^{(\bm{m})} \cQ D\mathcal{N}(\ba(t))\Phi(t),\quad \Phi(0)=\mathrm{Id}.
\end{align}
Furthermore, letting $\Psi(s)\bydef\Phi(s)^{-1}$, $\Psi(s)$ solves the adjoint problem of the fundamental system \eqref{eq:Phi_system}, that is
\begin{align}\label{eq:Psi_system}
	\frac{d}{dt}\Psi(s)=-\cL \Psi(s) -\varPi^{(\bm{m})} \cQ D\mathcal{N}(\ba(s))\Psi(s),\quad \Psi(0)=\mathrm{Id}.
\end{align}
Using these formulas, the fundamental solution matrix $C^{(\bbm)}(t,s)$ is explicitly constructed via rigorous numerics, and one can obtain the ${W_{m,q}^{\cS_{J}}}$ bound from the definition of $\bU^{(\bbm)}(t,s)$ in \eqref{eqn:bUm} such that
\begin{align}\label{eq:Wmq}
	\sup_{(t,s)\in \jp{\cS_{J}}}\left\|\bU^{(\bbm)}(t,s)\cQ\right\|_{B(\ell_{\omega}^1)}&=\sup_{(t,s)\in \jp{\cS_{J}}}\left(\max_{\bj \in \Fm}\frac1{\omega_{\bj}}\sum_{\bk\in\Fm}|C_{\bk,\bj}(t,s)|(\bj \bL)^q\omega_{\bk}\right)\\
	&\le \max_{\bj \in \Fm}\frac1{\omega_{\bj}}\sum_{\bk\in\Fm}\left(\sup_{(t,s)\in \jp{\cS_{J}}}|C_{\bk,\bj}(t,s)|\right)(\bj \bL)^q\omega_{\bk} \bydef {W_{m,q}^{\cS_{J}}}.\label{eq:W_mq_bound}
\end{align}

Our main task here is to solve the finite differential systems \eqref{eq:Phi_system} for $t\in[0,\tau]$ and \eqref{eq:Psi_system} for $s\in[0,\tau]$. We present only the case of \eqref{eq:Phi_system}, as the other case can be handled by a straight forward extension.
Let us define the fundamental solution matrix $\Phi(t) \in M_{m_1\dots m_d}(\R)$ as
\begin{equation} \label{eq:Psi}
	\Phi(t) \bydef \left(c_{\bk}^{(\bj)}(t)\right)_{\bk,\bj\in\Fm}=\left(\cdots,~c^{(\bj)}(t),~ \cdots\right)
%	\begin{bmatrix}
%%		 & | &  \\
%		\cdots,  & c^{(\bj)}(t),  & \cdots \\
%%		 & | &  \\
%%		\vdots & \vdots &  & \vdots \\
%%		c^{(1)}(t)  & c^{(2)}(t)  & \cdots & c^{(m_1 \cdots m_d)}(t) \\
%%		\vdots & \vdots &  & \vdots
%	\end{bmatrix}
	\quad (\bj\in\Fm).
\end{equation}
From \eqref{eq:Phi_system}, each $c_{\bk}^{(\bj)}$ ($\bk,\bj\in\Fm$) solves the IVP of the linear system
\begin{align}\label{eq:ode_system}
	\dot{c}_{\bk}^{(\bj)}=\mu_{\bk}c_{\bk}^{(\bj)} + \im^q (\bk \bL)^q\left(D\cN(\ba)c^{(\bj)}\right)_{\bk},\quad c^{(\bj)}_\bk(0) = ({\rm e}_{\textbf{j} })_{\bk} \bydef \delta_{k_1,j_1} \cdots \delta_{k_d,j_d}.
\end{align}
In the following, we omit the superscript $(\bj)$.
By rescaling the time interval $[0,\tau]$ to $[-1,1]$, %and setting $ \dot{c}_{\bk}(t) = \frac\tau2G_{\bk}(c)$, 
and defining as $G_{\bk}(c)\bydef \frac\tau2\left(\mu_{\bk}c_{\bk} + \im^q (\bk \bL)^q\left(D\cN(\ba)c\right)_{\bk}\right)$,
we can rewrite the IVP \eqref{eq:ode_system} as an integral equation given by
\begin{equation} \label{eq:integral_equation_Fisher}
c_{\bk}(t) = b_{\bk} +  \int_{-1}^t G_{\bk}(c(s))~ds, \qquad \bk \in \Fm, \quad t \in [-1,1],
\end{equation}
where $b_{\bk}$ is the given initial data (determined as $b_{\bk}=({\rm e}_{\textbf{j} })_{\bk}$ when $\bj$ is fixed).
%\[
%b = (b_\bk)_{\bk< \bm{m}} \in \{ {\rm e}_{\textbf{j} } : j_i=0,\dots,m_i-1 \text{ with } i \in \{1,...,d \} \}
%\]
For each $\bk \in \Fm$,  we \jp{use the idea presented in \cite{MR4444839,MR3148084} and} solve the finite number of integral equations \eqref{eq:integral_equation_Fisher} using Chebyshev series expansions, that is we expand $c_\bk(t)$ as
}
\begin{equation} \label{eq:a_k_Chebyshev_expansion}
c_\bk(t) = c_{0,\bk} + 2 \sum_{\ell \ge 1} c_{\ell,\bk} T_\ell(t) = c_{0,\bk} + 2 \sum_{\ell \ge 1} c_{\ell,\bk} \cos(\ell \theta) 
= \sum_{\ell \in \Z} c_{\ell,\bk} e^{\im \ell \theta},
\end{equation}
where $c_{-\ell,\bk} = c_{\ell,\bk}$ and $t = \cos(\theta)$. For each $\bk \in \Fm$, we expand $G_\bk(c(t))$ using a Chebyshev series, that is 
\begin{equation} \label{eq:f_k(a)_Chebyshev_expansion}
G_\bk(c(t)) = \psi_{0,\bk}(c)  + 2 \sum_{\ell \ge 1} \psi_{\ell,\bk}(c) \cos(\ell \theta) 
=  \sum_{\ell \in \Z} \psi_{\ell,\bk}(c) e^{\im \ell \theta},
\end{equation}
where
\[
\psi_{\ell,\bk}(c) \jp{\bydef} \lambda_\bk c_{\ell,\bk} + \jp{\im^q} (\bm{kL})^qN_{\ell,\bk}(c),\quad\jp{\lambda_\bk \bydef  \frac{\tau}{2} \mu_\bk}
\]
\jp{
and $N_{\ell,\bk}(c)\bydef\left((\tau/2)D\cN \left(\ba \right)c\right)_{\ell,\bk}$ is the Chebyshev coefficients of the Fr\'echet derivative $D\cN(\ba)$ acting on $c$}. Letting $N_\bk(c) \bydef (N_{\ell,\bk}(c))_{\ell \ge 0}$, $\psi_\bk(c) \bydef (\psi_{\ell,\bk}(c))_{\ell \ge 0}$ and noting that $(\lambda_\bk c_\bk)_\ell = \lambda_\bk c_{\ell,\bk}$, we get that
\begin{equation} \label{eq:phi_expansion_general}
\psi_\bk(c) = \lambda_\bk c_\bk +\jp{\im^q} (\bm{kL})^q N_\bk(c).
\end{equation}
Combining expansions \eqref{eq:a_k_Chebyshev_expansion} and \eqref{eq:f_k(a)_Chebyshev_expansion} leads to %
\[
\sum_{\ell \in \Z} c_{\ell,\bk} e^{\im \ell \theta} = c_\bk(t) = b_\bk+\int_{-1}^t G_\bk(c(s))~ds =  b_\bk + \int_{-1}^t \sum_{\ell \in \Z} \psi_{\ell,\bk}(c) e^{\im \ell \theta}~ds 
\]
and this results in solving $f=0$, where 
$f = \left(f_{\ell,\bk} \right)_{\jp{\ell\ge 0,\,\bk\in\Fm}}$ is given component-wise by
\[
f_{\ell,\bk}(c) = 
\begin{cases}
\displaystyle
c_{0,\bk} + 2 \sum_{j=1}^\infty (-1)^j c_{j,\bk} - b_\bk, & \ell=0, \bk \in \Fm \\
\displaystyle
2\ell c_{\ell,\bk} + ( \psi_{\ell+1,\bk}(c) - \psi_{\ell-1,\bk}(c)), & \ell>0, \bk \in \Fm.
\end{cases}
\]
Hence, for $\ell>0$ and $\bk \in \Fm$, we aim at solving
\[
f_{\ell,\bk}(c)  = 
2\ell c_{\ell,\bk} +  \lambda_k ( c_{\ell+1,\bk} - c_{\ell-1,\bk})
+ \jp{\im^q} (\bm{kL})^q( N_{\ell+1,\bk}(c) - N_{\ell-1,\bk}(c)) = 0.
\]
Finally, the problem that we solve is $f=0$, where $f = \left(f_{\ell,\bk} \right)_{\ell,\bk}$ is given component-wise by
\[
f_{\ell,\bk}(c) \bydef 
\begin{cases}
\displaystyle
c_{0,\bk} + 2 \sum_{j=1}^\infty (-1)^j c_{j,\bk} - b_\bk, & \ell=0, \bk \in \Fm \\
\displaystyle
- \lambda_\bk c_{\ell-1,\bk} + 2\ell c_{\ell,\bk} + \lambda_\bk c_{\ell+1,\bk}
+ \jp{\im^q} (\bm{kL})^q(N_{\ell+1,\bk}(c) - N_{\ell-1,\bk}(c)), & \ell > 0 , \bk \in \Fm.
\end{cases}
\]
Define the operators (acting on Chebyshev sequences, that is for a fixed $\bk$) by
\begin{equation} \label{eq:tridiagonal_T}
\cT \bydef
\begin{pmatrix} 
0&0&0&0&0&\cdots\\
-1&0&1&0&\cdots&\ \\
0&-1&0&1&0&\cdots\\
\ &\ddots&\ddots&\ddots&\ddots&\ddots\\
\ &\dots&0&-1&0&1 \\
\ &\ &\dots&\ddots&\ddots&\ddots
\end{pmatrix} \quad  \mbox{and} \quad
\Lambda  \bydef
\begin{pmatrix} 
0&0&0&0&0&\cdots\\
0&2&0&0&\cdots&\ \\
0&0&4&0&0&\cdots\\
\ &\ddots&\ddots&\ddots&\ddots&\ddots\\
\ &\dots&0&0&2\ell&0 \\
\ &\ &\dots&\ddots&\ddots&\ddots
\end{pmatrix}.
\end{equation}
Using the operators $\cT$ and $\Lambda$, we may write more densely for the cases $\ell>0$ and $\bk \in \Fm$
\[
f_{\bk}(c)  = \Lambda c_{\bk} + \cT( \lambda_\bk  c_{\bk} + \jp{\im^q} (\bm{kL})^qN_\bk(c) ).
\]
Hence,
\begin{equation} \label{eq:f_{ell,k}}
f_{\ell,\bk}(c) =
\begin{cases}
\displaystyle
c_{0,\bk} + 2 \sum_{j=1}^\infty (-1)^j c_{j,\bk} - b_\bk, & \ell=0, \bk \in \Fm \\
\displaystyle
\left( \Lambda c_{\bk} + \cT( \lambda_\bk c_{\bk} + \jp{\im^q} (\bm{kL})^qN_\bk(c) ) \right)_\ell, & \ell > 0 , \bk \in \Fm.
\end{cases}
\end{equation}
Denoting the set of indices $\cI = \{ (\ell,\bk) \in \N^{d+1} : \ell \ge 0 \text{ and } \bk \in \Fm \}$ and denote $f=(f_{\bfj})_{\bfj \in \cI}$. Assume that using Newton's method, we computed $\bar c = (\bar c_{\ell,\bk})_{\ell=0,\dots,n-1 \atop \bk \in \Fm}$ such that $f(\bar c) \approx 0$. Fix $\nu_{\jp{C}} \ge 1$ the {\em Chebyshev decay rate} and define the weights $\omega_{\ell,\bk} =\alpha_{\ell,\bk} \nu_{\jp{{C}}}^\ell$\jp{, where $\alpha_{\ell,\bk}\bydef 2^{\delta_{\ell,0}}\alpha_{\bk}$}. 
Given a sequence 
$c = (c_{\bfj})_{\bfj \in \cI} = (c_{\ell,\bk})_{\bk \in \Fm \atop \ell \ge 0}$, denote the Chebyshev-weighed $\ell^1$ norm by
\[
\| c \|_{\cX^{\bm{m}}_{\jp{C}}} \bydef \sum_{\bfj \in \cI} |c_{\bfj}| \omega_{\bfj}.
\]
The Banach space in which we prove the existence of the solutions of $f=0$ is given by
\[
\cX^{\bm{m}}_{\jp{C}} \bydef
\left\{ c = (c_{\bfj})_{\bfj \in \cI} : 
\| c \|_{\cX^{\bm{m}}_{\jp{C}}}  < \infty
\right\}.
\]
The following result is useful to perform the nonlinear analysis when solving $f=0$ in $\cX^{\bm{m}}_{\jp{C}}$. %We omit the elementary proof which can be mimicked from the one of Lemma~3 in \cite{MR3353132}.
\begin{lem} \label{lem:banach_algebra}
	For all $a,b \in \cX^{\bm{m}}_{\jp{C}}$, $\| a * b\|_{\cX^{\bm{m}}_{\jp{C}}} \le \| a\|_{\cX^{\bm{m}}_{\jp{C}}} \| b\|_{\cX^{\bm{m}}_{\jp{C}}}$.
\end{lem}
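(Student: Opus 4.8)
The plan is to reduce the claimed sub-multiplicativity to two "one-dimensional" Banach algebra facts that are already available: the spatial cosine estimate \eqref{eq:Banach_algebra} (used with $\nu_F=1$ and restricted to multi-indices in $\Fm$) and the standard Chebyshev-convolution Banach algebra property of the weighted $\ell^1$ space carrying the weight $2^{\delta_{\ell,0}}\nu_C^\ell$. The key observation is that the weight on $\cX^{\bm{m}}_{C}$ factors as a product of a weight in the Chebyshev index and a weight in the spatial multi-index, namely $\omega_{\ell,\bk}=\alpha_{\ell,\bk}\nu_C^\ell=\bigl(2^{\delta_{\ell,0}}\nu_C^\ell\bigr)\,\alpha_{\bk}$, and that the convolution on $\cX^{\bm{m}}_{C}$ — induced by the product of Chebyshev series in $t$ and cosine series in $x$ — is the corresponding (possibly truncated) tensor product of the folded Chebyshev convolution in $\ell$ and the discrete convolution \eqref{eq:discrete_convolution} in $\bk$.

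First I would make the index folding harmless by passing to the two–sided representation. Given $c=(c_{\ell,\bk})_{\ell\ge0,\bk\in\Fm}\in\cX^{\bm{m}}_{C}$, extend it to $\tilde c$ indexed by $(\ell,\bk)\in\Z\times\Z^d$ by $\tilde c_{\ell,\bk}\bydef c_{|\ell|,|\bk|}$ when $|\bk|<\bm{m}$ and $\tilde c_{\ell,\bk}\bydef 0$ otherwise, and set $\tilde\omega_{\ell}\bydef\nu_C^{|\ell|}$. A direct count of sign-preimages shows that the $2^{\delta}$ prefactors in $\alpha_{\ell,\bk}$ are precisely absorbed, so that $\|c\|_{\cX^{\bm{m}}_{C}}=\sum_{(\ell,\bk)\in\Z^{d+1}}|\tilde c_{\ell,\bk}|\,\tilde\omega_\ell$, and under this correspondence $a*b$ becomes the ordinary convolution on $\Z^{d+1}$ of $\tilde a$ and $\tilde b$, followed (if the convolution is truncated onto $\Fm$) by a restriction that can only decrease the $\ell^1$ norm because all weights are positive.

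Next I would run the standard estimate on $\Z^{d+1}$. Using the triangle inequality, $|\ell_1+\ell_2|\le|\ell_1|+|\ell_2|$, and $\nu_C\ge1$ we get $\tilde\omega_{\ell_1+\ell_2}\le\tilde\omega_{\ell_1}\tilde\omega_{\ell_2}$, whence, by Tonelli for the non-negative double sum,
\[
\|a*b\|_{\cX^{\bm{m}}_{C}}\le\sum_{(\ell,\bk)}\sum_{\substack{\ell_1+\ell_2=\ell\\\bk_1+\bk_2=\bk}}|\tilde a_{\ell_1,\bk_1}|\,|\tilde b_{\ell_2,\bk_2}|\,\tilde\omega_{\ell_1}\tilde\omega_{\ell_2}=\Bigl(\sum_{(\ell_1,\bk_1)}|\tilde a_{\ell_1,\bk_1}|\tilde\omega_{\ell_1}\Bigr)\Bigl(\sum_{(\ell_2,\bk_2)}|\tilde b_{\ell_2,\bk_2}|\tilde\omega_{\ell_2}\Bigr)=\|a\|_{\cX^{\bm{m}}_{C}}\|b\|_{\cX^{\bm{m}}_{C}},
\]
which is the claim after translating back.

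The only genuinely delicate point is the bookkeeping in the unfolding step: verifying that the isometry $\|c\|_{\cX^{\bm{m}}_{C}}=\|\tilde c\|$ holds with exactly the weights $\alpha_{\ell,\bk}$ used in the paper, and that the truncated convolution really is the restriction of the full $\Z^{d+1}$-convolution. This is the same bookkeeping underlying \eqref{eq:Banach_algebra}; alternatively, one may bypass the unfolding entirely by fixing the $\ell$-slices and applying \eqref{eq:Banach_algebra} (with $\nu_F=1$) in the $\bk$-variable, then fixing the $\bk$-slices and applying the one-dimensional Chebyshev Banach algebra inequality in the $\ell$-variable, and assembling the two nested estimates. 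Everything after this is routine.
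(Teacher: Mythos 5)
The paper states this lemma without proof, so there is nothing to compare against on that side; the argument you give is the standard one and it is essentially correct. Unfolding to bi-infinite symmetric sequences, using that the weight $\nu_{C}^{|\ell|}$ is sub-multiplicative because $\nu_{C}\ge 1$ and $|\ell_1+\ell_2|\le|\ell_1|+|\ell_2|$, applying Tonelli to the nonnegative double sum, and noting that truncating the resulting convolution back to Fourier indices in $\Fm$ can only decrease the norm is exactly how such statements are established; the slice-wise alternative you sketch (apply \eqref{eq:Banach_algebra} in the $\bk$-variable for fixed Chebyshev slices, then the one-dimensional Chebyshev algebra inequality in $\ell$) is equally valid since the weight factors as a product over the indices.

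The one point that deserves an explicit check is the bookkeeping step you yourself flag. Your isometry $\|c\|_{\cX^{\bm{m}}_{C}}=\sum_{(\ell,\bk)\in\Z^{d+1}}|\tilde c_{\ell,\bk}|\nu_{C}^{|\ell|}$ holds precisely when $\alpha_{\ell,\bk}$ equals the number of sign-preimages of $(\ell,\bk)$, i.e.\ a factor $2$ for each \emph{nonzero} component of $(\ell,\bk)$. The paper's literal formulas, $\alpha_{\ell,\bk}=2^{\delta_{\ell,0}}\alpha_{\bk}$ with $\alpha_{\bk}=2^{\delta_{k_1,0}}\cdots 2^{\delta_{k_d,0}}$ and Kronecker deltas, place the factor $2$ on the \emph{zero} components instead; with that reading neither your "precisely absorbed" claim nor the clean inequality of the lemma holds (already in one dimension with $\nu=1$: taking weight $2$ at the zero mode and $1$ elsewhere, the sequence with $a_1=1$ and all other entries zero has $\|a\|=1$ while the folded convolution \eqref{eq:discrete_convolution} gives $\|a\ast a\|=5$). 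The same convention issue would also falsify \eqref{eq:Banach_algebra} and the identification of the cosine series with the exponential Fourier series, so it is evidently a typo in the paper and the intended weights are $\alpha_{\ell,\bk}=2^{\#\{\text{nonzero components of }(\ell,\bk)\}}$. Your proof is correct for those intended weights; just state explicitly that this is the convention under which the unfolding is an isometry, rather than the formula as printed.
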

 The computer-assisted proof of existence of a solution of $f=0$ relies on showing that a certain Newton-like operator $c \mapsto c-Af(c)$ has a unique fixed point in the closed ball $B_r(\bar c) \subset \cX^{\bm{m}}_{\jp{C}}$, where $r$ is a radius to be determined. Let us now define the operator $A$. Given $n$, a finite number of Chebyshev coefficients used for the computation of $c$, denote by $f^{(n,\bm{m})}:\R^{nm_1\cdots m_d} \to \R^{nm_1\cdots m_d}$ the finite dimensional projection used to compute $\bar c \in \R^{nm_1\cdots m_d}$, that is 
\[
f^{(n,\bm{m})}_{\ell,\bk}(c^{(n,\bm{m})}) \bydef 
\begin{cases}
\displaystyle
c_{0,\bk} + 2 \sum_{j=1}^{n-1} (-1)^j c_{j,\bk} - b_\bk, & \ell=0, \bk \in \Fm \\
\displaystyle
\left( \Lambda c_{\bk} + T( \lambda_\bk c_{\bk} +\jp{\im^q} (\bm{kL})^q N_\bk(c^{(n,\bm{m})}) ) \right)_\ell, & 0<\ell<n, \bk \in \Fm.
\end{cases}
\]

First consider $A^\dagger$ an approximation for the Fr\'echet derivative $Df(\bar c)$: 
\[
(A^\dagger c)_{\ell,\bk} = 
\begin{cases}
\left( Df^{(n,\bm{m})}(\bar c) c^{(n,\bm{m})} \right)_{\ell,\bk}, & 0 \le \ell < n, \bk \in \Fm
\\
2\ell c_{\ell,\bk}, & \ell \ge n, \bk \in \Fm,
\end{cases}
\]
where $Df^{(n,\bm{m})}(\bar c) \in M_{nm_1\cdots m_d}(\R)$ denotes the Jacobian matrix. Consider now a numerical inverse $A^{(n,\bm{m})}$ of $Df^{(n,\bm{m})}(\bar c)$, that is $\| I - A^{(n,\bm{m})} Df^{(n,\bm{m})}(\bar c) \| \ll 1$. We define the action of $A$ on a vector $c \in \cX^{\bm{m}}_{\jp{C}}$ as 
\[
(Ac)_{\ell,\bk} = 
\begin{cases}
\left( A^{(n,\bm{m})} c^{(n,\bm{m})} \right)_{\ell,\bk}, & 0 \le \ell < n, \bk \in \Fm
\\
\frac{1}{2\ell} c_{\ell,\bk}, & \ell \ge n, \bk \in \Fm.
\end{cases}
\]
The following Newton-Kantorovich type theorem (for linear problems posed on Banach spaces) is useful to show 
the existence of zeros of $f$ \jp{(similar formulations can be found for instance in \cite{BerBreLesVee21,MR4444839,MR2338393})}.
\begin{thm} \label{thm:radii_polynomial}
Assume that there are constants $Y_0, Z_0, Z_1 \ge 0$ having that 
\begin{align}
\label{eq:Y0}
\| A f(\bar c) \|_{\cX^{\bm{m}}_{\jp{C}}} &\le Y_0,
\\
\label{eq:Z0}
\| {\rm Id} - A A^\dagger \|_{B(\cX^{\bm{m}}_{\jp{C}})} &\le Z_0,
\\
\label{eq:Z1}
\| A (Df(\bar c) - A^\dagger) \|_{B(\cX^{\bm{m}}_{\jp{C}})} &\le Z_1.
\end{align}
If 
\begin{equation} \label{eq:radii_polynomial}
Z_0 + Z_1 <1,
\end{equation}
then for all 
\[
r \in \left( \frac{Y_0}{1-Z_0-Z_1},\infty \right),
\]
there exists a unique $\tilde c \in B_r(\bar c)$ such that $f(\tilde c) = 0$. 
\end{thm}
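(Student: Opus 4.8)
The plan is to recognize Theorem~\ref{thm:radii_polynomial} as the standard radii-polynomial (Newton--Kantorovich) argument applied to the Newton-like operator $\mathcal{T}(c) \bydef c - Af(c)$ on the Banach space $\cX^{\bm{m}}_{\jp{C}}$, and to prove that under \eqref{eq:radii_polynomial} this operator is a contraction on $B_r(\bar c)$ for each admissible $r$. First I would check that $A$ is a bounded linear operator on $\cX^{\bm{m}}_{\jp{C}}$ and that it is injective: on the tail modes $\ell\ge n$ it acts as multiplication by $\tfrac{1}{2\ell}$, which is clearly bounded and injective, while on the finite block it is the numerical inverse $A^{(n,\bm m)}$, which is invertible by hypothesis (since $\|I - A^{(n,\bm m)}Df^{(n,\bm m)}(\bar c)\|\ll 1$ forces $A^{(n,\bm m)}$ to be nonsingular). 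Injectivity of $A$ is what guarantees that a fixed point of $\mathcal{T}$ is genuinely a zero of $f$: if $\tilde c = \tilde c - Af(\tilde c)$ then $Af(\tilde c)=0$, hence $f(\tilde c)=0$.

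Next I would estimate $\|\mathcal{T}(c)-\bar c\|$ and the Lipschitz constant of $\mathcal{T}$ on $B_r(\bar c)$ using the three bounds $Y_0, Z_0, Z_1$. Writing, for $c \in B_r(\bar c)$,
\begin{align}
\mathcal{T}(c) - \bar c &= \mathcal{T}(\bar c) - \bar c + \bigl(\mathcal{T}(c) - \mathcal{T}(\bar c)\bigr)\notag\\
&= -Af(\bar c) + \bigl(c - \bar c\bigr) - A\bigl(f(c)-f(\bar c)\bigr),\notag
\end{align}
and using the mean value inequality together with $f(c)-f(\bar c) = \int_0^1 Df(\bar c + t(c-\bar c))(c-\bar c)\,dt$, one writes
\[
\bigl(c-\bar c\bigr) - A\bigl(f(c)-f(\bar c)\bigr) = \bigl({\rm Id} - AA^\dagger\bigr)(c-\bar c) + A\bigl(A^\dagger - Df(\bar c)\bigr)(c-\bar c) + A\!\int_0^1\!\bigl(Df(\bar c)-Df(\bar c+t(c-\bar c))\bigr)(c-\bar c)\,dt.
\]
The first term is controlled by $Z_0\|c-\bar c\|$ via \eqref{eq:Z0}, the second by $Z_1\|c-\bar c\|$ via \eqref{eq:Z1}, and the third requires a bound on $\|A(Df(\bar c) - Df(\bar c + v))\|_{B(\cX^{\bm m}_{\jp{C}})}$ for $\|v\|\le r$. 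Since $Df$ enters $f$ only through the polynomial nonlinear term $N_\bk$, whose derivative is polynomial (hence multilinear combinations of convolutions), the Banach algebra inequality of Lemma~\ref{lem:banach_algebra} makes this difference bounded by a polynomial in $r$ with no constant term; the stated hypotheses implicitly fold such a higher-order $Z_2 r$-type contribution into $Z_1$ or treat the problem as affine in $c$ after the linearization — indeed, for a degree-$2$ nonlinearity $Df(\bar c + v) - Df(\bar c)$ is exactly linear in $v$, so the third term is itself of the form $A\cdot(\text{linear in }v)\cdot(c - \bar c)$ and is absorbed. In the general polynomial case one reads the theorem as applying to the linearized (affine) operator, matching the phrasing ``Newton--Kantorovich type theorem (for linear problems)''.

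Combining, for $c \in B_r(\bar c)$ one gets $\|\mathcal{T}(c)-\bar c\| \le Y_0 + (Z_0+Z_1)r$, which is $\le r$ precisely when $r \ge \frac{Y_0}{1-Z_0-Z_1}$, and the Lipschitz estimate gives $\|\mathcal{T}(c_1)-\mathcal{T}(c_2)\| \le (Z_0+Z_1)\|c_1-c_2\|$ with $Z_0+Z_1<1$ by \eqref{eq:radii_polynomial}. Thus $\mathcal{T}$ is a contraction self-map on the complete metric space $B_r(\bar c)$, and the Banach fixed point theorem yields a unique fixed point $\tilde c \in B_r(\bar c)$, which by injectivity of $A$ satisfies $f(\tilde c)=0$. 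The main obstacle I anticipate is making the third (genuinely nonlinear) term rigorous in the truly polynomial setting — i.e.\ confirming that the theorem as stated is being invoked on the appropriate affine/linearized problem, or equivalently that any residual superlinear contribution is subsumed into the given $Z_1$; since the paper explicitly labels this a theorem ``for linear problems posed on Banach spaces,'' I expect the intended reading is that $f$ here is affine in $c$ (after the linearization $D\cN(\ba)$ is frozen), in which case $Df(\bar c)$ is constant, the third term vanishes identically, and the argument is clean.
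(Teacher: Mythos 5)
Your proposal is correct and takes essentially the same route as the paper: show that $\mathcal{T}(c)=c-Af(c)$ maps $B_r(\bar c)$ into itself and is a contraction with constant $Z_0+Z_1<1$, apply the Banach fixed point theorem, and use injectivity of $A$ (which follows from $Z_0<1$ on the finite block together with the diagonal tail action $\tfrac{1}{2\ell}$) to conclude $f(\tilde c)=0$. Your concern about a residual $Z_2$-type term is moot, and your final reading is the right one: here $N_{\ell,\bk}(c)=\left((\tau/2)D\cN(\ba)c\right)_{\ell,\bk}$ is linear in $c$, so $f$ is affine, $Df(\bar c)$ is constant, and the integral term in your decomposition vanishes identically.
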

\begin{proof}
We omit the details of this standard proof. Denote $\kappa \bydef Z_0 + Z_1 <1$. The idea is show that $T(c)\bydef c - A f(c)$ satisfies $T(B_r(\bar c)) \subset B_r(\bar c)$ and then that $\| T(c_1)-T(c_2) \|_{\cX^{\bm{m}}_{\jp{C}}} \le \kappa \| c_1-c_2 \|_{\cX^{\bm{m}}_{\jp{C}}}$ for all $c_1,c_2 \in B_r(\bar c)$. From the Banach fixed point theorem, there exists a unique $\tilde c \in B_r(\bar c)$ such that $T(\tilde c) = \tilde c$. The condition \eqref{eq:radii_polynomial} implies that $\| {\rm Id} - A A^\dagger \|_{B(\cX^{\bm{m}}_{\jp{C}})} < 1$, and by construction of the operators $A$ and $A^\dagger$, it can be shown that $A$ is an injective operator. By injectivity of $A$, we conclude that there exists a unique $\tilde c \in B_r(\bar c)$ such that $f(\tilde c) = 0$.
\end{proof}

\jp{
Summing up the above, for a given Fourier projection dimension $\bm{m}=(m_1, \hdots, m_d)$, we apply Theorem~\ref{thm:radii_polynomial} to validate the solutions of $m_1 \cdots  m_d$ problems of the form $f=0$, as given in \eqref{eq:f_{ell,k}}, which is equivalent to the IVP of linear system \eqref{eq:ode_system}.
This approach yields a sequence of solutions $c^{(\bj)}:[0,\tau] \to \R^{m_1\cdots m_d}$ ($\bj\in\Fm$) with the following Chebyshev series representation:
\[
	c_{\bk}^{(\bj)}(t) =  c^{(\bj)}_{0,\bk} + 2 \sum_{\ell \ge 1}  c^{(\bj)}_{\ell,\bk} T_\ell(t),\quad \bk\in\Fm.
\]
Thus, we obtain the explicit form of the fundamental solution matrix $\Phi(t)$ that satisfies \eqref{eq:Phi_system}. Using this representation and $\Psi(s)$, which is the solution to the adjoint problem \eqref{eq:Psi_system}, we can derive the $\jp{W_{m,q}^{\cS_{J}}}$ bound defined in \eqref{eq:W_mq_bound} via rigorous evaluation of Chebyshev polynomials with interval arithmetic.
}

The rest of this section is dedicated to the explicit construction of the bounds $Y_0$, $Z_0$ and $Z_1$ of Theorem~\ref{thm:radii_polynomial}. \\

\noindent{\bf The bound \boldmath$Y_0$\unboldmath.} 
The first bound will be given by
\begin{align*}
\| A f(\bar c) \|_{\cX^{\bm{m}}_{\jp{C}}} = \| A^{(n,\bm{m}) } f^{(n,\bm{m}) }(\bar c) \|_{\cX^{\bm{m}}_{\jp{C}}} + \sum_{\ell \geq n} \sum_{\bk \in F_{\bm{m}}} \frac{1}{2 \ell} \left| f_{\ell,\bk}(\bar{c}) \right| \omega_{\ell,\bk}.
\end{align*}
Using that the term $N_\bk$ is a finite polynomial and that the numerical solution $\bar c = (\bar c_{\ell,\bk})_{\ell=0,\dots,n-1 \atop \bk \in \Fm}$, there exists a $J \in \mathbb{N}$ such that $f_{\ell,\bk}(\bar c)=0$ for all $\ell \ge J$. In other words, the term $f(\bar c)=(f_{\bfj}(\bar c))_{\bfj \in \cI}$ has only finitely many nonzero terms. Hence, the computation of $Y_0$ satisfying $\|Af(\bar c)\|_{\cX^{\bm{m}}_{\jp{C}}} \le Y_0$ is finite and can be rigorously computed with interval arithmetic. \\

\noindent{\bf The bound \boldmath$Z_0$\unboldmath.} The computation of the bound $Z_0$ satisfying \eqref{eq:Z0} requires defining the operator
\[
B \bydef  {\rm Id} - A A^\dagger, 
\]
which action is given by
\[
(Bc)_{\ell,\bk} = 
\begin{cases}
\left( ({\rm Id} - A^{(n,m)} Df^{(n,m)}(\bar c) )c^{(n,m)} \right)_{\ell,\bk}, & 0 \le \ell < n, \bk \in \Fm ,
\\
0, & \ell \ge n, \bk \in \Fm.
\end{cases}
\]
Using interval arithmetic, compute $Z_0$ such that
\[
\| B \|_{B(\cX^{\bm{m}}_{\jp{C}})} = \sup_{\bfj \in \cI} \frac{1}{\omega_{\bfj}} \sum_{\bfi \in \cI} | B_{\bfi,\bfj} | \omega_{\bfi}
= \max_{\ell_2=0,\dots,n-1 \atop \bk_2 \in \Fm} \frac{1}{\omega_{\ell_2,\bk_2}} \sum_{\ell_1=0,\dots,n-1 \atop \bk_1 \in \Fm} | B_{(\ell_1,\bk_1),(\ell_2,\bk_2)} | \omega_{\ell_1,\bk_1} \le Z_0.
\]

\noindent{\bf The bound \boldmath$Z_1$\unboldmath.} For any $c \in B_1(0)$, let
\[
z \bydef [Df(\bar c)-A^\dagger]c 
\]
which is given component-wise by 
\[
z_{\ell,\bk}
= z_{\ell,\bk}(\ba,c) \bydef 
\begin{cases}
\displaystyle
2 \sum_{j \ge n} (-1)^j c_{j,k}, & \ell=0, \bk \in \Fm
\\
\displaystyle
\jp{\im^q} \left( \cT  (\bk \bm{L})^q N_k(c^{(\infty,\bm{m})} \right)_\ell , & 0<\ell<n , \bk \in \Fm
\\
\displaystyle
\lambda_k (\cT c_{k})_\ell + \jp{\im^q} \left( \cT  (\bk \bm{L})^q N_k(c^{(\bm{m})} \right)_\ell, & \ell \ge n , \bk \in \Fm.
\end{cases}
\]
To simplify the notation of the bound $Z_1$, lets us first define component-wise uniform bounds $\hat z_{\ell,\bk}$  for $ 0 \leq \ell < n$ and $\bk \in \Fm$ such that $|z_{\ell,\bk}(\ba,c)| \le \hat z_{\ell,\bk}$ for all $c \in \cX^{\bm{m}}_{\jp{C}}$ with $\|c\|_{\cX^{\bm{m}}_{\jp{C}}} \le 1$. The first case $\ell = 0$ can be bounded by
\begin{equation} \label{eq:hatz_{0,k}(c)}
|z_{0,\bk}(c)| = \left| 2 \sum_{j \ge n} (-1)^j c_{j,\bk} \right| \le 2 \sum_{j \ge n} |c_{j,\bk}| \frac{\omega_{j, \bk}}{\omega_{j ,\bk}} \le
\frac{2}{\omega_{n,\bk}} \sum_{j \ge n} |c_{j,\bk}| \omega_{j,\bk} \le \frac{2}{\omega_{n,\bk}} \|c\|_{\cX^{\bm{m}}_{\jp{C}}} \le  \hat z_{0,k} \bydef \frac{2}{\omega_{n,\bk}}.
\end{equation} 
For the case $\ell = 1,\dots, n-1$, the challenging part to bound is the non-linear term $N_\bk(c^{c^{(\infty,\bm{m} )}})$. First we need to recall that the term $N_k$ can be express as a finite polynomial in $\bar{a}$ such that $N_\bk(c) = \sum_{i = 1}^p \beta_{i} (\bar{a}^i* c)$
for some $p \in \mathbb{N}$ and $\beta_i \in \mathbb{R}$ with $i = 1, \dots, p$.
Before bounding this non-linear term, we first look at how to bound the discrete convolution of $\ba^p$ with $c^{(\infty, \bm{m})}$ for some $p >0$. This convolution can be written as 
\[
(\ba^p*c^{(\infty,\bm{m})})_{\ell,\bk} = \sum_{{{\ell_1+\ell_2 = \ell \atop \bk_1 + \bk_2 = \bk} \atop |\ell_1| \leq p(n-1),   |\ell_2| \ge n} \atop \bk_1 \in \Fm, \bk_2 \in \Fm} \ba^p_{\ell_1,\bk_1} c_{\ell_2,\bk_2} = \sum_{ n \leq i \leq \ell + p(n-1) \atop \bm{j} \in \Fm } \ba^p_{\ell-i,\bk - \bm{j}} c_{i,\bm{j}},.
\]
Using this notation, we see that
\begin{align*}
\left| (\ba*c^{(\infty,m)})_{\ell,\bk} \right| &\leq \sum_{ n \leq i \leq \ell + p(n-1) \atop \bm{j} \in \Fm } \left| \ba_{\ell-i,\bk - \bm{j}}  \right|  \left| c_{i,\bm{j}}  \right| = \sum_{ n \leq i \leq \ell + p(n-1) \atop \bm{j} \in \Fm } \left| \ba_{\ell-i,\bk - \bm{j}}  \right|  \left| c_{i,\bm{j}}  \right| \frac{\omega_{\ell,\bk}}{\omega_{\ell,\bk}} \\&\leq \Psi^{(\ell,\bk)}(\ba)\|c\|_{\cX^{\bm{m}}_{\jp{C}}} \leq \Psi^{(\ell,\bk)}(\ba)
\end{align*}
where
\begin{align*}
\Psi^{(\ell,\bk)}(\ba) \bydef \sup_{|i| \in \mathbb{N} \atop \bm{j} \in \Fm } \frac{\left| \ba_{\ell-i,\bk - \bm{j}}  \right|}{\omega_{i,\bm{j}}} = 
\max_{ \bm{j} \in \Fm \atop n \leq i \leq \ell +p(n -1)} \left\{ \frac{\left| \ba_{\ell-i,\bk - \bm{j}}  \right|}{\omega_{i,\bm{j}}}  \right\},
\end{align*}
which can easily be computed using interval arithmetic. For the cases $1\le \ell<n$ and $\bk \in \Fm$, this leads to the bound
% \left( \cT (\bk \bm{L})^q N_k(c^{(\infty,\bm{m})} \right)_\ell
\begin{align*} \label{eq:hatz_{ell,k}(c)}
|z_{\ell,\bk}| &= \left| \left( \cT (\bk \bm{L})^q N_\bk(c^{(\infty,\bm{m})} )  \right)_\ell\right| = \left| \left( \cT (\bk \bm{L})^q 
\sum_{i = 1}^p \beta_i (\ba^i*c^{(\infty,\bm{m})} )_\bk  \right)_\ell \right| 
\\& \le  \left( |\cT|  |(\bk \bm{L})^q |
\sum_{i = 1}^p |\beta_i| \Psi^{(\cdot,\bk)} (\ba^i) \right)_\ell \bydef \hat z_{\ell,\bk} 
\end{align*}
for all $c \in B_1(0)$, where $|\cT|$ denotes the operator with component-wise absolute values. Given $c \in B_1(0)$, we use Lemma~\ref{lem:banach_algebra} to conclude that the bound $Z_1$ is given by

\begin{align*}
\| A [Df(\bar c)-A^\dagger]c\|_{\cX^{\bm{m}}_{\jp{C}}}  
&= \| Az \|_{\cX^{\bm{m}}_{\jp{C}}} 
\\
& = \sum_{\bfj \in \cI} |(Az)_{\bfj}| \omega_{\bfj} \\
& = \sum_{\ell = 0,\dots,n-1 \atop \bk \in \Fm} \left( |A^{(n,m)}| \hat z^{(n,m)}\right)_{\ell,\bk} \omega_{\ell,\bk} 
\\&\quad + \sum_{\ell \ge n \atop \bk \in \Fm} \frac{1}{2 \ell} \left| \lambda_k (\cT c_{k})_\ell +\jp{\im^q} \left( \cT (\bk \bm{L})^q N_k(c^{(\bm{m})} \right)_\ell \right|\omega_{\ell,\bk} 
\\
& \leq \sum_{\ell = 0,\dots,n-1 \atop \bk \in \Fm} \left( |A^{(n,m)}| \hat z^{(n,m)}\right)_{\ell,\bk} \omega_{\ell,\bk}  + \frac{|\lambda_m|}{2n} \sum_{\ell \ge n \atop \bk \in \Fm} |  -c_{\ell-1,k} + c_{\ell+1,k}|\omega_{\ell,\bk}  
\\
& \quad +\frac{1}{2n}\sum_{\ell \ge n \atop \bk \in \Fm} \ \left| \left( \cT (\bk \bm{L})^q \sum_{i = 1}^p \beta_i (\ba^i*c^{(\bm{m})})_{\bk}\right)_\ell \right|\omega_{\ell,\bk} 
\\
&
\leq \sum_{\ell = 0,\dots,n-1 \atop \bk \in \Fm} \left( |A^{(n,m)}| \hat z^{(n,m)}\right)_{\ell,\bk} \omega_{\ell,\bk} \\
& \quad + \frac{1}{2n} \left( \nu_{\jp{C}} + \frac{1}{\nu_{\jp{C}}} \right) \left( |\lambda_m| \|c\|_{\cX^{\bm{m}}_{\jp{C}}}
+ \max_{\bk \in \Fm } \left\{ (\bk \bm{L})^q \right\} \sum_{i = 1}^{p} | \beta_i |  \| \ba^i*c\|_{\cX^{\bm{m}}_{\jp{C}}} \right) 
\\
& \leq \sum_{\ell = 0,\dots,n-1 \atop \bk \in \Fm} \left( |A^{(n,m)}| \hat z^{(n,m)}\right)_{\ell,\bk} \omega_{\ell,\bk} \\
& \quad + \frac{1}{2n} \left( \nu_{\jp{C}} + \frac{1}{\nu}_{\jp{C}} \right) \left( |\lambda_m| 
+ \max_{\bk \in \Fm} \left\{ (\bk \bm{L})^q \right\} \sum_{i = 1}^{p} 2^{d+1}| \beta_i |  \| \ba^i\|_{\cX^{\bm{m}}_{\jp{C}}} \right) 
\end{align*} 
Hence, by construction
\begin{equation} \label{eq:Z1_explicit}
Z_1 \bydef \sum_{ 0 \leq \ell \leq n-1 \atop \bk \in \Fm} \left( |A^{(n,m)}| \hat z^{(n,m)}\right)_{\ell,\bk} \omega_{\ell,\bk} 
+ \frac{\nu_{\jp{C}} + \nu_{\jp{C}}^{-1}}{2n}  \left( |\lambda_m| 
+ \max_{\bk \in \Fm } \left\{ (\bk \bm{L})^q \right\} \sum_{i = 1}^{p} 2^{d+1}| \beta_i |  \| \ba^i\|_{\cX^{\bm{m}}_{\jp{C}}} \right)
\end{equation}
satisfies \eqref{eq:Z1}.
\subsubsection{Generation of the evolution operator and \boldmath $W^{(\infty)}_{{q}}(t,s)$\boldmath\ bounds}\label{sec:evolution_op}
In the followings, unless otherwise noted the index $\bbm$ is the one set in Theorem \ref{thm:solutionmap} and $\mu_\ast$ is that in \eqref{eq:ass_m}.
Let $\ell^1_{\infty}\bydef \left({\rm Id}-\varPi^{(\bbm)}\right) \ell^1_{\omega} = \left\{\left(a_{\bk}\right)_{\bk\ge 0}\in\ell^1_{\omega}:a_{\bk}=0~(\bk\in\Fm)\right\}$ endowed with the norm $\|a\|_{\ell^1_{\infty}}\bydef \sum_{\bk\not\in\Fm}|a_{\bk}|\omega_{\bk}$.
Consider a linear operator $\cL_\infty$ whose action is restricted on $\ell^1_{\infty}$, that is
\[
(\cL_\infty a)_{\bk} = \begin{cases}
	0, & \bk\in\Fm\\
	\mu_{\bk}a_{\bk}, & \bk\not\in\Fm
\end{cases}\quad \mbox{for}~ a \in D(\cL_\infty),
\]
where the domain of the operator $\cL_\infty$ is $D(\cL_\infty) = \left\{a\in\ell^1_{\infty}: \cL_\infty a\in\ell^1_{\infty}\right\}$.
More explicitly, $\cL_\infty = \left({\rm Id}-\varPi^{(\bbm)}\right) \cL $ holds. 
The operator $\cL_\infty$ generates the semigroup on $\ell^1_{\infty}$, which is denoted by $\left\{e^{\cL_\infty t}\right\}_{t\ge 0}$. Furthermore, the action of such a semigroup $\left\{e^{\cL_\infty t}\right\}_{t\ge 0}$ can be naturally extended to $\ell^1_{\omega}$ by
\begin{equation} \label{eq:semigroup_on_ell1}
	\left(e^{\cL_\infty t}\phi\right)_{\bk}=\begin{cases}
		0,&\bk\in\Fm\\
		e^{\mu_{\bk}t} \phi_{\bk}, &\bk\not\in\Fm
	\end{cases}\quad \mbox{for}~\phi\in\ell^1_{\omega}.
\end{equation}

To get a norm bound of the evolution operator $\bU^{(\infty)}(t,s)$,  let us prepare two lemmas.
\begin{lem} \label{lem:semigroup_es_tail}
	Let $\left\{e^{\cL_\infty t}\right\}_{t\ge 0}$ be the semigroup on $\ell^1_{\omega}$ defined in \eqref{eq:semigroup_on_ell1}.
	For $\gamma\in (0,1)$, $\xi\in(0,1]$, \jp{and $\mu_* < 0 $ defined by \eqref{eq:ass_m},} the following estimates:
	\begin{align}\label{eq:semigroup_estimate_0}
		\left\|e^{\cL_\infty t} \psi^{(\infty)}\right\|_{\omega}\le e^{-|\mu_\ast |t}\|\psi^{(\infty)}\|_{\omega}
	\end{align}
	and
	\begin{align}\label{eq:semigroup_estimate_q}
		\left\|e^{\cL_\infty t}\cQ \psi^{(\infty)}\right\|_{\omega}\le C_\infty t^{-\gamma} e^{-(1-\xi)|\mu_\ast |t}\|\psi^{(\infty)}\|_{\omega}
	\end{align}
	hold for $\psi\in \jp{\ell_{\omega}^1}$ and $t\in J$, where
	\begin{align}\label{eq:C_inf}
		C_\infty = \left(\frac{\gamma}{e\xi}\right)^\gamma\sup_{\bk\notin\Fm}\frac{(\bk \bL)^q}{|\mu_{\bk}|^\gamma}.
	\end{align}
\end{lem}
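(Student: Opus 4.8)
The plan is to establish the two estimates separately, working from the explicit diagonal form of the semigroup in \eqref{eq:semigroup_on_ell1}. Estimate \eqref{eq:semigroup_estimate_0} is immediate: since $\bbm$ is chosen so that $\mu_{\bk}<0$ for $\bk\notin\Fm$ and $\mu_\ast=\max_{\bk\notin\Fm}\mu_{\bk}<0$, we have $|e^{\mu_{\bk}t}|=e^{\mu_{\bk}t}\le e^{\mu_\ast t}=e^{-|\mu_\ast|t}$ for every $\bk\notin\Fm$ and every $t\ge0$. Then
\[
\left\|e^{\cL_\infty t}\psi^{(\infty)}\right\|_{\omega}=\sum_{\bk\notin\Fm}e^{\mu_{\bk}t}|\psi_{\bk}|\omega_{\bk}\le e^{-|\mu_\ast|t}\sum_{\bk\notin\Fm}|\psi_{\bk}|\omega_{\bk}=e^{-|\mu_\ast|t}\|\psi^{(\infty)}\|_{\omega},
\]
which is \eqref{eq:semigroup_estimate_0}.

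For \eqref{eq:semigroup_estimate_q}, the point is to absorb the algebraic growth $(\bk\bL)^q$ coming from $\cQ$ into a $t^{-\gamma}$ singularity while keeping exponential decay. Componentwise, the $\bk$-th entry of $e^{\cL_\infty t}\cQ\psi^{(\infty)}$ is $\im^q(\bk\bL)^q e^{\mu_{\bk}t}\psi_{\bk}$, so after taking the $\ell^1_\omega$ norm it suffices to bound, uniformly in $\bk\notin\Fm$, the scalar quantity $(\bk\bL)^q e^{-|\mu_{\bk}|t}$. First I would split the exponential as $e^{-|\mu_{\bk}|t}=e^{-\xi|\mu_{\bk}|t}\,e^{-(1-\xi)|\mu_{\bk}|t}$ and then use $|\mu_{\bk}|\ge|\mu_\ast|$ (valid since $\mu_\ast$ is the \emph{largest}, hence closest to zero, among the negative $\mu_{\bk}$) to get $e^{-(1-\xi)|\mu_{\bk}|t}\le e^{-(1-\xi)|\mu_\ast|t}$. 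For the remaining factor $(\bk\bL)^q e^{-\xi|\mu_{\bk}|t}$, I would factor out $|\mu_{\bk}|^\gamma$, writing it as $\dfrac{(\bk\bL)^q}{|\mu_{\bk}|^\gamma}\cdot\bigl(|\mu_{\bk}|^{\gamma}e^{-\xi|\mu_{\bk}|t}\bigr)$, and apply the elementary scalar inequality $x^\gamma e^{-\xi x t}\le \bigl(\tfrac{\gamma}{e\xi t}\bigr)^\gamma$ for $x>0$, $t>0$ (obtained by maximizing $x\mapsto x^\gamma e^{-\xi xt}$ at $x=\gamma/(\xi t)$). This yields
\[
(\bk\bL)^q e^{-|\mu_{\bk}|t}\le \left(\frac{\gamma}{e\xi}\right)^\gamma\frac{(\bk\bL)^q}{|\mu_{\bk}|^\gamma}\,t^{-\gamma}e^{-(1-\xi)|\mu_\ast|t},
\]
and taking the supremum over $\bk\notin\Fm$ produces exactly the constant $C_\infty$ in \eqref{eq:C_inf}. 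Summing against $|\psi_{\bk}|\omega_{\bk}$ then gives \eqref{eq:semigroup_estimate_q}.

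The only genuine subtlety — and the step I would be most careful about — is the finiteness of the supremum $\sup_{\bk\notin\Fm}(\bk\bL)^q/|\mu_{\bk}|^\gamma$ defining $C_\infty$. Since $\mu_{\bk}=\lambda_0-\lambda_1(\bk\bL)^2+\lambda_2(\bk\bL)^4$, under the parabolicity assumption the leading behavior is $|\mu_{\bk}|\sim|\lambda_2|(\bk\bL)^4$ (or $\sim|\lambda_1|(\bk\bL)^2$ if $\lambda_2=0$) as $(\bk\bL)\to\infty$, so with $q\in\{0,2\}$ the ratio $(\bk\bL)^q/|\mu_{\bk}|^\gamma$ decays to $0$ at infinity and, being a continuous function of finitely many surviving small modes near $\Fm$ where $\mu_{\bk}\ne0$, it attains a finite maximum. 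I would note this explicitly (invoking the standing parabolicity hypothesis on $\lambda_0,\lambda_1,\lambda_2$ and the choice of $\bbm$) so that $C_\infty<\infty$, after which the rest is the routine scalar optimization sketched above.
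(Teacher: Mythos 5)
Your proof is correct and follows essentially the same route as the paper's: split $e^{-|\mu_{\bk}|t}$ via $\xi$, use $|\mu_{\bk}|\ge|\mu_\ast|$ on the $(1-\xi)$-part, pull out $|\mu_{\bk}|^\gamma$ and invoke the scalar bound $\sup_{x>0}x^\gamma e^{-\xi x t}\le(\gamma/(e\xi))^\gamma t^{-\gamma}$ to get $C_\infty$. The only cosmetic difference is that the paper obtains \eqref{eq:semigroup_estimate_0} by specializing $\gamma=\xi=0$, $C_\infty=1$ in the argument for \eqref{eq:semigroup_estimate_q}, whereas you prove it directly; your added remark on the finiteness of $C_\infty$ matches the paper's subsequent remark.
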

\begin{rem}
	The parameter $\gamma\in(0,1)$ is taken to make the constant $C_\infty$ in \eqref{eq:C_inf} bounded. 
	For example, let us set $\mu_{\bk}=(\bk \bL)^2(-\varepsilon^2(\bk \bL)^2+1)-\sigma$ and $q=2$, which is the case of our example in Section \ref{sec:OK}.
	In this case, $\mu_{\bk}$ is a fourth-order term with respect to $\bk$, whereas $(\bk \bL)^q$ is a second-order term.
	Consequently, it is necessary for $\gamma$ to be asymptotically greater than or equal to $0.5$ to ensure the boundedness of $C_\infty$.
\end{rem}
\begin{proof}
Let $\psi\in \jp{\ell_{\omega}^1}$. It follows for $\bk\notin\Fm$ from \eqref{eq:ass_m} that
\begin{align}
e^{\mu_{\bk}t}(\bk \bL)^q|\psi_{\bk}|\alpha_{\bk}\nu_{\jp{F}}^\bk&= |\mu_{\bk}|^\gamma e^{-\xi|\mu_{\bk}|t}e^{-(1-\xi)|\mu_{\bk}|t}\frac{(\bk \bL)^q}{|\mu_{\bk}|^\gamma}|\psi_{\bk}|\alpha_{\bk}\nu_{\jp{F}}^\bk\\
&\le \sup_{|\mu_{\bk}|}\left(|\mu_{\bk}|^\gamma e^{-\xi|\mu_{\bk}|t}\right)\sup_{|\mu_{\bk}|}\left(e^{-(1-\xi)|\mu_{\bk}|t}\right)\frac{(\bk \bL)^q}{|\mu_{\bk}|^\gamma}|\psi_{\bk}|\alpha_{\bk}\nu_{\jp{F}}^\bk\\
&\le \left(\frac{\gamma}{e\xi}\right)^\gamma t^{-\gamma} \cdot e^{-(1-\xi)|\mu_\ast |t}\frac{(\bk \bL)^q}{|\mu_{\bk}|^\gamma}|\psi_{\bk}|\alpha_{\bk}\nu_{\jp{F}}^\bk\\
&\le C_\infty t^{-\gamma} e^{-(1-\xi)|\mu_\ast |t}|\psi_{\bk}|\alpha_{\bk}\nu_{\jp{F}}^\bk,
\end{align}
where we used the inequality
\begin{align}
	\sup_{x>0} \left(x^\gamma e^{-\xi xt}\right) \le \left(\frac{\gamma}{e\xi}\right)^\gamma t^{-\gamma},\quad\mbox{for~}\gamma,~t>0.
\end{align}
Therefore, it yields from the definition \eqref{eq:semigroup_on_ell1} that
\begin{align}
\left\|e^{\cL_\infty t}\cQ\psi^{(\infty)}\right\|_{\omega}&=\sum_{\bk\not\in\Fm}e^{\mu_{\bk}t}(\bk \bL)^q|\psi_{\bk}|\alpha_{\bk}\nu_{\jp{F}}^\bk\\
&\le C_\infty t^{-\gamma} e^{-(1-\xi)|\mu_\ast |t}\sum_{\bk\not\in\Fm}|\psi_{\bk}|\alpha_{\bk}\nu_{\jp{F}}^\bk = C_\infty t^{-\gamma} e^{-(1-\xi)|\mu_\ast |t}\|\psi^{(\infty)}\|_{\omega}.
\end{align}
This directly gives the semigroup estimate \eqref{eq:semigroup_estimate_q}.
In the case of $q=0$, one can directly get the estimate \eqref{eq:semigroup_estimate_0} by setting $\gamma=0$, $\xi=0$, and $C_\infty=1$.
\end{proof}
%
%\begin{lem}\label{lem:L_hat_q}
%Let 
%%
%\begin{equation} \label{eq:hat_Lq}
%\hat L_q \bydef \max_{j=1,\dots,d} \{ L_j^q\}.
%\end{equation}
%%
%Then, for any $\phi\in\ell_{\omega}^1$, the operator $\cQ$ defined in \eqref{eq:def_Q} satisfies
%%
%\begin{equation} \label{eq:Q_estimate}
%	\|\cQ\phi\|_{\omega}\le \hat L_q \|\phi\|_{\omega}.
%\end{equation}
%%
%\end{lem}

%\begin{proof}
%	From the definitions \eqref{eq:ell-one-space} and \eqref{eq:weight_omega}, we have
%	\[
%		\|\cQ\phi\|_{\omega} = \sum_{\bk\ge 0} (\bk \bL)^q|\phi_{\bk}|\alpha_{\bk}\nu^\bk= \sum_{\bk\ge 0} \frac{(\bk \bL)^q}{\langle\bk\rangle^q}|\phi_{\bk}|\alpha_{\bk}\nu^\bk\langle\bk\rangle^q\le \max_{j=1,\dots,d} \{ L_j^q\}  \|\phi\|_{\omega}. \qedhere
%	\]
%\end{proof}
%
From Lemma \ref{lem:semigroup_es_tail}, we show the existence of the solution of \eqref{eq:cinf} with any initial sequence $c^{(\infty)}(s)=\cQ\psi^{(\infty)}$ in the following theorem.
Consequently, the existence of the evolution operator $\bU^{(\infty)}(t,s)$ is obtained. To state the result, we recall $\ba(t)$ defined in \eqref{eq:a_bar} and recall \eqref{eq:N_assumption} that, for each $t\in J$, there exists a non-decreasing function $g:(0,\infty)\to(0,\infty)$ such that $\left\|D\cN(\ba(t))\phi\right\|_{\omega}\le g(\| \ba(t) \|_{\omega})\|\phi\|_{\omega}$ for all $\phi\in\ell_{\omega}^1$. 

\begin{thm}\label{thm:ev_op}
	Let $\left\{e^{\cL_\infty t}\right\}_{t\ge 0}$ be the semigroup on $\ell^1_{\omega}$ defined in \eqref{eq:semigroup_on_ell1} and assume that Lemma \ref{lem:semigroup_es_tail} holds for $\gamma\in (0,1)$ and $\xi\in(0,1]$.
	There exists a unique solution of the integral equation:
	\begin{align}\label{eq:cinf_integral}
	c^{(\infty)}(t)=e^{\cL_\infty (t-s)}{\cQ}\psi^{(\infty)} + \int_s^t e^{\cL_\infty (t-r)}({\rm Id} -\varPi^{(\bbm)})\cQ D\cN(\ba(r)) c^{(\infty)}(r) d r.
	\end{align}
	Hence, such a solution $c^{(\infty)}$ solves the infinite-dimensional system of differential equations \eqref{eq:cinf}.
	It yields that the evolution operator $\bU^{(\infty)}(t,s)$ exists and the following estimate holds
	\[
	\left\|\bU^{(\infty)}(t,s){\cQ}\psi^{(\infty)}\right\|_{\omega}\le W_{{q}}^{(\infty)}(t,s)\left\|\psi ^{(\infty)}\right\|_{\omega},\quad\forall\psi\in \jp{\ell_{\omega}^1},
	\]
	where $W_{{q}}^{(\infty)}(t,s)$ is defined by
	{\begin{align}\label{eq:W^inf_q}
			W_q^{(\infty)}(t,s)\bydef 
			\begin{cases}
				e^{(-|\mu_\ast | + g(\|\ba\|))(t-s)}, & q=0\\
				C_\infty (t-s)^{-\gamma} e^{-(1-\xi)|\mu_\ast |(t-s) + C_\infty (t-s)^{1-\gamma} g(\|\ba\|) \mathrm{B}(1-\gamma,1-\gamma)}, & q>0
			\end{cases}.
	\end{align}
	Here, $\mathrm{B}(x,y)$ denotes the Beta function.}
\end{thm}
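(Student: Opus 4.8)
The plan is to obtain $c^{(\infty)}$ as the unique fixed point of the integral operator appearing in \eqref{eq:cinf_integral} on a suitably time‑weighted Banach space, then to promote this mild solution to a genuine solution of \eqref{eq:cinf} (which simultaneously defines and controls the evolution operator $\bU^{(\infty)}(t,s)$), and finally to derive the explicit bound $W_q^{(\infty)}$ from a singular Gronwall inequality. Throughout, $\mu_\ast<0$ is as in \eqref{eq:ass_m}, $\gamma\in(0,1)$, $\xi\in(0,1]$ are as in Lemma~\ref{lem:semigroup_es_tail} (with the conventions $\gamma=0$, $C_\infty=1$ for $q=0$), and $g(\|\ba\|)$ is used with $\|\ba\|=\sup_{t\in J}\|\ba(t)\|_\omega$, so $g(\|\ba(r)\|_\omega)\le g(\|\ba\|)$ by monotonicity of $g$ from \eqref{eq:N_assumption}.

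\emph{Step 1 (mild solution).} Fix $s\in[0,\tau)$ and work in the Banach space
\[
Z_s\bydef\Big\{c\in C\big((s,\tau];\ell^1_{\infty}\big):\ \|c\|_{Z_s}\bydef\sup_{t\in(s,\tau]}(t-s)^{\gamma}e^{(1-\xi)|\mu_\ast|(t-s)}\|c(t)\|_\omega<\infty\Big\}
\]
(for $q=0$ one simply takes $Z_s=C([s,\tau];\ell^1_{\infty})$). Let $\Gamma c$ denote the right-hand side of \eqref{eq:cinf_integral}. Estimating the first term of $\Gamma c$ with \eqref{eq:semigroup_estimate_q}, and the integral term by inserting $\|D\cN(\ba(r))c(r)\|_\omega\le g(\|\ba\|)\|c(r)\|_\omega$ and then applying \eqref{eq:semigroup_estimate_q} once more, the key computation
\[
\int_s^t(t-r)^{-\gamma}(r-s)^{-\gamma}\,dr=(t-s)^{1-2\gamma}\,\mathrm{B}(1-\gamma,1-\gamma)
\]
shows that $\Gamma$ maps $Z_s$ into itself and that, restricted to a subinterval $(s,s+\delta]$, it is Lipschitz with constant $C_\infty\,g(\|\ba\|)\,\mathrm{B}(1-\gamma,1-\gamma)\,\delta^{1-\gamma}$. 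Choosing $\delta$ small enough that this is $<1$ (a choice depending only on $C_\infty$, $g(\|\ba\|)$, $\gamma$, not on $s$ or $\psi^{(\infty)}$), Banach's fixed point theorem gives a unique solution on $(s,s+\delta]$; since the value $c^{(\infty)}(s+\delta)\in\ell^1_{\omega}$ is now a regular initial datum and $\delta$ is uniform, finitely many such steps cover $(s,\tau]$ and yield a unique $c^{(\infty)}$. (Equivalently, one can run a single contraction on all of $(s,\tau]$ using the rescaled weight $(t-s)^{\gamma}e^{-\Lambda(t-s)}$ with $\Lambda$ large.)

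\emph{Step 2 (regularity and the evolution operator).} The operator $\cL_\infty$ is diagonal with $\mu_\bk<0$ for $\bk\notin\Fm$, hence generates an analytic semigroup on $\ell^1_{\infty}$, and \eqref{eq:semigroup_estimate_q} exhibits $\cQ$ as an operator of fractional order $\gamma$ relative to $\cL_\infty$. Since $\ba$ and the mild solution $c^{(\infty)}$ are continuous (indeed locally Hölder) on $(s,\tau]$, the map $r\mapsto(\mathrm{Id}-\varPi^{(\bbm)})\cQ D\cN(\ba(r))c^{(\infty)}(r)$ is locally Hölder continuous there, so the standard parabolic regularity argument (cf.\ \cite{pazy1983semigroups}) upgrades $c^{(\infty)}$ to a classical solution: $c^{(\infty)}\in C\big((s,\tau];D(\cL_\infty)\big)\cap C^1\big((s,\tau);\ell^1_{\infty}\big)$ and \eqref{eq:cinf} holds for $t>s$. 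Defining $\bU^{(\infty)}(t,s)\cQ\psi^{(\infty)}\bydef c^{(\infty)}(t)$ then produces the evolution operator. For $q=0$ the semigroup $e^{\cL_\infty t}$ is a contraction semigroup, no smoothing is required, and the same conclusion follows using estimate \eqref{eq:semigroup_estimate_0}.

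\emph{Step 3 (the bound, and the main obstacle).} Set $v(t)\bydef e^{(1-\xi)|\mu_\ast|(t-s)}\|c^{(\infty)}(t)\|_\omega$. Taking $\|\cdot\|_\omega$ in \eqref{eq:cinf_integral}, applying \eqref{eq:semigroup_estimate_q} and \eqref{eq:N_assumption}, and using $e^{(1-\xi)|\mu_\ast|(t-s)}e^{-(1-\xi)|\mu_\ast|(t-r)}=e^{(1-\xi)|\mu_\ast|(r-s)}$ yields the singular Volterra inequality
\[
v(t)\le C_\infty\,\|\psi^{(\infty)}\|_\omega\,(t-s)^{-\gamma}+C_\infty\,g(\|\ba\|)\int_s^t(t-r)^{-\gamma}v(r)\,dr .
\]
A generalized (singular) Gronwall inequality — iterating the kernel $(t-r)^{-\gamma}$ and using the Beta-function convolution identity at each step — then bounds $v(t)\le C_\infty\|\psi^{(\infty)}\|_\omega(t-s)^{-\gamma}e^{C_\infty g(\|\ba\|)\mathrm{B}(1-\gamma,1-\gamma)(t-s)^{1-\gamma}}$, and substituting back the definition of $v$ gives exactly $\|c^{(\infty)}(t)\|_\omega\le W_q^{(\infty)}(t,s)\|\psi^{(\infty)}\|_\omega$ with $W_q^{(\infty)}$ as in \eqref{eq:W^inf_q}; when $q=0$ the kernel is non-singular, the inequality reduces to $v(t)\le C_\infty\|\psi^{(\infty)}\|_\omega+g(\|\ba\|)\int_s^tv(r)\,dr$, and the classical Gronwall lemma gives the first line of \eqref{eq:W^inf_q}. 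I expect the main difficulty to be Step~3 together with the unbounded-perturbation bookkeeping it relies on: the term $\cQ D\cN(\ba)$ costs a full $q$ orders of derivatives, and the scheme closes only because the analytic-semigroup smoothing \eqref{eq:semigroup_estimate_q} trades those derivatives for the integrable singularity $(t-r)^{-\gamma}$, whose self-convolution produces the Beta-function constant; extracting the sharp exponential form of $W_q^{(\infty)}$ — rather than a cruder bound of type $\big(1-c(t-s)^{1-\gamma}\big)^{-1}$ — from the resulting singular Volterra inequality is the delicate point.
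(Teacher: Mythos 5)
Your Steps 1 and 2 are essentially the paper's route: the paper also obtains $c^{(\infty)}$ as the unique fixed point of the same integral operator on the same time-weighted space (it avoids your continuation/large-$\Lambda$ device by using a Bielecki-type metric built from the kernel $H(r)=(t-r)^{-\gamma}g(\|\ba(r)\|_{\omega})(r-s)^{-\gamma}$, which makes the map a contraction on all of $(s,\tau]$ in one shot), and then passes to the differential form. The genuine gap is in Step 3. From your inequality $v(t)\le C_\infty\|\psi^{(\infty)}\|_{\omega}(t-s)^{-\gamma}+C_\infty g(\|\ba\|)\int_s^t(t-r)^{-\gamma}v(r)\,dr$, the iteration you describe does \emph{not} produce $e^{C_\infty g(\|\ba\|)\mathrm{B}(1-\gamma,1-\gamma)(t-s)^{1-\gamma}}$. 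Writing $\alpha=1-\gamma$, the $n$-th Neumann term carries the coefficient $\Gamma(\alpha)^{n+1}/\Gamma((n+1)\alpha)$ (each convolution step yields $\mathrm{B}(\alpha,j\alpha)$, not $\mathrm{B}(\alpha,\alpha)$), whereas the exponential series you claim has coefficients $\mathrm{B}(\alpha,\alpha)^{n}/n!$. Already at $n=2$ with $\gamma=1/2$ one has $\Gamma(1/2)^3/\Gamma(3/2)=2\pi>\pi^2/2$, so there is no term-by-term domination; the sharp consequence of your singular Volterra inequality is a Mittag--Leffler bound, which grows like $e^{c(t-s)}$ rather than $e^{c(t-s)^{1-\gamma}}$. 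Since your Step 3 uses only that inequality, it cannot deliver \eqref{eq:W^inf_q} as stated — you correctly flagged this as the delicate point, but the resolution you propose does not work.

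The paper closes exactly this gap by a different choice of unknown: it sets $y(t)\bydef C_\infty^{-1}(t-s)^{\gamma}e^{(1-\xi)|\mu_\ast|(t-s)}\|c^{(\infty)}(t)\|_{\omega}$, i.e.\ the factor $(t-s)^{\gamma}$ is attached to the unknown rather than left with the data. The inhomogeneous term then becomes the constant $\|\psi^{(\infty)}\|_{\omega}$ and the inequality reads $y(t)\le\|\psi^{(\infty)}\|_{\omega}+G(t)\int_s^tH(r)y(r)\,dr$ with $G(t)=C_\infty(t-s)^{\gamma}$ and $H$ as above. Applying the Gronwall lemma for kernels of the factorized form $G(t)H(r)$ (the Mitrinovi\'c reference), together with the monotonicity of $G$, the exponent collapses to $G(t)\int_s^tH(\sigma)\,d\sigma\le C_\infty g(\|\ba\|)(t-s)^{1-\gamma}\mathrm{B}(1-\gamma,1-\gamma)$, which is precisely the constant appearing in \eqref{eq:W^inf_q}. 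To complete your argument you must perform this change of unknown (or an equivalent manoeuvre) \emph{before} invoking Gronwall; the unweighted singular Gronwall you invoke provably gives a strictly weaker estimate. Your treatment of the $q=0$ case is unaffected, since there the kernel is regular and the classical Gronwall lemma does give the first line of \eqref{eq:W^inf_q}.
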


\begin{proof}
	For a fixed $s> 0$, let us define a map $\cP$ acting on the $c^{(\infty)}(t)$ as
	\[
	\cP c^{(\infty)}(t) \bydef e^{\cL_\infty (t-s)}{\cQ}\psi^{(\infty)} + \int_s^t e^{\cL_\infty(t-r)}({\rm Id} -\varPi^{(\bbm)})\cQ D\cN(\ba(r)) c^{(\infty)}(r) d r
	\]
	and let a function space $\cX_{\infty}$ be defined by
	\[
	\cX_{\infty} \bydef\left\{c^{(\infty)}\in C\left(\jp{(s,\tau]},\ell_{\infty}^1\right) :
	\sup_{\jp{s<t\le\tau}}
	{C_\infty^{-1}(t-s)^{\gamma}e^{(1-\xi)|\mu_\ast |(t-s)}}\left\| c^{(\infty)}(t)\right\|_{\omega}<\infty
	\right\}.
	\]
	Consider $\beta>1$ and define a distance on $\cX_{\infty}$ as
	\begin{align}
	\mathbf{d}\left(c_1^{(\infty)},c_2^{(\infty)}\right)\bydef\sup_{\jp{s<t\le\tau}}\left(
	{(t-s)^\gamma e^{(1-\xi)|\mu_\ast |(t-s) - \beta\tau^\gamma  C_\infty \int_{s}^t H(r)dr}}
%	e^{-\mu_\ast (t-s)-\beta\langle\bbm\rangle^q   \int_{s}^tg(\|\ba(r)\|_{\omega})dr}
	\left\|
	c_1^{(\infty)}(t)-c_2^{(\infty)}(t)
	\right\|_{\omega}\right),
	\end{align}
	{where $H(r)$ is defined by
	\begin{align}
		H(r)\bydef (t-r)^{-\gamma} g(\|\ba(r)\|_{\omega})(r-s)^{-\gamma}.
	\end{align}}%
	\jp{Then, $(\cX_{\infty},\mathbf{d})$ is a complete metric space.}
	Let us denote $g(r)\equiv g(\|\ba(r)\|_{\omega})$ for the simplicity.
	We prove that the map $\cP $ becomes a contraction mapping under the distance $\mathbf{d}$ on $\cX_{\infty}$.
	For $c_1^{(\infty)},~c_2^{(\infty)}\in \cX_{\infty}$, it follows using \eqref{eq:semigroup_estimate_q} that
	{
	\begin{align*}
		&(t-s)^\gamma e^{(1-\xi)|\mu_\ast |(t-s) - \beta\tau^\gamma   C_\infty \int_{s}^t H(r)dr}
%		e^{-\mu_\ast (t-s)-\beta\langle\bbm\rangle^q    \int_{s}^tg(r)dr}
		\left\| \cP  c_1^{(\infty)}(t)-\cP  c_2^{(\infty)}(t)\right\|_{\omega}\\
		&\le (t-s)^\gamma e^{(1-\xi)|\mu_\ast |(t-s) - \beta\tau^\gamma   C_\infty \int_{s}^t H(r)dr}
%		e^{-\mu_\ast (t-s)-\beta\langle\bbm\rangle^q    \int_{s}^tg(r)dr}
		\int_{s}^t\left\|e^{\cL_\infty(t-r)}({\rm Id} -\varPi^{(\bbm)})\cQ D\cN(\ba(r)) \left(c_1^{(\infty)}(r)-c_2^{(\infty)}(r)\right)\right\|_{\omega}dr\\
		&\le (t-s)^\gamma e^{(1-\xi)|\mu_\ast |(t-s) - \beta\tau^\gamma   C_\infty \int_{s}^t H(r)dr}\\
		&\hphantom{\le}\quad\cdot
%		e^{-\mu_\ast (t-s)-\beta\langle\bbm\rangle^q    \int_{s}^tg(r)dr}
		\int_{s}^t C_\infty (t-r)^{-\gamma} e^{-(1-\xi)|\mu_\ast |(t-r)}\left\|({\rm Id} -\varPi^{(\bbm)}) D\cN(\ba(r)) \left(c_1^{(\infty)}(r)-c_2^{(\infty)}(r)\right)\right\|_{\omega}dr\\
%		&\le (t-s)^\gamma e^{(1-\xi)|\mu_\ast |(t-s) - \beta\tau^\gamma   C_\infty \int_{s}^t H(r)dr}\\
%		&\hphantom{\le}\quad\cdot
%%		e^{-\mu_\ast (t-s)-\beta\langle\bbm\rangle^q    \int_{s}^tg(r)dr}
%		\int_{s}^t C_\infty (t-r)^{-\gamma} e^{-(1-\xi)|\mu_\ast |(t-r)}  \left\|({\rm Id} -\varPi^{(\bbm)})D\cN(\ba(r)) \left(c_1^{(\infty)}(r)-c_2^{(\infty)}(r)\right)\right\|_{\omega}dr\\
		&\le (t-s)^\gamma e^{(1-\xi)|\mu_\ast |(t-s) - \beta\tau^\gamma   C_\infty \int_{s}^t H(r)dr}
%		e^{-\mu_\ast (t-s)-\beta\langle\bbm\rangle^q    \int_{s}^tg(r)dr}
		\int_{s}^t C_\infty (t-r)^{-\gamma} e^{-(1-\xi)|\mu_\ast |(t-r)}g(r) \left\| c_1^{(\infty)}(r)-c_2^{(\infty)}(r)\right\|_{\omega}dr\\
		&\le (t-s)^\gamma e^{(1-\xi)|\mu_\ast |(t-s) - \beta\tau^\gamma   C_\infty \int_{s}^t H(r)dr}
%		e^{-\mu_\ast (t-s)-\beta\langle\bbm\rangle^q    \int_{s}^tg(r)dr}
		\mathbf{d}\left(c_1^{(\infty)},c_2^{(\infty)}\right) \int_{s}^t   C_\infty H(r) e^{-(1-\xi)|\mu_\ast |(t-s)+\beta\tau^\gamma  C_\infty\int_{s}^r H(\sigma)d\sigma} 
%		e^{\mu_\ast (r-s)+\beta\langle\bbm\rangle^q    \int_{s}^rg(\sigma)d\sigma}
		dr\\
		&= (t-s)^\gamma e^{- \beta\tau^\gamma   C_\infty \int_{s}^t H(r)dr}
%		e^{-\beta\langle\bbm\rangle^q   \int_{s}^tg(r)dr}
		\mathbf{d}\left(c_1^{(\infty)},c_2^{(\infty)}\right) \int_{s}^t   C_\infty H(r) e^{\beta\tau^\gamma    C_\infty\int_{s}^r H(\sigma)d\sigma} dr\\
		&\le e^{- \beta\tau^\gamma   C_\infty \int_{s}^t H(r)dr}\mathbf{d}\left(c_1^{(\infty)},c_2^{(\infty)}\right) 
		\left[\frac1\beta e^{\beta\tau^\gamma    C_\infty\int_{s}^r H(\sigma)d\sigma}\right]_{r=s}^{r=t}\\
		&\le \frac{1}{\beta} \mathbf{d}\left(c_1^{(\infty)},c_2^{(\infty)}\right).
	\end{align*}}%
	Since $\beta>1$, the map $\cP $ becomes a contraction mapping on $\cX_{\infty}$.
	This yields that the solution of \eqref{eq:cinf} uniquely exists in $\cX_{\infty}$, which satisfies \eqref{eq:cinf_integral}.
%	\begin{equation}\label{eq:integral_form}
%		c_{\bk}(t)=e^{\mu_{\bk}(t-s)}\psi_{\bk} -3 \int_s^t e^{\mu_{\bk}(t-\tau)}(\ba(\tau)^2*c^{(\infty)}(\tau))_{\bk}\,d \tau\quad(\bk\not\in\Fm).
%	\end{equation}
	%
	Moreover, letting 
	\[
	y(t)\bydef {C_\infty^{-1}(t-s)^{\gamma}e^{(1-\xi)|\mu_\ast |(t-s)}}\| c^{(\infty)}(t)\|_{\omega},
	\]
	 it follows from \eqref{eq:cinf_integral} using \eqref{eq:semigroup_estimate_q} that
	\begin{align}
		y(t)\le \|\psi^{(\infty)}\|_{\omega} + {G(t) \int_s^t H(r) y(r)d r},
	\end{align}
	{where $G(t)\bydef C_\infty (t-s)^\gamma$.
	From the Gronwall lemma {\cite[Chap.\,7, p.\,356]{Mitrinovic1991}}, it follows that
	\[
	y(t)\le \left(1 + G(t)\int_s^t H(r)\exp\left(\int_{r}^{t}G(\sigma)H(\sigma)d\sigma\right)dr\right)\|\psi^{(\infty)}\|_{\omega}.
	\]
	Since $G(t)$ is a non-decreasing function, one gets that
	\begin{align}
		y(t)&\le \left(1 + G(t)\int_s^t H(r) \exp\left(\int_{r}^{t}G(\sigma)H(\sigma)d\sigma\right)dr\right)\|\psi^{(\infty)}\|_{\omega}\\
		&\le \left(1 + G(t)\int_s^t H(r) \exp\left(G(t)\int_{r}^{t}H(\sigma)d\sigma\right)dr\right)\|\psi^{(\infty)}\|_{\omega}\\
		&= \left(1 + \left[-\exp\left(G(t)\int_{r}^{t}H(\sigma)d\sigma\right) \right]_{r=s}^{r=t}\right)\|\psi^{(\infty)}\|_{\omega}\\
		&= \exp\left(G(t)\int_{s}^{t}H(\sigma)d\sigma\right)\|\psi^{(\infty)}\|_{\omega}.
%		\exp\left(\int_{r}^{t}H(\sigma)G(\sigma)d\sigma\right) &= \exp\left(C_\infty\int_{r}^{t}(t-\sigma)^{-\gamma}g(\sigma)  d\sigma\right)\\
%		&\le \exp\left(C_\infty g(\|\ba\|) \int_{r}^{t}(t-\sigma)^{-\gamma}d\sigma\right)\\
%		&\le \exp\left(C_\infty g(\|\ba\|) \frac{(t-r)^{1-\gamma}}{1-\gamma}\right).
	\end{align}
	Furthermore, 
	\begin{align} 
		G(t)\int_s^t H(r)dr  &\le C_ \infty (t-s)^\gamma g(\|\ba\|)\int_s^t (t-r)^{-\gamma}(r-s)^{-\gamma} dr\\
		&= C_\infty (t-s)^{1-\gamma} g(\|\ba\|) \mathrm{B}(1-\gamma,1-\gamma)
	\end{align}
	holds. Here, $\mathrm{B}(x,y)$ is the Beta function.}%
	
	Then, we conclude that the following inequality holds for any $\psi\in \jp{\ell_{\omega}^1}$:
	\begin{align}\label{eq:ev_op_estimate2}
		\left\|\bU^{(\infty)}(t,s)\cQ\psi ^{(\infty)}\right\|_{\omega}
		&\le {C_\infty (t-s)^{-\gamma} e^{-(1-\xi)|\mu_\ast |(t-s) + C_\infty (t-s)^{1-\gamma} g(\|\ba\|) \mathrm{B}(1-\gamma,1-\gamma)}}\\
		&= W_{{q}}^{(\infty)}(t,s)\left\|\psi ^{(\infty)}\right\|_{\omega},
	\end{align}
	where $W_{{q}}^{(\infty)}(t,s)$ is defined in \eqref{eq:W^inf_q}. When $q=0$, an analogous discussion holds by setting $\gamma=0$, $\xi=0$, and $C_\infty=1$. It directly follows \eqref{eq:W^inf_q} in the case of $q=0$.
	%This completes the proof.
\end{proof}

\subsubsection{The other bounds}\label{sec:other_bounds}
Here, we will give $\jp{W_{\infty,q}^{\cS_{J}}}$, $\jp{\overline{W}_{\infty,q}^{\cS_{J}}}$, and $\jp{\doverline{W}_{\infty,q}^{\cS_{J}}}$ bounds satisfying \eqref{eq:ineqW_infty_sup}, \eqref{eq:ineqW_infty}, and \eqref{eq:ineqbarW_infty}, respectively.
Firstly, let us rewrite $W^{(\infty)}_{0}(t,s)$ in \eqref{eq:W^inf_q} as $e^{\vartheta(t-s)}$, where $\vartheta=-|\mu_\ast |+g(\|\ba\|)$.
\begin{lem}\label{lem:other_bounds}
	\jp{Set $q=0$ and d}efine the constants $\jp{W_{\infty,0}^{\cS_{J}}}>0$, $\jp{\overline{W}_{\infty,0}^{\cS_{J}}}\ge0$, $\jp{\doverline{W}_{\infty,0}^{\cS_{J}}}\ge0$ as
	\begin{align}
		\jp{W_{\infty,0}^{\cS_{J}}}&\bydef \begin{cases}
			1, & \vartheta\le 0\\
			e^{\vartheta \tau}, & \vartheta>0
		\end{cases}\\[1mm]
		\jp{\overline{W}_{\infty,0}^{\cS_{J}}}&\bydef \frac{1}{\vartheta}\left(e^{\vartheta \tau}-1\right)\\[1mm]
		\jp{\doverline{W}_{\infty,0}^{\cS_{J}}}&\bydef\frac{\jp{\overline{W}_{\infty,0}^{\cS_{J}}}- \tau}{\vartheta},
	\end{align}
	respectively. Then $W^{(\infty)}_{{0}}(t,s)$, defined in \eqref{eq:W^inf_q}, obeys the inequalities \eqref{eq:ineqW_infty_sup}, \eqref{eq:ineqW_infty}, and \eqref{eq:ineqbarW_infty} \jp{with $\gamma=0$}.
\end{lem}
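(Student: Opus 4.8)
The plan is a direct, elementary computation. The key structural observation is that when $q=0$ the bound $W^{(\infty)}_{0}(t,s)=e^{\vartheta(t-s)}$ depends on $(t,s)\in\cS_{J}$ only through the single scalar $u\bydef t-s\in(0,\tau)$, so that every supremum over the simplex $\cS_{J}$ reduces to a one-variable extremization over $u$. Since $\gamma=0$, all of the factors $(t-s)^{\gamma}$ and $(\sigma-s)^{-\gamma}$ appearing in \eqref{eq:ineqW_infty_sup}, \eqref{eq:ineqW_infty} and \eqref{eq:ineqbarW_infty} equal $1$, which simplifies the integrals further.

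First, for \eqref{eq:ineqW_infty_sup}, I would note that $u\mapsto e^{\vartheta u}$ is monotone on $(0,\tau)$: its supremum equals $e^{\vartheta\tau}$ if $\vartheta>0$ and equals $1$ (the limit as $u\to 0^{+}$, attained identically when $\vartheta=0$) if $\vartheta\le 0$, which is precisely the case-defined constant $\jp{W_{\infty,0}^{\cS_{J}}}$. Next, for the two suprema in \eqref{eq:ineqW_infty}, the substitutions $v=r-s$ and $v=t-r$ respectively give
\[
\int_{s}^{t} W^{(\infty)}_{0}(r,s)\,dr=\int_{s}^{t} W^{(\infty)}_{0}(t,r)\,dr=\int_{0}^{u} e^{\vartheta v}\,dv=\frac{1}{\vartheta}\bigl(e^{\vartheta u}-1\bigr),
\]
whose derivative in $u$ is $e^{\vartheta u}>0$, so this quantity is increasing in $u$ for \emph{either} sign of $\vartheta$ and its supremum over $\cS_{J}$ is its value at $u=\tau$, namely $\jp{\overline{W}_{\infty,0}^{\cS_{J}}}$. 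Finally, for the two double integrals in \eqref{eq:ineqbarW_infty}, a single application of Fubini's theorem on the triangle $\{s\le\sigma\le r\le t\}$ shows that both reduce to $\int_{0}^{u}\frac{1}{\vartheta}(e^{\vartheta v}-1)\,dv=\frac{1}{\vartheta}\bigl(\frac{1}{\vartheta}(e^{\vartheta u}-1)-u\bigr)$; its $u$-derivative is $\frac{1}{\vartheta}(e^{\vartheta u}-1)\ge 0$, again regardless of the sign of $\vartheta$, so the supremum over $\cS_{J}$ is its value at $u=\tau$, which is exactly $\jp{\doverline{W}_{\infty,0}^{\cS_{J}}}=\frac{1}{\vartheta}(\jp{\overline{W}_{\infty,0}^{\cS_{J}}}-\tau)$.

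There is no genuine obstacle in this lemma; it is routine calculus. The only points that I would make explicit are (i) that the relevant integrands are increasing in $u=t-s$ for both signs of $\vartheta$, which is what places all of the suprema at the corner $u=\tau$ of $\cS_{J}$ even though that value need not be attained there; and (ii) the degenerate case $\vartheta=0$, in which the displayed formulas are to be read in the limiting sense $\jp{\overline{W}_{\infty,0}^{\cS_{J}}}=\tau$ and $\jp{\doverline{W}_{\infty,0}^{\cS_{J}}}=\tau^{2}/2$.
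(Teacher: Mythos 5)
Your proposal is correct and follows essentially the same direct calculus argument as the paper: reduce to the one variable $u=t-s$, observe the monotonicity in $u$, and evaluate at $u=\tau$. The only cosmetic difference is that you unify the two double integrals in \eqref{eq:ineqbarW_infty} via Fubini into the single quantity $\int_0^{u}\frac{1}{\vartheta}(e^{\vartheta v}-1)\,dv$, whereas the paper computes the first by iterated integration and the second by integration by parts, but the content and the monotonicity argument (including the sign-independence in $\vartheta$, which the paper records in the remark following the lemma) are the same.
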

\begin{proof}
	First, we note that from \eqref{eq:W^inf_q} in the case of $q=0$
	\begin{align}
		\sup_{(t,s)\in \jp{\cS_{J}}} W^{(\infty)}_{{0}}(t,s) = \sup_{(t,s)\in \jp{\cS_{J}}} e^{\vartheta(t-s)}\le \begin{cases}
			1, & \vartheta\le 0\\
			e^{\vartheta \tau}, & \vartheta>0
		\end{cases}=\jp{W_{\infty,0}^{\cS_{J}}}.
	\end{align}
	The inequality \eqref{eq:ineqW_infty_sup} holds.
	Second, we note that
	\begin{align}
		\sup _{(t,s)\in \jp{\cS_{J}}} \int_{s}^{t} W^{(\infty)}_{{0}}(r, s) d r &= \sup _{(t,s)\in \jp{\cS_{J}}} \int_{s}^{t}  e^{\vartheta(r-s)} d r\\
		&=\sup _{(t,s)\in \jp{\cS_{J}}} \frac1\vartheta\left(e^{\vartheta (t-s)}-1\right)\\
		&\le \frac1\vartheta\left(e^{\vartheta \tau}-1\right)=\jp{\overline{W}_{\infty,0}^{\cS_{J}}}
	\end{align}
	and that
	\begin{align}
		\sup _{(t,s)\in \jp{\cS_{J}}} \int_{s}^{t} W^{(\infty)}_{{0}}(t, r) d r &= \sup _{(t,s)\in \jp{\cS_{J}}} \int_{s}^{t}  e^{\vartheta(t-r)} d r\\
		&=\sup _{(t,s)\in \jp{\cS_{J}}} \frac1\vartheta\left(e^{\vartheta (t-s)}-1\right)\\
		&\le \frac1\vartheta\left(e^{\vartheta \tau}-1\right)=\jp{\overline{W}_{\infty,0}^{\cS_{J}}}.
	\end{align}
	These yield that \eqref{eq:ineqW_infty} holds.
	Third, note that
	\begin{align}
		\sup _{(t,s)\in \jp{\cS_{J}}}\int_{s}^{t} \int_{s}^{r} W^{(\infty)}_{{0}}(r, \sigma) d \sigma d r&=\sup _{(t,s)\in \jp{\cS_{J}}}\int_{s}^{t} \int_{s}^{r} e^{\vartheta(r-\sigma)} d \sigma d r\\
		&= \sup _{(t,s)\in \jp{\cS_{J}}}\int_{s}^{t} \frac1\vartheta\left(e^{\vartheta (r-s)}-1\right)dr\\
		&= \sup _{(t,s)\in \jp{\cS_{J}}}\frac1\vartheta\left[\frac{e^{\vartheta (r-s)}}{\vartheta}-r\right]^{r=t}_{r=s}\\
		&= \sup _{(t,s)\in \jp{\cS_{J}}}\frac1\vartheta\left[\frac{e^{\vartheta (t-s)}-1}{\vartheta}-(t-s)\right]\\
		&\le \frac1\vartheta\left(\frac{e^{\vartheta \tau}-1}{\vartheta}-\tau\right)\\
		&\le \frac{\jp{\overline{W}_{\infty,0}^{\cS_{J}}}- \tau}\vartheta=\jp{\doverline{W}_{\infty,0}^{\cS_{J}}}.
	\end{align}
	Finally, we have
	\begin{align}
		\sup _{(t,s)\in \jp{\cS_{J}}}\int_{s}^{t} W^{(\infty)}_{{0}}(t, r)(r -s) d r&= \sup _{(t,s)\in \jp{\cS_{J}}}\int_{s}^{t}   e^{\vartheta(t-r)}(r -s) d r\\
		&=\sup _{(t,s)\in \jp{\cS_{J}}}\left\{\left[-\frac1\vartheta e^{\vartheta(t-r)}(r-s)\right]^{r=t}_{r=s} + \int_{s}^t\frac1\vartheta e^{\vartheta(t-r)}dr\right\}\\
		&=\sup _{(t,s)\in \jp{\cS_{J}}}\left\{-\frac1\vartheta (t-s) + \frac1\vartheta\left[ -\frac{e^{\vartheta(t-r)}}{\vartheta}\right]^{r=t}_{r=s} \right\}\\
		&=\sup _{(t,s)\in \jp{\cS_{J}}}\frac1\vartheta \left[-(t-s) +  \frac{e^{\vartheta(t-s)}-1}{\vartheta}\right]\\
		&\le \frac1\vartheta\left(\frac{e^{\vartheta \tau}-1}{\vartheta}-\tau\right)\\
		&\le \frac{\jp{\overline{W}_{\infty,0}^{\cS_{J}}}- \tau}\vartheta=\jp{\doverline{W}_{\infty,0}^{\cS_{J}}}.
	\end{align}
	Hence, \eqref{eq:ineqbarW_infty} holds.
\end{proof}

\begin{rem}
	The inequalities \eqref{eq:ineqW_infty} and \eqref{eq:ineqbarW_infty} hold regardless of the positivity or negativity of the variable $\vartheta$. Furthermore, in the above proof, we used the monotonicity of the functions
	\begin{align}
		\frac{e^{\vartheta t}-1}{\vartheta}\quad\mbox{and}\quad \frac1\vartheta\left(\frac{e^{\vartheta t}-1}{\vartheta}-t\right)
	\end{align}
	with respect to $t\in J$.
%	for any $t\in J$.
\end{rem}

{
Next, we consider the case of $q>0$. Let us rewrite $W^{(\infty)}_{q}(t,s)$ in \eqref{eq:W^inf_q} as  $C_\infty (t-s)^{-\gamma}e^{-\iota(t-s) + \vartheta(t-s)^{1-\gamma}}$, where
\begin{align}
	\iota = (1-\xi)|\mu_\ast |,\quad \vartheta=C_\infty g(\|\ba\|) \mathrm{B}(1-\gamma,1-\gamma).
\end{align}
Similar to Lemma \ref{lem:other_bounds}, we have the following lemma:
\begin{lem}\label{lem:other_bounds_q}
	Define the constants $\jp{W_{\infty,q}^{\cS_{J}}}>0$, $\jp{\overline{W}_{\infty,q}^{\cS_{J}}}\ge0$, $\jp{\doverline{W}_{\infty,q}^{\cS_{J}}}\ge0$ as
	\begin{align}
		\jp{W_{\infty,q}^{\cS_{J}}}&\bydef C_\infty e^{\vartheta \tau^{1-\gamma}}\\[1mm]
		\jp{\overline{W}_{\infty,q}^{\cS_{J}}}&\bydef \frac{\tau}{1-\gamma}\jp{W_{\infty,q}^{\cS_{J}}}\\[1mm]
		\jp{\doverline{W}_{\infty,q}^{\cS_{J}}}&\bydef\frac{\jp{\overline{W}_{\infty,q}^{\cS_{J}}}}{2}\tau^{1-\gamma}\mathrm{B}(1-\gamma,1-\gamma),\label{eq:Wbar_infty_q}
	\end{align}
	respectively. Then $W^{(\infty)}_{q}(t,s)$, defined in \eqref{eq:W^inf_q}, obeys the inequalities \eqref{eq:ineqW_infty_sup}, \eqref{eq:ineqW_infty}, and \eqref{eq:ineqbarW_infty}.
\end{lem}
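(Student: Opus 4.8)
The plan is to reduce every supremum over the open simplex $\cS_{J}=\{(t,s):0<s<t\le\tau\}$ to an elementary estimate in the single variable $\rho\bydef t-s\in(0,\tau]$, using two crude observations about $W^{(\infty)}_{q}(t,s)=C_\infty(t-s)^{-\gamma}e^{-\iota(t-s)+\vartheta(t-s)^{1-\gamma}}$ from \eqref{eq:W^inf_q}: since $\iota=(1-\xi)|\mu_\ast|\ge0$ the decaying factor $e^{-\iota\rho}$ may simply be discarded, and since $\rho\le\tau$ the growing factor obeys $e^{\vartheta\rho^{1-\gamma}}\le e^{\vartheta\tau^{1-\gamma}}$. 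Combining these yields the pointwise bound $W^{(\infty)}_{q}(t,s)\le C_\infty e^{\vartheta\tau^{1-\gamma}}(t-s)^{-\gamma}$, which after multiplication by $(t-s)^{\gamma}$ and the identification $C_\infty e^{\vartheta\tau^{1-\gamma}}=W_{\infty,q}^{\cS_{J}}$ immediately gives \eqref{eq:ineqW_infty_sup}.

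For the single integrals in \eqref{eq:ineqW_infty} I would change variables ($u=r-s$ in the first, $u=t-r$ in the second); after the uniform exponential bound above both reduce to $C_\infty e^{\vartheta\tau^{1-\gamma}}\int_{0}^{t-s}u^{-\gamma}\,du=C_\infty e^{\vartheta\tau^{1-\gamma}}(t-s)^{1-\gamma}/(1-\gamma)$, the integral converging because $\gamma\in(0,1)$. Multiplying by $(t-s)^{\gamma}$ and using $t-s\le\tau$ gives the common bound $\tfrac{\tau}{1-\gamma}W_{\infty,q}^{\cS_{J}}=\overline{W}_{\infty,q}^{\cS_{J}}$.

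For the double integrals in \eqref{eq:ineqbarW_infty} the same uniform exponential bound leaves two Beta-type integrals. In the first, the inner integral $\int_{s}^{r}(r-\sigma)^{-\gamma}(\sigma-s)^{-\gamma}\,d\sigma$ rescales on $[0,1]$ to $(r-s)^{1-2\gamma}\mathrm{B}(1-\gamma,1-\gamma)$ (integrable in $r$ near $r=s$ since $1-2\gamma>-1$), integrating in $r$ produces $(t-s)^{2-2\gamma}\mathrm{B}(1-\gamma,1-\gamma)/(2-2\gamma)$, and multiplying by $(t-s)^{\gamma}$ together with $t-s\le\tau$ lands exactly on $\doverline{W}_{\infty,q}^{\cS_{J}}=\tfrac12\overline{W}_{\infty,q}^{\cS_{J}}\tau^{1-\gamma}\mathrm{B}(1-\gamma,1-\gamma)$. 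In the second, the inner integral is $\int_{s}^{r}(\sigma-s)^{-\gamma}\,d\sigma=(r-s)^{1-\gamma}/(1-\gamma)$ and the outer integral $\int_{s}^{t}(t-r)^{-\gamma}(r-s)^{1-\gamma}\,dr$ rescales to $(t-s)^{2-2\gamma}\mathrm{B}(2-\gamma,1-\gamma)$; the crucial point is the Beta recursion $\mathrm{B}(a+1,b)=\tfrac{a}{a+b}\mathrm{B}(a,b)$, which with $a=b=1-\gamma$ gives $\mathrm{B}(2-\gamma,1-\gamma)=\tfrac12\mathrm{B}(1-\gamma,1-\gamma)$, so this expression equals $\tfrac{1}{2(1-\gamma)}(t-s)^{2-\gamma}\mathrm{B}(1-\gamma,1-\gamma)\,C_\infty e^{\vartheta\tau^{1-\gamma}}$ after multiplying by $(t-s)^{\gamma}$, again bounded by $\doverline{W}_{\infty,q}^{\cS_{J}}$ after $t-s\le\tau$. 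This identity is exactly what lets one constant control both halves of \eqref{eq:ineqbarW_infty}.

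I expect no genuine obstacle. The only points requiring care are: (i) the endpoint singularities of $u^{-\gamma}$ and $(r-\sigma)^{-\gamma}(\sigma-s)^{-\gamma}$, all integrable because $\gamma\in(0,1)$; (ii) the supremum over the \emph{open} simplex $\cS_{J}$, which is attained in the limit $t-s\to\tau$ since each intermediate bound is a non-decreasing function of $t-s$; and (iii) matching the stated constants, which hinges on the Beta recursion above and on recognizing $C_\infty e^{\vartheta\tau^{1-\gamma}}=W_{\infty,q}^{\cS_{J}}$. The argument parallels the proof of Lemma~\ref{lem:other_bounds} for $q=0$, with power-law weights replacing the clean exponential antiderivatives used there.
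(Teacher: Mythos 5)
Your proposal is correct and follows essentially the same route as the paper: uniformly discard the factor $e^{-\iota(t-s)}$, bound $e^{\vartheta(\cdot)^{1-\gamma}}$ by its value at $\tau$, reduce the single integrals to $\int_0^{t-s}u^{-\gamma}du$, and reduce the double integrals to Beta integrals combined with the identity $\mathrm{B}(2-\gamma,1-\gamma)=\tfrac12\mathrm{B}(1-\gamma,1-\gamma)$ (the paper states this as $\mathrm{B}(x,x)=2\mathrm{B}(x+1,x)$). The only cosmetic difference is that you apply the uniform bound $e^{\vartheta\rho^{1-\gamma}}\le e^{\vartheta\tau^{1-\gamma}}$ pointwise at the outset, whereas the paper first bounds by $e^{\vartheta(t-s)^{1-\gamma}}$ and passes to the supremum at the end; the resulting constants are identical.
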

\begin{proof}
	First, it follows from \eqref{eq:W^inf_q} that
	\begin{align}
		\sup_{(t,s)\in \jp{\cS_{J}}} (t-s)^\gamma W^{(\infty)}_{q}(t,s) = \sup_{(t,s)\in \jp{\cS_{J}}} C_\infty e^{-\iota(t-s) + \vartheta(t-s)^{1-\gamma}}\le C_\infty e^{\vartheta \tau^{1-\gamma}} =\jp{W_{\infty,q}^{\cS_{J}}}.
	\end{align}
	The inequality \eqref{eq:ineqW_infty_sup} holds.
	Second, we note that
	\begin{align}
		\sup _{(t,s)\in \jp{\cS_{J}}} (t-s)^\gamma \int_{s}^{t} W^{(\infty)}_{q}(r, s) d r 
		&= C_\infty \sup _{(t,s)\in \jp{\cS_{J}}} (t-s)^\gamma \int_{s}^{t}  (r-s)^{-\gamma}e^{-\iota(r-s) + \vartheta(r-s)^{1-\gamma}} d r\\
		&\le C_\infty \sup _{(t,s)\in \jp{\cS_{J}}} e^{\vartheta(t-s)^{1-\gamma}} (t-s)^\gamma \int_{s}^{t}(r-s)^{-\gamma} e^{-\iota(r-s)}dr\\
		&\le C_\infty \sup _{(t,s)\in \jp{\cS_{J}}} e^{\vartheta(t-s)^{1-\gamma}}\frac{t-s}{1-\gamma}
		\le \jp{\overline{W}_{\infty,q}^{\cS_{J}}}
	\end{align}
	and that
	\begin{align}
		\sup _{(t,s)\in \jp{\cS_{J}}} (t-s)^\gamma \int_{s}^{t} W^{(\infty)}_{q}(t, r) d r 
		&= C_\infty \sup _{(t,s)\in \jp{\cS_{J}}}  (t-s)^\gamma \int_{s}^{t}  (t-r)^{-\gamma}e^{-\iota(t-r) + \vartheta(t-r)^{1-\gamma}} d r\\
		&\le C_\infty \sup _{(t,s)\in \jp{\cS_{J}}}  (t-s)^\gamma e^{\vartheta(t-s)^{1-\gamma}}\int_{s}^{t}  (t-r)^{-\gamma}e^{-\iota(t-r)} d r\\
		&\le C_\infty \sup _{(t,s)\in \jp{\cS_{J}}} e^{\vartheta(t-s)^{1-\gamma}}\frac{t-s}{1-\gamma}
		\le \jp{\overline{W}_{\infty,q}^{\cS_{J}}}.
	\end{align}
	These yield that \eqref{eq:ineqW_infty} holds.
	Third, note that
	\begin{align}
		&\sup _{(t,s)\in \jp{\cS_{J}}}(t-s)^\gamma\int_{s}^{t} \int_{s}^{r} W^{(\infty)}_{q}(r, \sigma) (\sigma-s)^{-\gamma}d \sigma d r\\
		&=C_\infty \sup _{(t,s)\in \jp{\cS_{J}}}(t-s)^\gamma\int_{s}^{t} \int_{s}^{r} (r-\sigma)^{-\gamma} e^{-\iota(r-\sigma) + \vartheta (r-\sigma)^{1-\gamma}} (\sigma-s)^{-\gamma} d \sigma d r\\
		&\le C_\infty \sup _{(t,s)\in \jp{\cS_{J}}}(t-s)^\gamma\int_{s}^{t} e^{ \vartheta (r-s)^{1-\gamma}} \int_{s}^{r} (r-\sigma)^{-\gamma} e^{-\iota(r-\sigma)} (\sigma-s)^{-\gamma} d \sigma d r\\
		&\le C_\infty \sup _{(t,s)\in \jp{\cS_{J}}}(t-s)^\gamma\int_{s}^{t} e^{ \vartheta (r-s)^{1-\gamma}} (r-s)^{1-2\gamma} \mathrm{B}(1-\gamma,1-\gamma) d r\\
		&\le C_\infty \sup _{(t,s)\in \jp{\cS_{J}}} e^{ \vartheta (t-s)^{1-\gamma}}\frac{(t-s)^{2-\gamma}}{2(1-\gamma)}\mathrm{B}(1-\gamma,1-\gamma)\\
		&\le \frac{\jp{\overline{W}_{\infty,q}^{\cS_{J}}}}2\tau^{1-\gamma}\mathrm{B}(1-\gamma,1-\gamma) =\jp{\doverline{W}_{\infty,q}^{\cS_{J}}}.
	\end{align}
	Finally, we have
	\begin{align}
		&\sup _{(t,s)\in \jp{\cS_{J}}} (t-s)^\gamma\int_{s}^{t} W^{(\infty)}_{q}(t, r)\int_{s}^r(\sigma -s)^{-\gamma}d\sigma d r\\
		&= \sup _{(t,s)\in \jp{\cS_{J}}} (t-s)^\gamma\int_{s}^{t} W^{(\infty)}_{q}(t, r)\frac{(r -s)^{1-\gamma}}{1-\gamma} d r\\
		&= C_\infty \sup _{(t,s)\in \jp{\cS_{J}}} \frac{(t-s)^\gamma}{1-\gamma} \int_{s}^{t} (t-r)^{-\gamma} e^{-\iota(t-r) + \vartheta (t-r)^{1-\gamma}}(r -s)^{1-\gamma}d r\\
		&\le C_\infty \sup _{(t,s)\in \jp{\cS_{J}}} e^{\vartheta (t-s)^{1-\gamma}} \frac{(t-s)^\gamma}{1-\gamma} \int_{s}^{t} (t-r)^{-\gamma} e^{-\iota(t-r)}(r -s)^{1-\gamma}d r\\
		&\le C_\infty \sup _{(t,s)\in \jp{\cS_{J}}} e^{\vartheta (t-s)^{1-\gamma}} \frac{(t-s)^{2-\gamma}}{1-\gamma} \mathrm{B}(1-\gamma,2-\gamma)\\
		&\le \frac{\jp{\overline{W}_{\infty,q}^{\cS_{J}}}}2\tau^{1-\gamma}\mathrm{B}(1-\gamma,1-\gamma) =\jp{\doverline{W}_{\infty,q}^{\cS_{J}}}.
	\end{align}
	Here we used the formula $\mathrm{B}(x,x) = 2\mathrm{B}(x+1,x)$ for $x>0$ in the last line.
	Hence, \eqref{eq:ineqbarW_infty} holds.
\end{proof}
}

\subsection{Validation theorem for local inclusion of the solution}\label{sec:local_inclusion}

\begin{lem}\label{lem:DFinv_bounds}
	Recall the definition of $DF(\ba)^{-1}$ in \eqref{eq:invDF} and assume that the uniform bounds satisfying \eqref{eq:Wh_bound_classic} and \eqref{eq:Wh_bound} hold. Let $p$, $\psi\in X$ and $\phi \in \ell_{\omega}^1$. Then $\cQ \psi + p \in Y$ and %Then, we have for any $(\cQ \psi + p,\phi)\in Y\times \ell_{\omega}^1$ that
	\begin{align}\label{eq:inv_DF_estimate}
		\left\|DF(\ba)^{-1}(\cQ \psi + p,\phi)\right\| \le \bm{\jp{W^{\cS_{J}}}}\|\phi\|_{\omega} + \frac{\tau^{1-\gamma}\bm{\jp{W_q^{\cS_{J}}}}}{1-\gamma}\|\psi\| + \tau\bm{\jp{W^{\cS_{J}}}}\|p\|,\quad \gamma\in (0,1).
	\end{align}
\end{lem}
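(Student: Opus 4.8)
The plan is to substitute the explicit variation-of-constants formula \eqref{eq:invDF} for $DF(\ba)^{-1}$ and then estimate the result term by term using the two uniform evolution-operator bounds \eqref{eq:Wh_bound_classic} and \eqref{eq:Wh_bound}. First I would check that the pair $(\cQ\psi+p,\phi)$ indeed lies in $Y\times\ell_{\omega}^1$, so that \eqref{eq:invDF} applies. Since $(\bk\bL)^q/|\bk|_\infty^q$ is bounded uniformly in $\bk\ge0$, the multiplication operator $\cQ$ maps $\ell_{\omega}^1$ boundedly into $\ell_{\omega,-q}^1$; combined with $\psi\in X=C(J;\ell_{\omega}^1)$ this gives $\cQ\psi\in C(J;\ell_{\omega,-q}^1)=Y$. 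Moreover $\|\cdot\|_{\omega,-q}\le\|\cdot\|_{\omega}$ yields the continuous inclusion $\ell_{\omega}^1=\ell_{\omega,0}^1\hookrightarrow\ell_{\omega,-q}^1$, so $p\in X\subset Y$ and hence $\cQ\psi+p\in Y$. That the resulting element lies in $X$ (in fact in $\cD$) has already been recorded in the remark following \eqref{eq:opT} and in Remark~\ref{rem:nu_val}, so it remains only to produce the norm bound.

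Next, writing $h(t)\bydef\bigl(DF(\ba)^{-1}(\cQ\psi+p,\phi)\bigr)(t)$ and splitting the integral in \eqref{eq:invDF} by linearity,
\[
h(t)=U(t,0)\phi+\int_{0}^{t}U(t,s)\cQ\psi(s)\,ds+\int_{0}^{t}U(t,s)p(s)\,ds,
\]
I would bound the three summands in the $\ell_{\omega}^1$-norm. The first is controlled by $\bm{W^{\cS_{J}}}\|\phi\|_{\omega}$ directly from \eqref{eq:Wh_bound_classic}. For the third, \eqref{eq:Wh_bound_classic} gives $\|U(t,s)p(s)\|_{\omega}\le\bm{W^{\cS_{J}}}\|p(s)\|_{\omega}\le\bm{W^{\cS_{J}}}\|p\|$, so that term is bounded by $t\,\bm{W^{\cS_{J}}}\|p\|\le\tau\,\bm{W^{\cS_{J}}}\|p\|$. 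The middle summand is the only one requiring attention: rearranging \eqref{eq:Wh_bound} yields $\|U(t,s)\cQ\psi(s)\|_{\omega}\le(t-s)^{-\gamma}\bm{W_q^{\cS_{J}}}\|\psi(s)\|_{\omega}\le(t-s)^{-\gamma}\bm{W_q^{\cS_{J}}}\|\psi\|$, and since $\gamma\in(0,1)$ the kernel $(t-s)^{-\gamma}$ is integrable on $[0,t]$ with $\int_{0}^{t}(t-s)^{-\gamma}\,ds=\tfrac{t^{1-\gamma}}{1-\gamma}\le\tfrac{\tau^{1-\gamma}}{1-\gamma}$; this also justifies absolute convergence of that improper integral. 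Adding the three estimates and taking the supremum over $t\in J$ gives exactly \eqref{eq:inv_DF_estimate}.

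The argument is essentially bookkeeping; the single delicate point is the integrability of the singular kernel $(t-s)^{-\gamma}$ coming from the smoothing bound \eqref{eq:Wh_bound}, which is precisely the reason $\gamma$ was constrained to lie in $(0,1)$ when generating the evolution operator in Section~\ref{sec:evolution_op}. No further obstacle is anticipated.
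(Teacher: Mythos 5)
Your proposal is correct and follows essentially the same route as the paper: substitute the variation-of-constants formula \eqref{eq:invDF}, split by linearity, bound the $\phi$ and $p$ terms via \eqref{eq:Wh_bound_classic}, and bound the $\cQ\psi$ term via \eqref{eq:Wh_bound} together with $\int_0^t(t-s)^{-\gamma}\,ds\le\tau^{1-\gamma}/(1-\gamma)$, then take the supremum over $t\in J$. The only addition is your explicit verification that $\cQ\psi+p\in Y$, which the paper asserts without elaboration and which your argument handles correctly.
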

\begin{proof}
Recalling \eqref{eq:invDF}, and applying \eqref{eq:Wh_bound_classic}  and \eqref{eq:Wh_bound}, it follows that
\begin{align}
	\|DF(\ba)^{-1}(\cQ \psi+p,\phi)\| &\le \sup_{t\in J}\|U(t,0)\phi\|_{\omega} +  \sup_{t\in J}\int_{0}^t\|U(t,s)(\cQ \psi(s)+p(s))\|_{\omega}ds\\
	&\le \bm{\jp{W^{\cS_{J}}}}\|\phi\|_{\omega} +  \bm{\jp{W_q^{\cS_{J}}}}\|\psi\| \sup_{t\in J} \int_{0}^t(t-s)^{-\gamma}ds + \bm{\jp{W^{\cS_{J}}}}\sup_{t\in J} \int_{0}^t\|p(s)\|_{\omega}ds \\
	&\le \bm{\jp{W^{\cS_{J}}}}\|\phi\|_{\omega} + \frac{\tau^{1-\gamma}\bm{\jp{W_q^{\cS_{J}}}}}{1-\gamma}\|\psi\|+ \tau\bm{\jp{W^{\cS_{J}}}}\|p\|.\qedhere
\end{align}
\end{proof}
\begin{rem}
	The parameter $\gamma$ is taken as $\gamma=0$ in the case of $q=0$ in Lemma \ref{lem:DFinv_bounds}.
\end{rem}

\begin{thm}[\bf Local existence of the solution to IVP]\label{thm:local_inclusion}
	Given the approximate solution $\bar{a}\in \jp{\cD}$ of \eqref{eq:ODEs_finite}, assume that $\left\|(F(\bar{a}))_1\right\|=\|\dot{\ba}-\cL\ba-\cQ\cN(\ba)\|\le\delta$ and $\|(F(\ba))_{2}\|_{\omega}=\|\varphi-\bar{a}(0)\|_{\omega}\le\varepsilon$ hold.
	Assume also that for $a_1,a_2\in B_J\left(\bar{a},\varrho\right)$ there exists a non-decreasing function $L_{\ba}:(0,\infty)\to (0,\infty)$ such that
	\[
	\left\|\cN(a_1)-\mathcal{N}(a_2)-D\cN(\ba)(a_1-a_2)\right\|\le L_{\ba}(\varrho)\|a_1-a_2\|.
	\]
	Let us assume that there exists $\bm{\jp{W^{\cS_{J}}}}$, $\bm{\jp{W_q^{\cS_{J}}}}>0$ satisfying \eqref{eq:Wh_bound_classic}, \eqref{eq:Wh_bound}, respectively.
	Define
	\[
	p_{\varepsilon}\left(\varrho\right)\bydef {\bm{\jp{W^{\cS_{J}}}}\left(\varepsilon+\tau\delta\right) + \frac{\tau^{1-\gamma}\bm{\jp{W_q^{\cS_{J}}}}}{1-\gamma}L_{\ba}(\varrho)\varrho}.
	\]
	If there exists $\varrho_0>0$ such that
	\[
	p_{\varepsilon}\left(\varrho_0\right)\le\varrho_0,
	\]
	then there exists a unique $\tilde{a}\in B_J\left(\bar{a},\varrho\right)$ satisfying $F(\tilde{a})=0$ defined in \eqref{ODE_Cauchy}.
	Hence, the solution of the IVP \eqref{eq:IVP_PDE} exists locally in $J$.
\end{thm}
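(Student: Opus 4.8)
The plan is to recast the zero-finding problem $F=0$ from \eqref{ODE_Cauchy} as the fixed-point equation $a=T(a)$ for the Newton-like operator $T$ of \eqref{eq:opT}, to show via the Banach fixed point theorem that $T$ has a unique fixed point in the closed ball $B_{J}(\ba,\varrho_0)\subset X$, and then to identify that fixed point with the unique zero of $F$ in the ball; through the Fourier expansion \eqref{eq:FourierSeries} this zero produces the desired local solution of \eqref{eq:IVP_PDE} on $J$.

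The first task is an algebraic reduction. Using the identity $DF(\ba)\ba=(\dot\ba-\cL\ba-\cQ D\cN(\ba)\ba,\ \ba(0))$, the fact (from the remark after \eqref{eq:opT}) that $DF(\ba)a-F(a)=(\cQ(\cN(a)-D\cN(\ba)a),\ \varphi)\in Y\times\ell_{\omega}^1$ for every $a\in X$, and the relation $\dot\ba-\cL\ba=(F(\ba))_1+\cQ\cN(\ba)$, one obtains for all $a\in X$
\[
T(a)-\ba=DF(\ba)^{-1}\!\Big(\cQ\big(\cN(a)-\cN(\ba)-D\cN(\ba)(a-\ba)\big)-(F(\ba))_1,\ \varphi-\ba(0)\Big),
\]
and likewise $T(a_1)-T(a_2)=DF(\ba)^{-1}\big(\cQ(\cN(a_1)-\cN(a_2)-D\cN(\ba)(a_1-a_2)),\ 0\big)$ for $a_1,a_2\in X$. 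Since the numerical approximation $\ba$ has finitely many nonzero Fourier modes and $\cN$ preserves finite support, $(F(\ba))_1\in X$, so Lemma~\ref{lem:DFinv_bounds} applies to both expressions, with $\psi$ taken to be the argument of $\cQ$, with $p=-(F(\ba))_1$ respectively $0$, and with $\phi=\varphi-\ba(0)$ respectively $0$.

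Feeding the hypotheses $\|(F(\ba))_1\|\le\delta$, $\|\varphi-\ba(0)\|_{\omega}\le\varepsilon$ and the bound $\|\cN(a_1)-\cN(a_2)-D\cN(\ba)(a_1-a_2)\|\le L_{\ba}(\varrho_0)\|a_1-a_2\|$ into Lemma~\ref{lem:DFinv_bounds}, together with $\|a-\ba\|\le\varrho_0$ and the monotonicity of $L_{\ba}$, gives for $a,a_1,a_2\in B_{J}(\ba,\varrho_0)$
\[
\|T(a)-\ba\|\le p_\varepsilon(\varrho_0)\le\varrho_0,\qquad \|T(a_1)-T(a_2)\|\le\frac{\tau^{1-\gamma}\bm{W_q^{\cS_{J}}}}{1-\gamma}\,L_{\ba}(\varrho_0)\,\|a_1-a_2\|.
\]
The condition $p_\varepsilon(\varrho_0)\le\varrho_0$ forces $\frac{\tau^{1-\gamma}\bm{W_q^{\cS_{J}}}}{1-\gamma}L_{\ba}(\varrho_0)\le1$, with strict inequality unless $\varepsilon=\delta=0$, i.e. unless $\ba$ already solves $F=0$ exactly (a degenerate case in which there is nothing to prove). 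Hence $T$ is a contracting self-map of the complete metric space $B_{J}(\ba,\varrho_0)$, and the Banach fixed point theorem yields a unique $\tilde a\in B_{J}(\ba,\varrho_0)$ with $T(\tilde a)=\tilde a$. Since $T$ maps into $\cD$, we have $\tilde a\in\cD$ and $T(\tilde a)=\tilde a-DF(\ba)^{-1}F(\tilde a)$, so $DF(\ba)^{-1}F(\tilde a)=0$; applying $DF(\ba)$ and using that $DF(\ba)^{-1}$ is a genuine inverse (as established by the variation-of-constants construction \eqref{eq:variation_of_constant}, \eqref{eq:invDF}) gives $F(\tilde a)=0$, while conversely any zero of $F$ in the ball is a fixed point of $T$, which gives uniqueness. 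Finally, $F(\tilde a)=0$ means $\tilde a\in\cD$ solves $\dot{\tilde a}=\cL\tilde a+\cQ\cN(\tilde a)$ with $\tilde a(0)=\varphi$; combined with the continuity of the evolution operator this yields $\tilde a\in C([0,\tau];\ell_{\omega}^1)$, hence the solution $u$ of the IVP \eqref{eq:IVP_PDE} on $J$ via \eqref{eq:FourierSeries}.

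The part requiring the most care will be the algebraic reduction above: one must verify that $(F(\ba))_1$ genuinely lies in $X$ (not merely in $Y$) so that Lemma~\ref{lem:DFinv_bounds} is applicable, and that after applying $DF(\ba)^{-1}$ the quantities $\delta$, $\varepsilon$ and $L_{\ba}(\varrho_0)$ enter $p_\varepsilon(\varrho_0)$ with exactly the coefficients $\tau\bm{W^{\cS_{J}}}$, $\bm{W^{\cS_{J}}}$ and $\tfrac{\tau^{1-\gamma}\bm{W_q^{\cS_{J}}}}{1-\gamma}$ predicted by the lemma. Everything else is a standard Newton--Kantorovich/Banach fixed point argument, and strict contractivity of $T$ is obtained for free from $p_\varepsilon(\varrho_0)\le\varrho_0$ in the non-degenerate case.
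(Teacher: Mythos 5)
Your proposal is correct and follows essentially the same route as the paper's proof: you rewrite $T(a)-\ba$ and $T(a_1)-T(a_2)$ via the factorization $DF(\ba)(a-\ba)-F(a)=\left(\cQ(\cN(a)-\cN(\ba)-D\cN(\ba)(a-\ba))-(F(\ba))_1,\,(F(\ba))_2\right)$, apply Lemma~\ref{lem:DFinv_bounds} to get the self-mapping bound $p_\varepsilon(\varrho_0)$ and the Lipschitz constant $\tfrac{\tau^{1-\gamma}\bm{W_q^{\cS_J}}}{1-\gamma}L_{\ba}(\varrho_0)$, and invoke the Banach fixed point theorem. Your treatment is slightly more careful than the paper's in two respects — you note explicitly that $(F(\ba))_1\in X$ (so the lemma applies), and you address the degenerate case $\varepsilon=\delta=0$ where the contraction constant may equal $1$, which the paper glosses over by asserting the strict inequality $\tfrac{\tau^{1-\gamma}\bm{W_q^{\cS_J}}}{1-\gamma}L_{\ba}(\varrho_0)<p_\varepsilon(\varrho_0)/\varrho_0$ without comment.
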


\begin{rem}\label{rem:Lba_rho}
	It is worth noting that the non-decreasing function $L_{\ba}(\varrho)$ depends on the nonlinear term $\cN$. For example, if the nonlinearity is cubic, i.e., 
	%	\[
	$\cN_{\bk}(a) = -(a^3)_{\bk}$, % = -\sum_{\bk_1+\bk_2 + \bk_3 = \bk \atop \bk_1,\bk_2,\bk_3 \in \Z^2} a_{\bk_1} a_{\bk_2} a_{\bk_3},
	%	\]
	it follows that for $a_1,a_2\in B_J\left(\bar{a},\varrho\right)$
	\begin{align}
		\left\|\cN(a_1)-\mathcal{N}(a_2)-D\cN(\ba)(a_1-a_2)\right\|
		&= \left\|a_1^3 - a_2^3 - 3\ba^2(a_1-a_2)\right\|\\
		&=\left\|(a_1-a_2)\left(a_1^2 + a_1a_2 + a_2^2 - 3\ba^2\right)\right\|\\
		&=\left\|(a_1-a_2)\left[(a_1-\ba)(a_1+a_2+\ba) + (a_2-\ba)(a_1+2\ba)\right]\right\|\\
		%		&\le \left\{\varrho\left(3\|\ba\|+2\varrho\right) + \varrho\left(3\|\ba\|+\varrho\right)\right\}\|a_1-a_2\|.
		&\le 3\varrho\left(2\|\ba\|+\varrho\right)\|a_1-a_2\|.
	\end{align}
	Therefore, $L_{\ba}(\varrho)=3\varrho\left(2\|\ba\|+\varrho\right)$ holds in this case.
\end{rem}

\begin{proof}
	We prove that the operator $T$ defined in \eqref{eq:opT} is the contraction mapping on $B_J(\ba,\varrho_0)$ defined in \eqref{eq:theBall}.
	Firstly, for any $a\in B_J(\ba,\varrho_0)$, we have using \eqref{eq:DFa_bar} and \eqref{eq:opT}
	%	using \eqref{eqn:Fa=0}, \eqref{eq:opT}, the property of the evolution operator introduced in Section \ref{sec:fixed-point-form}, and \eqref{eq:property_evo_op}:
	\begin{align}
		T(a)-\ba &= DF(\ba)^{-1}\left(DF(\ba)a-F(a)\right) - \ba\\
		&=DF(\ba)^{-1}\left(DF(\ba)(a-\ba)-F(a)\right)\\
		&=DF(\ba)^{-1}\left(\cQ\left(\cN(a)-D\cN(\ba)(a-\ba)\right)-\left(\dot{\ba}-\cL\ba\right),~\varphi-\ba(0)\right)\\
		&=DF(\ba)^{-1}\left[\left(\cQ\left(\cN(a)-\cN(\ba)-D\cN(\ba)(a-\ba)\right)-(F(\ba))_1,~(F(\ba))_{2}\right)\right].\label{eq:U_form}
		%		&=DF(\ba)^{-1}\left(\cQ\left(\cN(a)-\cN(\ba)-D\cN(\ba)(a-\ba)\right)-\left(\dot{\ba}-\cL\ba-\cQ\cN(\ba)\right),\varphi-\ba(0)\right)
	\end{align}
	%	It follows from the definition of $DF(\ba)^{-1}$ given in \eqref{eq:invDF} that
	%	\begin{align}
		%		&(T(a))(t) - \ba(t)\\
		%%		&= U(t, 0) \varphi+\int_{0}^{t} U(t, s)\cQ\left(\mathcal{N}(a(s))-D\mathcal{N}(\ba(s))a(s)\right) ds - U(t,t)\ba(t)\\
		%%		&= U(t, 0)(\varphi-\ba(0))+\int_{0}^{t} U(t, s)\cQ\left(\mathcal{N}(a(s))-D\mathcal{N}(\ba(s))a(s)\right) ds - \int_{0}^t \frac{\partial}{\partial s}\left(U(t,s)\ba(s)\right)ds\\
		%%		&= U(t, 0)(\varphi-\ba(0))+\int_{0}^{t} U(t, s)\cQ\left(\mathcal{N}(a(s))-D\mathcal{N}(\ba(s))a(s)\right) ds\\
		%%		&\hphantom{=}\quad - \int_{0}^t \left(\frac{\partial}{\partial s}U(t,s)\ba(s)+U(t,s)\frac{d}{ds}{\ba}(s)\right)ds\\
		%%		&= U(t, 0)(\varphi-\ba(0))+\int_{0}^{t} U(t, s)\cQ\left(\mathcal{N}(a(s))-D\mathcal{N}(\ba(s))a(s)\right) ds\\
		%%		&\hphantom{=}\quad - \int_{0}^t \left(-U(t,s)\left(\cL+\cQ D\cN(\ba(s))\right)\ba(s)+U(t,s)\frac{d}{ds}{\ba}(s)\right)ds\\
		%		&= U(t, 0)(\varphi-\ba(0))+\int_{0}^{t} U(t, s)\cQ\left(\cN(a(s))-\cN(\ba(s))-D\cN(\ba(s))(a(s)-\ba(s))\right) ds\\ &\hphantom{=}\quad -\int_{0}^tU(t,s)\left(\frac{d}{ds}\ba(s)-\cL\ba(s)-\cQ\,\cN(\ba(s))\right)ds\\
		%		&= U(t, 0)(\varphi-\ba(0))\\
		%		&\hphantom{=}\quad +\int_{0}^{t} U(t, s)\left\{\cQ\left[\cN(a(s))-\cN(\ba(s))-D\cN(\ba(s))(a(s)-\ba(s))\right] -\left(F(\ba)\right)(s)\right\}ds.\label{eq:g_func}
		%	\end{align}
	It follows from \eqref{eq:inv_DF_estimate} and the assumption of the theorem that
	\begin{align}
		\|T(a)-\ba\|&\le {\bm{\jp{W^{\cS_{J}}}}\left\|(F(\ba))_2\right\|_{\omega}+\frac{\tau^{1-\gamma}\bm{\jp{W_q^{\cS_{J}}}}}{1-\gamma}\left\|\left(\cN(a)-\cN(\ba)-D\cN(\ba)(a-\ba)\right)\right\|+\tau\bm{\jp{W^{\cS_{J}}}}\left\|\left(F(\ba)\right)_1\right\|}\\
		&\le {\bm{\jp{W^{\cS_{J}}}}\left(\varepsilon+\tau\delta\right) + \frac{\tau^{1-\gamma}\bm{\jp{W_q^{\cS_{J}}}}}{1-\gamma}L_{\ba}(\varrho_0)\varrho_0}=p_{\varepsilon}(\varrho_0).\label{eq:g_estimate}
	\end{align}
	From the assumption of the theorem $p_{\varepsilon}\left(\varrho_0\right)\le\varrho_0$, $T(a)\in B_J(\ba,\varrho_0)$ holds.
	
	Secondary, we show the contraction property of $T$ on $B_J(\ba,\varrho_0)$. From \eqref{eq:opT} we have for $a_1, a_2\in B_J(\ba,\varrho_0)$
	%	, let us define a distance as $\mathbf{d}(a_1,a_2)\bydef\|a_1-a_2\|$.
	\begin{align}
		T(a_1) - T(a_2)&=DF(\ba)^{-1}\left[\left(\cQ \cN(a_1)-\cQ D\cN(\ba)a_1,~\varphi\right)-\left(\cQ \cN(a_2)-\cQ D\cN(\ba)a_2,~\varphi\right)\right]\\
		&=DF(\ba)^{-1}\left[\left( \cQ\left(\cN(a_1)- \cN(a_2)- D\cN(\ba)(a_1-a_2)\right),~0\right)\right].
		%		 \int_{0}^tU(t,s)\cQ\left(\cN(a_1(s))-\cN(a_2(s))-D\cN(\ba(s))(a_1(s)-a_2(s))\right)ds.
	\end{align}
	It follows from \eqref{eq:inv_DF_estimate} and the assumption of the theorem that
	\begin{align}
		\|T(a_1)-T(a_2)\|&\le {\frac{\tau^{1-\gamma}\bm{\jp{W_q^{\cS_{J}}}}}{1-\gamma}}\left\|\left(\cN(a_1)-\cN(a_2)-D\cN(\ba)(a_1-a_2)\right)\right\|\\
		&\le {\frac{\tau^{1-\gamma}\bm{\jp{W_q^{\cS_{J}}}}}{1-\gamma}}L_{\ba}(\varrho_0)\|a_1-a_2\|.
	\end{align}
	Hence, 
	${\frac{\tau^{1-\gamma}\bm{\jp{W_q^{\cS_{J}}}}}{1-\gamma}L_{\ba}(\varrho_0)}<p_{\varepsilon}(\varrho_0)/\varrho_0\le 1$ holds. Therefore, it is proved that the operator $T$ is the contraction mapping on $B_J(\ba,\varrho_0)$.
\end{proof}

The rest of this section is dedicated to the construction of bounds $\varepsilon$ and $\delta$ of Theorem \ref{thm:local_inclusion}.

\paragraph{The bound \boldmath$\varepsilon$\boldmath.}

We obtain the $\varepsilon$ bound of Theorem \ref{thm:local_inclusion}.
It follows from \eqref{eq:appsol} and \eqref{eq:a_bar} that
\begin{align}
	\ba_{\bk}(0)=\ba_{0,\bk}+2\sum_{\ell=1}^{n-1}\ba_{\ell,\bk}T_{\ell}(0)=\ba_{0,\bk}+2\sum_{\ell=1}^{n-1}(-1)^{\ell}\ba_{\ell,\bk}.
\end{align}
Then  the $\varepsilon$ bound is given by
\begin{align}
	\varepsilon = \sum_{\bk\in\FN}\left|\varphi_{\bk} - \left(\ba_{0,\bk}+2\sum_{\ell=1}^{n-1}(-1)^{\ell}\ba_{\ell,\bk}\right)\right|\omega_{\bk} + \sum_{\bk\notin\FN} |\varphi_{\bk}|\omega_{\bk}.
\end{align}

%\begin{rem}
%	It is important to note that the $\varepsilon$ bound discussed above is only applied to the initial time step. When we proceed to the next time step, the initial sequence is updated to $\varphi=a(h)$, but the explicit expression of $a(h)$ is not provided by this procedure. Instead, the error estimate at the endpoint of the current time step is used to connect to the next time step. The details of how we obtain such an error bound are discussed in Section \ref{sec:time_stepping} of the present paper.
%%	We note that this bound is only used for the first step of time stepping. 
%%	When we proceed to the next time step, the initial sequence is updated to $\varphi=a(h)$.
%%	However, the explicit expression of $a(h)$ is not provided by this procedure. Instead, the error estimate at the endpoint of the current time step is used to connect to tne next time steps. See Section \ref{sec:time_stepping} for details how to get such an error bound.
%\end{rem}

\paragraph{The bound \boldmath$\delta$\boldmath.}

We next introduce the $\delta$ bound of Theorem \ref{thm:local_inclusion}.
From the definition of the operator $F$ in \eqref{ODE_Cauchy} and that of the approximate solution \eqref{eq:a_bar},  we recall
\begin{align}\label{eq:F_ba}
	(F_{\bk}(\ba))_2(t) = \begin{cases}
		\frac{d}{dt}\ba_{\bk}(t)-\mu_{\bk}\ba_{\bk}(t)-\jp{\im^q} (\bk\bL)^q\cN_{\bk}(\ba(t)) & (\bk\in\FN)\\
		-\jp{\im^q} (\bk\bL)^q\cN_{\bk}(\ba(t)) & (\bk\notin\FN).
	\end{cases}
\end{align}
We suppose that the derivative of $\ba$ with respect to $t$ is denoted by
\begin{align}
	\frac{d}{dt}\ba_{\bk}(t) = \ba_{0,\bk}^{(1)} + 2\sum_{\ell=1}^{n-2}\ba_{\ell,\bk}^{(1)} T_{\ell}(t),\quad \bk\in \FN,
\end{align}
where the coefficients $\ba_{\ell,\bk}^{(1)}$ can be computed by the explicit formula, see, e.g., \cite[Sec.\ 2.4.5]{Chebyshev_book},
\begin{align}
	\ba_{\ell,\bk}^{(1)}=\sum_{r =\ell+1 \atop r-\ell\ \textrm{odd}}^{n-1} 2r\ba_{r,\bk},\quad\ell=0,\dots,n-1,\quad\bk\in\FN.
\end{align}

On the one hand, for $\bk\in\FN$, it follows from \eqref{eq:F_ba} and the fact $|T_\ell(t)|\le 1$ for all $\ell\ge0$ that
\begin{align}
	&\sup_{t \in J}\left|(F_{\bk}(\ba))_2(t)\right|\\
	&=\sup_{t \in J}\left|\frac{d}{dt}\ba_{\bk}(t)-\mu_{\bk}\ba_{\bk}(t)-\jp{\im^q}(\bk\bL)^q\cN_{\bk}(\ba(t))\right|\\
	&\le \sum_{\ell\ge 0}\left|\ba_{\ell,\bk}^{(1)}- \mu_{\bk}\ba_{\ell,\bk}-\jp{\im^q}(\bk\bL)^q\cN_{\ell,\bk}(\ba)\right|\mathfrak{m}_\ell\\
	&\le \sum_{\ell=0}^{n-2}\left|\ba_{\ell,\bk}^{(1)}- \mu_{\bk}\ba_{\ell,\bk}-\jp{\im^q}(\bk\bL)^q\cN_{\ell,\bk}(\ba)\right|\mathfrak{m}_\ell + \left| \mu_{\bk}\ba_{n-1,\bk}+\jp{\im^q}(\bk\bL)^q\cN_{n-1,\bk}(\ba)\right|\mathfrak{m}_{n-1}+\sum_{\ell\ge n}\left|(\bk\bL)^q\cN_{\ell,\bk}(\ba)\right|\mathfrak{m}_\ell,\label{eq:defect_Fn}
\end{align}
where $\mathfrak{m}_\ell$ denotes the multiplicity for the Chebyshev coefficients 
\begin{align}
	\mathfrak{m}_\ell=\begin{cases}
		1 & (\ell = 0)\\
		2 & (\ell>0)
	\end{cases}
\end{align}
and
$\cN_{\ell,\bk}(\ba)$ is the Chebyshev-Fourier coefficients of the nonlinear term $\cN_{\bk}(\ba(t))$ such that
\begin{align}
	\cN_{\bk}(\ba(t)) = \sum_{\ell\ge0}\cN_{\ell,\bk}(\ba)\mathfrak{m}_\ell T_{\ell}(t).
\end{align} 

On the other hand, in the case of $\bk\notin\FN$, we have from \eqref{eq:F_ba}
\begin{align}\label{eq:defect_tail}
	\sup_{t \in J}\left|(F_{\bk}(\ba))_2(t)\right|=
	\sup_{t \in J}\left|(\bk\bL)^q\cN_{\bk}(\ba(t))\right|\le \sum_{\ell\ge 0}\left|(\bk\bL)^q\cN_{\ell,\bk}(\ba)\right|\mathfrak{m}_\ell.
\end{align}
It is worth noting that if we consider the cubic nonlinearity, i.e., $\cN_{\bk}(a) = -(a^3)_{\bk}$, for instance, the above coefficients are given by the discrete convolution for $d+1$ dimensional tensor.
\begin{align}
	\cN_{\ell,\bk}(\ba)=-\sum_{\substack{ \ell_1+\ell_2+\ell_3 = \pm\ell\\\bk_1+\bk_2 + \bk_3 = \pm\bk\\ |\ell_i|<n,~|\bk_i| \in \FN}} \ba_{|\ell_1|,\bk_1}\ba_{|\ell_2|,\bk_2}\ba_{|\ell_3|,\bk_3}.
\end{align}
Furthermore, thanks to the finiteness of the nonzero elements in $\ba$, these coefficients also have a finite number of nonzero elements. The indices of these nonzero elements are  $\bk\in \bm{F_{3N-2}}$ in the Fourier dimension and $\ell=0,\dots,3(n-1)$ in the Chebyshev dimension.
This implies that the last term of \eqref{eq:defect_Fn} and that of \eqref{eq:defect_tail} are finite sums in this case.

Finally, the defect bound $\delta$ is given from \eqref{eq:defect_Fn} and \eqref{eq:defect_tail} by
\begin{align}
	\left\|(F(\ba))_2\right\| &= \sup_{t \in J}\left(\sum_{\bk\in\FN}\left|(F_{\bk}(\ba))_2(t)\right|\omega_{\bk} + \sum_{\bk\notin\FN}\left|(F_{\bk}(\ba))_2(t)\right|\omega_{\bk}\right)\\
	&\le \sum_{\bk\in\FN} \sup_{t \in J}\left|(F_{\bk}(\ba))_2(t)\right|\omega_{\bk} + \sum_{\bk\notin\FN} \sup_{t \in J}\left|(F_{\bk}(\ba))_2(t)\right|\omega_{\bk}\\
	&\le  \sum_{\bk\in\FN}\Bigg(\sum_{\ell=0}^{n-2}\left|\ba_{\ell,\bk}^{(1)}- \mu_{\bk}\ba_{\ell,\bk}-\jp{\im^q} (\bk\bL)^q\cN_{\ell,\bk}(\ba)\right|\mathfrak{m}_\ell + \left| \mu_{\bk}\ba_{n-1,\bk}+\jp{\im^q}(\bk\bL)^q\cN_{n-1,\bk}(\ba)\right|\mathfrak{m}_{n-1}\\
	&\hphantom{\le}\quad +\sum_{\ell\ge n}\left|(\bk\bL)^q\cN_{\ell,\bk}(\ba)\right|\mathfrak{m}_\ell\Bigg)\omega_{\bk} + \sum_{\bk\notin\FN}\sum_{\ell\ge 0}\left|(\bk\bL)^q\cN_{\ell,\bk}(\ba)\right|\mathfrak{m}_\ell\omega_{\bk}\bydef \delta.\label{eq:delta_bound}
\end{align}

\section{Multi-step scheme of rigorous integration}\label{sec:time_stepping}

In this section, a \emph{multi-step scheme} is introduced to extend the existence time of the solution to the initial value problem \eqref{eq:IVP_PDE}.
\jp{This technique is similar to the \emph{domain decomposition} of the time interval, which is standard for computer-assisted proofs of integrating differential equations presented in \cite{JBMaxime,MR2518006}, etc.}
By defining the $F$ map within discrete time intervals and numerically verifying the zero-finding problem for the coupled system, it is possible to rigorously include the solution in multiple time intervals.
In this context, we consider a sequence of \emph{time steps} denoted by $J_i = (t_{i-1}, t_i]$ ($i=1,2,\dots,K$), where $0 = t_0 < t_1 < \dots<t_K$ and the step size $\tau_i\bydef t_i-t_{i-1}$ can be changed.
On each time step $J_i$, we represent the function spaces $X$, $Y$, and $\cD$ as $X_i$, $Y_i$, and $\cD_i$, respectively.
We denote the Banach space
\begin{align}
	\bm{X}\bydef\prod_{i=1}^{K} X_i.
\end{align}
The norm on the space $\bm{X}$ is defined for given $\bm{a}=(a^{J_1},a^{J_2},\dots,a^{J_K})\in\bm{X}$ by
\begin{align}
	\|\bm{a}\|_{\bm{X}} = \max\left\{\left\|a^{J_1}\right\|_{X_1},\left\|a^{J_2}\right\|_{X_2},\dots,\left\|a^{J_K}\right\|_{X_K}\right\},
\end{align}
where the superscript is used to indicate the dependence on each time step.
Similarly, we use the bold style notations in the same manner for denoting $\bm{\cD}\bydef\prod_{i=1}^{K} \cD_i$, $\bm{Y}\bydef\prod_{i=1}^{K} Y_i$, $\bm{\ell}_{\omega}^1=\prod_{i=1}^{K} \ell_{\omega}^1$, and $\bm{Y}\times \bm{\ell}_{\omega}^1\bydef\prod_{i=1}^{K} Y_i\times \ell_{\omega}^1$.

The mapping $F:\jp{\bm{\cD}}\to\bm{Y}\times \bm{\ell}_{\omega}^1$ for the initial value problem over multiple time steps is defined by
\begin{align}\label{eq:mapF_multisteps}
	F(\bm{a})=\left(F_1\left(a^{J_1}\right), F_2\left(a^{J_1},a^{J_2}\right),\dots,F_K\left(a^{J_{K-1}},a^{J_{K}}\right)\right),
\end{align}
where $F_1:\jp{\cD_1}\to Y_1\times \ell_{\omega}^1$ and $F_i:\jp{\cD_{i-1}}\times \jp{\cD_i}\to Y_i\times \ell_{\omega}^1$ ($i=2,3,\dots,K$) are given by
\begin{align}\label{Fi_maps}
	F_1\left(a^{J_1}\right)&\bydef\left(\dot{a}^{J_1}-\cL a^{J_1}-\cQ\,\cN\left(a^{J_1}\right),~a^{J_1}(t_0)-\varphi\right)\\
	F_2\left(a^{J_1},a^{J_2}\right)&\bydef\left(\dot{a}^{J_2}-\cL a^{J_2}-\cQ\,\cN\left(a^{J_2}\right),~a^{J_2}(t_1)-a^{J_1}(t_1)\right)\\
	&~~\vdots\\
	F_K\left(a^{J_{K-1}},a^{J_{K}}\right)&\bydef\left(\dot{a}^{J_{K}}-\cL a^{J_K}-\cQ\,\cN\left(a^{J_K}\right),~a^{J_K}(t_{K-1})-a^{J_{K-1}}(t_{K-1})\right).
\end{align}
As we denoted in Section \ref{sec:fixed-point-form}, the first and second components of $F_i$ ($i=1,2,\dots, K$) are also denoted by $(F_i)_1$ and $(F_i)_{2}$, respectively. From the above construction, solving the initial value problem \eqref{eq:IVP_PDE} over multiple time steps reduces into the zero-finding problem for the map $F$ given in \eqref{eq:mapF_multisteps}.

The approximate solution is set by
$\bm{\ba}=\left(	\ba^{J_1},\ba^{J_2},\dots,\ba^{J_K}\right)$,
which is numerically computed on each time step $J_i$.
The Fr\'echet derivative at the approximate solution $DF(\bm{\ba})$ is denoted by
\begin{align}\label{eq:DF_multisteps}
	DF(\bm{\ba})\bm{h}\bydef\begin{bmatrix}
		\left(\dot{h}^{J_1}-\cL h^{J_1}-\cQ D\cN\left(\ba^{J_1}\right)h^{J_1},~h^{J_1}(t_0)\right)\\
		\left(\dot{h}^{J_2}-\cL h^{J_2}-\cQ D\cN\left(\ba^{J_2}\right)h^{J_2},~h^{J_2}(t_1)-h^{J_1}(t_1)\right)\\
		\vdots\\
		\left(\dot{h}^{J_{K}}-\cL h^{J_K}-\cQ\,\cN\left(\ba^{J_K}\right)h^{J_K},~h^{J_K}(t_{K-1})-h^{J_{K-1}}(t_{K-1})\right)
	\end{bmatrix}\in \bm{Y}\times \bm{\ell}_{\omega}^1
\end{align}
for $\bm{h}=(h^{J_1},h^{J_2},\dots,h^{J_K})$.

To define the Newton-like operator for validating the solution over multiple time steps, we obtain a formula for the inverse of the Fr\'echet derivative, that is, we solve $DF(\bm{\ba})\bm{h}=(\bm{p},\bm{\phi})$ for given $\bm{p}=(p^{J_1},p^{J_2},\dots,p^{J_K})\in\bm{Y}$ and $\bm{\phi}=(\phi^{J_1},\phi^{J_2},\dots,\phi^{J_K})$.
\jp{
In order to perform this task, we first construct the evolution operators $\{U_{J_i}(t,s)\}_{t_{i-1}\le s\le t\le t_i}$ ($i=1,2,\dots,K$) validated on each time step $J_i$ via Theorem \ref{thm:solutionmap} in Section \ref{sec:solution_map} and then, denoting ${\cS_{J_i}} \bydef \{(t, s) : t_{i-1} < s < t\le t_{i}\}$ $(i=1,2,\dots,K)$, we obtain positive constants $\bm{{W^{\cS_{J_i}}}}$, $\bm{{W_q^{\cS_{J_i}}}}$, $\bm{\jp{W^{(t_i,J_i)}}}$, $\bm{\jp{W^{(t_i,J_i)}_q}}$, $\bm{\jp{W^{(t_i,t_{i-1})}}}$ such that
\begin{equation} \label{eq:bounds_evolution_operators_multi-steps}
\begin{aligned} 
		\sup_{(t,s)\in\jp{\cS_{J_i}}}\left\|U_{J_i}(t,s)\right\|_{B (\ell_{\omega}^1)}\le \bm{\jp{W^{\cS_{J_i}}}},&\quad
		\sup_{(t,s)\in\jp{\cS_{J_i}}}(t-s)^\gamma\left\|U_{J_i}(t,s)\cQ\right\|_{B (\ell_{\omega}^1)}\le \bm{{W_q^{\cS_{J_i}}}},
\\
		\sup_{s\in J_i}\|U_{J_i}(t_i,s)\|_{B(\ell_{\omega}^1)}\le \bm{\jp{W^{(t_i,J_i)}}},&\quad
		\sup_{s\in[t_{i-1},t_i)}(t_i-s)^\gamma\|U_{J_i}(t_i,s)\cQ\|_{B(\ell_{\omega}^1)}\le \bm{\jp{W^{(t_i,J_i)}_q}},
\\
		\|U_{J_i}(t_i,t_{i-1})\|_{B(\ell_{\omega}^1)}\le \bm{\jp{W^{(t_i,t_{i-1})}}},&
	\end{aligned}
	\end{equation}
respectively.
Note that these constants fall into three types: First, $\bm{\jp{W^{\cS_{J_i}}}}$ and $\bm{{W_q^{\cS_{J_i}}}}$ are bounds for the two parameters $(t,s)$, with the superscript $\cS_{J_i}$ indicating the dependency $(t,s)\in\cS_{J_i}$. Second, $\bm{\jp{W^{(t_i,J_i)}}}$ and $\bm{\jp{W^{(t_i,J_i)}_q}}$ are bounds for the one parameter $s$, where the superscript $(t_i,J_i)$ indicates that $t=t_i$ is fixed and $s\in J_i$. Third, $\bm{\jp{W^{(t_i,t_{i-1})}}}$ denotes the operator norm at the specific times $t=t_i$ and $s=t_{i-1}$.

\begin{rem}
Note that computing each evolution operator $U_{J_i}(t,s)$ by solving the linearized problems and then obtaining the bounds in \eqref{eq:bounds_evolution_operators_multi-steps} at each step $i=1,\dots,K$ can be done independently. Hence each computation can be made in parallel, which implies that the computational cost is additive in time rather than multiplicative. 
\end{rem}
Having obtained the evolution operators $U_{J_i}(t,s)$ on each $J_i$, the solution $\bm{h}$ of  $DF(\bm{\ba})\bm{h}=(\bm{p},\bm{\phi})$ is derived iteratively as follows:}
The first step is the same as that presented in Section \ref{sec:inverse_DF}. The variation of constants formula yields
\begin{align}
	h^{J_1}(t)=U_{J_1}(t,t_0)\phi^{J_1}+\int_{t_0}^tU_{J_1}(t,s)p^{J_1}(s)ds,\quad t\in J_1.
\end{align}

Next, the $h^{J_2}$ is given by
\begin{align}
	h^{J_2}(t)&=U_{J_2}(t,t_1)\left(\phi^{J_2}+h^{J_1}(t_1)\right)+\int_{t_1}^tU_{J_2}(t,s)p^{J_2}(s)ds\\
	&=U_{J_2}(t,t_1)\left(\phi^{J_2}+U_{J_1}(t_1,t_0)\phi^{J_1}+\int_{J_1}U_{J_1}(t_1,s)p^{J_1}(s)ds\right)+\int_{t_1}^tU_{J_2}(t,s)p^{J_2}(s)ds\\
	&=U_{J_2}(t,t_1)\phi^{J_2}+\int_{t_1}^tU_{J_2}(t,s)p^{J_2}(s)ds+U_{J_1\cup J_2}(t,t_0)\phi^{J_1}+\int_{J_1}U_{J_1\cup J_2}(t,s)p^{J_1}(s)ds,\quad t\in J_2.\label{eq:invDF_J2}
\end{align}
Here, we denote $U_{J_1\cup J_2}(t,t_0)\bydef U_{J_2}(t,t_1)U_{J_1}(t_1,t_0)$ ($t\in J_2$). %from the property (i) of the evolution operator introduced in Section \ref{sec:inverse_DF}.
We iterate this process for $i=1,2,\dots,K$. The resulting $h^{J_K}$ is obtained for $t\in J_K$ by
\begin{align}
	&h^{J_K}(t)\\
	&=U_{J_K}(t,t_{K-1})\left(\phi^{J_K}+h^{J_{K-1}}(t_{K-1})\right)+\int_{t_{K-1}}^{t}U_{J_K}(t,s)p^{J_K}(s)ds\\
	&=U_{J_K}(t,t_{K-1})\left(\phi^{J_K}+U_{J_{K-1}}(t_{K-1},t_{K-2})h^{J_{K-2}}(t_{K-2})+\int_{J_{K-1}}U_{J_{K-1}}(t_{K-1},s)p^{J_{K-1}}(s)ds\right)\\
	&\hphantom{=}\quad+\int_{t_{K-1}}^tU_{J_K}(t,s)p^{J_K}(s)ds\\
	&~~\vdots\\
	&=U_{J_K}(t,t_{K-1})\phi^{J_K} + \int_{t_{K-1}}^tU_{J_K}(t,s)p^{J_K}(s)ds+ \sum_{i=1}^{K-1}\left( U_{\bigcup_{j=i}^KJ_j}(t,t_{i-1})\phi^{J_{i}} + \int_{J_i}U_{\bigcup_{j=i}^KJ_j}(t,s)p^{J_i}(s)ds\right),\label{eq:invDF_Jk}
\end{align}
where
\begin{align}
	U_{\bigcup_{j=i}^KJ_j}(t,t_{i-1})\bydef U_{J_K}(t,t_{K-1})U_{J_{K-1}}(t_{K-1},t_{J_{K-2}})\cdots U_{J_i}(t_i,t_{i-1}),\quad t\in J_K.
\end{align}
%holds for $t\in J_K$.
Finally, we get an unique expression of the inverse of $DF(\bm{\ba})$ as
\begin{align}
	&DF(\bm{\ba})^{-1}(\bm{p},\bm{\phi})=\\
	&\begin{bmatrix}
		U_{J_1}(t,t_0)\phi^{J_1}+\int_{t_0}^tU_{J_1}(t,s)p^{J_1}(s)ds\\
		U_{J_2}(t,t_1)\phi^{J_2}+\int_{t_1}^tU_{J_2}(t,s)p^{J_2}(s)ds+U_{J_1\cup J_2}(t,t_0)\phi^{J_1}+\int_{J_1}U_{J_1\cup J_2}(t,s)p^{J_1}(s)ds\\
		\vdots\\
		U_{J_K}(t,t_{K-1})\phi^{J_K} + \int_{t_{K-1}}^tU_{J_K}(t,s)p^{J_K}(s)ds+\sum_{i=1}^{K-1}\left( U_{\bigcup_{j=i}^KJ_j}(t,t_{i-1})\phi^{J_{i}} + \int_{J_i}U_{\bigcup_{j=i}^KJ_j}(t,s)p^{J_i}(s)ds\right)
	\end{bmatrix}.\label{eq:DFinv_multisteps}
\end{align}

The Newton-like operator is defined by
\begin{align}\label{eq:opT_multisteps}
	T(\bm{a})\bydef DF(\bm{\ba})^{-1}\left(DF(\bm{\ba})\bm{a}-F(\bm{a})\right),\quad T:\bm{X}\to\jp{\bm{\cD}}\subset\bm{X}.
\end{align}
Therefore, we validate the solution of $F(\bm{a})=0$ given in \eqref{eq:mapF_multisteps} to show that $T:\bm{B}_K(\bm{\ba}, \bm{\varrho}) \to \bm{B}_K(\bm{\ba}, \bm{\varrho})$ is a contraction mapping, for some $\bm{\varrho}>0$, where
\begin{align}\label{eq:theBall_multisteps}
	\bm{B}_K(\bm{\ba}, \bm{\varrho}) \bydef \left\{\bm{a} \in \bm{X}:\|\bm{a}-\bm{\ba}\|_{\bm{X}} \leq \bm{\varrho}\right\}.
\end{align}

\begin{lem}\label{lem:DFinv_estimate}
\jp{For each $i=1,2,\dots,K$, consider the positive constants $\bm{\jp{W^{\cS_{J_i}}}}$, $\bm{\jp{W_q^{\cS_{J_i}}}}$, $\bm{\jp{W^{(t_i,J_i)}}}$, $\bm{\jp{W^{(t_i,J_i)}_q}}$, $\bm{\jp{W^{(t_i,t_{i-1})}}}$ satisfying \eqref{eq:bounds_evolution_operators_multi-steps}}, and let $\bm{W_\ell}>0$, $\bm{W_{X}}>0$ and $\bm{W_{Y}}>0$ be defined by the matrix infinity norm of the following lower triangular matrices:
	\begin{align}
		\bm{W_\ell}\bydef
		\left\|\begin{pmatrix}
			\bm{\jp{W^{\cS_{J_1}}}}&0&&\\
			\bm{\jp{W^{\cS_{J_2}}}}\bm{\jp{W^{(t_1,t_0)}}}&\bm{\jp{W^{\cS_{J_2}}}}&$\Huge 0$&\\
			\vdots&\vdots&\ddots&\\
			\bm{\jp{W^{\cS_{J_K}}}}\bm{\jp{W^{(t_{K-1},t_0)}}}&\bm{\jp{W^{\cS_{J_K}}}}\bm{\jp{W^{(t_{K-1},t_1)}}}&\dots&\bm{\jp{W^{\cS_{J_K}}}}
%			\bm{\jp{W^{\cS_{J_K}}}}\bm{W^{t_{K-1}}}\cdots\bm{W^{t_{1}}}&\bm{\jp{W^{\cS_{J_K}}}}\bm{W^{t_{K-1}}}\cdots\bm{W^{t_{2}}}&\dots&\bm{\jp{W^{\cS_{J_K}}}}
		\end{pmatrix}\right\|_\infty,
	\end{align}
	\begin{align}
		\bm{W_X}\bydef
		&{\left\|\begin{pmatrix}
				\hat{\tau}_1\bm{\jp{W_q^{\cS_{J_1}}}}&0&&  \\
				\hat{\tau}_1\bm{\jp{W^{\cS_{J_2}}}}\bm{\jp{W^{(t_1,J_1)}_q}}&\hat{\tau}_2\bm{\jp{W_q^{\cS_{J_2}}}}&$\Huge 0$&\\
				\vdots&\vdots&\ddots&\\
				\hat{\tau}_1\bm{\jp{W^{\cS_{J_K}}}}\bm{\jp{W^{(t_{K-1},t_1)}}}\bm{\jp{W^{(t_1,J_1)}_q}}&\hat{\tau}_2\bm{\jp{W^{\cS_{J_K}}}}\bm{\jp{W^{(t_{K-1},t_2)}}}\bm{\jp{W^{(t_2,J_2)}_q}}&\dots&\hat{\tau}_K\bm{\jp{W_q^{\cS_{J_K}}}}
%				\hat{\tau}_1\bm{\jp{W^{\cS_{J_K}}}}\bm{W^{t_{K-1}}}\cdots\bm{W^{t_{2}}}\bm{\jp{W^{(t_1,J_1)}_q}}&\hat{\tau}_2\bm{\jp{W^{\cS_{J_K}}}}\bm{W^{t_{K-1}}}\cdots\bm{W^{t_{3}}}\bm{\jp{W^{(t_2,J_2)}_q}}&\dots&\hat{\tau}_K\bm{\jp{W_q^{\cS_{J_K}}}}
			\end{pmatrix}\right\|_{\infty},}
	\end{align}
	where $\hat{\tau}_i\bydef\frac{\tau_i^{1-\gamma}}{1-\gamma}$ ($i=1,2,\dots,K$), and
	\begin{align}
		\bm{W_Y}\bydef
		&{\left\|\begin{pmatrix}
				\tau_1\bm{\jp{W^{\cS_{J_1}}}}&0&&  \\
				\tau_1\bm{\jp{W^{\cS_{J_2}}}}\bm{\jp{W^{(t_1,J_1)}}}&\tau_2\bm{\jp{W^{\cS_{J_2}}}}&$\Huge 0$&\\
				\vdots&\vdots&\ddots&\\
				\tau_1\bm{\jp{W^{\cS_{J_K}}}}\bm{\jp{W^{(t_{K-1},t_1)}}}\bm{\jp{W^{(t_1,J_1)}}}&\tau_2\bm{\jp{W^{\cS_{J_K}}}}\bm{\jp{W^{(t_{K-1},t_2)}}}\bm{\jp{W^{(t_2,J_2)}}}&\dots&\tau_K\bm{\jp{W^{\cS_{J_K}}}}
%				\tau_1\bm{\jp{W^{\cS_{J_K}}}}\bm{W^{t_{K-1}}}\cdots\bm{W^{t_{2}}}\bm{\jp{W^{(t_1,J_1)}}}&\tau_2\bm{\jp{W^{\cS_{J_K}}}}\bm{W^{t_{K-1}}}\cdots\bm{W^{t_{3}}}\bm{\jp{W^{(t_2,J_2)}}}&\dots&\tau_K\bm{\jp{W^{\cS_{J_K}}}}
			\end{pmatrix}\right\|_{\infty},}
	\end{align}
	respectively. \jp{Here, we denote $\bm{W^{(t_i,t_j)}}=\bm{W^{(t_i,t_{i-1})}}\cdots\bm{W^{(t_{j+1},t_j)}}$ with $0\le j<i\le K-1$.} Then, these constants obey the following inequality:
	\begin{align}
		\left\|DF(\bm{\ba})^{-1}(\cQ\bm{\psi}+\bm{p},\bm{\phi})\right\|_{\bm{X}}\le \bm{W_\ell}\|\bm{\phi}\|_{\bm{\omega}} + \bm{W_X}\|\bm{\psi}\|_{\bm{X}} + \bm{W_Y}\|\bm{p}\|_{\bm{X}}\quad\mbox{for}\quad\bm{\phi}\in \bm{\ell_{\omega}^1},~ \bm{\psi}, \bm{p}\in\bm{X},
	\end{align}
	where $\|\bm{\phi}\|_{\bm{\omega}}\bydef \max\{\|\phi^{J_i}\|_\omega\}$.
	%	for $\bm{\phi}\in \bm{\ell_{\omega}^1}$, $\bm{\psi}, \bm{p}\in\bm{X}$.
\end{lem}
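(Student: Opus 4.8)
The plan is to estimate the explicit formula \eqref{eq:DFinv_multisteps} for $DF(\bm{\ba})^{-1}$ componentwise, using only the composition identity (i) for the evolution operators and the five families of bounds collected in \eqref{eq:bounds_evolution_operators_multi-steps}. First note that $\cQ\bm{\psi}+\bm{p}\in\bm{Y}$, since $\cQ$ maps $\ell_{\omega}^1$ into $\ell_{\omega,-q}^1$ and $\ell_{\omega}^1\hookrightarrow\ell_{\omega,-q}^1$; hence each component $\cQ\psi^{J_i}+p^{J_i}$ lies in $Y_i$ and the left-hand side is well defined. Writing out the $i$-th component of \eqref{eq:DFinv_multisteps} with $\bm{p}$ replaced by $\cQ\bm{\psi}+\bm{p}$, for $t\in J_i$ one has
\begin{align*}
h^{J_i}(t)&=U_{J_i}(t,t_{i-1})\phi^{J_i}+\int_{t_{i-1}}^{t}U_{J_i}(t,s)\bigl(\cQ\psi^{J_i}(s)+p^{J_i}(s)\bigr)\,ds\\
&\quad+\sum_{j=1}^{i-1}\left(U_{\bigcup_{l=j}^{i}J_l}(t,t_{j-1})\phi^{J_j}+\int_{J_j}U_{\bigcup_{l=j}^{i}J_l}(t,s)\bigl(\cQ\psi^{J_j}(s)+p^{J_j}(s)\bigr)\,ds\right).
\end{align*}
Taking $\|\cdot\|_{\omega}$ and the triangle inequality splits the estimate into a $\phi$-part, a $\cQ\psi$-part and a $p$-part, each containing one ``own-interval'' term ($j=i$) and $i-1$ ``earlier-interval'' terms.

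The core of the argument is the bookkeeping of the composed propagators. By property (i), for $j<i$, $t\in J_i$ and $s\in J_j$,
\[
U_{\bigcup_{l=j}^{i}J_l}(t,s)=U_{J_i}(t,t_{i-1})\,U_{J_{i-1}}(t_{i-1},t_{i-2})\cdots U_{J_{j+1}}(t_{j+1},t_j)\,U_{J_j}(t_j,s),
\]
and the same identity with $t_{j-1}$ in place of $s$, in which case the rightmost factor $U_{J_j}(t_j,t_{j-1})$ is a full step of $J_j$. I then bound: the running factor $U_{J_i}(t,t_{i-1})$ by $\bm{W^{\cS_{J_i}}}$ (the value $s=t_{i-1}$ being approached from within $\cS_{J_i}$ by strong continuity of $U$); each full-step propagator $U_{J_l}(t_l,t_{l-1})$ by $\bm{W^{(t_l,t_{l-1})}}$, so a chain of them contributes $\bm{W^{(t_{i-1},t_{j-1})}}$ in the $\phi$-case and $\bm{W^{(t_{i-1},t_j)}}$ in the integral case (with the convention that this product is $1$ when the chain is empty); the leftmost partial step $U_{J_j}(t_j,s)$ of an integral term by $\bm{W^{(t_j,J_j)}}$ when it acts on a $p$-term and by $(t_j-s)^{-\gamma}\bm{W^{(t_j,J_j)}_q}$ when it carries the factor $\cQ$; and for own-interval terms the integrand $U_{J_i}(t,s)$ by $\bm{W^{\cS_{J_i}}}$, resp.\ $(t-s)^{-\gamma}\bm{W_q^{\cS_{J_i}}}$ with $\cQ$. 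The $s$-integrations are then carried out via
\[
\int_{t_{i-1}}^{t}(t-s)^{-\gamma}\,ds\le\frac{\tau_i^{1-\gamma}}{1-\gamma}=\hat{\tau}_i,\qquad \int_{J_j}(t_j-s)^{-\gamma}\,ds\le\hat{\tau}_j,\qquad \int_{J_j}ds=\tau_j,
\]
while $\|\psi^{J_j}(s)\|_{\omega}$ and $\|p^{J_j}(s)\|_{\omega}$ are bounded by $\|\psi^{J_j}\|_{X_j}$ and $\|p^{J_j}\|_{X_j}$.

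Collecting these estimates, $\sup_{t\in J_i}\|h^{J_i}(t)\|_{\omega}$ is bounded by row $i$ of the lower-triangular matrix defining $\bm{W_\ell}$ applied to $(\|\phi^{J_j}\|_{\omega})_{j=1}^{K}$, plus row $i$ of the matrix defining $\bm{W_X}$ applied to $(\|\psi^{J_j}\|_{X_j})_{j=1}^{K}$, plus row $i$ of the matrix defining $\bm{W_Y}$ applied to $(\|p^{J_j}\|_{X_j})_{j=1}^{K}$: the products $\bm{W^{\cS_{J_i}}}\bm{W^{(t_{i-1},t_{j-1})}}$, $\hat{\tau}_j\bm{W^{\cS_{J_i}}}\bm{W^{(t_{i-1},t_j)}}\bm{W^{(t_j,J_j)}_q}$ and $\tau_j\bm{W^{\cS_{J_i}}}\bm{W^{(t_{i-1},t_j)}}\bm{W^{(t_j,J_j)}}$ produced above are precisely the off-diagonal entries, while $\bm{W^{\cS_{J_i}}}$, $\hat{\tau}_i\bm{W_q^{\cS_{J_i}}}$, $\tau_i\bm{W^{\cS_{J_i}}}$ are the diagonal ones. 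Taking the maximum over $i$ and using $\|\bm{\phi}\|_{\bm{\omega}}=\max_j\|\phi^{J_j}\|_{\omega}$, $\|\bm{\psi}\|_{\bm{X}}=\max_j\|\psi^{J_j}\|_{X_j}$, $\|\bm{p}\|_{\bm{X}}=\max_j\|p^{J_j}\|_{X_j}$, together with $\max_i\sum_j M_{ij}v_j\le\|M\|_{\infty}\max_j v_j$ for nonnegative $M,v$, converts each row-maximum into the matrix infinity norm, giving the claimed inequality. The main difficulty is purely organizational — correctly identifying, within each nested composition, which of the five bound types in \eqref{eq:bounds_evolution_operators_multi-steps} applies to each factor, and checking that the singular kernel $(t-s)^{-\gamma}$ appears only on the initial partial step of a composition, so that it is always integrated against a bounded $X$-function and produces the $\hat{\tau}$-factors; the case $q=0$ is recovered by setting $\gamma=0$, $\hat{\tau}_i=\tau_i$ and $\bm{W_q^{\cS_{J_i}}}=\bm{W^{\cS_{J_i}}}$.
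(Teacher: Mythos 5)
Your proposal is correct and follows essentially the same route as the paper: estimate each component of the explicit formula \eqref{eq:DFinv_multisteps}, split composed propagators via the cocycle property into a running factor bounded by $\bm{W^{\cS_{J_i}}}$, full-step factors bounded by $\bm{W^{(t_l,t_{l-1})}}$, and an initial partial step bounded by $\bm{W^{(t_j,J_j)}}$ or $(t_j-s)^{-\gamma}\bm{W^{(t_j,J_j)}_q}$, then integrate the singular kernel to produce the $\hat{\tau}_j$ factors and assemble the rows into the infinity norms of the triangular matrices. (Only a cosmetic slip: the partial-step factor $U_{J_j}(t_j,s)$ is the rightmost, i.e.\ first-applied, factor in the composition, not the leftmost; this does not affect the argument.)
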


\begin{proof}
	For given $\bm{\phi}=(\phi^{J_1},\phi^{J_2},\dots,\phi^{J_K})\in \bm{\ell_{\omega}^1}$, $\bm{\psi}=(\psi^{J_1},\psi^{J_2},\dots,\psi^{J_K})\in\bm{X}$ and $\bm{p}=(p^{J_1},p^{J_2},\dots,p^{J_K})\in\bm{X}$,
	taking $X_i$ norm in each component of \eqref{eq:DFinv_multisteps}, we have from \eqref{eq:inv_DF_estimate}
	\begin{align}
		&\sup_{t \in J_1}\left\|U_{J_1}(t,t_0)\phi^{J_1}+\int_{t_0}^tU_{J_1}(t,s)(\cQ\psi^{J_1}(s) + p^{J_1}(s))ds\right\|_{\omega}\\
		&\le \bm{\jp{W^{\cS_{J_1}}}}\|\phi^{J_1}\|_{\omega} + \hat{\tau}_1\bm{\jp{W_q^{\cS_{J_1}}}}\|\psi^{J_1}\|_{X_1} + \tau_1\bm{\jp{W^{\cS_{J_1}}}}\|p^{J_1}\|_{X_1},
	\end{align}
	where $\hat{\tau}_i=\frac{\tau_i^{1-\gamma}}{1-\gamma}$ ($i=1,2,\dots,K$). We also have
	\begin{align}
		&\sup_{t \in J_2}\Bigg\|U_{J_2}(t,t_1)\phi^{J_2}+\int_{t_1}^tU_{J_2}(t,s)(\cQ\psi^{J_2}(s) + p^{J_2}(s))ds\\
		&\quad+U_{J_1\cup J_2}(t,t_0)\phi^{J_1} +\int_{J_1}U_{J_1\cup J_2}(t,s)(\cQ\psi^{J_1}(s) + p^{J_1}(s))ds\Bigg\|_{\omega}\\
		&\le \bm{\jp{W^{\cS_{J_2}}}}\bm{\jp{W^{(t_1,t_0)}}}\|\phi^{J_1}\|_{\omega} + \bm{\jp{W^{\cS_{J_2}}}}\|\phi^{J_2}\|_{\omega} + \hat{\tau}_1\bm{\jp{W^{\cS_{J_2}}}}\bm{\jp{W^{(t_1,J_1)}_q}}\|\psi^{J_1}\|_{X_1} +\hat{\tau}_2\bm{\jp{W_q^{\cS_{J_2}}}}\|\psi^{J_2}\|_{X_2}\\
		&\quad\hphantom{\le}\quad+ \tau_1\bm{\jp{W^{\cS_{J_2}}}}\bm{\jp{W^{(t_1,J_1)}}}\|p^{J_1}\|_{X_1} +\tau_2\bm{\jp{W^{\cS_{J_2}}}}\|p^{J_2}\|_{X_2},
	\end{align}
	where we used
	\begin{align}
		\sup_{t \in J_2}\|U_{J_1\cup J_2}(t,t_0)\|_{B (\ell_{\omega}^1)}\le \sup_{t \in J_2}\|U_{J_2}(t,t_1)\|_{B (\ell_{\omega}^1)}\|U_{J_1}(t_1,t_0)\|_{B (\ell_{\omega}^1)}\le\bm{\jp{W^{\cS_{J_2}}}}\bm{\jp{W^{(t_1,t_0)}}}
	\end{align}
	\begin{align}
		\sup_{t\in J_2,\,s\in J_1}(t_1-s)^\gamma\|U_{J_1\cup J_2}(t,s)\cQ\|_{B (\ell_{\omega}^1)}\le \sup_{t \in J_2}\|U_{J_2}(t,t_1)\|_{B (\ell_{\omega}^1)}\sup_{s\in J_1}(t_1-s)^\gamma\|U_{J_1}(t_1,s)\|_{B(\ell_{\omega}^1)}\le\bm{\jp{W^{\cS_{J_2}}}}\bm{\jp{W^{(t_1,J_1)}_q}}
	\end{align}
	\begin{align}
		\sup_{t\in J_2,\,s\in J_1}\|U_{J_1\cup J_2}(t,s)\|_{B (\ell_{\omega}^1)}\le \sup_{t \in J_2}\|U_{J_2}(t,t_1)\|_{B (\ell_{\omega}^1)}\sup_{s\in J_1}\|U_{J_1}(t_1,s)\|_{B(\ell_{\omega}^1)}\le\bm{\jp{W^{\cS_{J_2}}}}\bm{\jp{W^{(t_1,J_1)}}}.
	\end{align}
	Similarly, we obtain the $K$-th component
	\begin{align}
		&\sup_{t \in J_K}\Bigg\|U_{J_K}(t,t_{K-1})\phi^{J_K} + \int_{t_{K-1}}^tU_{J_K}(t,s)(\cQ\psi^{J_K}(s)+p^{J_K}(s))ds\\
		&\quad +\sum_{i=1}^{K-1}\left( U_{\bigcup_{j=i}^KJ_j}(t,t_{i-1})\phi^{J_{i}} + \int_{J_i}U_{\bigcup_{j=i}^KJ_j}(t,s)(\cQ\psi^{J_i}(s)+p^{J_i}(s))ds\right)\Bigg\|_{\omega}\\
		&\le \bm{\jp{W^{\cS_{J_K}}}}\bm{\jp{W^{(t_{K-1},t_0)}}}\|\phi^{J_1}\|_{\omega}+\bm{\jp{W^{\cS_{J_K}}}}\bm{\jp{W^{(t_{K-1},t_1)}}}\|\phi^{J_2}\|_{\omega}+\dots+\bm{\jp{W^{\cS_{J_K}}}}\|\phi^{J_K}\|_{\omega}\\
		&+\hat{\tau}_1\bm{\jp{W^{\cS_{J_K}}}}\bm{\jp{W^{(t_{K-1},t_1)}}}\bm{\jp{W^{(t_1,J_1)}_q}}\|\psi^{J_1}\|_{X_1} + \hat{\tau}_2\bm{\jp{W^{\cS_{J_K}}}}\bm{\jp{W^{(t_{K-1},t_2)}}}\bm{\jp{W^{(t_2,J_2)}_q}}\|\psi^{J_2}\|_{X_2} + \dots + \hat{\tau}_K\bm{\jp{W_q^{\cS_{J_K}}}}\|\psi^{J_K}\|_{X_K}\\
		&+\tau_1\bm{\jp{W^{\cS_{J_K}}}}\bm{\jp{W^{(t_{K-1},t_1)}}}\bm{\jp{W^{(t_1,J_1)}}}\|p^{J_1}\|_{X_1} + \tau_2\bm{\jp{W^{\cS_{J_K}}}}\bm{\jp{W^{(t_{K-1},t_2)}}}\bm{\jp{W^{(t_2,J_2)}}}\|p^{J_2}\|_{X_2} + \dots + \tau_K\bm{\jp{W^{\cS_{J_K}}}}\|p^{J_K}\|_{X_K},
	\end{align}
	\jp{where $\bm{W^{(t_i,t_j)}}=\bm{W^{(t_i,t_{i-1})}}\cdots\bm{W^{(t_{j+1},t_j)}}$ ($0\le j<i\le K-1$).}
	Summing up the above inequalities and taking the maximum norm of each components, the $\bm{X}$ norm of \eqref{eq:DFinv_multisteps} is given by
	\begin{align}
		\left\|DF(\bm{\ba})^{-1}(\cQ\bm{\psi}+\bm{p},\bm{\phi})\right\|_{\bm{X}}\le\bm{W_\ell}\|\bm{\phi}\|_{\bm{\omega,0}} + \bm{W_X}\|\bm{\psi}\|_{\bm{X}} + \bm{W_Y}\|\bm{p}\|_{\bm{X}}.
	\end{align}
\end{proof}

\begin{thm}[{\bf Local existence over multiple time steps}]\label{thm:local_inclusion_multisteps}
	Given the approximate solution $\bm{\ba}=\left(	\ba^{J_1},\ba^{J_2},\dots,\ba^{J_K}\right)\in \jp{\bm{\cD}}$, assume that
	\begin{align}
		\|(F_1(\ba^{J_1}))_1\|_{X_1}\le\delta^{J_1},~ \|(F_2(\ba^{J_1},\ba^{J_2}))_1\|_{X_2}\le\delta^{J_2},~\dots,~\|(F_K(\ba^{J_{K-1}},\ba^{J_K}))_1\|_{X_K}\le\delta^{J_K}
	\end{align}
	and
	\begin{align}
		\|(F_1(\ba^{J_1}))_2\|_{\omega}\le\varepsilon^{J_1},~ \|(F_2(\ba^{J_1},\ba^{J_2}))_2\|_{\omega}\le\varepsilon^{J_2},~\dots,~\|(F_K(\ba^{J_{K-1}},\ba^{J_K}))_2\|_{\omega}\le\varepsilon^{J_K}.
	\end{align}	
	Assume also that for $a_1^{J_i},a_2^{J_i}\in B_{J_i}\left(\ba^{J_i},\bm{\varrho}\right)$ $(i=1,2,\dots,K)$ there exists a non-decreasing function $L_{\ba^{J_i}}:(0,\infty)\to (0,\infty)$ such that
	\[
	\left\|\cN\left(a_1^{J_i}\right)-\mathcal{N}\left(a_2^{J_i}\right)-D\cN\left(\ba^{J_i}\right)\left(a_1^{J_i}-a_2^{J_i}\right)\right\|_{X_i}\le L_{\ba^{J_i}}(\bm{\varrho})\left\|a_1^{J_i}-a_2^{J_i}\right\|_{X_i}.
	\]
	Set $\bm{\delta}=\max\{\delta^{J_i}\}$, $\bm{\varepsilon}=\max\{\varepsilon^{J_i}\}$ and define
	\[
	p_{\bm{\varepsilon}}(\bm{\varrho})\bydef\bm{W_\ell}\,\bm{\varepsilon}+\bm{W_X}\max_{i}\left\{L_{\ba^{J_i}}(\bm{\varrho})\right\}\bm{\varrho}+\bm{W_Y}\bm{\delta},
	\]
	where positive constants $\bm{W_\ell}$, $\bm{W_X}$, and $\bm{W_Y}$ are defined in Lemma \ref{lem:DFinv_estimate}.
	If there exists $\bm{\varrho}_0>0$ such that
	\[
	p_{\bm{\varepsilon}}\left(\bm{\varrho}_0\right)\le\bm{\varrho}_0,
	\]
	then there exists a unique $\bm{\tilde{a}}\in\bm{B}_K(\bm{\ba}, \bm{\varrho}_0)$ satisfying $F(\bm{\tilde{a}})=0$ defined in \eqref{eq:mapF_multisteps}.
	Hence, the solution of the IVP \eqref{eq:IVP_PDE} exists locally over the multiple time steps.
\end{thm}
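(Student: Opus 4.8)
The plan is to mirror the single-step argument of Theorem~\ref{thm:local_inclusion}, replacing the scalar bound of Lemma~\ref{lem:DFinv_bounds} by its multi-step counterpart Lemma~\ref{lem:DFinv_estimate}. Concretely, I would show that the Newton-like operator $T$ defined in \eqref{eq:opT_multisteps} is a self-map and a contraction on $\bm{B}_K(\bm{\ba},\bm{\varrho}_0)$ and then invoke the Banach fixed point theorem, exactly as in the proof of Theorem~\ref{thm:local_inclusion}.

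The first step is to compute $DF(\bm{\ba})\bm{a}-F(\bm{a})$ explicitly. Using the definitions \eqref{eq:mapF_multisteps}, \eqref{Fi_maps} and \eqref{eq:DF_multisteps}, the linear parts containing $\dot{a}^{J_i}$, $\cL a^{J_i}$ and $\cQ D\cN(\ba^{J_i})a^{J_i}$ cancel, and the inter-step matching terms $a^{J_i}(t_{i-1})-a^{J_{i-1}}(t_{i-1})$ in the second components for $i\ge 2$ also cancel (only $\varphi$ survives for $i=1$); this is the multi-step analogue of the remark following \eqref{eq:opT}. Proceeding as for \eqref{eq:U_form}, one obtains component by component
\begin{align}
T(\bm{a})-\bm{\ba} = DF(\bm{\ba})^{-1}\Big(\big(\cQ\big(\cN(a^{J_i})-\cN(\ba^{J_i})-D\cN(\ba^{J_i})(a^{J_i}-\ba^{J_i})\big)-(F_i(\bm{\ba}))_1\big)_{i=1}^K,\ \big((F_i(\bm{\ba}))_2\big)_{i=1}^K\Big).
\end{align}

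Next I would apply Lemma~\ref{lem:DFinv_estimate} to this expression, with $\bm{\psi}=\big(\cN(a^{J_i})-\cN(\ba^{J_i})-D\cN(\ba^{J_i})(a^{J_i}-\ba^{J_i})\big)_i$, $\bm{p}=-\big((F_i(\bm{\ba}))_1\big)_i$ and $\bm{\phi}=\big((F_i(\bm{\ba}))_2\big)_i$. The hypotheses give $\|\bm{\phi}\|_{\bm{\omega}}\le\bm{\varepsilon}$ and $\|\bm{p}\|_{\bm{X}}\le\bm{\delta}$, while the Lipschitz-type bound on $\cN$ yields $\|\bm{\psi}\|_{\bm{X}}\le\max_i\{L_{\ba^{J_i}}(\bm{\varrho}_0)\}\bm{\varrho}_0$ on $\bm{B}_K(\bm{\ba},\bm{\varrho}_0)$. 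Hence $\|T(\bm{a})-\bm{\ba}\|_{\bm{X}}\le \bm{W_\ell}\bm{\varepsilon}+\bm{W_X}\max_i\{L_{\ba^{J_i}}(\bm{\varrho}_0)\}\bm{\varrho}_0+\bm{W_Y}\bm{\delta}=p_{\bm{\varepsilon}}(\bm{\varrho}_0)\le\bm{\varrho}_0$, so $T$ maps the ball into itself. For the contraction estimate, the same computation applied to $T(\bm{a}_1)-T(\bm{a}_2)$ leaves only the nonlinear term (the $\bm{\phi}$ and $\bm{p}$ contributions vanish since $\varphi$ and the defects cancel), so Lemma~\ref{lem:DFinv_estimate} gives $\|T(\bm{a}_1)-T(\bm{a}_2)\|_{\bm{X}}\le\bm{W_X}\max_i\{L_{\ba^{J_i}}(\bm{\varrho}_0)\}\|\bm{a}_1-\bm{a}_2\|_{\bm{X}}$, and $\bm{W_X}\max_i\{L_{\ba^{J_i}}(\bm{\varrho}_0)\}\le p_{\bm{\varepsilon}}(\bm{\varrho}_0)/\bm{\varrho}_0\le 1$ (strictly, as soon as $\bm{W_\ell}\bm{\varepsilon}+\bm{W_Y}\bm{\delta}>0$, which holds in all practical validations). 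The Banach fixed point theorem then gives a unique $\bm{\tilde a}\in\bm{B}_K(\bm{\ba},\bm{\varrho}_0)$ with $T(\bm{\tilde a})=\bm{\tilde a}$; since $DF(\bm{\ba})^{-1}$ in \eqref{eq:DFinv_multisteps} is a genuine injective inverse built from the validated evolution operators $U_{J_i}$, this is equivalent to $F(\bm{\tilde a})=0$, and the second-component conditions in \eqref{eq:mapF_multisteps} ensure the pieces glue continuously across the $t_i$, yielding a solution of \eqref{eq:IVP_PDE} on $\bigcup_{i=1}^K J_i$.

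There is no genuinely hard analytic step here: the heavy lifting—constructing and uniformly controlling each $U_{J_i}$ via Theorem~\ref{thm:solutionmap}, and the telescoping lower-triangular bound of Lemma~\ref{lem:DFinv_estimate}—has already been done. The only point that requires care is the bookkeeping in the algebraic cancellation above, namely checking that the inter-step matching terms of $DF(\bm{\ba})$ and $F$ cancel exactly so that $DF(\bm{\ba})\bm{a}-F(\bm{a})$ has precisely the block form fed into Lemma~\ref{lem:DFinv_estimate}, and that $\|\bm{\phi}\|_{\bm{\omega}}=\max_i\|(F_i(\bm{\ba}))_2\|_{\omega}\le\bm{\varepsilon}$ is consistent with the definition $\bm{\varepsilon}=\max_i\{\varepsilon^{J_i}\}$ entering $p_{\bm{\varepsilon}}$.
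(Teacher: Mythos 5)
Your proposal is correct and follows essentially the same route as the paper's proof: compute $T(\bm{a})-\bm{\ba}$ and $T(\bm{a}_1)-T(\bm{a}_2)$ explicitly so that only the nonlinear remainder and the defect/initial-mismatch terms survive, bound them via Lemma~\ref{lem:DFinv_estimate} to get the self-map and contraction estimates, and conclude with the Banach fixed point theorem and the injectivity of $DF(\bm{\ba})^{-1}$. Your extra remark on when the contraction constant is strictly below one is a small refinement of the same argument, not a different approach.
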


\begin{proof}
	We prove that the operator $T$ defined in \eqref{eq:opT_multisteps} is the contraction mapping on $\bm{B}_K(\bm{\ba}, \bm{\varrho}_0)$ in \eqref{eq:theBall_multisteps}.
	Firstly, for any $\bm{a}\in \bm{B}_K(\bm{\ba}, \bm{\varrho}_0)$, we have using \eqref{eq:DF_multisteps} and \eqref{eq:opT_multisteps}
	%	using \eqref{eqn:Fa=0}, \eqref{eq:opT}, the property of the evolution operator introduced in Section \ref{sec:fixed-point-form}, and \eqref{eq:property_evo_op}:
	\begin{align}
		&T(\bm{a})-\bm{\ba}\\
		&= DF(\bm{\ba})^{-1}\left(DF(\bm{\ba})\bm{a}-F(\bm{a})\right) - \bm{\ba}\\
		&=DF(\bm{\ba})^{-1}\left(DF(\bm{\ba})(\bm{a}-\bm{\ba})-F(\bm{a})\right)\\
		&=DF(\bm{\ba})^{-1}\begin{bmatrix}
			\left(\cQ\left(\cN(a^{J_1})-\cN(\ba^{J_1})-D\cN(\ba^{J_1})(a^{J_1}-\ba^{J_1})\right)-(F_1(\ba^{J_1}))_1,~(F_1(\ba^{J_1}))_{2}\right)\\
			\left(\cQ\left(\cN(a^{J_2})-\cN(\ba^{J_2})-D\cN(\ba^{J_2})(a^{J_2}-\ba^{J_2})\right)-(F_2(\ba^{J_1},\ba^{J_2}))_1,~(F_2(\ba^{J_1},\ba^{J_2}))_{2}\right)\\
			\vdots\\
			\left(\cQ\left(\cN(a^{J_K})-\cN(\ba^{J_K})-D\cN(\ba^{J_K})(a^{J_K}-\ba^{J_K})\right)-(F_K(\ba^{J_{K-1}},\ba^{J_K}))_1,~(F_K(\ba^{J_{K-1}},\ba^{J_K}))_{2}\right)
		\end{bmatrix}.\label{eq:Uform_multisteps}
	\end{align}
	It follows using Lemma \ref{lem:DFinv_estimate} and the assumption of the theorem that
	\begin{align}
		\|T(\bm{a})-\bm{\ba}\|_{\bm{X}}
		&\le \bm{W_\ell}\max\left\{\left\|(F_1(\ba^{J_1}))_2\right\|_{\omega},\left\|(F_2(\ba^{J_1},\ba^{J_2}))_2\right\|_{\omega},\dots,\left\|(F_K(\ba^{J_{K-1}},\ba^{J_K}))_2\right\|_{\omega}\right\}\\
		&\hphantom{\le}\quad+\bm{W_X}\max_{i}\left\{\left\|\cN(a^{J_i})-\cN(\ba^{J_i})-D\cN(\ba^{J_i})(a^{J_i}-\ba^{J_i})\right\|_{X_i}\right\}\\
		&\hphantom{\le}\quad+\bm{W_Y}\max\left\{\|(F_1(\ba^{J_1}))_1\|_{X_1},\|(F_2(\ba^{J_1},\ba^{J_2}))_1\|_{X_2},\dots,\|(F_K(\ba^{J_{K-1}},\ba^{J_K}))_1\|_{X_K}\right\}\\
		&\le\bm{W_\ell}\,\bm{\varepsilon}+\bm{W_X}\max_{i}\left\{L_{\ba^{J_i}}(\bm{\varrho}_0)\right\}\bm{\varrho}_0+\bm{W_Y}\bm{\delta}=p_{\bm{\varepsilon}}(\bm{\varrho}_0).
	\end{align}
	From the assumption of the theorem $p_{\bm{\varepsilon}}(\bm{\varrho}_0)\le\bm{\varrho}_0$, then $T(\bm{a})\in \bm{B}_K(\bm{\ba}, \bm{\varrho}_0)$ holds.
	
	Secondary, we show the contraction property of $T$ on $\bm{B}_K(\bm{\ba}, \bm{\varrho}_0)$. From \eqref{eq:opT_multisteps} we have for $\bm{a}_1, \bm{a}_2\in \bm{B}_K(\bm{\ba}, \bm{\varrho}_0)$
	\begin{align}
		T(\bm{a}_1) - T(\bm{a}_2)
		&=DF(\bm{\ba})^{-1}\left[DF(\bm{\ba})(\bm{a}_1-\bm{a}_2)-\left(F(\bm{a}_1)-F(\bm{a}_2)\right)\right]\\
		&=DF(\bm{\ba})^{-1}\begin{bmatrix}
			\left(\cQ\left(\cN(a_1^{J_1})- \cN(a_2^{J_1})- D\cN(\ba^{J_1})(a_1^{J_1}-a_2^{J_1})\right),~0\right)\\
			\left(\cQ\left(\cN(a_1^{J_2})- \cN(a_2^{J_2})- D\cN(\ba^{J_2})(a_1^{J_2}-a_2^{J_2})\right),~0\right)\\
			\vdots\\
			\left( \cQ\left(\cN(a_1^{J_K})- \cN(a_2^{J_K})- D\cN(\ba^{J_K})(a_1^{J_K}-a_2^{J_K})\right),~0\right)\\
		\end{bmatrix}.
		%		 \int_{0}^tU(t,s)\cQ\left(\cN(a_1(s))-\cN(a_2(s))-D\cN(\ba(s))(a_1(s)-a_2(s))\right)ds.
	\end{align}
	Using Lemma \ref{lem:DFinv_estimate} and the assumption of the theorem, we obtain
	\begin{align}
		\|T(\bm{a}_1)-T(\bm{a}_2)\|_{\bm{X}}&\le \bm{W_X}\max_{i}\left\{L_{\ba^{J_i}}(\bm{\varrho}_0)\right\}\|\bm{a}_1-\bm{a}_2\|_{\bm{X}}.
	\end{align}
	Hence, $\bm{W_X}\max_{i}\left\{L_{\ba^{J_i}}(\bm{\varrho}_0)\right\}<p_{\bm{\varepsilon}}(\bm{\varrho}_0)/\bm{\varrho}_0\le 1$ holds. Therefore, it is proved that the operator $T$ is the contraction mapping on $\bm{B}_K(\bm{\ba}, \bm{\varrho}_0)$.
\end{proof}

\subsection{Bounds on each time step}
The remaining bounds to complete the proof of  the local existence over the multiple time steps are positive constants $\bm{\jp{W^{(t_i,J_i)}}}$, $\bm{\jp{W^{(t_i,J_i)}_q}}$, and $\bm{\jp{W^{(t_i,t_{i-1})}}}$ such that
$\sup_{s\in J_i}\|U_{J_i}(t_i,s)\|_{B(\ell_{\omega}^1)}\le \bm{\jp{W^{(t_i,J_i)}}}$,
$\sup_{s\in [t_{i-1},t_i)}(t_i-s)^\gamma\|U_{J_i}(t_i,s)\cQ\|_{B(\ell_{\omega}^1)}\le \bm{\jp{W^{(t_i,J_i)}_q}}$,
and $\|U_{J_i}(t_i,t_{i-1})\|_{B(\ell_{\omega}^1)}\le \bm{\jp{W^{(t_i,t_{i-1})}}}$ ($i=1,2\dots,K$), respectively.

To get these bounds, we consider the linearized problem \eqref{LinearizedProb} and represent the solution of \eqref{LinearizedProb} as $b_s(t)\equiv b(t)$ ($(t,s)\in \jp{\cS_{J_i}}$).
Since we have a rigorous inclusion of the fundamental solution in Section \ref{sec:fundamental_sol}, one can obtain two bounds $\jp{W_{m,q}^{(t_i,J_i)}}>0$ and $ \jp{W_{m}^{(t_i,t_{i-1})}}>0$ from the definition of $\bU^{(\bbm)}(t,s)$ in \eqref{eqn:bUm} such that
\begin{align}\label{eq:WmJ1}
	\sup_{s\in J_i}\left\|\bU^{(\bbm)}(t_i,s)\cQ\right\|_{B(\ell_{\omega}^1)}=\sup_{s\in J_i}\left(\max_{\bj\in\Fm}\frac1{\omega_{\bj}}\sum_{\bk\in\Fm}|C_{\bk,\bj}(t_i,s)|(\text{\textbf{j}}\bL)^q\omega_{\bk}\right)\le \jp{W_{m,q}^{(t_i,J_i)}}
\end{align}
and 
\begin{align}\label{eq:Wmt1}
	\left\|\bU^{(\bbm)}(t_i,t_{i-1})\right\|_{B(\ell_{\omega}^1)}=\max_{\bj\in\Fm}\frac1{\omega_{\bj}}\sum_{\bk\in\Fm}|C_{\bk,\bj}(t_i,t_{i-1})|\omega_{\bk}\le  \jp{W_{m}^{(t_i,t_{i-1})}},
\end{align}
respectively.

Returning to consider $b_s(t)$ \jp{in Section \ref{sec:solution_map}}, we recall the bounds \eqref{eq:bm_bounds} and \eqref{eq:binf_bounds}. We define a matrix whose entries are the coefficients of the bounds as 
\begin{align}
	\jp{\bm{U}^{J_i}} \bydef \kappa^{-1}
	\begin{pmatrix}
		{\tau_i^\gamma}\jp{W_{m,q}^{\cS_{J_i}}} & \jp{W_{m,q}^{\cS_{J_i}}}\jp{\overline{W}_{\infty,q}^{\cS_{J_i}}} \jp{\cE_{m,\infty}^{J_i}} \\
		\jp{W_{m,q}^{\cS_{J_i}}}\jp{\overline{W}_{\infty,q}^{\cS_{J_i}}} \jp{\cE_{\infty,m}^{J_i}}  & \jp{W_{\infty,q}^{\cS_{J_i}}}
	\end{pmatrix}.
\end{align}
\jp{For any $s\in J_i$}, we have
\begin{align}
	\|b^{(\bbm)}_s\|_{\cX_i}\le \jp{\bm{U}_{11}^{J_i}}\|\psi^{(\bbm)}\|_{\omega} + \jp{\bm{U}_{12}^{J_i}}\|\psi^{(\infty)}\|_{\omega},\quad \|b^{(\infty)}_s\|_{\cX_i}\le \jp{\bm{U}_{21}^{J_i}}\|\psi^{(\bbm)}\|_{\omega} + \jp{\bm{U}_{22}^{J_i}}\|\psi^{(\infty)}\|_{\omega},
\end{align}
where $\jp{\bm{U}_{kj}^{J_i}}$ denotes the $(k,j)$ entry of $\jp{\bm{U}^{J_i}}$.
Similarly, we also define another matrix to obtain the $\bm{\jp{W^{\cS_{J_i}}}}$ bound in \eqref{eq:W_tau_bound} as
\begin{align}
	\jp{\tilde{\bm{U}}^{J_i}}\bydef\tilde{\kappa}^{-1}
	\begin{pmatrix}
		\jp{W_{m,0}^{\cS_{J_i}}} & \jp{W_{m,q}^{\cS_{J_i}}}\jp{\overline{W}_{\infty}^{\cS_{J_i}}} \jp{\cE_{m,\infty}^{J_i}} \\
		\jp{W_{m,0}^{\cS_{J_i}}}\jp{{\overline{W}}_{\infty,q}^{\prime\cS_{J_i}}} \jp{\cE_{\infty,m}^{J_i}}  & \jp{W_{\infty}^{\cS_{J_i}}}
	\end{pmatrix}.
\end{align}
We also represent the norm bound $b_s(t)$ as
\begin{align}
	\|b^{(\bbm)}_s\|\le \jp{\tilde{\bm{U}}_{11}^{J_i}}\|\psi^{(\bbm)}\|_{\omega} + \jp{\tilde{\bm{U}}_{12}^{J_i}}\|\psi^{(\infty)}\|_{\omega},\quad
	\|b^{(\infty)}_s\|\le \jp{\tilde{\bm{U}}_{21}^{J_i}}\|\psi^{(\bbm)}\|_{\omega} + \jp{\tilde{\bm{U}}_{22}^{J_i}}\|\psi^{(\infty)}\|_{\omega}
\end{align}
with the same manner.

Using these bounds, since the solution $b_s(t)$ of \eqref{LinearizedProb} satisfies the integral equations in \eqref{eq:b_form}, the finite part by the Fourier projection is bounded using the 
$\jp{W_{m,q}^{(t_i,J_i)}}$ bound in \eqref{eq:WmJ1}, \eqref{eq:ineqW_infty}, \eqref{eq:ineqbarW_infty}, \eqref{eq:Cm_bound}, and \eqref{eq:binf_omegaq} by
\begin{align}
	&\sup_{s\in J_i}(t_i-s)^\gamma\|b^{(\bbm)}_s(t_i)\|_{\omega}\\
	&\le \tau_i^\gamma \jp{W_{m,q}^{(t_i,J_i)}}\|\psi^{(\bbm)}\|_{\omega} + \jp{W_{m,q}^{(t_i,J_i)}}\jp{\cE_{m,\infty}^{J_i}} \sup_{s\in J_i}(t_i-s)^\gamma\int_{s}^{t_i}\|b^{(\infty)}_s(r)\|_{\omega}dr\\
	&\le \tau_i^\gamma \jp{W_{m,q}^{(t_i,J_i)}}\|\psi^{(\bbm)}\|_{\omega}\\
	&\hphantom{\le}\quad + \jp{W_{m,q}^{(t_i,J_i)}}\jp{\cE_{m,\infty}^{J_i}} \sup_{s\in J_i}(t_i-s)^\gamma\int_{s}^{t_i}\left(W_q^{(\infty)}(r,s)\|\psi^{(\infty)}\|_{\omega}+\jp{\cE_{\infty,m}^{J_i}} \int_s^rW_q^{(\infty)}(r,\sigma)\|b^{(\bm{m})}_s(\sigma)\|_{\omega} d\sigma\right)dr\\
	&\le \tau_i^\gamma \jp{W_{m,q}^{(t_i,J_i)}}\|\psi^{(\bbm)}\|_{\omega} + \jp{W_{m,q}^{(t_i,J_i)}}\jp{\cE_{m,\infty}^{J_i}}  \jp{\overline{W}_{\infty,q}^{\cS_{J_i}}}\|\psi^{(\infty)}\|_{\omega} + \jp{W_{m,q}^{(t_i,J_i)}}\jp{\doverline{W}_{\infty,q}^{\cS_{J_i}}} \jp{\cE_{m,\infty}^{J_i}}  \jp{\cE_{\infty,m}^{J_i}} \|b^{(\bbm)}_s\|_{\cX_i}\\
	&\le\left(\tau_i^\gamma \jp{W_{m,q}^{(t_i,J_i)}} + \jp{W_{m,q}^{(t_i,J_i)}}\jp{\doverline{W}_{\infty,q}^{\cS_{J_i}}} \jp{\cE_{m,\infty}^{J_i}}  \jp{\cE_{\infty,m}^{J_i}} \jp{\bm{U}_{11}^{J_i}}\right)\|\psi^{(\bbm)}\|_{\omega}\\
	&\hphantom{\le}\quad+\left(\jp{W_{m,q}^{(t_i,J_i)}}\jp{\cE_{m,\infty}^{J_i}}  \jp{\overline{W}_{\infty,q}^{\cS_{J_i}}} + \jp{W_{m,q}^{(t_i,J_i)}}\jp{\doverline{W}_{\infty,q}^{\cS_{J_i}}} \jp{\cE_{m,\infty}^{J_i}}  \jp{\cE_{\infty,m}^{J_i}} \jp{\bm{U}_{12}^{J_i}}\right)\|\psi^{(\infty)}\|_{\omega}.
\end{align}
On the other hand, the infinite part is bounded using \eqref{eq:ineqW_infty_sup}, \eqref{eq:ineqW_infty}, \eqref{eq:ineqbarW_infty}, \eqref{eq:Cinf_bound}, and \eqref{eq:bm_omegaq} by
\begin{align}
	&\sup_{s\in J_i}(t_i-s)^\gamma\|b^{(\infty)}_s(t_i)\|_{\omega}\\
	&\le \jp{W_{\infty,q}^{\cS_{J_i}}}\|\psi^{(\infty)}\|_{\omega} + \jp{\cE_{\infty,m}^{J_i}} \sup_{s\in J_i}(t_i-s)^\gamma\int_{s}^{t_i}W^{(\infty)}_q(t_i,r)\|b^{(\bbm)}_s(r)\|_{\omega}dr\\
	&\le \jp{W_{\infty,q}^{\cS_{J_i}}}\|\psi^{(\infty)}\|_{\omega}\\
	&\hphantom{\le}\quad + \jp{\cE_{\infty,m}^{J_i}} \sup_{s\in J_i}(t_i-s)^\gamma\int_{s}^{t_i}W^{(\infty)}_q(t_i,r)\left(\jp{W_{m,q}^{\cS_{J_i}}}\|\psi^{(\bbm)}\|_{\omega}+\jp{W_{m,q}^{\cS_{J_i}}}\jp{\cE_{m,\infty}^{J_i}} \int_s^r\|b^{(\infty)}_s(\sigma)\|_{\omega} d\sigma\right)dr\\
	&\le \jp{W_{\infty,q}^{\cS_{J_i}}}\|\psi^{(\infty)}\|_{\omega} + \jp{\cE_{\infty,m}^{J_i}}  \jp{\overline{W}_{\infty,q}^{\cS_{J_i}}}\jp{W_{m,q}^{\cS_{J_i}}}\|\psi^{(\bbm)}\|_{\omega} + \jp{\cE_{\infty,m}^{J_i}} \jp{\doverline{W}_{\infty,q}^{\cS_{J_i}}}\jp{W_{m,q}^{\cS_{J_i}}} \jp{\cE_{m,\infty}^{J_i}} 	\|b^{(\infty)}_s\|_{\cX_i}\\
	&\le\left(\jp{\cE_{\infty,m}^{J_i}}  \jp{\overline{W}_{\infty,q}^{\cS_{J_i}}}\jp{W_{m,q}^{\cS_{J_i}}} + \jp{W_{m,q}^{\cS_{J_i}}}\jp{\doverline{W}_{\infty,q}^{\cS_{J_i}}}\jp{\cE_{m,\infty}^{J_i}}  \jp{\cE_{\infty,m}^{J_i}} 	\jp{\bm{U}_{21}^{J_i}}\right)\|\psi^{(\bbm)}\|_{\omega}\\
	&\hphantom{\le}\quad+\left(\jp{W_{\infty,q}^{\cS_{J_i}}}+ \jp{W_{m,q}^{\cS_{J_i}}}\jp{\doverline{W}_{\infty,q}^{\cS_{J_i}}}\jp{\cE_{m,\infty}^{J_i}}  \jp{\cE_{\infty,m}^{J_i}} \jp{\bm{U}_{22}^{J_i}}\right)\|\psi^{(\infty)}\|_{\omega}.
\end{align}
Consequently, the $\bm{\jp{W^{(t_i,J_i)}_q}}$ bound is given by
\begin{align}\label{eq:WhJ1_bound}
	&\bm{\jp{W^{(t_i,J_i)}_q}}\\
	&\bydef
	\left\|\begin{pmatrix}
		\tau_i^\gamma \jp{W_{m,q}^{(t_i,J_i)}} + \jp{W_{m,q}^{(t_i,J_i)}}\jp{\doverline{W}_{\infty,q}^{\cS_{J_i}}} \jp{\cE_{m,\infty}^{J_i}}  \jp{\cE_{\infty,m}^{J_i}} \jp{\bm{U}_{11}^{J_i}}
		& \jp{W_{m,q}^{(t_i,J_i)}}\jp{\cE_{m,\infty}^{J_i}}  \jp{\overline{W}_{\infty,q}^{\cS_{J_i}}} + \jp{W_{m,q}^{(t_i,J_i)}}\jp{\doverline{W}_{\infty,q}^{\cS_{J_i}}} \jp{\cE_{m,\infty}^{J_i}}  \jp{\cE_{\infty,m}^{J_i}} \jp{\bm{U}_{12}^{J_i}}\\
		\jp{\cE_{\infty,m}^{J_i}}  \jp{\overline{W}_{\infty,q}^{\cS_{J_i}}}\jp{W_{m,q}^{\cS_{J_i}}} + \jp{W_{m,q}^{\cS_{J_i}}}\jp{\doverline{W}_{\infty,q}^{\cS_{J_i}}}\jp{\cE_{m,\infty}^{J_i}}  \jp{\cE_{\infty,m}^{J_i}} 	\jp{\bm{U}_{21}^{J_i}}
		& \jp{W_{\infty,q}^{\cS_{J_i}}}+ \jp{W_{m,q}^{\cS_{J_i}}}\jp{\doverline{W}_{\infty,q}^{\cS_{J_i}}}\jp{\cE_{m,\infty}^{J_i}}  \jp{\cE_{\infty,m}^{J_i}} \jp{\bm{U}_{22}^{J_i}}
	\end{pmatrix}\right\|_1.
\end{align}
Moreover, an analogous discussion using the bounds in Corollary \ref{cor:solutionmap} directly yields the $\bm{\jp{W^{(t_i,J_i)}}}$ bound given by
\begin{align}\label{eq:WhJ1_bound_classic}
	&\bm{\jp{W^{(t_i,J_i)}}}\\
	&\bydef \left\|\begin{pmatrix}
		\jp{W_{m,0}^{(t_i,J_i)}} + \jp{W_{m,q}^{(t_i,J_i)}}\jp{{\doverline{W}}_{\infty,q}^{\prime\cS_{J_i}}}	 \jp{\cE_{m,\infty}^{J_i}}  \jp{\cE_{\infty,m}^{J_i}} \jp{\tilde{\bm{U}}_{11}^{J_i}}
		& \jp{W_{m,q}^{(t_i,J_i)}}\jp{\cE_{m,\infty}^{J_i}}  \jp{\overline{W}_{\infty}^{\cS_{J_i}}} + \jp{W_{m,q}^{(t_i,J_i)}}\jp{{\doverline{W}}_{\infty,q}^{\prime\cS_{J_i}}} \jp{\cE_{m,\infty}^{J_i}}  \jp{\cE_{\infty,m}^{J_i}} \jp{\tilde{\bm{U}}_{12}^{J_i}}\\
		\jp{\cE_{\infty,m}^{J_i}}  \jp{{\overline{W}}_{\infty,q}^{\prime\cS_{J_i}}}\jp{W_{m,0}^{\cS_{J_i}}} + \jp{W_{m,q}^{\cS_{J_i}}}\jp{{\doverline{W}}_{\infty,q}^{\prime\cS_{J_i}}}\jp{\cE_{m,\infty}^{J_i}}  \jp{\cE_{\infty,m}^{J_i}} 	\jp{\tilde{\bm{U}}_{21}^{J_i}}
		& \jp{W_{\infty}^{\cS_{J_i}}}+ \jp{W_{m,q}^{\cS_{J_i}}}\jp{{\doverline{W}}_{\infty,q}^{\prime\cS_{J_i}}}\jp{\cE_{m,\infty}^{J_i}}  \jp{\cE_{\infty,m}^{J_i}} \jp{\tilde{\bm{U}}_{22}^{J_i}}
	\end{pmatrix}\right\|_1.
\end{align}

Next, setting $s=t_{i-1}$ in \eqref{eq:b_form} and the initial data $\phi\in \ell_{\omega}^1$ instead of $\cQ\psi$, the finite part $b^{(\bbm)}_{t_{i-1}}(t_i)$ is bounded using the  bounds $ \jp{W_{m}^{(t_i,t_{i-1})}}$ and $\jp{W_{m,q}^{(t_i,J_i)}}$, given in \eqref{eq:Wmt1} and \eqref{eq:WmJ1} respectively, \eqref{eq:Cm_bound}, \eqref{eq:binf_omegaq}, \eqref{eq:ineqW_infty_classic}, and \eqref{eq:ineqbarW_infty_classic} by
\begin{align}
	&\|b^{(\bbm)}_{t_{i-1}}(t_i)\|_{\omega}\\
	&\le  \jp{W_{m}^{(t_i,t_{i-1})}}\|\phi^{(\bbm)}\|_{\omega} + \jp{W_{m,q}^{(t_i,J_i)}}\jp{\cE_{m,\infty}^{J_i}} \int_{J_i}\left\|b^{(\infty)}_{t_{i-1}}(r)\right\|_{\omega}dr\\
	&\le  \jp{W_{m}^{(t_i,t_{i-1})}}\|\phi^{(\bbm)}\|_{\omega} + \jp{W_{m,q}^{(t_i,J_i)}}\jp{\cE_{m,\infty}^{J_i}} \int_{J_i}\left(W^{(\infty)}(r,t_{i-1})\|\phi^{(\infty)}\|_{\omega}+\jp{\cE_{\infty,m}^{J_i}} \int_{t_{i-1}}^rW^{(\infty)}_q(r,\sigma)\|b^{(\bm{m})}_{t_{i-1}}(\sigma)\|_{\omega} d\sigma\right)dr\\
	&\le  \jp{W_{m}^{(t_i,t_{i-1})}}\|\phi^{(\bbm)}\|_{\omega} + \jp{W_{m,q}^{(t_i,J_i)}}\jp{\cE_{m,\infty}^{J_i}}  \jp{\overline{W}_{\infty}^{\cS_{J_i}}}\|\phi^{(\infty)}\|_{\omega} + \jp{W_{m,q}^{(t_i,J_i)}}\jp{{\doverline{W}}_{\infty,q}^{\prime\cS_{J_i}}} \jp{\cE_{m,\infty}^{J_i}}  \jp{\cE_{\infty,m}^{J_i}} \|b^{(\bbm)}_{t_{i-1}}\|\\
	&\le\left( \jp{W_{m}^{(t_i,t_{i-1})}} + \jp{W_{m,q}^{(t_i,J_i)}}\jp{{\doverline{W}}_{\infty,q}^{\prime\cS_{J_i}}} \jp{\cE_{m,\infty}^{J_i}}  \jp{\cE_{\infty,m}^{J_i}} \jp{\tilde{\bm{U}}_{11}^{J_i}}\right)\|\phi^{(\bbm)}\|_{\omega}\\
	&\hphantom{\le}\quad+\left(\jp{W_{m,q}^{(t_i,J_i)}}\jp{\cE_{m,\infty}^{J_i}}  \jp{\overline{W}_{\infty}^{\cS_{J_i}}} + \jp{W_{m,q}^{(t_i,J_i)}}\jp{{\doverline{W}}_{\infty,q}^{\prime\cS_{J_i}}} \jp{\cE_{m,\infty}^{J_i}}  \jp{\cE_{\infty,m}^{J_i}} \jp{\tilde{\bm{U}}_{12}^{J_i}}\right)\|\phi^{(\infty)}\|_{\omega}.
\end{align}
The $\ell_{\omega}^1$ norm bound of the infinite part $b^{(\infty)}_{t_{i-1}}(t_i)$ is derived using $W^{(\infty)}(t,s)$ in \eqref{eq:Winfts_bound_classic}, \eqref{eq:Cinf_bound}, \eqref{eq:bm_omegaq}, \eqref{eq:ineqhatW_infty_classic}, and \eqref{eq:ineqbarW_infty_classic} by
\begin{align}
	&\|b^{(\infty)}_{t_{i-1}}(t_i)\|_{\omega}\\
	&\le W^{(\infty)}(t_i,t_{i-1})\|\phi^{(\infty)}\|_{\omega} + \jp{\cE_{\infty,m}^{J_i}} \int_{J_i}W^{(\infty)}_q(t_i,r)\|b^{(\bbm)}_{t_{i-1}}(r)\|_{\omega}dr\\
	&\le W^{(\infty)}(t_i,t_{i-1})\|\phi^{(\infty)}\|_{\omega} + \jp{\cE_{\infty,m}^{J_i}} \int_{J_i}W^{(\infty)}_q(t_i,r)\left(\jp{W_{m,0}^{\cS_{J_i}}}\|\phi^{(\bbm)}\|_{\omega}+\jp{W_{m,q}^{\cS_{J_i}}}\jp{\cE_{m,\infty}^{J_i}} \int_{t_{i-1}}^r\|b^{(\infty)}_{t_{i-1}}(\sigma)\|_{\omega} d\sigma\right)dr\\
	&\le W^{(\infty)}(t_i,t_{i-1})\|\phi^{(\infty)}\|_{\omega} + \jp{\cE_{\infty,m}^{J_i}}  \jp{{\overline{W}}_{\infty,q}^{\prime\cS_{J_i}}} W_{\bbm,0}\|\phi^{(\bbm)}\|_{\omega} + \jp{\cE_{\infty,m}^{J_i}} \jp{{\doverline{W}}_{\infty,q}^{\prime\cS_{J_i}}}\jp{W_{m,q}^{\cS_{J_i}}} \jp{\cE_{m,\infty}^{J_i}} 	
	\|b^{(\infty)}_s\|\\
	&\le\left(\jp{\cE_{\infty,m}^{J_i}}  \jp{{\overline{W}}_{\infty,q}^{\prime\cS_{J_i}}}\jp{W_{m,0}^{\cS_{J_i}}} + \jp{W_{m,q}^{\cS_{J_i}}}\jp{{\doverline{W}}_{\infty,q}^{\prime\cS_{J_i}}}\jp{\cE_{m,\infty}^{J_i}}  \jp{\cE_{\infty,m}^{J_i}} 	\jp{\tilde{\bm{U}}_{21}^{J_i}}\right)\|\phi^{(\bbm)}\|_{\omega}\\
	&\hphantom{\le}\quad+\left(W^{(\infty)}(t_i,t_{i-1})+ \jp{W_{m,q}^{\cS_{J_i}}}\jp{{\doverline{W}}_{\infty,q}^{\prime\cS_{J_i}}}\jp{\cE_{m,\infty}^{J_i}}  \jp{\cE_{\infty,m}^{J_i}} \jp{\tilde{\bm{U}}_{22}^{J_i}}\right)\|\psi^{(\infty)}\|_{\omega}.
\end{align}
The $\bm{\jp{W^{(t_i,t_{i-1})}}}>0$ bound satisfying $\|U_{J_i}(t_i,t_{i-1})\|_{B(\ell_{\omega}^1)}\le \bm{\jp{W^{(t_i,t_{i-1})}}}$ is finally given by
\begin{align}\label{eq:Wht1_bound}
	&\bm{\jp{W^{(t_i,t_{i-1})}}}\\
	&= \left\|\begin{bmatrix}
		 \jp{W_{m}^{(t_i,t_{i-1})}} + \jp{W_{m,q}^{(t_i,J_i)}}\jp{{\doverline{W}}_{\infty,q}^{\prime\cS_{J_i}}} \jp{\cE_{m,\infty}^{J_i}}  \jp{\cE_{\infty,m}^{J_i}} \jp{\tilde{\bm{U}}_{11}^{J_i}} & \jp{W_{m,q}^{(t_i,J_i)}}\jp{\cE_{m,\infty}^{J_i}}  \jp{\overline{W}_{\infty}^{\cS_{J_i}}} + \jp{W_{m,q}^{(t_i,J_i)}}\jp{{\doverline{W}}_{\infty,q}^{\prime\cS_{J_i}}} \jp{\cE_{m,\infty}^{J_i}}  \jp{\cE_{\infty,m}^{J_i}} \jp{\tilde{\bm{U}}_{12}^{J_i}}\\
		\jp{\cE_{\infty,m}^{J_i}}  \jp{{\overline{W}}_{\infty,q}^{\prime\cS_{J_i}}}\jp{W_{m,0}^{\cS_{J_i}}} + \jp{W_{m,q}^{\cS_{J_i}}}\jp{{\doverline{W}}_{\infty,q}^{\prime\cS_{J_i}}}\jp{\cE_{m,\infty}^{J_i}}  \jp{\cE_{\infty,m}^{J_i}} 	\jp{\tilde{\bm{U}}_{21}^{J_i}} & W^{(\infty)}(t_i,t_{i-1})+ \jp{W_{m,q}^{\cS_{J_i}}}\jp{{\doverline{W}}_{\infty,q}^{\prime\cS_{J_i}}}\jp{\cE_{m,\infty}^{J_i}}  \jp{\cE_{\infty,m}^{J_i}} \jp{\tilde{\bm{U}}_{22}^{J_i}}
	\end{bmatrix}\right\|_1.
\end{align}

\subsection{Error bounds at the end point}\label{sec:err_endpoint}

When the local existence of the solution is proved, we obtain error bounds at the end point of $J_i$ as follows: Firstly, setting $t=t_1$ in \eqref{eq:U_form}, it follows from Lemma \ref{lem:DFinv_bounds}
\begin{align}
	\left\|a^{J_1}(t_1) -\ba^{J_1}(t_1) \right\|_{\omega} 
	&\le \bm{\jp{W^{(t_1,t_0)}}}\varepsilon^{J_1} + \hat{\tau}_1\bm{\jp{W^{(t_1,J_1)}_q}}L_{\ba^{J_1}}(\bm{\varrho}_0)\bm{\varrho}_0 + \tau_1\bm{\jp{W^{(t_1,J_1)}}}\delta^{J_1},\label{eq:error_at_end point}
\end{align}
where $\bm{\varrho}_0>0$ is the validated radius in Theorem \ref{thm:local_inclusion_multisteps} of the ball defined in \eqref{eq:theBall_multisteps}.
Secondly, setting $t=t_2$ in \eqref{eq:Uform_multisteps}, the exact form of the inverse \eqref{eq:invDF_J2} yields
\begin{align}
	\left\|a^{J_2}(t_2) -\ba^{J_2}(t_2) \right\|_{\omega} 
	&\le \bm{\jp{W^{(t_2,t_1)}}}\varepsilon^{J_2} + \hat{\tau}_2\bm{\jp{W^{(t_2,J_2)}_q}}L_{\ba^{J_2}}(\bm{\varrho}_0)\bm{\varrho}_0 + \tau_2\bm{\jp{W^{(t_2,J_2)}}}\delta^{J_2}\\
	&\hphantom{\le}\quad +  \bm{\jp{W^{(t_2,t_0)}}}\varepsilon^{J_1} + \hat{\tau}_1\bm{\jp{W^{(t_2,t_1)}}}\bm{\jp{W^{(t_1,J_1)}_q}}L_{\ba^{J_1}}(\bm{\varrho}_0)\bm{\varrho}_0 + \tau_1\bm{\jp{W^{(t_2,t_1)}}}\bm{\jp{W^{(t_1,J_1)}}} \delta^{J_1}.
\end{align}
Similarly, we have an error bound at $t=t_K$ from \eqref{eq:invDF_Jk} and \eqref{eq:Uform_multisteps}
\begin{align}
	&\left\|a^{J_K}(t_K) -\ba^{J_K}(t_K) \right\|_{\omega}\\
	&\le \bm{\jp{W^{(t_K,t_0)}}}\varepsilon^{J_1}+\bm{\jp{W^{(t_K,t_1)}}}\varepsilon^{J_2}+\dots+\bm{\jp{W^{(t_K,t_{K-1})}}}\varepsilon^{J_K}\\
	&+\hat{\tau}_1\bm{\jp{W^{(t_K,t_1)}}}\bm{\jp{W^{(t_1,J_1)}_q}}L_{\ba^{J_1}}(\bm{\varrho}_0)\bm{\varrho}_0 + \hat{\tau}_2\bm{\jp{W^{(t_K,t_2)}}}\bm{\jp{W^{(t_2,J_2)}_q}}L_{\ba^{J_2}}(\bm{\varrho}_0)\bm{\varrho}_0 + \dots + \hat{\tau}_K\bm{\jp{W^{(t_K,J_K)}_q}}L_{\ba^{J_K}}(\bm{\varrho}_0)\bm{\varrho}_0\\
	&+{\tau}_1\bm{\jp{W^{(t_K,t_1)}}}\bm{\jp{W^{(t_1,J_1)}}}\delta^{J_1} + {\tau}_2\bm{\jp{W^{(t_K,t_2)}}}\bm{\jp{W^{(t_2,J_2)}}}\delta^{J_2}+\tau_K\bm{\jp{W^{(t_K,J_K)}}}\delta^{J_K}.\label{eq:error_at_end point_K}
\end{align}

	Finally, the $\varepsilon^{J_i}$ bound satisfying
	\begin{align}
		\left\|\ba^{J_i}(t_{i-1}) - \ba^{J_{i-1}}(t_{i-1})\right\|_{\omega}\le \varepsilon^{J_i}\quad (i=2,3,\dots,K)
	\end{align}
	is a numerical error between two different approximate solutions, which is caused by the Chebyshev interpolation. This is tiny error in general and easily computed by the form
	\begin{align}
		\left\|\ba^{J_i}(t_{i-1}) - \ba^{J_{i-1}}(t_{i-1})\right\|_{\omega}=\sum_{\bk\in\FN}\left|\ba_{0,\bk}^{J_i}+2\sum_{\ell=1}^{n_i-1}(-1)^{\ell}\ba_{\ell,\bk}^{J_i}-\left(\ba_{0,\bk}^{J_{i-1}}+2\sum_{\ell=1}^{n_{i-1}-1}\ba_{\ell,\bk}^{J_{i-1}}\right)\right|\omega_{\bk},
	\end{align}
	where $n_i$ denotes the size of Chebyshev coefficients on $J_i$.
	
%	\begin{rem}
%		Instead of the bound \eqref{eq:error_at_end point}, the $\bm{\varrho}_0$ bound can be used as the error estimate at the end point. However, in general, the $\bm{\varrho}_0$ bound is often overestimated especially in the case of $q\neq 0$, and then the bound in \eqref{eq:error_at_end point}  can be used to obtain a tighter error bound.
%	\end{rem}

%%%%%%%%%%%%%%%%%%
%%% APPLICATION TO S-H %%%
%%%%%%%%%%%%%%%%%%

\section{Application to the Swift-Hohenberg equation} \label{sec:global_existence}

In the following sections, we demonstrate some applications of our rigorous integrator. Firstly, in this section, we provide a computational approach of proving the existence of global in time solutions. We then apply the provided method for computer-assisted proofs of global existence of solutions to initial value problems of the 3D/2D Swift-Hohenberg equation. We combine the rigorous forward integration with explicit constructions of trapping regions to prove global existence converging to asymptotically stable nontrivial equilibria. 
Secondly, in the next section, the 2D Ohta-Kawasaki equation is considered to deal with derivatives of the nonlinear term.
All computations in this study were conducted on Windows 10, Intel(R) Core(TM) i9-10900K CPU @ 3.70GHz using MATLAB R2022a with INTLAB - INTerval LABoratory \cite{Ru99a} version 11, which supports interval arithmetic.
The code used to generate the results presented in the following sections is freely available from {\cite{bib:codes}}.

\subsection{Constructing trapping region}\label{sec:Trapping_region}
We propose a strategy to demonstrate global existence of \eqref{eq:general_PDE} via the mechanism of convergence towards an asymptotically stable equilibrium solution. We are specifically looking at cases where the nonlinearity of \eqref{eq:general_PDE} is of the form \eqref{eq:assumption_on_N} with $q = 0$.
We denote the vector field of the infinite-dimensional system of ODEs \eqref{eq:the_ODEs} by
\begin{equation} \label{eq:ODE_trapping_region}
\dot{a}(t) = f(a) \bydef  \cL a(t) + \cN(a).
\end{equation}
%
%that is there are no derivatives in the nonlinear term (i.e. $q=0$).
% and recall that 
%$(\cL a)_{\bk}=\mu_{\bk}a_{\bk}$ with  $\mu_{\bk}= \lambda_0  - \lambda_1 (\bk \bL)^2  + \lambda_2 (\bk \bL)^4$. 
Assume the existence of an equilibrium solution $\ta \in \ell_{\omega}^1$, that is 
\begin{equation} \label{eq:steady_state_trapping_region}
f(\ta) = \cL \ta + \cN(\ta) = 0,
\end{equation}
and assume that the Fr\'{e}chet derivative $Df(\ta)$ is an invertible linear operator \jp{with real eigenvalues}. 
%
%assume that one computed a rigorous enclosure of the eigenvalue $-\tilde{\lambda}<0$ of $Df(\ta)$ with the largest value, that is for all eigenvalues $\sigma$ of $Df(\ta)$, we have that $\sigma \le -\tilde{\lambda}$. 
%
In addition, let $\cG:\ell_{\omega}^1 \to \ell_{\omega}^1$ be defined by 
\begin{align} \label{eq:definition_of_g_trapping_region}
\cG(h) \bydef f(\ta + h) - Df(\ta)h, \quad \text{for } h \in \ell_{\omega}^1,
\end{align}%{$g$ is overlapping with $g$ in \eqref{eq:N_assumption}!!!}
so that $\cG(0)=f(\ta)=0$ and $D\cG(0) = Df(\ta) - Df(\ta)=0$ hold.

The following theorem asserts existence of the trapping region associated with the asymptotically stable equilibrium solution. Note that a similar argument was consider in \cite{MR3906120} to construct an attracting neighborhood in Fisher's equation, which was combined with a computer-assisted proof to prove global existence. 

\begin{thm}[\bf Global existence via asymptotic convergence] \label{thm:trapping_region}
Let $\ta \in \ell_{\omega}^1$ be an equilibrium solution of \eqref{eq:ODE_trapping_region}.
%and assume that all $\hat{\lambda}_{\bk}>0$ defined in \eqref{eq:hat_lambda} are increasing order.
% all eigenvalues of the real symmetric invertible linear operator $Df(\ta)$ are less or equal to $-\tilde{\lambda}$. 
Assume that there exist $C\ge 1$ and $\lambda>0$ such that 
\begin{equation} \label{eq:exponential_operator_norm}
\|e^{Df(\ta)t}\|_{B(\ell_{\omega}^1)} \le C e^{-\lambda t}
\end{equation}
for all $t>0$.
Consider $\epsilon>0$ small enough so that 
\begin{equation} \label{eq:delta_trapping_region}
\delta \bydef \lambda - \epsilon > 0. 
\end{equation}
Since $\cG(0)=0$ and $D\cG(0)=0$, pick $\rho>0$ (as large as possible) such that 
\begin{equation} \label{eq:assumption_on_g_trapping_region}
\| \cG(\phi)\|_\omega \le \frac{\delta}{C} \| \phi \|_\omega, \quad\forall\phi \in \ell_{\omega}^1 \text{ with } \|\phi\|_\omega \le \rho.
\end{equation}
If
\begin{equation} \label{eq:assumption_for_trapping_region}
\|a(0) - \ta\|_\omega \le \frac{\rho}{C},
\end{equation}
then for all $t > 0$
\begin{equation} \label{eq:explicit_definition_trapping_region}
\| a(t) -\ta \|_{\omega} \le C \|a(0) - \ta\|_{\omega} e^{-\epsilon t} \le \rho e^{-\epsilon t}.
\end{equation}
More explicitly, if $a(0) \in B_{\frac{\rho}{C}}(\ta) \subset \ell_{\omega}^1$, then the solution $a(t)$ is bounded (it stays in $B_{\frac{\rho}{C}}(\ta)$), it exists globally in time and it satisfies
\[
\lim_{t \to \infty} a(t) = \ta.
\] 
\end{thm}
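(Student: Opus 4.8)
The plan is to set $h(t) \bydef a(t) - \ta$, rewrite \eqref{eq:ODE_trapping_region} as a perturbation equation about the equilibrium, and run a standard Gronwall/bootstrap argument using the spectral decay bound \eqref{eq:exponential_operator_norm}. First I would note that since $f(\ta)=0$, the function $h$ satisfies $\dot h = f(\ta+h) - f(\ta) = Df(\ta)h + \cG(h)$ with $\cG$ as in \eqref{eq:definition_of_g_trapping_region}. Because $Df(\ta)$ generates the semigroup $e^{Df(\ta)t}$ (this is where the hypothesis that $Df(\ta)$ is invertible with real eigenvalues, together with the structure of $\cL$, guarantees sectoriality/analytic semigroup generation, so local existence of $h$ is available), the variation of constants formula gives
\begin{align}
h(t) = e^{Df(\ta)t} h(0) + \int_0^t e^{Df(\ta)(t-s)} \cG(h(s))\, ds.
\end{align}

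The core of the argument is a continuity (bootstrap) step. Define $t^* \bydef \sup\{ t > 0 : \|h(s)\|_\omega \le \rho \text{ for all } s \in [0,t]\}$; this set is nonempty and $t^* > 0$ by continuity of $h$ and the hypothesis \eqref{eq:assumption_for_trapping_region} which gives $\|h(0)\|_\omega \le \rho/C \le \rho$ (using $C \ge 1$). On $[0, t^*)$ the bound \eqref{eq:assumption_on_g_trapping_region} applies, so taking $\omega$-norms and using \eqref{eq:exponential_operator_norm},
\begin{align}
\|h(t)\|_\omega \le C e^{-\lambda t} \|h(0)\|_\omega + \int_0^t C e^{-\lambda(t-s)} \frac{\delta}{C} \|h(s)\|_\omega\, ds.
\end{align}
Multiplying through by $e^{\lambda t}$ and setting $y(t) \bydef e^{\lambda t}\|h(t)\|_\omega$, I get $y(t) \le C\|h(0)\|_\omega + \delta \int_0^t y(s)\, ds$, and Gronwall's inequality yields $y(t) \le C\|h(0)\|_\omega e^{\delta t}$, i.e. $\|h(t)\|_\omega \le C\|h(0)\|_\omega e^{-(\lambda-\delta)t} = C\|h(0)\|_\omega e^{-\epsilon t}$ by \eqref{eq:delta_trapping_region}. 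In particular $\|h(t)\|_\omega \le C\|h(0)\|_\omega \le \rho$ for all $t \in [0,t^*)$, with strict-in-the-limit room since $C\|h(0)\|_\omega \le \rho$; a standard contradiction argument then shows $t^* = \infty$ (if $t^* < \infty$, the bound $\|h(t^*)\|_\omega \le \rho$ would persist past $t^*$ by continuity and local existence, contradicting maximality). This also furnishes global existence forward in time, since the solution cannot blow up while staying in the bounded set $B_{\rho/C}(\ta) \subset B_\rho(\ta)$.

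Having established $t^* = \infty$, the estimate \eqref{eq:explicit_definition_trapping_region} holds for all $t > 0$, giving both $\|a(t)-\ta\|_\omega \le C\|a(0)-\ta\|_\omega e^{-\epsilon t} \le \rho e^{-\epsilon t}$ and, letting $t \to \infty$, $\lim_{t\to\infty} a(t) = \ta$ in $\ell_\omega^1$. I expect the main obstacle to be the justification of the semigroup generation and local-in-time solvability needed to even write the variation of constants formula: one must argue that $Df(\ta) = \cL + D\cN(\ta)$ generates an analytic (or at least strongly continuous) semigroup on $\ell_\omega^1$ — the operator $\cL$ is the diagonal dominant part and $D\cN(\ta)$ is a relatively bounded (indeed bounded, by \eqref{eq:N_assumption}) perturbation, so this follows from standard perturbation theory for sectorial operators (cf.\ \cite{pazy1983semigroups}), and the hypothesis that $Df(\ta)$ has real eigenvalues with the spectral bound \eqref{eq:exponential_operator_norm} pins down the decay rate. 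The Gronwall and bootstrap parts are routine; the care is entirely in setting up the functional-analytic framework so that the integral equation is legitimate and $h$ exists at least locally.
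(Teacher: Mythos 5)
Your proposal is correct and follows essentially the same route as the paper's proof: the substitution $h(t)=a(t)-\ta$, the variation of constants formula, the bound $\|\cG(h)\|_\omega \le \tfrac{\delta}{C}\|h\|_\omega$ valid while $\|h\|_\omega\le\rho$, the rescaling $y(t)=e^{\lambda t}\|h(t)\|_\omega$, and Gronwall's inequality yielding $\|h(t)\|_\omega\le C\|h(0)\|_\omega e^{-\epsilon t}$. The only difference is that you make explicit the continuity/bootstrap argument ($t^*=\infty$) and the semigroup-generation and local-existence preliminaries, which the paper leaves implicit with the phrase ``as long as $\|h(s)\|_{\omega}\le\rho$''; this is added care, not a different method.
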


\begin{proof}
While the proof is standard in the theory of differential equations, we present it here since it provides a computational procedure to show global existence and asymptotic convergence.  

Consider $t>0$ and let $h(t) \bydef a(t) - \ta$. From \eqref{eq:definition_of_g_trapping_region}, one gets that
\begin{equation} \label{eq:linearization_about_ta}
\dot h(t) = \dot a(t) = f(a(t)) = f(h(t)+\ta) = f(\ta) + Df(\ta)h(t) + \cG(h(t)) = Df(\ta)h(t) + \cG(h(t)), 
\end{equation}
with $\cG(0)=0$ and $D\cG(0)=0$. The variation of constants formula in $h \in \ell_{\omega}^1$ for \eqref{eq:linearization_about_ta} yields  
\[
h(t) = e^{Df(\ta)t} h(0) + \int_0^t e^{Df(\ta)(t-s)} \cG(h(s)) ds,
\]
and therefore
\begin{equation} \label{eq:trapping_region_inequality_1}
\| h(t)\|_{\omega} \le \|e^{Df(\ta)t}\|_{B(\ell_{\omega}^1)} \|h(0)\|_{\omega} + \int_0^t \|e^{Df(\ta)(t-s)}\|_{B(\ell_{\omega}^1)} \| \cG(h(s))\|_{\omega} ds.
\end{equation}
Combining \eqref{eq:exponential_operator_norm} and \eqref{eq:trapping_region_inequality_1}, one obtains
\begin{equation} \label{eq:trapping_region_inequality_2}
\| h(t)\|_{\omega} \le  C e^{-\lambda t} \|h(0)\|_{\omega} + \int_0^t  C e^{-\lambda (t-s)} \| \cG(h(s))\|_{\omega} ds.
\end{equation}
Now, as long as $\|h(s)\|_{\omega} \le \rho$ for $s \in [0,t]$, then combining assumption \eqref{eq:assumption_on_g_trapping_region} and \eqref{eq:trapping_region_inequality_2} yields
\begin{align*} 
\| h(t)\|_{\omega} &\le  C e^{-\lambda t} \|h(0)\|_{\omega} + \int_0^t  C e^{-\lambda (t-s)} \| \cG(h(s))\|_{\omega} ds \\
&\le C e^{-\lambda t} \|h(0)\|_{\omega} + \int_0^t  \delta e^{-\lambda (t-s)} \| h(s) \|_{\omega} ds.
\end{align*}
Letting $y(t) \bydef e^{\lambda t}  \| h(t)\|_{\omega} \ge 0$, one obtains from the last inequality that
\begin{equation} \label{eq:trapping_region_inequality_3}
y(t) \le C \|h(0)\|_{\omega} + \int_0^t  \delta y(s) ds.
\end{equation}
Applying Gronwall's inequality to \eqref{eq:trapping_region_inequality_3}
\[
y(t) = e^{\lambda t}  \| h(t)\|_{\omega} \le C \|h(0)\|_{\omega} e^{\int_0^t \delta ds} = C \|h(0)\|_{\omega} e^{\delta t}
\]
which implies that
\begin{equation} \label{eq:trapping_region_inequality_4}
\| h(t)\|_{\omega} \le C \|h(0)\|_{\omega} e^{(\delta -\lambda)t} 
= C \|h(0)\|_{\omega} e^{-\epsilon t}.
\end{equation}
Observe that if $\|h(0)\|_{\omega} \le \rho/C$, then \eqref{eq:trapping_region_inequality_4} guarantees that $\|h(t)\|_{\omega} \le \rho e^{-\epsilon t} < \rho$ for all $t > 0$. Going back to the original coordinates $a(t) = h(t) + \ta$, then we conclude that 
%
%\begin{equation} \label{eq:explicit_trapping_region}
\[
\|a(0) - \ta \|_{\omega} \le \rho/C \Longrightarrow \|a(t) - \ta \|_{\omega} \le \rho e^{-\epsilon t} < \rho \quad \text{for all } t>0. \qedhere
\]
%\end{equation}
%
\end{proof}

Next, we provide an explicit procedure which will be used to conclude, via a successful application of Theorem~\ref{thm:trapping_region}, that a solution exists globally in time and that it converges to an asymptotically stable equilibrium solution.

\begin{proc} \label{proc:global_existence}
To study the asymptotic behaviour of solutions close to an  asymptotically stable equilibrium solution, perform the following steps rigorously:
\begin{itemize}
\item[\em(P1)]Compute $\ta \in \ell_{\omega}^1$ such that $f(\ta)=0$; 
\vspace{-.1cm}
%\item[\em(P2)] Compute $\hat{\lambda}_{\bk}>0$ using the Gershgorin's pseudo-diagonalization approach; 
%\vspace{-.1cm}
\item[\em(P2)] Define $\cG$ from \eqref{eq:definition_of_g_trapping_region}; 
\vspace{-.1cm}
\item[\em(P3)] Compute $C>0$ and $\lambda>0$ such that \eqref{eq:exponential_operator_norm} holds; 
\vspace{-.1cm}
\item[\em(P4)] Consider $\epsilon>0$ small enough (in fact the smaller the better) so that 
\eqref{eq:delta_trapping_region} holds; 
\vspace{-.1cm}
\item[\em(P5)] Let $\delta \bydef \lambda - \epsilon > 0$;
\vspace{-.1cm}
\item[\em(P6)] Compute $\rho=\rho(\delta,C)>0$ (as large as possible) such that 
\eqref{eq:assumption_on_g_trapping_region} holds; 
\vspace{-.1cm}
\item[\em(P7)] As we integrate equation \eqref{eq:ODE_trapping_region}, verify that $\|a(\tau) - \ta\|_{\omega} \le \frac{\rho}{C}$, for some $\tau \ge 0$; 
\vspace{-.1cm}
\item[\em(P8)] From Theorem~\ref{thm:trapping_region}, conclude that $a(t)$ stays in $B_{\rho}(\ta)$ for all $t \ge \tau$ (it is bounded), it exists globally in time and it satisfies $a(t) \to \ta$ as $t \to +\infty$. 
\end{itemize}
\end{proc}

If Procedure~\ref{proc:global_existence} is successfully applied, then as we perform the rigorous integration, we verify if $a(\tau) \in B_{\frac{\rho}{C}}(\ta)$. If so, then the solution is trapped in the ball $B_{\rho}(\ta)$ for all time $t \ge \tau$. For this reason, we say that the set $B_{\rho}(\ta)$ is a {\em trapping region}.

%%% Add gershgorin part here

\subsection{Semigroup estimates}\label{sec:Semigroup_es}
%The remaining task to perform Procedure~\ref{proc:global_existence} is how one obtains constants $C>0$ and $\lambda>0$ in {(P3)}. 
We will show how one get the constants $C\ge 1$ and $\lambda>0$ satisfying
\begin{align}
	\left\|e^{Df(\tilde{a})t}\phi\right\|_{\omega}\le Ce^{-\lambda t}\|\phi\|_{\omega,0},\quad \mbox{for all}~t>0,~\phi\in \ell_{\omega}^1.
\end{align}
This corresponds to (P3) in Procedure \ref{proc:global_existence}.
Since the equilibrium solution is asymptotically stable, such $\lambda>0$ is expected to always exit. Before presenting the computation of the constants, we first need to construct an operator $P$, for which the column will be an approximation of the eigenvectors of $Df(\tilde{a})$. Let $\bar{\lambda}_{\bk}$ ($\bk\in\FM$) be approximate eigenvalues of the truncated matrix of $Df(\tilde{a})$ with the size $\bM=(M_1,\dots,M_d)$\footnote{The truncation size $\bM$ is determined appropriately later.}. These eigenvalues can be sorted in increasing order with respect to $\bk$. 
To present a decomposition of an infinite dimensional operator, say  $\mathcal{A}:\ell_{\omega}^1\to\ell_{\omega}^1$ for example, into finite and tail parts, we denote a finite truncation of the size $\bM$ by $\mathcal{A}_1:\ell_{\omega}^1\to\C^{N}$, where $N=\prod_{i=1}^dM_i$, and the tail part by $\mathcal{A}_\infty:\ell_{\omega}^1\to\ell_{\infty}^1=(\mathrm{Id}-\varPi^{(\bM)})\ell_{\omega}^1$, respectively.
Let $P_1$ be an invertible matrix whose columns are numerical approximation of the eigenvectors corresponding to $\bar{\lambda}_{\bk}$. Then we set an operator $P$ and its inverse $P^{-1}$ as
\begin{align}
	P=\begin{bmatrix}
		P_1 & 0\\ 0 & \mathrm{Id}_\infty
	\end{bmatrix},\quad
	P^{-1}=\begin{bmatrix}
		P_1^{-1} & 0\\ 0 & \mathrm{Id}_\infty
	\end{bmatrix}.
\end{align}
Therefore, using these operators, we define the operator $M\bydef P^{-1}Df(\tilde{a})P$. 
Since $P$ is an approximation of the eigenvectors of $Df(\tilde{a})$, and $P^{-1}$ is the exact inverse of $P$, we can see $M$ as being an approximate standard diagonalization of $Df(\tilde{a})$. 
For this reason we will refer to $M$ as the \emph{pseudo-diagonalization} of $Df(\tilde{a})$. The motivation for the pseudo-diagonalization is presented later in this section. We can rewrite $M$ with the form
\begin{align}\label{eq:operators}
	M=\Lambda+\mathscr{E},\quad
	\Lambda=\begin{bmatrix}
		\Lambda_1 & 0\\0 & \Lambda_\infty
	\end{bmatrix},\quad
	\mathscr{E}=\begin{bmatrix}
		\mathscr{E}_1^1 & \mathscr{E}_1^\infty\\
		\mathscr{E}_\infty^1 & \mathscr{E}_\infty^\infty
	\end{bmatrix},
\end{align}
where $\Lambda_1=\mathrm{diag}(\bar{\lambda}_{\bk})$ ($\bk\in\FM$) and $\Lambda_\infty=\mathrm{diag}(\mu_{\bk})$ ($\bk\notin\FM$) are the finite/infinite diagonal matrices, $\mathscr{E}_1^1\in\C^{N\times N}$ contains numerical errors of diagonalization, $\mathscr{E}_1^\infty:\ell_{\infty}^1\to \C^{N}$, $\mathscr{E}_\infty^1:\C^{N}\to \ell_{\infty}^1$, and $\mathscr{E}_\infty^\infty:\ell_{\infty}^1\to \ell_{\infty}^1$.
%it follows for $\phi\in\ell_{\omega}^1$ that
More precisely,
\begin{align}
	\mathscr{E}_1^\infty=P_1^{-1}\cQ D\cN_1(\tilde{a}),\quad\mathscr{E}_\infty^1=\cQ D\cN_\infty(\tilde{a})P_1,\quad\mathscr{E}_\infty^\infty=\cQ D\cN_\infty(\tilde{a}).
\end{align}
Using this notation, we get the bound
\begin{equation} \label{eq:bound_P}
\| e^{Df(\tilde{a}) t} \|_{B(\ell_\omega^1)} = \| e^{PMP^{-1} t} \|_{B(\ell_\omega^1)}  \leq  \| P  \|_{B(\ell_\omega^1)}  \| P^{-1}  \|_{B(\ell_\omega^1)}   \|e^{M t} \|_{B(\ell_\omega^1)}.
\end{equation} 
By construction of the operator $P$ and $P^{-1}$, the computation of $\| P \|_{B(\ell_\omega^1)}$ and $\| P^{-1} \|_{B(\ell_\omega^1)}$ are finite and given by 
\begin{align*}
\| P \|_{B(\ell_\omega^1)} = \max\left( \| P_1 \|_{B(\ell_\omega^1)} , 1 \right) \quad \mbox{ and } \quad \| P^{-1} \|_{B(\ell_\omega^1)} = \max\left( \| P_1^{-1} \|_{B(\ell_\omega^1)} , 1 \right).
\end{align*}
The next step is to define the necessary conditions to use the following theorem.
\begin{thm}[{\cite[Theorem B.1]{JB_Jay_Jaquette2022}}]\label{thm:semigroup_estimate_M}
	Consider the linear operators $M$, $\Lambda$, $\mathscr{E}: \C^{N} \times\ell_{\infty}^1\to\C^{N} \times\ell_{\infty}^1$ defined in \eqref{eq:operators}. We require $\Lambda$  to be densely defined and $\mathscr{E}$ to be bounded. Suppose that $\Lambda_1$ is diagonal and that $\Lambda_\infty:\ell_{\infty}^1\to\ell_{\infty}^1$ has a bounded inverse.
	
	Let $\mu_1$, $\mu_\infty > 0 $ such that for $t>0$ and $(\phi_1,\phi_\infty)\in \C^{N}\times\ell_{\infty}^1$ we have
	\begin{align}
		\left\|e^{\Lambda_1 t}\phi_1\right\|_{\omega,0}\le  e^{-\mu_1 t}\|\phi_1\|_{\omega, 0},\quad\left\|e^{\Lambda_\infty t}\phi_\infty\right\|_{\omega,0}\le  e^{-\mu_\infty t}\|\phi_\infty\|_{\omega, 0}.
	\end{align}
	Fix constants $\delta_a$, $\delta_b$, $\delta_c$, $\delta_d>0$ satisfying
	\begin{align}
		\|\mathscr{E}_1^1\|_{B(\ell_{\omega}^1)}\le\delta_a,\quad
		\|\mathscr{E}_1^\infty\|_{B(\ell_{\omega}^1)}\le\delta_b,\quad
		\|\mathscr{E}_\infty^1\|_{B(\ell_{\omega}^1)}\le\delta_c,\quad
		\|\mathscr{E}_\infty^\infty\|_{B(\ell_{\omega}^1)}\le\delta_d,\quad
	\end{align}
	and set
	\begin{align}
		\eta \bydef \sum_{\lambda \in \sigma(\Lambda_1)} \frac{ 1 }{\mu_\infty - \delta_d - |\lambda|}
	\end{align}
%	{$\epsilon$ is overlapping in (P4)!!!}
	Assume the following inequalities hold
	\begin{equation} \label{eq:conditions_thm_B1}
	\delta_d + \sup_{\lambda_k \in \sigma(\Lambda_1)} | \lambda_k |  <  \mu_\infty   \quad \mbox{ and } \quad - \mu_\infty + \delta_d + \eta \delta_b \delta_c(1 + \eta^2 \delta_b \delta_c ) \leq - \mu_1. 
\end{equation}	 
	Then, we have the following estimates of semigroup generated by $M$:
	\begin{align}
		\left\|e^{M t}\phi\right\|_{\omega}\le C_s e^{-\lambda_s t}\|\phi\|_{\omega, 0},\quad\phi\in\ell_{\infty}^1,
	\end{align}
	where
	\begin{align}
C_s & \bydef (1 + \eta \delta_b )^2 (1 + \eta \delta_c)^2, \\
\lambda_s & \bydef \mu_1 - C_s \delta_a - \eta \delta_b \delta_c \left( 1 + \eta(2 \delta_b + \delta_c ) + \eta^2 \delta_b \delta_c ( 1 + \eta \delta_b)  \right).
	\end{align}
\end{thm}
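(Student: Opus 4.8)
The plan is to prove the semigroup estimate for $M=\Lambda+\mathscr{E}$ by reducing $M$, through a \emph{bounded} similarity transformation, to an operator that is block-diagonal up to a quadratically small off-diagonal remainder, each diagonal block generating an exponentially decaying semigroup, and then to absorb the remainder by a Duhamel/Gronwall argument. As a first step I would incorporate $\mathscr{E}_\infty^\infty$ into $\Lambda_\infty$, setting $\widehat{\Lambda}_\infty\bydef\Lambda_\infty+\mathscr{E}_\infty^\infty$. Since $\|e^{\Lambda_\infty t}\|_{B(\ell_{\omega}^1)}\le e^{-\mu_\infty t}$ and $\|\mathscr{E}_\infty^\infty\|_{B(\ell_{\omega}^1)}\le\delta_d$, the variation of constants formula together with Gronwall's inequality gives $\|e^{\widehat{\Lambda}_\infty t}\|_{B(\ell_{\omega}^1)}\le e^{-(\mu_\infty-\delta_d)t}$ for $t>0$; moreover, the first inequality of \eqref{eq:conditions_thm_B1} guarantees that every $\lambda\in\sigma(\Lambda_1)$ lies in the resolvent set of $\widehat{\Lambda}_\infty$ with $\|(\lambda-\widehat{\Lambda}_\infty)^{-1}\|_{B(\ell_{\omega}^1)}\le(\mu_\infty-\delta_d-|\lambda|)^{-1}$. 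After this reduction $M$ has the $2\times 2$ block form with diagonal blocks $\Lambda_1+\mathscr{E}_1^1$ and $\widehat{\Lambda}_\infty$ and off-diagonal blocks $\mathscr{E}_1^\infty$, $\mathscr{E}_\infty^1$.

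Second, I would block-diagonalize $M$ by conjugating with
\[
T\bydef\begin{bmatrix}\mathrm{Id}&U\\ V&\mathrm{Id}\end{bmatrix},
\]
where $U:\ell_{\infty}^1\to\C^{N}$ and $V:\C^{N}\to\ell_{\infty}^1$ solve the Riccati-type equations obtained from requiring $MT=T\,\mathrm{diag}(D_1,D_\infty)$, whose leading parts are the Sylvester equations $\Lambda_1 U-U\widehat{\Lambda}_\infty=\mathscr{E}_1^\infty$ and $\widehat{\Lambda}_\infty V-V\Lambda_1=-\mathscr{E}_\infty^1$. These are solved by a contraction argument; resolving the Sylvester parts row/column-wise along $\sigma(\Lambda_1)$, applying the resolvent bound above, and using the $\ell^1$ (Banach algebra) structure of the norm yields $\|U\|_{B(\ell_{\omega}^1)}\le\eta\delta_b$ and $\|V\|_{B(\ell_{\omega}^1)}\le\eta\delta_c$, the factor $\eta=\sum_{\lambda\in\sigma(\Lambda_1)}(\mu_\infty-\delta_d-|\lambda|)^{-1}$ arising exactly from summing the row/column bounds over the $N$ eigenvalues. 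Because $\eta^2\delta_b\delta_c$ is small under \eqref{eq:conditions_thm_B1}, the operators $\mathrm{Id}-UV$ and $\mathrm{Id}-VU$ are invertible, so $T$ has a bounded inverse; tracking $\|T\|_{B(\ell_{\omega}^1)}$ and $\|T^{-1}\|_{B(\ell_{\omega}^1)}$ through the block formulas for $T$ and $T^{-1}$ gives $\|T\|_{B(\ell_{\omega}^1)}\|T^{-1}\|_{B(\ell_{\omega}^1)}\le C_s=(1+\eta\delta_b)^2(1+\eta\delta_c)^2$, whence
\[
\left\|e^{Mt}\right\|_{B(\ell_{\omega}^1)}\le\|T\|_{B(\ell_{\omega}^1)}\,\|T^{-1}\|_{B(\ell_{\omega}^1)}\,\left\|e^{(T^{-1}MT)t}\right\|_{B(\ell_{\omega}^1)}\le C_s\left\|e^{(T^{-1}MT)t}\right\|_{B(\ell_{\omega}^1)}.
\]

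Third, I would estimate $\|e^{(T^{-1}MT)t}\|_{B(\ell_{\omega}^1)}$. By construction the conjugated operator is block-diagonal up to an off-diagonal remainder built from the products $UV$, $VU$ and from $\mathscr{E}_1^1$, hence of operator norm controlled by $\eta\delta_b\delta_c$ times factors of the form $\big(1+\eta(2\delta_b+\delta_c)+\eta^2\delta_b\delta_c(1+\eta\delta_b)\big)$; its finite diagonal block generates a semigroup of rate at least $\mu_1-C_s\delta_a$ minus further corrections of order $\delta_b\delta_c$, while its infinite diagonal block keeps rate essentially $\mu_\infty-\delta_d$, which by the second inequality of \eqref{eq:conditions_thm_B1} dominates the finite-block rate. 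A final Duhamel/Gronwall estimate then absorbs the off-diagonal remainder into the slower (finite-block) decay rate, producing $\|e^{(T^{-1}MT)t}\|_{B(\ell_{\omega}^1)}\le e^{-\lambda_s t}$ with $\lambda_s=\mu_1-C_s\delta_a-\eta\delta_b\delta_c\big(1+\eta(2\delta_b+\delta_c)+\eta^2\delta_b\delta_c(1+\eta\delta_b)\big)$, and hence $\|e^{Mt}\phi\|_{\omega}\le C_s e^{-\lambda_s t}\|\phi\|_{\omega,0}$ for $\phi\in\ell_{\omega}^1$; in the application one then combines this with \eqref{eq:bound_P} and the finiteness of $\|P\|_{B(\ell_{\omega}^1)}$, $\|P^{-1}\|_{B(\ell_{\omega}^1)}$ to obtain \eqref{eq:exponential_operator_norm}. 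I expect the main obstacle to be the constant bookkeeping: one must carry the weighted $\ell^1$ operator norms and all the remainder terms that are quadratic (and higher) in $(\delta_b,\delta_c)$ precisely through the Sylvester/Riccati solution, the two conjugation formulas, and the final Gronwall step, and verify that under \eqref{eq:conditions_thm_B1} they collapse \emph{exactly} to the stated $C_s$ and $\lambda_s$ — in particular that the Riccati corrections do not spoil the resolvent bounds used for $U$ and $V$.
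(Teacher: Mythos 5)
This theorem is not proved in the paper at all: it is quoted verbatim from the cited reference ([JB\_Jay\_Jaquette2022], Theorem B.1), so there is no in-paper argument to measure you against, and your proposal has to stand on its own. Your overall strategy (absorb $\mathscr{E}_\infty^\infty$ into the tail to get $\|e^{\widehat{\Lambda}_\infty t}\|\le e^{-(\mu_\infty-\delta_d)t}$, then block-diagonalize $M$ by a bounded similarity built from Sylvester/Riccati solutions $U,V$, then Gronwall) is a legitimate classical route, and the intermediate bounds $\|U\|_{B(\ell_{\omega}^1)}\le\eta\delta_b$, $\|V\|_{B(\ell_{\omega}^1)}\le\eta\delta_c$ can indeed be justified row/column-wise from the resolvent bound $\|(\lambda-\widehat{\Lambda}_\infty)^{-1}\|\le(\mu_\infty-\delta_d-|\lambda|)^{-1}$.

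However, there are two genuine gaps. First, the smallness of $\eta^2\delta_b\delta_c$, which you invoke to invert $\mathrm{Id}-UV$ and $\mathrm{Id}-VU$ and to control the Riccati corrections, is \emph{not} implied by \eqref{eq:conditions_thm_B1}: the second inequality only bounds $\eta\delta_b\delta_c(1+\eta^2\delta_b\delta_c)$ by $\mu_\infty-\delta_d-\mu_1$, and since $\eta$ grows like $N/(\mu_\infty-\delta_d-\mu_1)$ when many eigenvalues of $\Lambda_1$ have comparable modulus, one can satisfy both hypotheses with $\eta^2\delta_b\delta_c>1$, in which case your transformation $T$ need not exist. Second, even when $T$ exists, the constants do not collapse to the stated ones: with the product $\ell^1_\omega$-norm the natural bound is $\|T\|\,\|T^{-1}\|\le\bigl(1+\max(\eta\delta_b,\eta\delta_c)\bigr)^2\bigl(1-\eta^2\delta_b\delta_c\bigr)^{-1}$, which blows up as $\eta^2\delta_b\delta_c\to1$ and can exceed $C_s=(1+\eta\delta_b)^2(1+\eta\delta_c)^2$, and the Riccati corrections also shift the diagonal blocks away from $\Lambda_1+\mathscr{E}_1^1$ and $\widehat{\Lambda}_\infty$. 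The step you defer as ``constant bookkeeping'' --- showing that this route yields exactly (or at least bounds implying) the stated $C_s$ and $\lambda_s$ --- is the actual content of the theorem, and the absence of any $(1-\eta^2\delta_b\delta_c)^{-1}$ factor in $C_s$ strongly suggests the quoted result is obtained instead by an iterated Duhamel/Gronwall bootstrap on the coupled finite/tail components (in the spirit of the proof of Theorem \ref{thm:solutionmap}), not by a similarity transformation. As written, your argument would at best prove a variant of the estimate with different constants under an additional smallness assumption, not the theorem as stated.
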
 %\begin{rem}
Computing $C_s$ and $\lambda_s$ using interval arithmetic and combining them with \eqref{eq:bound_P} give us
\begin{align*}
\| e^{Df(\tilde{a}) t} \|_{B(\ell_\omega^1)} \leq C e^{-\lambda t }
\end{align*}
where 
\begin{align*}
C \geq \| P  \|_{B(\ell_\omega^1)}  \| P^{-1}  \|_{B(\ell_\omega^1)}  C_s \quad \mbox{ and } \quad  \lambda \leq \lambda_s.
\end{align*}
It is worth mentioning that if we applied Theorem \ref{thm:semigroup_estimate_M} directly to the operator $ \| e^{Df(\tilde{a}) t} \|_{B(\ell_\omega^1)}$ without pseudo-diagonalizing first, we would have $\lambda < 0$, and thus we would not have the hypothesis needed for Theorem \ref{thm:trapping_region} to prove global existence. Also, the truncation size $\bM$ of the finite part of the pseudo-diagonalization is  chosen such that the conditions \eqref{eq:conditions_thm_B1} are satisfied and such that $\lambda > 0$.

\subsection{Computer-assisted proofs}\label{sec:results}

Let us apply Procedure~\ref{proc:global_existence} to study global existence and asymptotic convergence of solutions in the Swift-Hohenberg equation, which is introduced in \cite{swift-hohenberg} to describe the onset of Rayleigh-B\'enard convection
\begin{align}\label{eq:SH}
u_t  = \sigma u - (1+\Delta)^2 u -u^3 = \left( (\sigma-1) - 2 \Delta  - \Delta^2 \right) u - u^3,
\end{align}
where $\sigma \in \R$ is a parameter. It is worth mentioning that several computer-assisted proofs have been presented in the last twenty years to study the dynamics in \eqref{eq:SH}, including constructive proofs of steady states \cite{MR2126387,MR2338393,MR2269503,MR2718657,MR4664056}, global dynamics \cite{MR2136516}, chaos \cite{MR2443030,MR3792791}, stable manifold of equilibria \cite{JB_Jay_Jaquette2022} and solutions to initial value problems in the one-dimensional case \cite{MR4379799,JBMaxime}.

Here, we impose the homogeneous Neumann boundary condition on 3D prism domain $[0,\pi/L_1] \times [0,\pi/L_2]\times [0,\pi/L_3]$ with $(L_1,L_2,L_3) = (1,1.1,1.2)$ (resp.\ 2D rectangle domain $[0,\pi/L_1] \times [0,\pi/L_2]$ with $(L_1,L_2)=(1,1.1)$). Using the general notation \eqref{eq:general_PDE}, 
$\lambda_0=\sigma-1$, $\lambda_1=-2$, $\lambda_2=-1$ and $\jp{\Delta^p N(u)} = -u^3$ ($p=0$).
In this case, the corresponding differential equation \eqref{eq:ODE_trapping_region} in the space of Fourier coefficients in $\ell_{\omega}^1$ is given by 
\[
\dot{a}(t) = f(a) \bydef \cL a(t) - a(t)^3,
\]
where $(\cL a)_\bk = (\sigma - ( 1- (\bk \bL)^2)^2)a_{\bk} (=\mu_{\bk}a_{\bk})$ and the cubic term is the usual convolution defined by
\[
(a^3)_{\bk} \bydef (a*a*a)_{\bk} = \sum_{\bk_1+\bk_2 + \bk_3 = \bk \atop \bk_1,\bk_2,\bk_3 \in \Z^d} a_{\bk_1} a_{\bk_2} a_{\bk_3}.
\]

For the part (P1) of Procedure~\ref{proc:global_existence}, we use the tools of computer-assisted proofs for equilibria of PDEs defined on 3D/2D domains with periodic boundary conditions (e.g. see \cite{MR4159300,MR2338393,MR2718657}) to compute $\ta \in \ell_{\omega}^1$ such that $f(\ta) = \cL \ta - \ta^3$. A brief introduction of such tools is presented in Appendix~\ref{sec:equilibria_SH}.
For the part (P2), we note that for the Swift-Hohenberg equation,
\begin{align*}
	\cG(h) &= f(\ta + h) - Df(\ta)h \\
	& = \cL (\ta + h) - (\ta + h)^3 - (\cL h - 3\ta^2*h) \\
	& = - 3 \ta*h^2 - h^3.
\end{align*}

For the part (P3), the explicit construction of the constants $C>0$ and $\lambda>0$ satisfying \eqref{eq:exponential_operator_norm} is performed rigorously via Theorem \ref{thm:semigroup_estimate_M} in Section \ref{sec:Semigroup_es}.
For the part (P4), fix $\epsilon= 10^{-16}$, which is small enough so that $\delta \bydef \lambda-\epsilon >0$ (the part (P5)).
For the part (P6), one must pick $\rho=\rho(\delta,C)>0$ such that \eqref{eq:assumption_on_g_trapping_region} hold. In our context, we use the Banach algebra structure of $\ell_{\omega}^1$ to reduce the verification of (P6) to
\[
\| \cG(\phi)\|_{\omega} = \| -3 \ta*\phi^2 - \phi^3 \|_{\omega}
\le 3 \| \ta\|_{\omega} \|\phi\|_{\omega}^2 +\| \phi \|_{\omega}^3
\le \frac{\delta}{C} \| \phi \|_{\omega}
\]
for all $\phi \in \ell_{\omega}^1$ with $\|\phi\|_{\omega} \le \rho$. Assuming $\phi \ne 0$, the previous inequality boils down to verify that 
\[
3 \| \ta\|_{\omega} \|\phi\|_{\omega} +\| \phi\|_{\omega}^2
\le \frac{\delta}{C}
\]
for all $\phi \in \ell_{\omega}^1$  $\|\phi\|_{\omega} \le \rho$. A sufficient condition for this to hold is that 
\[
3 \| \ta\|_{\omega} \rho +\rho^2 \le \frac{\delta}{C}.
\]
The largest positive $\rho$ which satisfies this inequality is given by
\begin{equation} \label{eq:explicit_trapping_region_radius_SH}
	\rho \bydef \frac{1}{2} \left( - 3 \| \ta\|_{\omega} + \sqrt{(3 \| \ta\|_{\omega})^2 + \frac{4\delta}{C}}\right) =
	\frac{3}{2} \| \ta\|_{\omega} \left( \sqrt{1 + \frac{4\delta}{9C \| \ta\|_{\omega}^2}} - 1 \right)>0.
\end{equation}

For the part (P7), we integrate \eqref{eq:ODE_trapping_region} via the multi-step scheme introduced in Section \ref{sec:time_stepping} to prove that $\|a(t_K) - \ta\|_{\omega} \le \frac{\rho}{C}$ holds for some $t_K \ge 0$.
To perform this, we denote $\cN_{\bk}(a)=-(a^3)_{\bk}$ in \eqref{eq:the_ODEs}.
The Fr\'echet derivative of $\cN$ at $\ba$ is given by $D\cN(\ba)\phi=-3\left(\ba^2*\phi\right)=-3(\ba*\ba*\phi)$ for $\phi\in\ell_{\omega}^1$ because of the fact
\begin{align}
	\lim_{\|\phi\|_{\omega} \to 0}\frac{\left\|\cN(\ba+\phi)-\cN(\ba)-D\cN(\ba)\phi\right\|_{\omega}}{\|\phi\|_{\omega}}\le\lim_{\|\phi\|_{\omega}\to 0}\left(3\|\ba\|_{\omega}\|\phi\|_{\omega}+\|\phi\|_{\omega}^2\right)= 0.
\end{align}
Then the function $g$ satisfying \eqref{eq:N_assumption} is given by $g(\|\psi\|_{\omega}) = 3\|\psi\|_{\omega}^2$ for $\psi\in\ell_{\omega}^1$. If we take $\psi=\ba(t)$ for a fixed $t$, this $g$ can also be expressed as $g(\|\ba(t)\|_{\omega})=3\|\ba^2(t)\|_{\omega}$. This is because that $\ba^2(t)$ is finite-dimensional sequence in the Fourier dimensions. In addition, $\ba(t)\in\ell_{\omega^{-1}}^\infty$ holds from its finiteness.
From Lemma \ref{lem:conv_es}, if we set $c=c^{(\infty)}\in\ell_{\omega}^1$ and $a=\ba^2(t)\in\ell_{\omega^{-1}}^\infty$ in \eqref{eq:conv_bounds1}, it follows that
%\begin{align}
%	\left|\left(a*c^{(\infty)}\right)_\bk\right|  \le \left(\sup_{\bk_1 \notin\Fm} \left|a_{|\bk-\bk_1|}+a_{\bk+\bk_1}\right|\omega_{\bk_1,q}^{-1}\right)\|c^{(\infty)}\|_{\omega}.\label{eq:conv_bounds2}
%\end{align}
%Furthermore, for $c^{(\infty)}\in\ell_{\omega}^1$ and  defined in \eqref{eq:a_bar}, we have
\begin{align}
	\left|\left(\ba^2(t)*c^{(\infty)}\right)_\bk\right|  \le \left(\max_{|\bell| \in\bm{F_{k+2N-1}}\setminus\Fm} \left|\ba^2_{\bk-\bell}(t)\right|\omega_{|\bell|}^{-1}\right)\|c^{(\infty)}\|_{\omega},\label{eq:conv_bounds3}
\end{align}
where $\bN$ is the size of the Fourier dimension of $\ba$.
Let
\begin{align}
	\Psi_{\bk}\left(\ba^2(t)\right)\bydef\max_{|\bell| \in\bm{F_{k+2N-1}}\setminus\Fm} \left|\ba^2_{\bk-\bell}(t)\right|\omega_{|\bell|}^{-1}.
\end{align}
Using this notation, it follows from \eqref{eq:Fourier_proj} that
\begin{align}
	\left\|\varPi^{(\bm{m})} D\mathcal{N}(\ba(t))c^{(\infty)}\right\|_{\omega}
	&=3\sum_{\bk\in\Fm}\left|\left(\ba^2(t)*c^{(\infty)}\right)_{\bk}\right|\omega_{\bk}\\
	&\le3\sup_{t \in J}\sum_{\bk\in\Fm}\Psi_{\bk}\left(\ba^2(t)\right)\omega_{\bk}\|c^{(\infty)}\|_{\omega}.
\end{align}
Therefore, the constant $\jp{\cE_{m,\infty}^{J}}$ satisfying \eqref{eq:Cm_bound} is given by
\begin{align}
	\jp{\cE_{m,\infty}^{J}} = 3\sum_{\ell=0}^{2(n-1)}\sum_{\bk\in\Fm}\left|\Psi_{\ell,\bk}\left(\ba^2\right)\right|\omega_{\bk},
\end{align}
where $\Psi_{\ell,\bk}\left(\ba^2\right)$ denotes the Chebyshev coefficients of $\Psi_{\bk}\left(\ba^2(t)\right)$.
Similarly, 
%if $c=c^{(\bbm)}\in\ell_{\omega}^1$ and $a=\ba^2(t)\in\ell_{\omega^{-1}}^\infty$ in \eqref{eq:conv_bounds1}, 
we have for $\bk\notin\Fm$
%\begin{align}
%	\left|\left(\ba^2(t)*c^{(\bbm)}\right)_\bk\right| &\le \max\left\{\left|\ba^2_{\bk}(t)\right|,\max_{|\bell| \in\Fm\setminus\bm{F}_{1}} \left|\ba^2_{\bk-\bell}(t)\right|\omega_{|\bell|,q}^{-1}\right\}\left\|c^{(\bbm)}\right\|_{\omega}.\label{eq:conv_bounds4}
%\end{align}
\begin{align}
	\left(\ba^2(t)*c^{(\bbm)}\right)_\bk &=\sum_{\bk_1+\bk_2 = \bk \atop |\bk_1|\in \bm{F_{2N-1}}\setminus\bm{F_{1}},~|\bk_2|\in\Fm}\left(\ba^2(t)\right)_{\bk_1}c_{\bk_2}.
\end{align}
%Let us define another bound as
%\begin{align}
%	\tilde{\Psi}_{\bk}\left(\ba^2(t)\right)\bydef\max\left\{\left|\ba^2_{\bk}(t)\right|,\max_{|\bell| \in\Fm\setminus\bm{F}_{1}} \left|\ba^2_{\bk-\bell}(t)\right|\omega_{|\bell|,q}^{-1}\right\}.
%\end{align}
To get the $\jp{\cE_{\infty,m}^{J}}$ bound satisfying \eqref{eq:Cinf_bound}, we have from the Banach algebra property
\begin{align}
	\left\|(\mathrm{Id}-\varPi^{(\bm{m})} )D\mathcal{N}(\ba(t))c^{(\bm{m})}\right\|_{\omega}
	&=3\sum_{\bk\not\in\Fm}\left|\left(\ba^2(t)c^{(\bbm)}\right)_\bk\right|\omega_{\bk}\\
	&=3\sum_{\bk\not\in\Fm}\left|\sum_{\bk_1+\bk_2 = \bk \atop |\bk_1|\in \bm{F_{2N-1}}\setminus\bm{F_{1}},~|\bk_2|\in\Fm}\left(\ba^2(t)\right)_{\bk_1}c_{\bk_2}\right|\omega_{\bk}\\
	&\le 3\left(\sum_{\bk\in \bm{F_{2N-1}}\setminus\bm{F_{1}}}\left|\left(\ba^2(t)\right)_{\bk}\right|\omega_{\bk}\right)\|c^{(\bbm)}\|_{\omega}.
\end{align}
Hence, we obtain $\jp{\cE_{\infty,m}^{J}}$ in \eqref{eq:Cinf_bound} as
\begin{align}
	\jp{\cE_{\infty,m}^{J}}=3\sum_{\ell=0}^{2(n-1)}\sum_{\bk\in \bm{F_{2N-1}}\setminus\bm{F_{1}}}\left|\left(\ba^2\right)_{\ell,\bk}\right|\omega_{\bk},
\end{align}
where $\left(\ba^2\right)_{\ell,\bk}$ also denotes the Chebyshev coefficients of $\left(\ba^2(t)\right)_{\bk}$.
Finally, recalling Remark \ref{rem:Lba_rho}, we have $L_{\ba}(\varrho)=3\varrho\left(2\|\ba\|_X+\varrho\right)$ in Theorem \ref{thm:local_inclusion}.

Our integrator based on Theorem \ref{thm:local_inclusion_multisteps} proves that there exists a solution of Swift-Hohenberg equation \eqref{eq:SH} in $\bm{B}_K(\bm{\ba}, \bm{\varrho}_0)$ defined in \eqref{eq:theBall_multisteps}. Then we rigorously compute the error bound at the end point, say $\bm{\varrho}_K$, by \eqref{eq:error_at_end point_K}.
We verify that the solution is in the trapping region via the following inequality:
\begin{align}
	\|a^{J_K}(t_K) - \ta\|_{\omega}\le \|a^{J_K}(t_K) - \ba^{J_K}(t_K)\|_{\omega} +\|\ba^{J_K}(t_K) - \ta\|_{\omega}\le \bm{\varrho}_K +  \|\ba^{J_K}(t_K) - \ta\|_{\omega} \le \frac{\rho}{C}.
\end{align}
The second term in the last inequality is rigorously computable via the result of computer-assisted proofs for the equiliubrium based on interval arithmetic.
Consequently, if the last inequality holds, then we have a computer-assisted proof of global existence of the solution to Swift-Hohenberg equation \eqref{eq:SH} for the part (P8).
\jp{We note that, while the global existence of solutions of the Swift-Hohenberg equation is already established, our numerical integrator enables us to begin with any initial condition far from equilibrium, without prior knowledge of the equilibrium to which it will converge. Moreover, our method is not restricted to gradient systems and could potentially be applied to more complex problems.}

\subsubsection{3D Swift-Hohenberg equation} \label{3D Swift-Hohenberg equation}

\begin{figure}[htbp]
	\begin{minipage}[b]{0.49\linewidth}
		\centering
		\includegraphics[width=\textwidth]{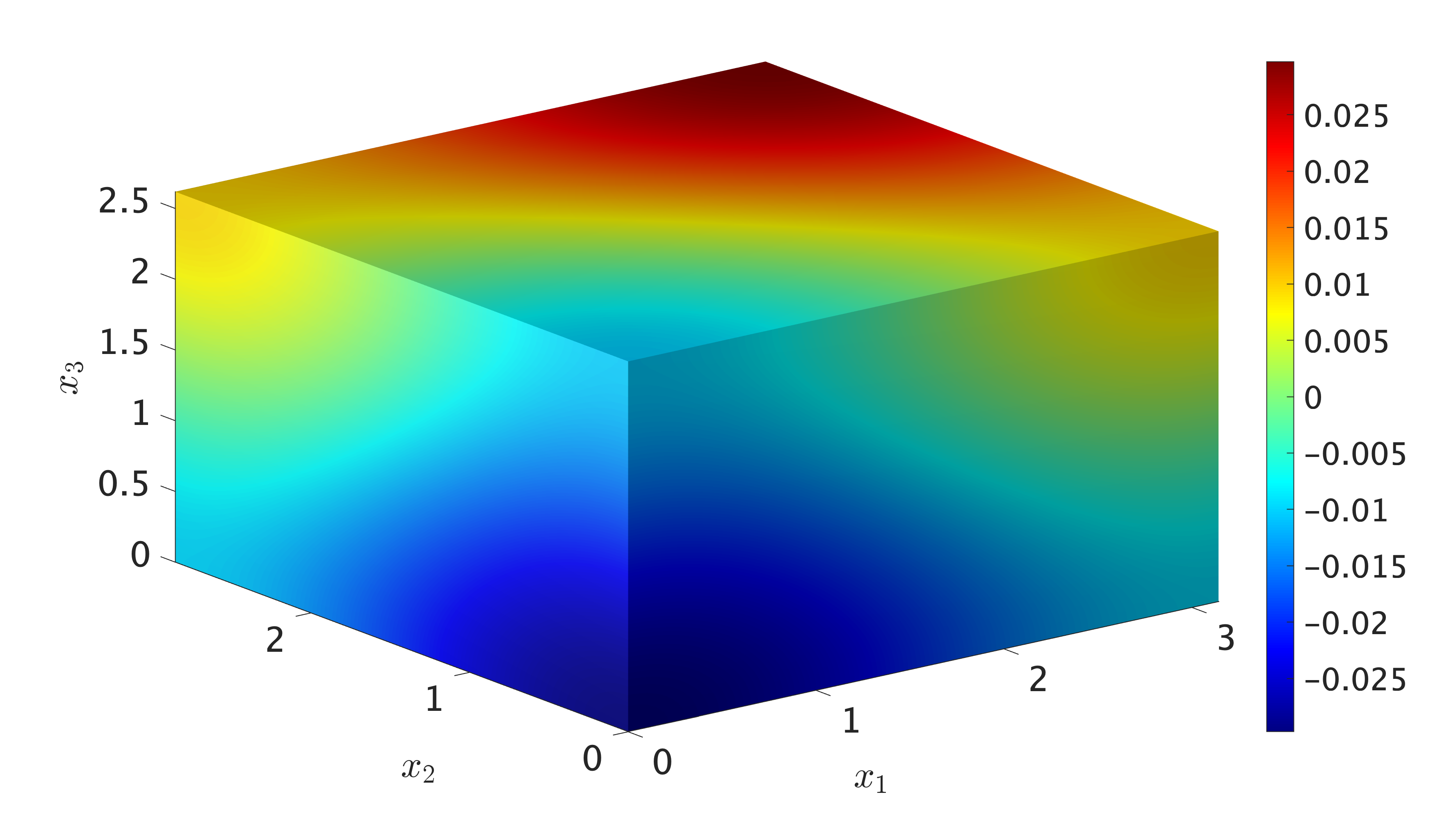}
		\subcaption{Initial data ($t_0=0$).}\label{fig:1_1}
	\end{minipage}
	\begin{minipage}[b]{0.49\linewidth}
		\centering
		\includegraphics[width=\textwidth]{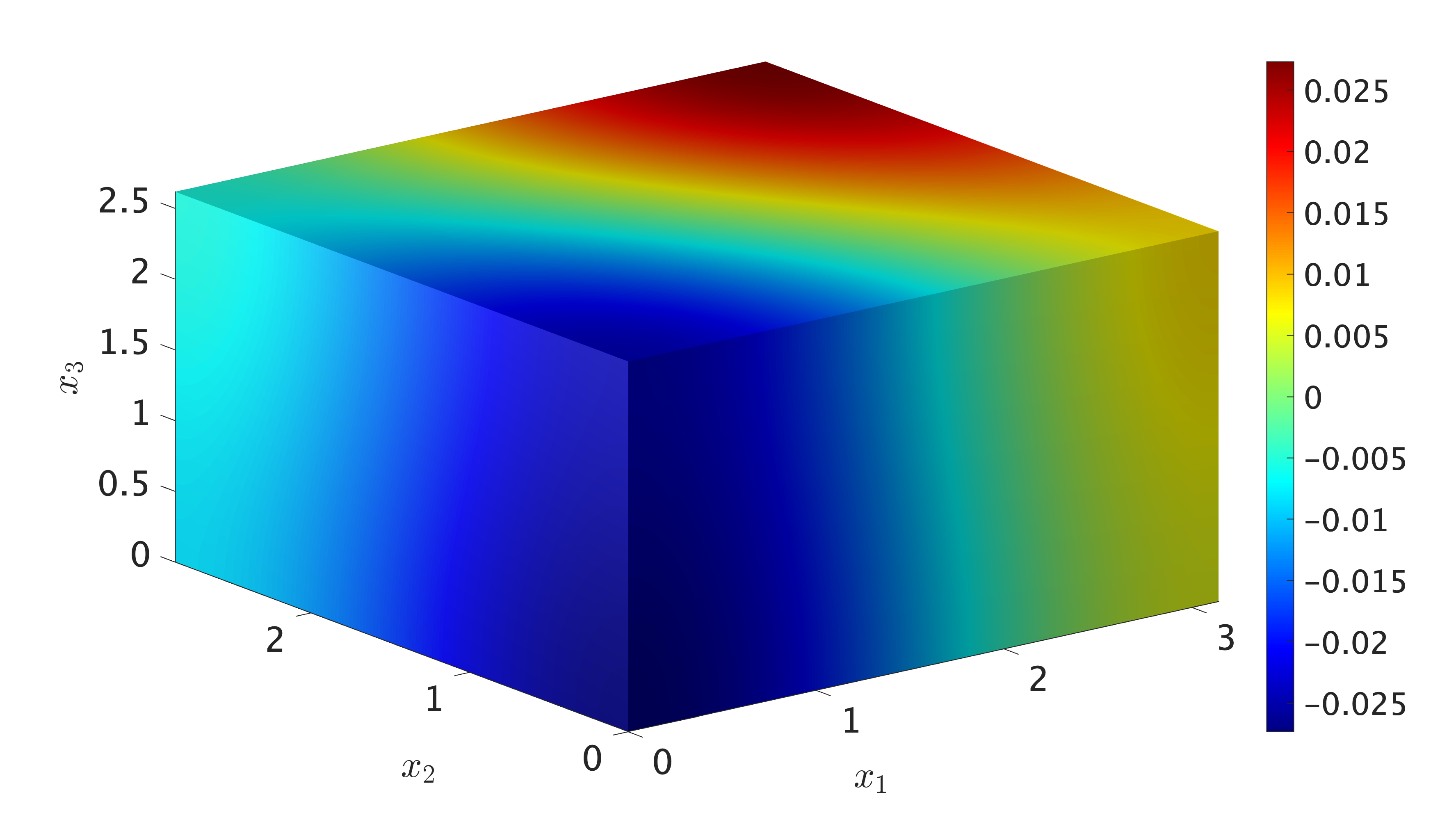}
		\subcaption{After $50$ steps ($t_{25}=12.5$).}\label{fig:1_2}
	\end{minipage}\\[2mm]
	\begin{minipage}[b]{0.49\linewidth}
		\centering
		\includegraphics[width=\textwidth]{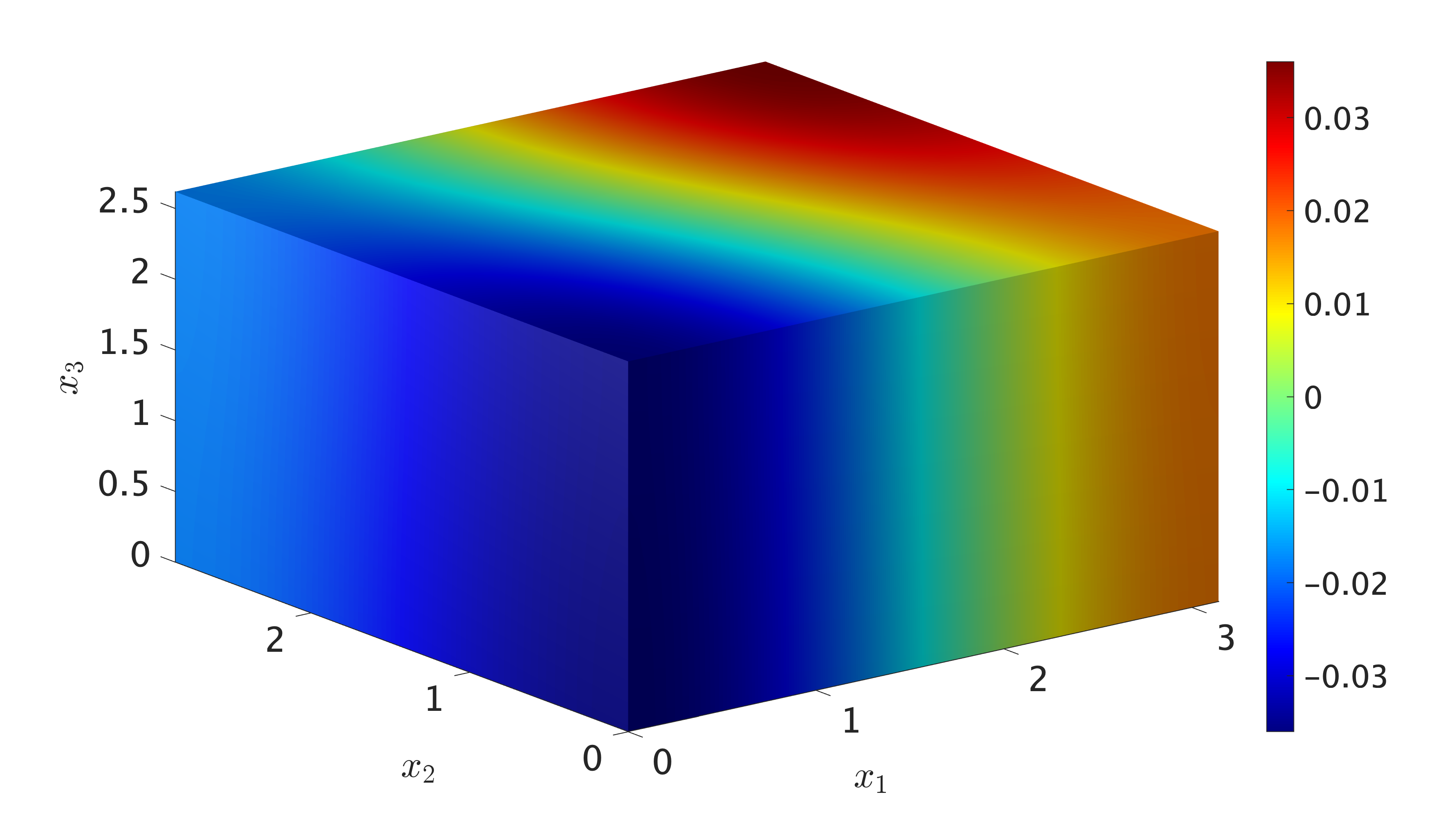}
		\subcaption{After $100$ steps ($t_{100}=25$).}\label{fig:1_3}
	\end{minipage}
	\begin{minipage}[b]{0.49\linewidth}
		\centering
		\includegraphics[width=\textwidth]{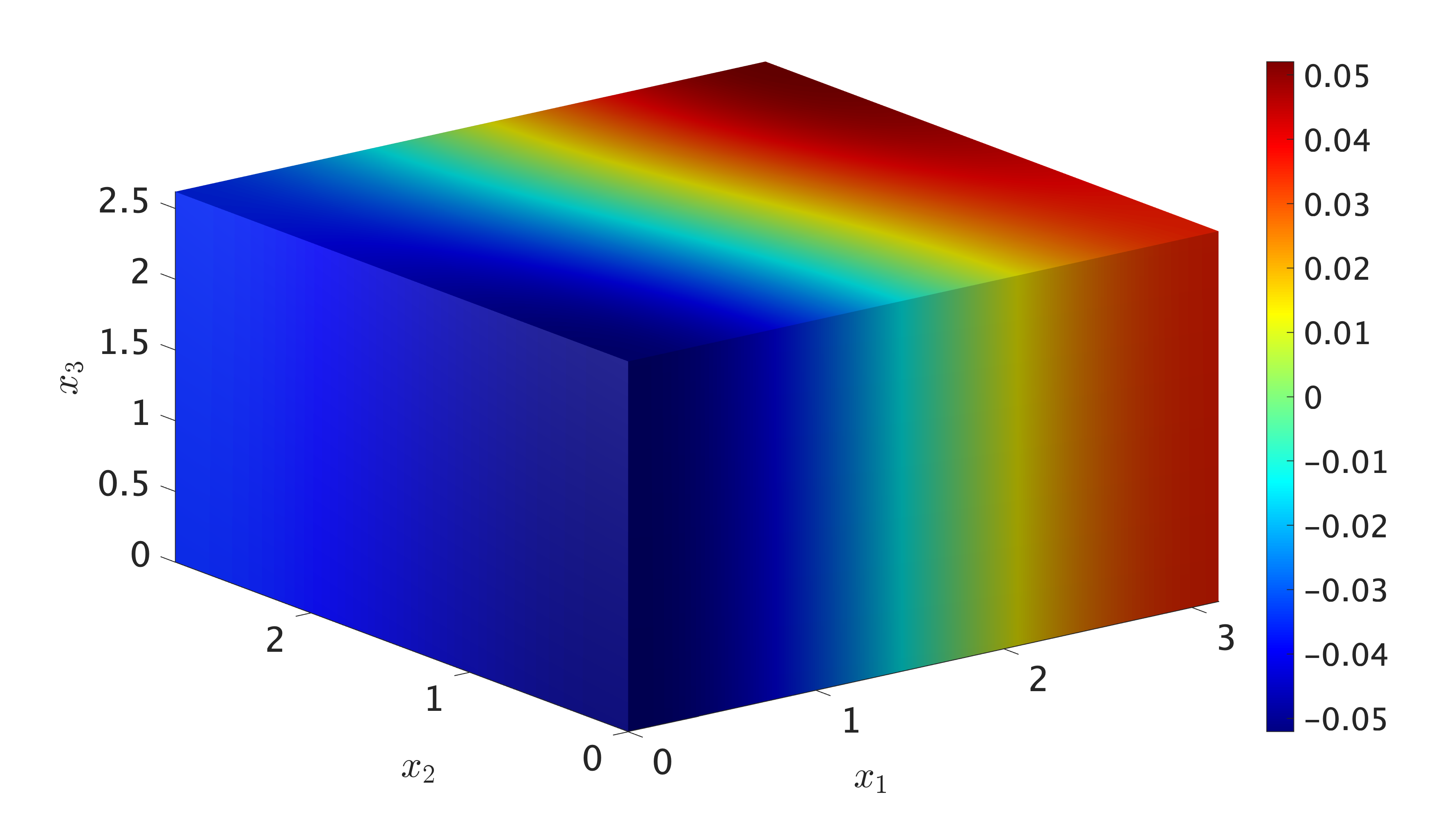}
		\subcaption{After $150$ steps ($t_{150}=37.5$).}\label{fig:1_4}
	\end{minipage}\\[2mm]
	\begin{minipage}[b]{0.49\linewidth}
		\centering
		\includegraphics[width=\textwidth]{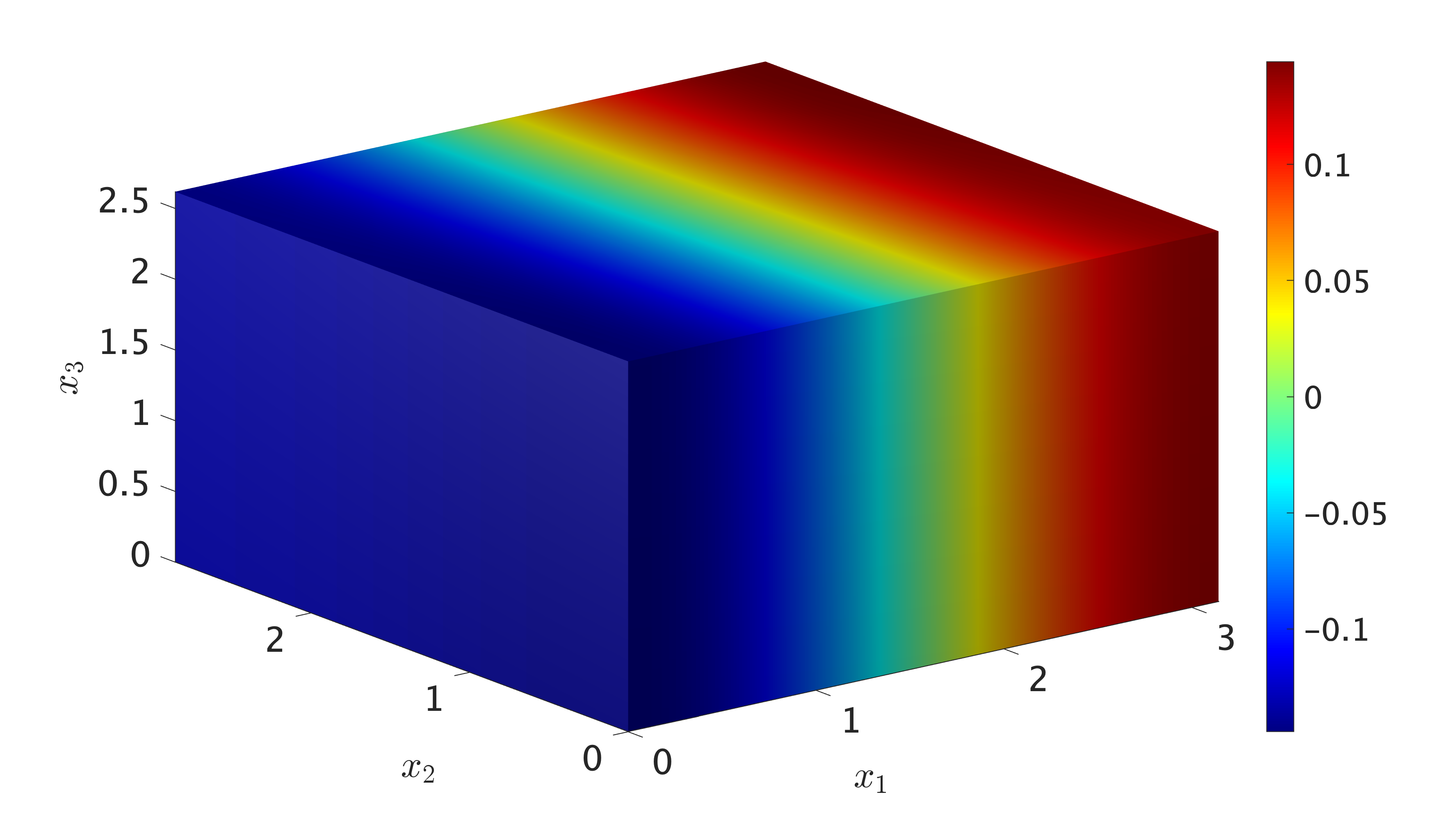}
		\subcaption{After $287$ steps ($t_{287}=71.75$).}\label{fig:1_5}
	\end{minipage}
	\begin{minipage}[b]{0.49\linewidth}
		\centering
		\includegraphics[width=\textwidth]{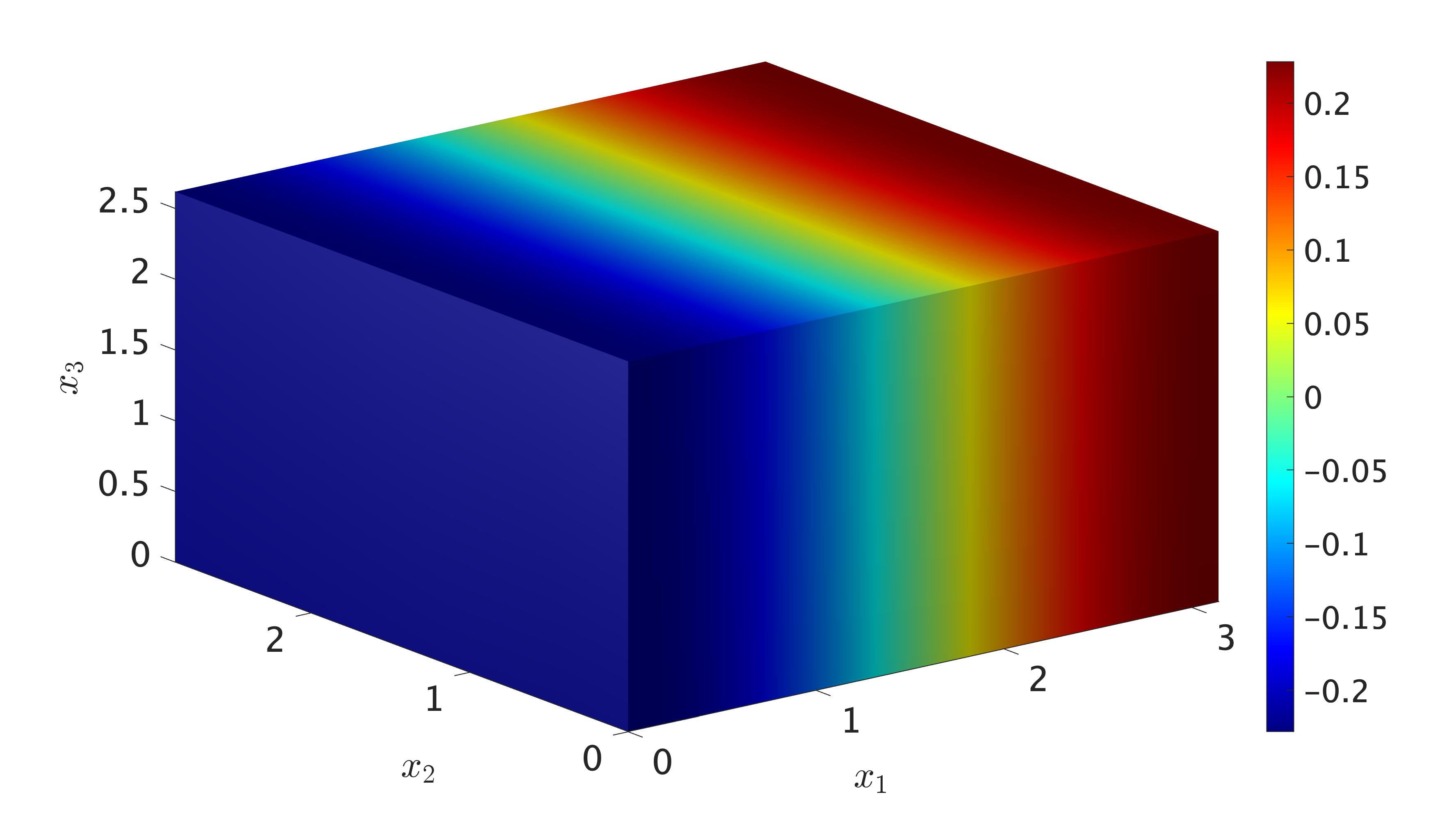}
		\subcaption{Profile of stable equilibrium (stripe pattern)}\label{fig:1_6}
	\end{minipage}
	\caption{The profiles of solution to the 3D Swift-Hohenberg: From the initial data (a), our integrator proves the existence of solution locally in time. After $i$ ($i=25$, $100$, $150$, $\dots$) steps, the time evolution of solution profile is fully changed in the 3D prism domain. Consequently, after $287$ steps, we verified that the exact solution enclosure is in the trapping region in the part (P7) of Procedure \ref{proc:global_existence}. Therefore, we proved that there exists a global in time solution from the initial data (a) to the stripe pattern equilibrium (f).}\label{fig:1}
\end{figure}

From now on, we fix $\sigma=0.04$ in the 3D case of \eqref{eq:SH}. We set $\bN=(4,4,4)$ for the size of Fourier and $\bbm=(2,1,1)$ for the Fourier projection to obtain the solution map shown in Section \ref{sec:solution_map}. We also set $\nu_{\jp{F}}=1$ for the $\ell_{\omega}^1$ norm.
As shown in Fig.\,\ref{fig:1}\,(f), we have a stable (stripe pattern) equilibrium of Swift-Hohenberg equation \eqref{eq:SH} via the tools of computer-assisted proofs.
The explicit construction of the constants $C$ and $\lambda$ introduced in Secion \ref{sec:Semigroup_es} yields that $C=1.0216$ and $\lambda=0.0799$ for the part (P3).
The largest positive $\rho$ defined in Section \ref{eq:explicit_trapping_region_radius_SH} is obtained as $\rho = 0.0988$ (the part (P6)). Therefore, our target neighborhood of the equilibrium is $\rho/C =  0.0967$.

The initial data is set as
\begin{align}
	u_0(x) = \sum_{\bk\ge 0}\alpha_{\bk}\varphi_{\bk}\cos(k_1L_1x_1)\cos(k_2L_2x_2)\cos(k_3L_3x_3)
\end{align}
with
\begin{align}
	\varphi_{\bk}=\begin{cases}
		-0.005, & \bk=(1,0,0),~(0,1,0),~(0,0,1)\\
		0, & \mbox{otherwise}
	\end{cases}.
\end{align}
Setting $\tau_i=0.25$, our rigorous integrator proves that the solution of the IVP of \eqref{eq:SH} exists at least to $t_K=71.75$ ($K=287$). The resulting error bound $\bm{\varrho}_0$ via Theorem \ref{thm:local_inclusion_multisteps} is given by
\begin{align}
	\sup_{t\in(0,71.75]}\|a(t)-\ba(t)\|_{\omega}\le  4.7241\cdot 10^{-6}=\bm{\varrho}_0.
\end{align}
In this case we also obtain $\bm{\varrho}_K=4.7241\cdot 10^{-6}$. The value of $\bm{\varrho}_K +  \|\ba^{J_K}(t_K) - \ta\|_{\omega}$ is bounded by $0.0962$, which is less than $\rho/C$. Hence, for the part (P8), the proof of global existence of the solution to the Swift-Hohenberg equation \eqref{eq:SH} is completed.

Several profiles of the solution are presented in Fig.\,\ref{fig:1}, demonstrating that the evolutionary behavior of the solution undergoes significant changes within the 3D prism domain. We believe that such a capability to visualize the solution's evolution in a higher domain, underpinned by mathematical proof, provides a new perspective of pattern dynamics.

\subsubsection{2D Swift-Hohenberg equation} \label{sec:2D-SH}

Next let us consider the 2D case of Swift-Hohenberg equation \eqref{eq:SH}. The parameter is fixed as $\sigma = 3$. Computational parameters are set by $\bN=(12,12)$, $\bbm=(3,3)$, $\nu_{\jp{F}}=1.0$.
Our tool of computer-assisted proofs obtains two stable equilibria of Swift-Hohenberg equation \eqref{eq:SH} shown in Fig.\,\ref{fig:2}. Both equilibria are asymptotically stable. We apply the provided approach of computer-assisted proofs for existence of global in time solutions.

\begin{figure}[htbp]
	\begin{minipage}[b]{0.49\linewidth}
		\centering
		\includegraphics[width=\textwidth]{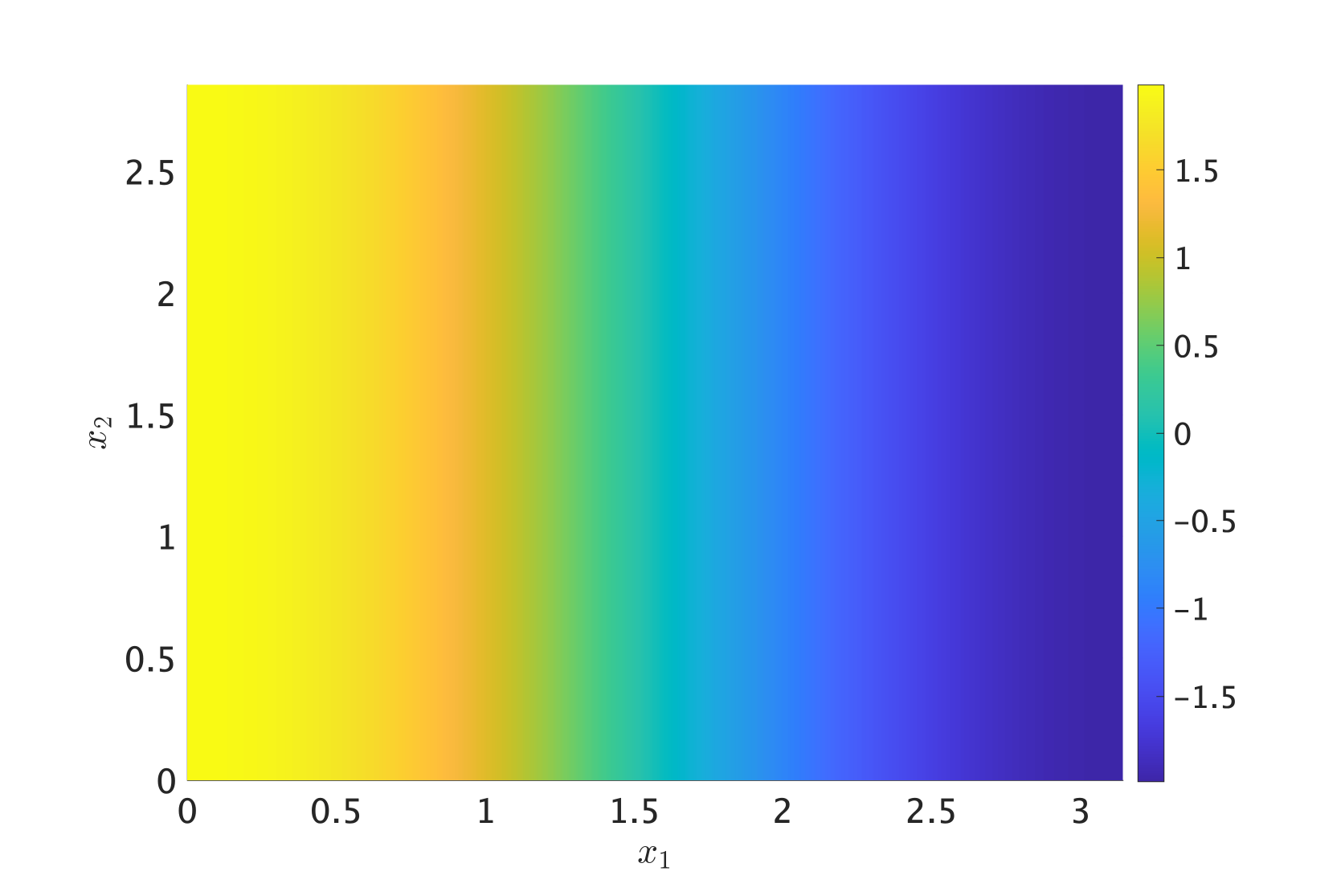}
		\subcaption{Stripe pattern equilibrium $\tilde{u}_1(x)$.}\label{fig:2_1}
	\end{minipage}
	\begin{minipage}[b]{0.49\linewidth}
		\centering
		\includegraphics[width=\textwidth]{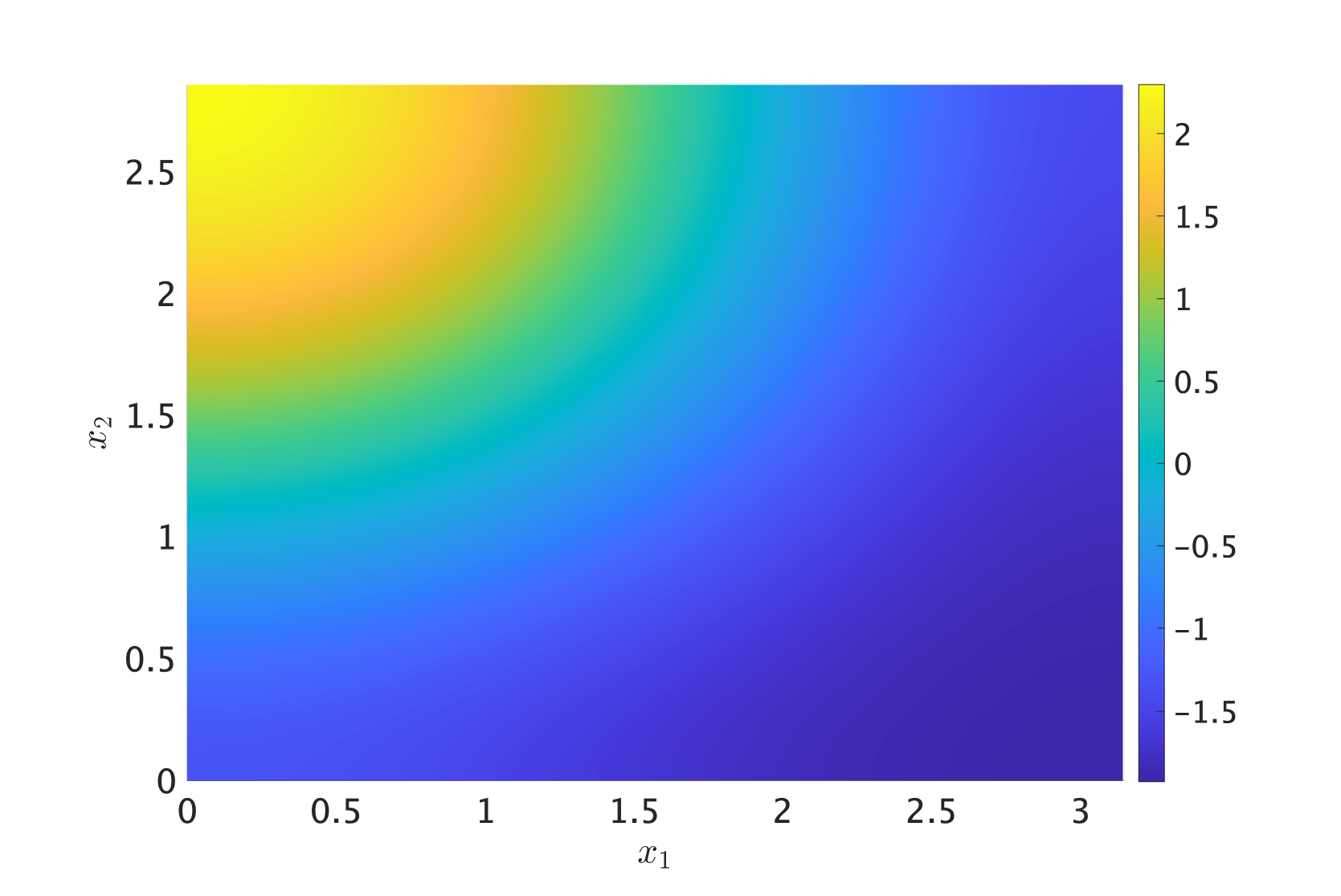}
		\subcaption{Spot pattern equilibrium $\tilde{u}_2(x)$.}\label{fig:2_2}
	\end{minipage}%\\[2mm]
	\caption{Profiles of two stable equilibria of the 2D Swift-Hohenberg equation: (a) Stripe pattern equilibrium solution $\tilde{u}_1(x)$ and (b) spot pattern equilibrium solution $\tilde{u}_2(x)$.}\label{fig:2}
\end{figure}

\paragraph{Stripe pattern equilibrium.}
The constants $C$ and $\lambda$ are determined by $C=2.7295$ and $\lambda=1.9406$, respectively.
The largest positive value of $\rho$ introduced in Section \ref{eq:explicit_trapping_region_radius_SH} is identified as $\rho = 0.1138$. 
This leads the radius of our target neighborhood near the equilibrium solution, given as $\rho/C = 0.0416$. 
The initial data $u_0(x)$ we used here is plotted in Fig.\,\ref{fig:3}\,(a). 
Using the provided integrator, we achieve a rigorous inclusion of the solution for the IVP up to $t_K=1.4141$ (where $K=181$), by setting $\tau_i=7.8125\cdot 10^{-3}$ ($i=1,2,\dots,K$) uniformly for each step.

The resulting error bound $\bm{\varrho}_0$ is given by
\begin{align}
	\sup_{t\in(0,1.4141]}\|a(t)-\ba(t)\|_{\omega}\le  2.9081\cdot 10^{-6}=\bm{\varrho}_0.
\end{align}
The $\bm{\varrho}_K$ bound is obtained by $\bm{\varrho}_K=2.6919\cdot 10^{-6}$. 
Recalling the part (P7) of Procedure \ref{proc:global_existence}, the value of $\bm{\varrho}_K +  \|\ba^{J_K}(t_K) - \ta\|_{\omega}$ is bounded by $0.0414$.
This is definitely less than $\rho/C$. 
Therefore, we have a computer-assisted proof of global existence of the solution asymptotically converging to the stripe pettern equilibrium solution plotted in Fig.\,\ref{fig:2}\,(a).

\begin{figure}[htbp]
	\begin{minipage}[b]{0.49\linewidth}
		\centering
		\includegraphics[width=\textwidth]{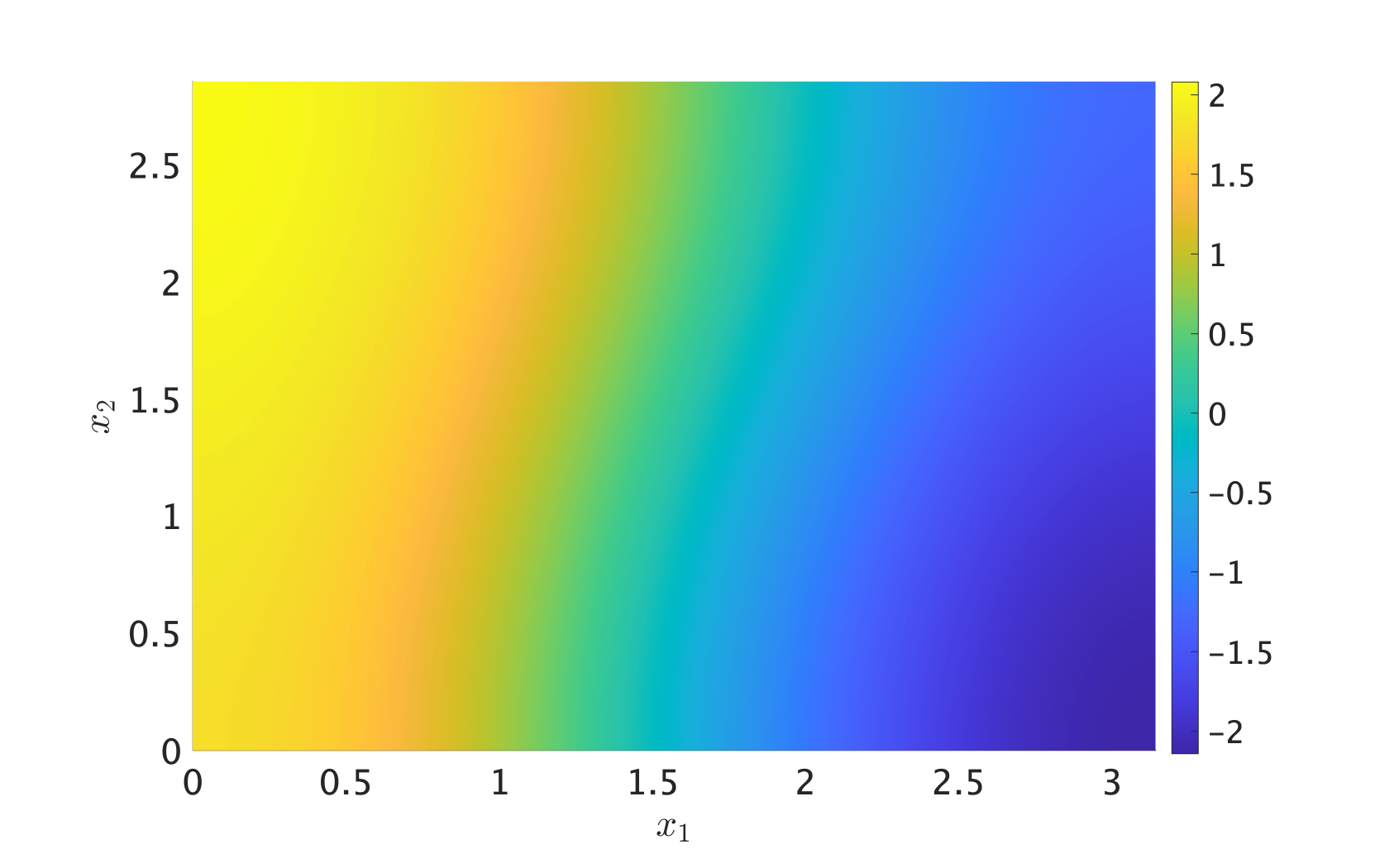}
		\subcaption{The initial data $u_0(x)$.}\label{fig:3_1}
	\end{minipage}
	\begin{minipage}[b]{0.49\linewidth}
		\centering
		\includegraphics[width=\textwidth]{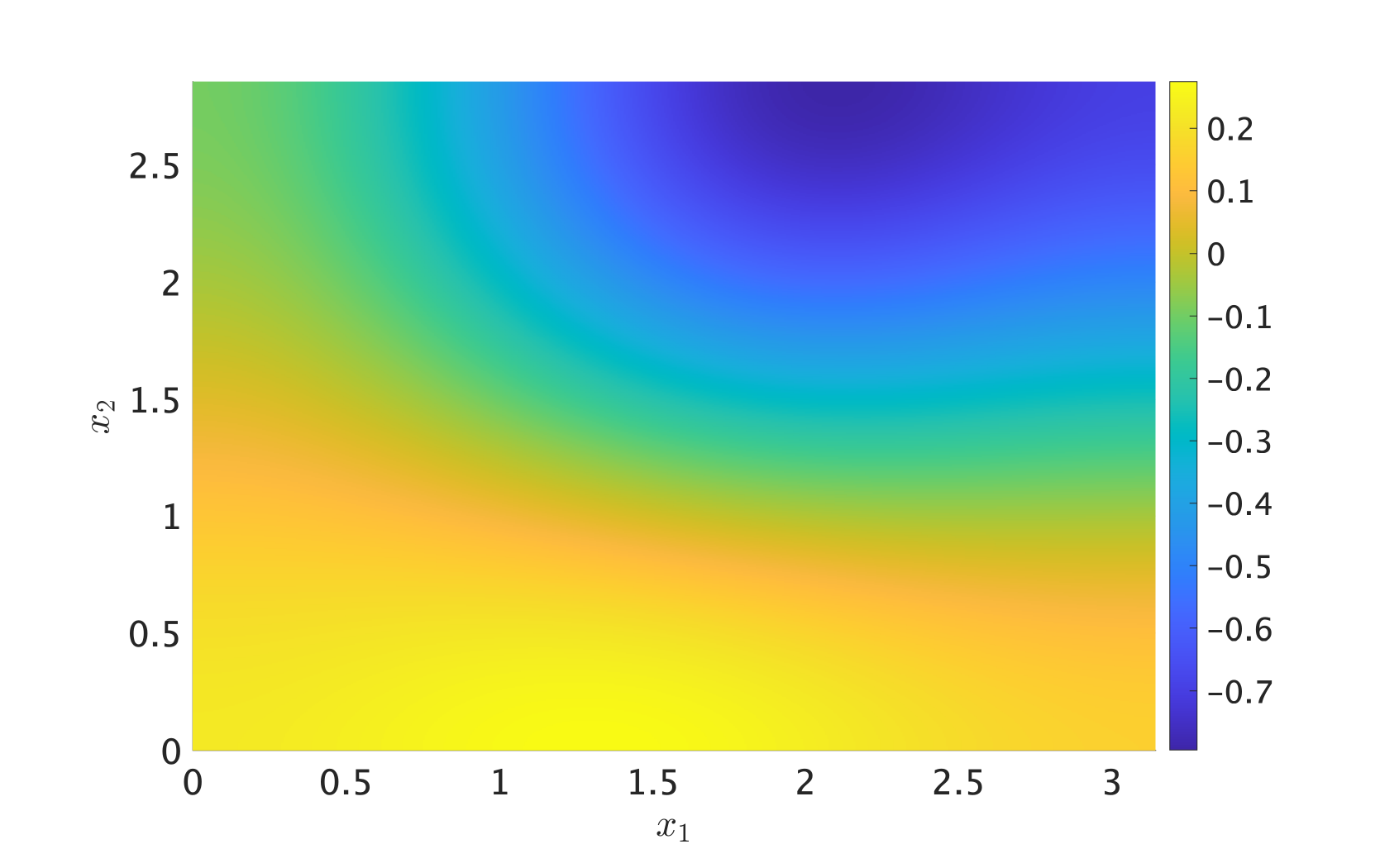}
		\subcaption{Plot of $\tilde{u}_1(x)-u_0(x)$.}\label{fig:3_2}
	\end{minipage}%\\[2mm]
	\caption{The initial data $u_0(x)$ and the difference between the stable stripe pattern equilibrium solution $\tilde{u}_1(x)$ and $u_0(x)$: To demonstrate computer-assisted proofs for global existence beyond the neighborhood of the equilibrium solution, our integrator is essential for capturing variations in the solution profile.}\label{fig:3}
\end{figure}

As shown in Fig.\,\ref{fig:3}\,(b), there is a substantial difference between the equilibrium solution $\tilde{u}_1(x)$ and the initial data $u_0(x)$ in this case. Consequently, our rigorous integrator plays a crucial role in providing computer-assisted proofs for the global existence of solutions originating from initial data, particularly those distant from the neighborhood of equilibria.

\paragraph{Spot pattern equilibrium.}
Let us consider another solution converging to the spot pattern equilibrium solution in Fig.\,\ref{fig:2}\,(b).
%The method of constructing $C$ and $\lambda$ yields that $C=6.7813$ and $\lambda=0.3505$, respectively.
%The largest positive $\rho$ is obtained by $\rho = 0.0046$.
%The radius of the target neighborhood is now $\rho/C =  6.746\cdot 10^{-4}$.
%The initial data $u_0(x)$ is plotted in Fig.\,\ref{fig:4}\,(a), which is slightly close to the target equilibrium solution.
%But still, the integrator also help us to propagate the rigorous inclusion of the solution into the trapping region.
%In other words, without the integrator, the global existence of the solution cannot be proved.
%The end point of rigorous integration is $t_K=0.375$ ($K=12$), where we set $\tau_i=3.125\cdot 10^{-2}$ ($i=1,2,\dots,K$).
In this case, the values of $C$ and $\lambda$ are determined to be  $C=6.7813$ and $\lambda=0.3505$, respectively.
The largest positive $\rho$ is found to be $\rho = 0.0046$.
Consequently, these values lead to a radius of the target neighborhood around the equilibrium, calculated as $\rho/C = 6.746\cdot 10^{-4}$. 
The initial data, denoted by $u_0(x)$, is plotted in Fig.\,\ref{fig:4}\,(a), which is slightly close to the target equilibrium solution.
Nevertheless, the integrator helps us to propagate the rigorous inclusion of the solution into the trapping region.
In other words, without the integrator, the global existence of the solution cannot be proved.
The integration is carried out until $t_K=0.375$ (with $K=12$), setting $\tau_i=3.125\cdot 10^{-2}$ ($i=1,2,\dots,K$).

The  error bound $\bm{\varrho}_0$ in Theorem \ref{thm:local_inclusion_multisteps} is obtained by
\begin{align}
	\sup_{t\in(0,0.375]}\|a(t)-\ba(t)\|_{\omega}\le  1.2099\cdot 10^{-6}=\bm{\varrho}_0.
\end{align}
The $\bm{\varrho}_K$ bound is $\bm{\varrho}_K=9.6731\cdot 10^{-7}$. Then the value of $\bm{\varrho}_K +  \|\ba^{J_K}(t_K) - \ta\|_{\omega}$ is bounded by $6.669\cdot 10^{-4}$. This value is less than $\rho/C$.
From Theorem~\ref{thm:trapping_region}, there exists a solution globally in time, which converges to the spot pattern equilibrium solution.

\begin{figure}[htbp]
	\begin{minipage}[b]{0.49\linewidth}
		\centering
		\includegraphics[width=\textwidth]{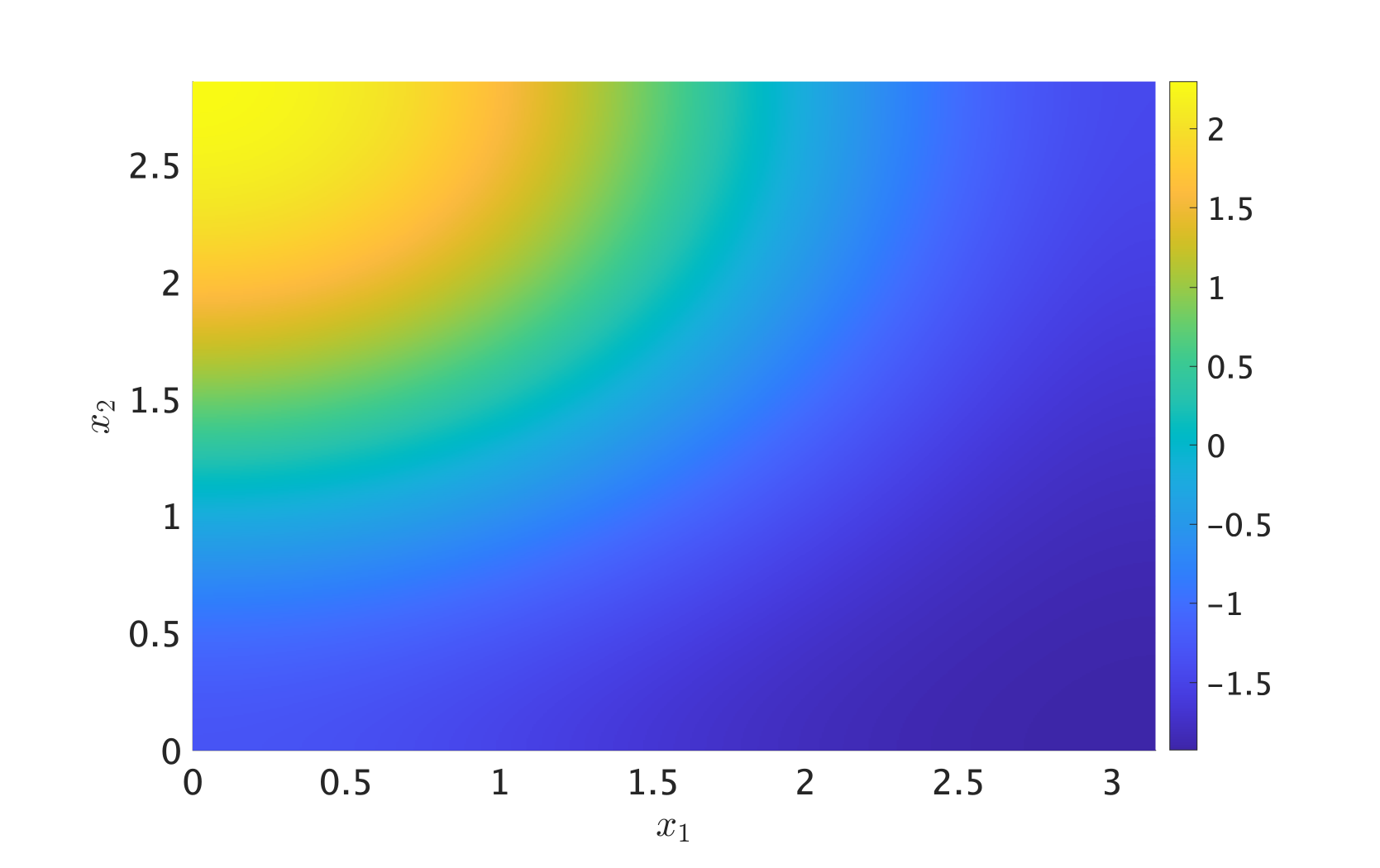}
		\subcaption{The initial data $u_0(x)$.}\label{fig:4_1}
	\end{minipage}
	\begin{minipage}[b]{0.49\linewidth}
		\centering
		\includegraphics[width=\textwidth]{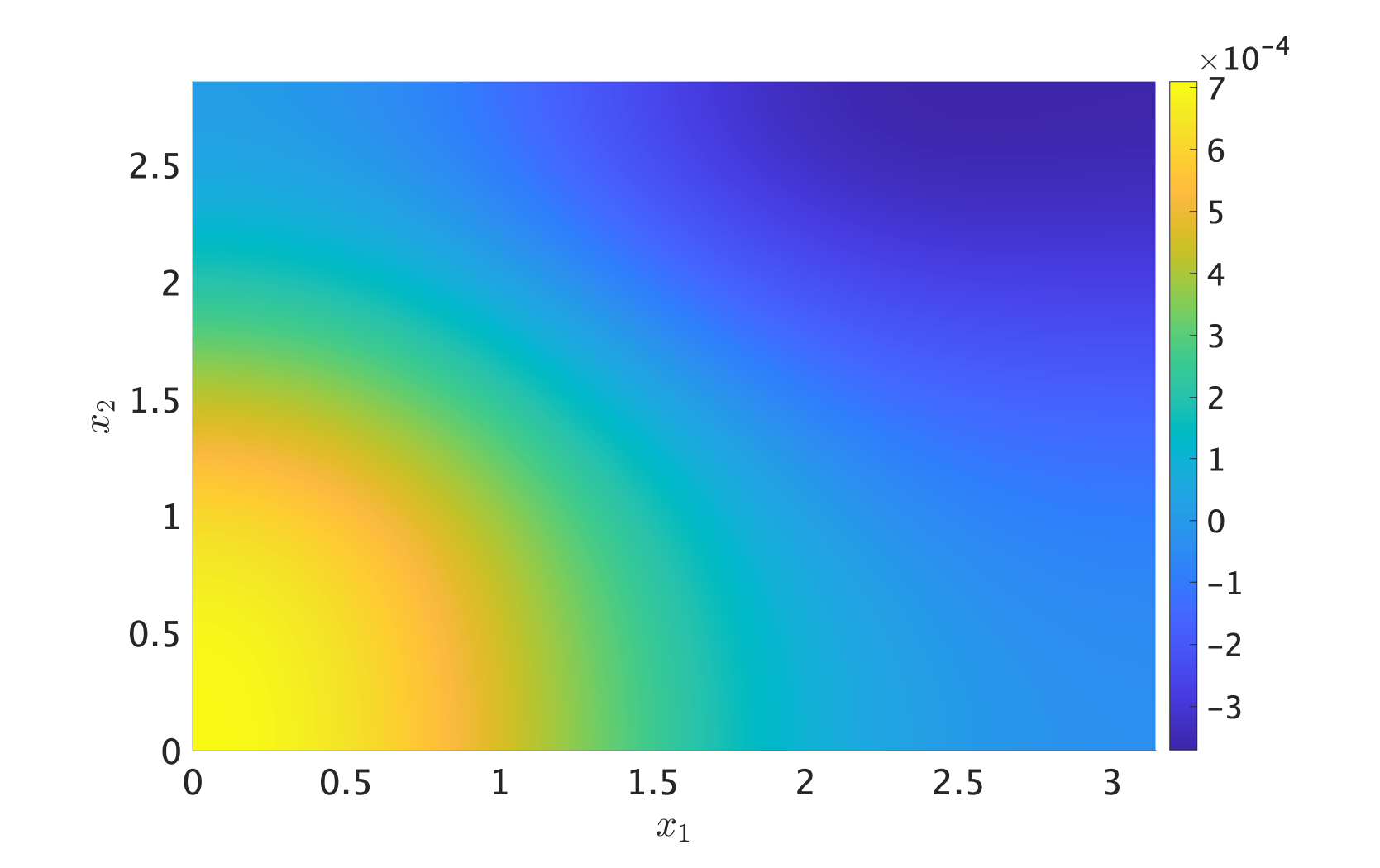}
		\subcaption{Plot of $\tilde{u}_2(x)-u_0(x)$.}\label{fig:4_2}
	\end{minipage}%\\[2mm]
	\caption{The initial data $u_0(x)$ and the difference between the stable spot pattern equilibrium solution $\tilde{u}_2(x)$ and $u_0(x)$: The initial data is close to the equilibrium solution but it is still out of the trapping region. Our integrator is neccessary to bring the rigorous inclusion of the solution into the trapping region.}\label{fig:4}
\end{figure}

A notable aspect of this result is that the global existence of the solution has only been proved for initial values close to the equilibrium solution. The reason is the exceptionally small trapping region required for proving global existence, which is related to the stability of this equilibrium solution. In other words, this equilibrium solution has a slow stable manifold, and on such a slow manifold the rigorous integrator is difficult to succeed for a long time period.
To successfully demonstrate computer-assisted proofs for global existence from initial values further from the equilibrium, it is necessary to progressively adjust the step size $\tau_i$ of each time step. Or the accuracy of the approximate solution needs to be improved, which leads to smaller bounds $\delta^{J_i}$ and $\varepsilon^{J_i}$ in Theorem \ref{thm:local_inclusion_multisteps}. These improvements should be positioned as future works.

%%%%%%%%%%%%%%%%%%
%%% APPLICATION TO O-K %%%
%%%%%%%%%%%%%%%%%%

\section{Application to the Ohta-Kawasaki equation} \label{sec:OK}

In this section, as the second application of the provided integrator, we consider the 2D Ohta-Kawasaki equation
\begin{equation}\label{eq:OK}
	u_t = -\Delta\left(\epsilon^2\Delta u + u - u^3\right) - \sigma(u-m)
\end{equation}
which is a nonlinear PDE used in the study of microphase separation in diblock copolymers, and which is pivotal in material science for predicting and understanding the self-organizing structures in soft condensed matter systems. Developed by Ohta and Kawasaki \cite{Ohta-Kawasaki}, it models the complex pattern formation in polymer blends, driven by the balance between entropic effects and chemical incompatibility. Note that the Ohta-Kawasaki equation \eqref{eq:OK} has been studied recently with the tools of rigorous numerics, including constructive computer-assisted proofs of existence of steady states \cite{MR3904424,MR4159300,MR3636312,MR4191556} and connecting orbits \cite{MR3773757}.

In this paper, we set $\epsilon=0.4$, $\sigma=1$, $m=0$ (the average of solution, i.e., $\frac{1}{|\Omega|}\int_\Omega u(x)dx$) and consider the equation \eqref{eq:OK} under the homogeneous Neumann boundary condition on the 2D rectangle domain $[0,\pi/L_1] \times [0,\pi/L_2]$ with $(L_1,L_2)=(1,1.1)$.
We note that the average of solution denoted by $m$ is conserved for any time. In other words, the zero mode of the Fourier coefficient is fixed as $m=0$ in this example.
Using the general notation \eqref{eq:general_PDE}, 
$\lambda_0=-\sigma$, $\lambda_1=-1$, $\lambda_2=-\epsilon^2$ and $\jp{\Delta^p N(u)} = \Delta (u^3)$ (that is $p=1$ and $N(u)=u^3$).
The corresponding ODEs \eqref{eq:the_ODEs} for the time-dependent Fourier coefficients is given by
\[
	\dot{a}_{\bk}(t) = \left(-\sigma+(\bk \bL)^2-\epsilon^2(\bk \bL)^4\right) a_{\bk}(t) - (\bk \bL)^2(a(t)^3)_\bk.
\]
Setting $\mu_{\bk}=-\sigma+(\bk \bL)^2-\epsilon^2(\bk \bL)^4$, $q=2p=2$, and $\cN_\bk(a)=(a^3)_\bk$ in the framework, we integrate \eqref{eq:the_ODEs} via the multi-step scheme in Section \ref{sec:time_stepping} until some $t_K \ge 0$.

\jp{
Unlike the earlier Swift-Hohenberg cases, achieving rigorous integration of the Ohta-Kawasaki equation for long time $t_K$ is challenging. This difficulty primarily arises from the wrapping effect encountered in rigorous forward-time integration, which stems from the overestimation of several bounds presented in Section~\ref{sec:solving_IVP}. Therefore, for successful integration up to a large $t_K$, it is essential to carefully choose the time steps $J_i$ as defined in Section~\ref{sec:time_stepping}.
In other words, a meticulous partitioning of the time interval $[0,t_K]$ allows for long-time integration.
Thus, we employed an ``adaptive'' procedure based on the error bound at the end point of each time step, as introduced in Section~\ref{sec:err_endpoint}.
The step sizes $\tau_i$ of time steps are automatically adjusted to ensure that the ratio of the error bound at $t=t_{i-1}$ to the one at $t=t_i$ ($i=1,2,\dots,K$) does not exceed a predetermined threshold, set in this case to $1.2$.

Note that while this ``adaptive'' procedure is similar to the \emph{time-stepping} method, the resulting validated error bound is obtained all at once over the entire time interval $[0,t_K]$. This approach corresponds to the multi-step method introduced in Section \ref{sec:time_stepping}, rather than the standard step-by-step procedure.
}

The Fr\'echet derivative of $\cN$ at $\ba$ is the same as that of Swift-Hohenberg, given by $D\cN(\ba)\phi=3\left(\ba^2*\phi\right)$ for $\phi\in\ell_{\omega}^1$.
Accordingly, the function $g$ satisfying \eqref{eq:N_assumption} is also the same. As a result, $\jp{\cE_{m,\infty}^{J}}$ and $\jp{\cE_{\infty,m}^{J}}$ bounds and $L_{\ba}(\varrho)=3\varrho\left(2\|\ba\|_X+\varrho\right)$ in Theorem \ref{thm:local_inclusion} are equal to those previously determined.
The main difference from the case of the Swift-Hohenberg equation in the previous section is the derivative at the nonlinear term.
%As mentioned in Remark \ref{rem:nu_val}, one should take $\nu>1$ in this example. 
We set $\nu_{\jp{F}}=1.001$, $\gamma=0.5$, and $\xi=0.8$ in Section \ref{sec:solution_map}. 
The other computational parameters are determined by $\bN=(12,12)$ and $\bbm=(3,3)$.

\subsection{Computer-assisted proofs}
We consider two initial data, whose time evolutionary solution profiles go to different stationary states as shown in Fig.\,\ref{fig:5}.
Such two initial datum are given by the form
\begin{align}
	u_0^i(x) = \sum_{\bk\ge 0}\alpha_{\bk}(\varphi^i)_{\bk}\cos(k_1L_1x_1)\cos(k_2L_2x_2),\quad i=1,2
\end{align}
with
\begin{align}\label{eq:initial_OK}
	(\varphi^1)_{\bk}=\begin{cases}
		0.002, & \bk=(2,0)\\
		0.02, & \bk=(0,1)\\
		0, & \mbox{otherwise}
	\end{cases},\quad
	(\varphi^2)_{\bk}=\begin{cases}
		-0.02, & \bk=(1,0)\\
		0.001, & \bk=(1,1)\\
		0, & \mbox{otherwise}
	\end{cases}.
\end{align}

\begin{figure}[htbp]
	\begin{minipage}[b]{0.49\linewidth}
		\centering
		\includegraphics[width=\textwidth]{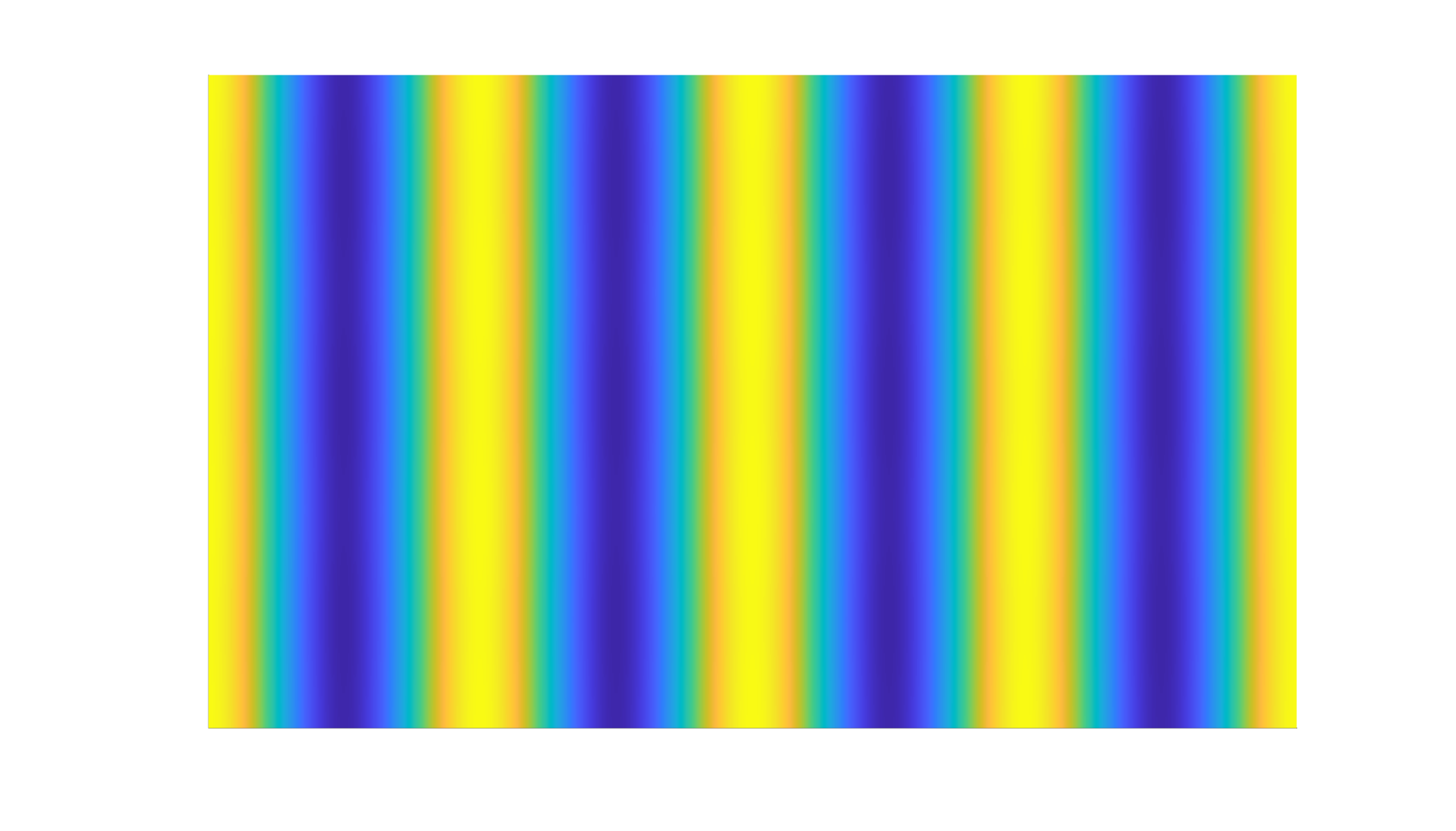}
		\subcaption{Stripe pattern stationary state.}\label{fig:5_1}
	\end{minipage}
	\begin{minipage}[b]{0.49\linewidth}
		\centering
		\includegraphics[width=\textwidth]{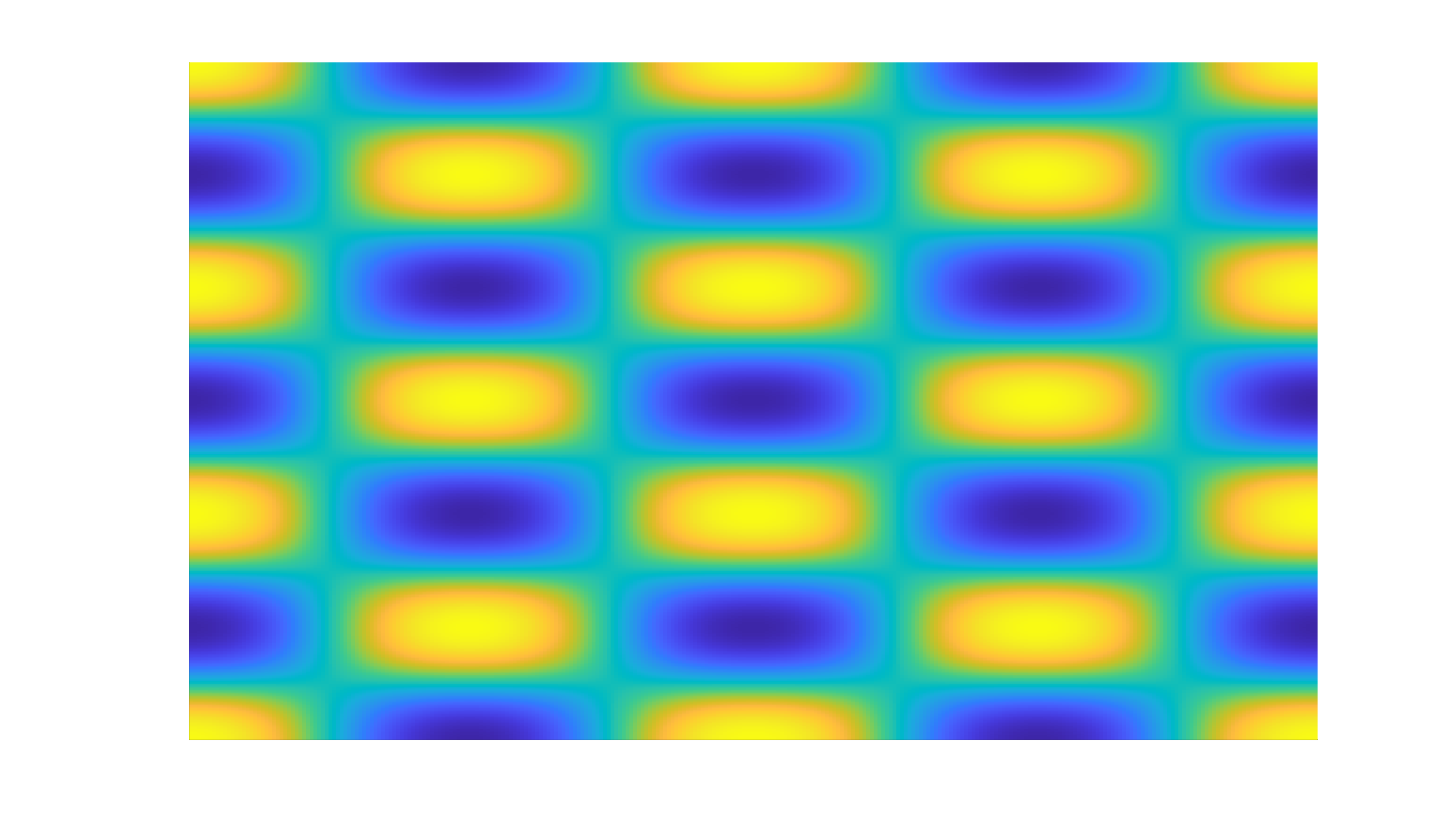}
		\subcaption{Spot pattern stationary state.}\label{fig:5_2}
	\end{minipage}
	\caption{Two stationary states of the 2D Ohta-Kawasaki equation \eqref{eq:OK}: From the different initial data $\varphi^i$ ($i=1,2$) in \eqref{eq:initial_OK}, the solution profile changes to each stationary state. Such an asymptotic dynamics of \eqref{eq:OK} is determined by the parameters $\epsilon$ and $m$ (see, e.g., \cite{MR3636312,MR4191556} for more details).}\label{fig:5}
\end{figure}

\begin{figure}[htbp]
	\begin{minipage}[b]{0.98\linewidth}
		\centering
		\includegraphics[width=\textwidth]{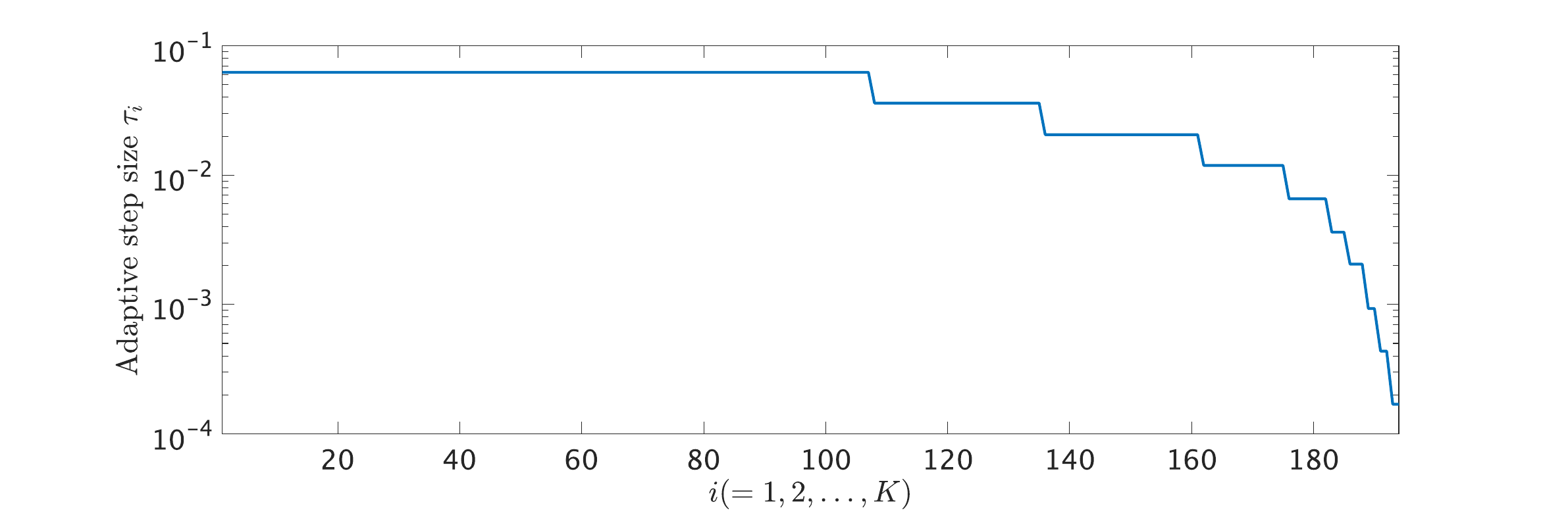}
		\subcaption{\jp{Resulting step size} $\tau_i$ (up to $K=194$) to the stripe pattern.}\label{fig:6_1}
	\end{minipage}\\
	\begin{minipage}[b]{0.98\linewidth}
		\centering
		\includegraphics[width=\textwidth]{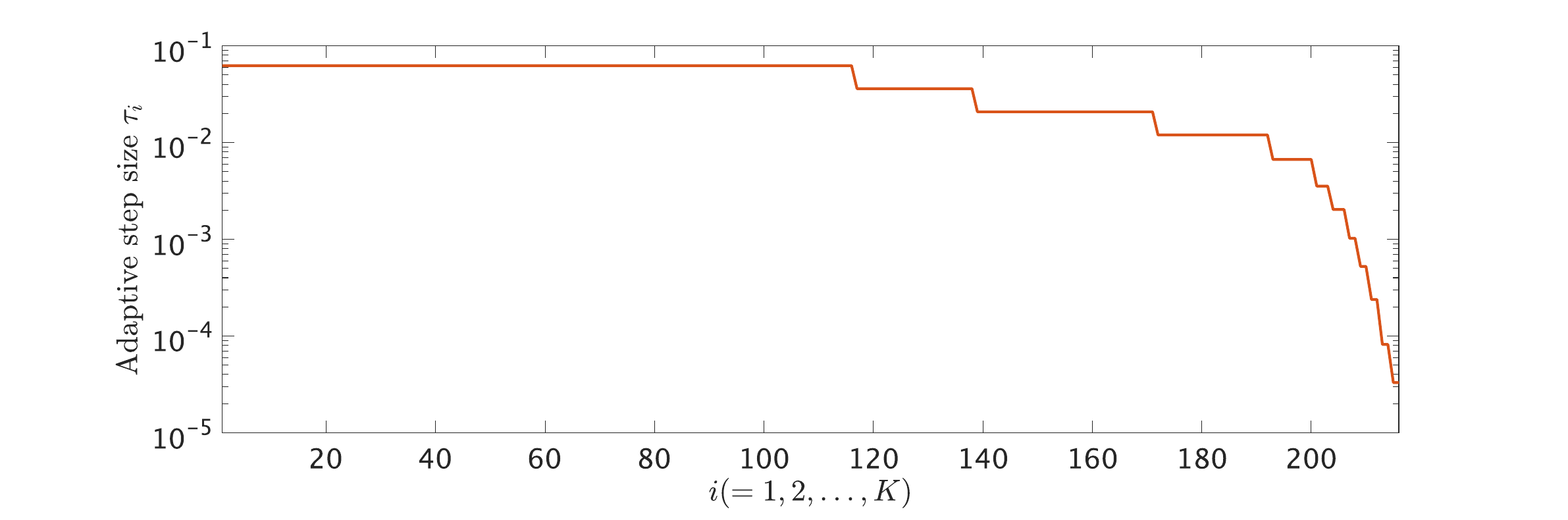}
		\subcaption{\jp{Resulting step size} $\tau_i$ (up to $K=216$) to the spot pattern.}\label{fig:6_2}
	\end{minipage}\\
	\begin{minipage}[b]{0.98\linewidth}
		\centering
		\includegraphics[width=\textwidth]{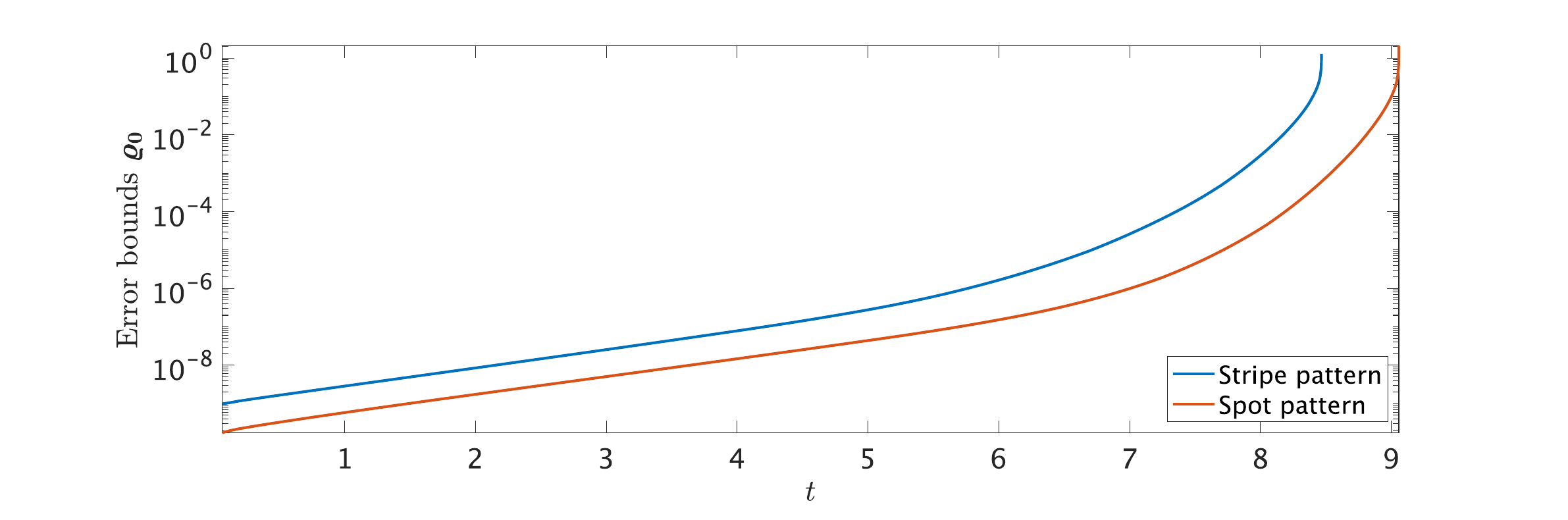}
		\subcaption{Error bounds $\bm{\varrho}_0$ \jp{on $[0,t]$}: Blue/red presents the case of stripe/spot pattern, respectively.}\label{fig:6_3}
	\end{minipage}
	\caption{Computational results of rigorous integration for the Ohta-Kawasaki equation \eqref{eq:OK}: \jp{Results of the adjusted step size are shown in (a) and (b). In both cases, the initial step size was set to $\tau_1=0.0625$.  After about 100 time steps, smaller step sizes were chosen to control the wrapping effect of the rigorous enclosure. This wrapping effect arises from an overestimation of the uniform bounds for the evolution operator presented in Section \ref{sec:solution_map}. By using Theorem \ref{thm:local_inclusion_multisteps}, the rigorous error bound displayed in (c) was obtained all at once over the entire interval $[0,t]$. Note that these results were not obtained using a step-by-step procedure.}}\label{fig:6}
\end{figure}

\paragraph{Stripe pattern state.}
Set the initial data as $\varphi^1$ in  \eqref{eq:initial_OK}. 
\jp{We chose the step size $\tau_i$ at each time step $J_i$, as illustrated in Fig\,\ref{fig:6}\,(a).}
Resulting computer-assisted proof assures that the solution of the IVP of \eqref{eq:OK} exists at least to $t_K=8.4666$ ($K=194$). The rigorous error bound $\bm{\varrho}_0$ via Theorem \ref{thm:local_inclusion_multisteps} satisfies
\begin{align}
	\sup_{t\in(0,8.4666]}\|a(t)-\ba(t)\|_{\omega}\le 1.2994=\bm{\varrho}_0.
\end{align}
We also have $\bm{\varrho}_K=1.2838$.

The results shown in Fig.\,\ref{fig:6}\,(a) and (c) indicate that after about 100 steps, \jp{while using a smaller step size helps to control the increasing error bound to some extent, such} error bound becomes larger and more difficult to handle. Eventually, this error escalation makes the hypothesis  of Theorem \ref{thm:local_inclusion_multisteps} unverifiable.
However, our result marks the first implementation of rigorous integration method for time-dependent PDEs in higher spatial dimensions, including nonlinear terms with derivatives.
We believe that this method constitutes a significant advance in the methodology of rigorous forward integration for PDEs and demonstrates satisfactory performance as a rigorous integrator.

\paragraph{Spot pattern state.}
Setting the initial data as $\varphi^2$ in \eqref{eq:initial_OK}, \jp{the choice of the step size is} displayed in Fig\,\ref{fig:6}\,(b). \jp{Computer-assisted proofs verifying the existence of a solution for the IVP of \eqref{eq:OK} succeeded} up to $t_K=9.0577$ (with $K=216$). The rigorous error bound $\bm{\varrho}_0$, as in Theorem \ref{thm:local_inclusion_multisteps}, is confirmed to satisfy the following inequality:
\begin{align}
	\sup_{t\in(0,9.0577]}\|a(t)-\ba(t)\|_{\omega}\le 2.0778=\bm{\varrho}_0,
\end{align}
Notably, $\bm{\varrho}_K$ at the final step is obtained as $2.0647$.

Fig\,\ref{fig:6}\,(c) also shows the escalation of the error bounds, due to the wrapping effect. In this case, the error bound is slightly smaller than that of the previous (stripe pattern) case, indicating the success of the integrator over more multiple time steps. 
To achieve successful long-time rigorous integration, it is necessary either to make the step size more reasonable, as discussed at the end of Section \ref{sec:2D-SH}, or to reduce the defects at each time step.

We conclude this section by mentioning that while it would be natural to consider a computer-assisted proof for global existence in the Ohta-Kawasaki equation, as was done in Section~\ref{sec:global_existence} for Swift-Hohenberg, this would require introducing a nontrivial construction for the trapping region (as the one used in the paper \cite{MR4444839,MR4388950}), and therefore we do not follow this path here. 

\section*{Conclusion}

In this paper, we have developed a general constructive method to compute solutions for IVPs of semilinear parabolic PDEs. Our approach combines the principles of semigroup theory with the practicality of computer-assisted proofs. 
A feature of our approach over the ordinary mathematical-analytic approach is the Fourier-Chebyshev series expansion, which derives a numerical candidate for the solution, thereby enhancing computational accessibility and feasibility.
We have constructed a two-component zero-finding problem as a direct derivation from the original PDEs.
To bring in the fixed-point argument, we have introduced a Newton-like operator at the numerically approximated solution. 
Central to our approach is the use of the evolution operator, which is the solution map for linearized equations at the numerical approximation. 
By inverting the linearized operator ``by hand'', the Newton-like operator is effectively derived in a more direct way.
The existence and local uniqueness of the fixed point of the Newton-like operator, together with its rigorously determined bounds, provide the rigorous inclusion of the solution to the IVPs.
Computer-assisted proof is done by numerically verifying the hypothesis of the contraction mapping via interval arithmetic.

We have further generalized our approach to a multi-step scheme that extends the existence time of solutions to IVPs.
This involves considering a coupled system of the zero-finding problem over multiple time steps, where the Newton-like operator at numerically approximated solution is derived by constructing the inverse of the linearized operator.
By numerically validating the hypothesis that the Newton-like operator becomes the contraction map, we obtain a rigorous inclusion of the solution over these multiple time steps. This multi-step approach should enhance the applicability of our approach not only for IVPs but also for boundary value problems in time, which is the subject of future studies.%
%It represents a significant leap forward in our understanding and ability to effectively solve semilinear parabolic PDEs.

Constructing the trapping region based on the mechanism of convergence towards asymptotically stable equilibria, we have also presented proofs of global existence of solutions to three-dimensional PDEs (Swift-Hohenberg) converging to a nontrivial equilibrium. This is the first instance of CAPs being applied to global existence of solutions to 3D PDEs, offering a new perspective in the field. Moreover, our integrator has been applied to the 2D Ohta-Kawasaki equation, dealing with derivatives in the nonlinear terms. 
The multi-step scheme, together with the smoothing property of the evolution operator, has allowed for some effective control of the wrapping effect, and long-time rigorous forward integration has been successfully achieved. Our study not only contributes to the theoretical aspects of these equations but also provides practical tools and methods for computer-assisted proofs for their resolutions, opening new avenues for study and application in the field of computer-assisted proofs.

We conclude this paper by highlighting several potential extensions of our rigorous integrator.
First, our approach provides a cost-effective way to implement computer-assisted proofs for \jp{multi-steps}.
This aspect is particularly noteworthy because it allows the linearized problems to be solved independently and a priori at each time step, making the process naturally parallelizable and keeping the computational cost additive rather than multiplicative.
In other words, the manual inversion of the linearized operator, a critical part of our approach, significantly reduces the computational complexity by avoiding the inversion of large finite-dimensional matrices. 
\jp{We believe this feature could be beneficial in studying boundary value problems over time (such as periodic orbits and connecting orbits), which is a topic for exploration in future research.}

Second, exploring beyond the spatial domain assumption (rectangular domain) is an interesting direction of this study. For example, it is worth considering the potential of extending computer-assisted proofs to the time-dependent PDEs on more complicated domains, such as disks/spheres, etc.
This would require suitable basis functions to approximate solutions satisfying boundary conditions.
For effective computer-assisted proofs, the property of geometric decay for the coefficients of the series expansion (such as Fourier/Chebyshev) is desirable, and the property of Banach algebra  for discrete convolutions is useful for handling function products.

Finally, a challenging application of our approach lies in addressing the incompressible 3D Navier-Stokes equation on the domain $\mathbb{T}^3$.
Here, we could use the Fourier setting of the vector fields as described in \cite{BerBreLesVee21} to construct the associated infinite-dimensional ODEs.
One could also obtain a trapping region of the solution asymptotic to the stable equilibrium (zero) solution.
Achieving computer-assisted proofs for such complex equations is one of the ultimate goals in this line of research.

\appendix

%%%%%%%%%%%%%%%%
%%%% CHEBYSHEV %%%%
%%%%%%%%%%%%%%%%

\section{Chebyshev interpolation for the integrator}\label{appA:chebyshev}
In this appendix, we briefly explain how one constructs an approximate solution $\left(\ba^{(\bN)}_{\bk }(t)\right)_{\bk\in\FN}$ in \eqref{eq:appsol} using Chebyshev polynomials of the first kind defined by
\[
T_\ell(t)\bydef\cos(\ell\theta),\quad \theta = \arccos(t),\quad \ell\ge 0.
\]
The Chebyshev polynomial expansion is called \emph{Fourier cosine series in disguise} \cite{boyd}, that is, the change of variables by $t = \cos\theta$ makes both series expansions equivalent.
We recall the approximate solution $\bar{a}^{(\bm{N})}$ in \eqref{eq:appsol} denoted by
\[
\bar{a}^{(\bm{N})}_{\bk }(t)=\bar{a}_{0, \bk }+2 \sum_{\ell=1}^{n-1} \bar{a}_{\ell, \bk } T_{\ell}(t).
\]
Here the Chebyshev coefficients $(\ba_{\ell,\bk})_{\ell<n,\atop\bk\in\FN}$ are represented by the $d+1$ dimensional (finite) tensor.
To get such Chebyshev coefficients by numerics, we use the form
\begin{align*}
	\bar{a}^{(\bN)}_{\bk}(t)&=\bar{a}_{0, \bk}+2 \sum_{\ell=1}^{n-1} \bar{a}_{\ell, \bk} T_{\ell}(t)\\
	&=\bar{a}_{0, \bm{k}}+2 \sum_{\ell=1}^{n-1} \bar{a}_{\ell, \bm{k}} \cos(\ell\theta)\\
	&=\sum_{|\ell|<n} \bar{a}_{\ell, \bm{k}} e^{i\ell\theta}\quad (\mbox{with the cosine symmetry: }\bar{a}_{-\ell, \bm{k}}=\bar{a}_{\ell, \bm{k}}).
\end{align*}
The coefficients can be computed via the FFT algorithm using the form
\[
	\bar{a}_{\ell, \bm{k}}  = \frac1{2n-2}\sum_{j=0}^{2n-3} \bar{a}^{(\bm{N})}_{\bm{k}}(t_j) e^{-\pi i\frac{\ell j}{n-1}},\quad|\ell|<n,\quad\bk\in\FN,
\]
where $t_j$ is the Chebyshev points (of the second kind) of the $n$\,th order Chebyshev polynomial defined by $t_j\bydef \cos\left(\frac{\pi j}{n-1}\right)$.

Practically, we numerically compute the solution $\ba^{(\bN)}_{\bk}(t)$ of the truncated system \eqref{eq:ODEs_finite} by a certain numerical integrator (e.g., MATLAB's \texttt{ode113}/\texttt{ode15s} or exponential integrator etdrk4 \cite{trefethen}). Then we evaluate the function value of $\ba^{(\bN)}_{\bk}$ at $t_j$ and transform these values into the Chebyshev coefficients via the FFT. To fix the size $n$, we use a truncation method proposed in \cite{trefethen} to chop the Chebyshev series by an appropriate size.
Using the explicit construction of the Chebyshev coefficients described above, one can define the coefficients of the approximate solution $\ba$ in \eqref{eq:appsol} and rigorously compute the $\varepsilon$ and $\delta$ bounds described in Section \ref{sec:local_inclusion}.

%%%%%%%%%%%%%%%%
%%%% DUAL SPACE %%%%
%%%%%%%%%%%%%%%%

\section{Functional analytic background}

Let us define another sequence space as
\begin{align}\label{eq:ell_inf}
	\ell_{\omega^{-1}}^\infty\bydef\left\{a=(a_{\bk })_{\bk\ge 0}:a_{\bk}\in \C,~\|a\|_{\infty,\omega^{-1}}\bydef\sup_{\bk\ge 0}|a_{\bk }|\omega_{\bk}^{-1}<+\infty\right\}.
\end{align}
It easily sees that this space is isometrically isomorphism to the \emph{dual} space of  $\ell_{\omega}^1$ defined in \eqref{eq:ell-one-space}, that is $\left(\ell_{\omega}^1\right)^\ast=\ell_{\omega^{-1}}^\infty$.
This fact is an analogy of the relation between the classical \emph{ell-one} space and \emph{ell-infinity} space.
%This fact is derived from the following lemma:
\begin{lem}\label{lem:dual_es}
	Suppose that $c\in\ell_{\omega}^1$ and $a\in\ell_{\omega^{-1}}^\infty$. Then
	\begin{align}
		\left|\sum_{\bk\ge 0} a_{\bk}c_{\bk}\right|\le \|a\|_{\infty,\omega^{-1}}\|c\|_{\omega}.
	\end{align}
\end{lem}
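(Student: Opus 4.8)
\textbf{Proof proposal for Lemma~\ref{lem:dual_es}.}

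The plan is to establish the inequality by a direct estimate, inserting and removing the weight factor $\omega_{\bk}$ termwise so that the $\ell^\infty_{\omega^{-1}}$-norm of $a$ can be pulled out of the sum, leaving behind exactly the $\ell^1_\omega$-norm of $c$. First I would note that $\omega_{\bk}>0$ for every $\bk\ge 0$ (this is clear from the definition \eqref{eq:weight_omega}, since $\alpha_{\bk}\ge 1$ and $\nu_F\ge 1$), so that writing $a_{\bk}c_{\bk}=\bigl(a_{\bk}\omega_{\bk}^{-1}\bigr)\bigl(c_{\bk}\omega_{\bk}\bigr)$ is legitimate. The series $\sum_{\bk\ge 0}a_{\bk}c_{\bk}$ converges absolutely, because $\sum_{\bk\ge 0}|a_{\bk}c_{\bk}| = \sum_{\bk\ge 0}|a_{\bk}|\omega_{\bk}^{-1}\,|c_{\bk}|\omega_{\bk}\le \|a\|_{\infty,\omega^{-1}}\sum_{\bk\ge 0}|c_{\bk}|\omega_{\bk}=\|a\|_{\infty,\omega^{-1}}\|c\|_\omega<\infty$, so the left-hand side is well defined as a (possibly complex) number.

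The main step is then the chain of inequalities
\begin{align}
	\left|\sum_{\bk\ge 0} a_{\bk}c_{\bk}\right|
	&\le \sum_{\bk\ge 0} |a_{\bk}|\,|c_{\bk}|
	= \sum_{\bk\ge 0} \left(|a_{\bk}|\omega_{\bk}^{-1}\right)\left(|c_{\bk}|\omega_{\bk}\right)\\
	&\le \left(\sup_{\bj\ge 0}|a_{\bj}|\omega_{\bj}^{-1}\right)\sum_{\bk\ge 0}|c_{\bk}|\omega_{\bk}
	= \|a\|_{\infty,\omega^{-1}}\,\|c\|_{\omega},
\end{align}
using the triangle inequality for the (absolutely convergent) series in the first line and, in the second line, bounding each factor $|a_{\bk}|\omega_{\bk}^{-1}$ by its supremum over $\bk$. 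This is nothing more than the standard Hölder-type pairing between $\ell^1$ and $\ell^\infty$ transported through the weight, and it completes the proof.

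There is essentially no obstacle here: the only point requiring any care is the justification that the sum on the left converges absolutely so that the rearrangement and the triangle inequality are valid, which is exactly what the same computation delivers. I would simply present the displayed chain of inequalities with a one-line remark on absolute convergence and on $\omega_{\bk}>0$, and conclude. One could also phrase this as the statement that $\ell^\infty_{\omega^{-1}}$ embeds isometrically into $(\ell^1_\omega)^\ast$ via the pairing $\langle a,c\rangle=\sum_{\bk}a_{\bk}c_{\bk}$, which is precisely the remark made just before the lemma, but for the purposes of this lemma the elementary estimate above suffices.
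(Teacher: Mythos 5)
Your proposal is correct and follows essentially the same route as the paper's proof: bound each $|a_{\bk}|\omega_{\bk}^{-1}$ by $\|a\|_{\infty,\omega^{-1}}$ and pull it out of the weighted sum, which is exactly the standard $\ell^1$--$\ell^\infty$ pairing argument used there. Your additional remarks on positivity of $\omega_{\bk}$ and absolute convergence are fine but not needed beyond what the paper records.
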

\begin{proof}
	For $a\in\ell_{\omega^{-1}}^\infty$ and $\bk\ge 0$ it follows from the definition of the weighted supremum norm in \eqref{eq:ell_inf} that
	\begin{align}
		\frac{|a_{\bk}|}{\omega_{\bk}}\le  \|a\|_{\infty,\omega^{-1}}.
	\end{align}
	Therefore, we have
	\begin{align}
		\left|\sum_{\bk\ge 0} a_{\bk}c_{\bk}\right|&=\sum_{\bk\ge 0} |c_{\bk}||a_{\bk}|\\
		&\le \sum_{\bk\ge 0} \left(\|a\|_{\infty,\omega^{-1}}\omega_{\bk}\right)|c_{\bk}|\\
		&= \|a\|_{\infty,\omega^{-1}}\|c\|_{\omega}.\qedhere
	\end{align}
\end{proof}
Using Lemma \ref{lem:dual_es}, we have an estimate of the discrete convolution defined in \eqref{eq:discrete_convolution} for $c\in\ell_{\omega}^1$ and $a\in\ell_{\omega^{-1}}^\infty$.

\begin{lem}\label{lem:conv_es}
	Suppose that $c\in\ell_{\omega}^1$ and $a\in\ell_{\omega^{-1}}^\infty$. Then
	\begin{align}
		\left|\left(a*c\right)_\bk\right| 
		&\le \max\left\{|a_{\bk}|,\sup_{|\bk^\prime| \notin\bm{F_{1}}} \left|a_{\bk-\bk^\prime}\right|\omega_{|\bk^\prime|}^{-1}\right\}\|c\|_{\omega}\label{eq:conv_bounds1}
	\end{align}
	holds for $\bk\ge 0$.
\end{lem}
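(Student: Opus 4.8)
The plan is to prove Lemma~\ref{lem:conv_es} by expanding the discrete convolution \eqref{eq:discrete_convolution}, separating the single term where the index landing on $c$ is zero (since the weight $\omega_{\mathbf{0}}$ requires special attention) from the rest, and then applying the dual estimate of Lemma~\ref{lem:dual_es} to the remaining sum. Concretely, I would write
\[
\left(a*c\right)_{\bk} = \sum_{\bk_1+\bk_2=\bk \atop \bk_1,\bk_2\in\Z^d} a_{|\bk_1|}\,c_{|\bk_2|},
\]
and split off the contribution of $\bk_2 = \mathbf{0}$ (equivalently $\bk_1$ ranging over the sign-flips of $\bk$, all contributing $a_{|\bk|}c_{\mathbf{0}}$ up to multiplicity) from the contribution where $|\bk_2|\notin F_{\mathbf 1}$, i.e. $\bk_2\neq \mathbf{0}$.

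The key steps, in order: First I would set $\bk^\prime \bydef |\bk_2|$ as the summation variable for the ``non-zero'' part, so that $a_{|\bk_1|} = a_{|\bk - \bk_2|}$ appears with argument depending on $\bk - \bk^\prime$ (using the cosine symmetry $a_{\bk}=a_{|\bk|}$ throughout). Second, for that part I would factor out $\sup_{|\bk^\prime|\notin F_{\mathbf 1}} |a_{\bk-\bk^\prime}|\,\omega_{|\bk^\prime|}^{-1}$ and recognize the leftover sum $\sum |c_{|\bk^\prime|}|\,\omega_{|\bk^\prime|}$ as being bounded by $\|c\|_\omega$; this is essentially an application of Lemma~\ref{lem:dual_es} with the sequence whose $\bk^\prime$-entry is $a_{\bk-\bk^\prime}/\omega_{|\bk^\prime|}$ playing the role of the $\ell^\infty_{\omega^{-1}}$ element (or, more directly, a triangle-inequality-then-supremum argument). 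Third, for the $\bk_2=\mathbf 0$ term I would bound $|a_{|\bk|}\,c_{\mathbf 0}|\le |a_{\bk}|\,\|c\|_\omega$ since $|c_{\mathbf 0}|\,\omega_{\mathbf 0}\le \|c\|_\omega$. Finally, taking the maximum of the two bounds on the common factor $\|c\|_\omega$ yields \eqref{eq:conv_bounds1}.

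The main subtlety I anticipate is the careful bookkeeping of multi-index sign symmetries and the multiplicities $\alpha_{\bk}$ hidden in the weight $\omega_{\bk}=\alpha_{\bk}\nu_F^{k_1+\dots+k_d}$: because the sum in \eqref{eq:discrete_convolution} runs over $\bk_1,\bk_2\in\Z^d$ (not just non-negative multi-indices) while the coefficients satisfy $a_{\bk}=a_{|\bk|}$, one must check that collapsing the sum to non-negative representatives is compatible with the factors $\alpha_{\bk}$ appearing in $\omega$, so that the geometric-weight Banach-algebra bookkeeping lines up exactly. I would handle this by reducing everything to the folded (non-negative) representation from the outset, consistent with the convention $a_{\bk}=a_{|\bk|}$ fixed in Section~\ref{sec:Newton}, and noting that the estimate is an upper bound so no sharpness in the combinatorial constants is needed. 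This step is routine but is where an error would most likely creep in; the rest is a direct triangle inequality plus the dual pairing bound of Lemma~\ref{lem:dual_es}.
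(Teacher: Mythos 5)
Your proposal is correct and follows essentially the same route as the paper: expand the convolution over $\Z^d$, split off the $\bk^\prime=\mathbf{0}$ term (bounded by $|a_{\bk}|\,|c_{\mathbf 0}|\le|a_{\bk}|\,\|c\|_{\omega}$ since $\omega_{\mathbf 0}\ge 1$), bound the remainder by $\sup_{|\bk^\prime|\notin\bm{F_1}}|a_{\bk-\bk^\prime}|\omega_{|\bk^\prime|}^{-1}$ times the weighted sum as in Lemma~\ref{lem:dual_es}, and take the maximum. The sign-flip/multiplicity bookkeeping you flag is also left implicit in the paper's own proof, so no substantive difference remains.
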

\begin{proof}
	For $\bk\ge 0$, it follows from \eqref{eq:discrete_convolution} and Lemma \ref{lem:dual_es} that
	\begin{align}
		\left|\left(a*c\right)_\bk\right| &= \left|\sum_{\bk^\prime \in \Z^d}a_{\bk-\bk^\prime}c_{\bk^\prime}\right|\\
		&\le |a_{\bk}||c_0|+\sum_{|\bk^\prime| \notin\bm{F_{1}}} \left|a_{\bk-\bk^\prime}\right|\left|c_{\bk^\prime}\right|\\
		&\le \max\left\{|a_{\bk}|,\sup_{|\bk^\prime| \notin\bm{F_{1}}} \left|a_{\bk-\bk_1}\right|\omega_{|\bk^\prime|}^{-1}\right\}\|c\|_{\omega}.\qedhere
	\end{align}
\end{proof}

%%%%%%%%%%%%%%%%
%%%% W_TAU BOUND %%%%
%%%%%%%%%%%%%%%%

\section{Proof of Corollary \ref{cor:solutionmap} and each bound}\label{sec:Wtau_bound}

\begin{lem}
	Under the same assumption of Theorem \ref{thm:ev_op}, the unique solution of 
	\begin{align}\label{eq:cinf_integral_alt}
		c^{(\infty)}(t)=e^{\cL_\infty (t-s)}\phi^{(\infty)} + \int_s^t e^{\cL_\infty (t-r)}({\rm Id} -\varPi^{(\bbm)})\cQ D\cN(\ba(r)) c^{(\infty)}(r) d r
	\end{align}
	exists for all $\phi\in\ell_{\omega}^1$. Then the following estimate of the evolution operator $\bU^{(\infty)}(t,s)$ holds
	\[
	\left\|\bU^{(\infty)}(t,s)\phi^{(\infty)}\right\|_{\omega}\le W^{(\infty)}(t,s)\left\|\phi ^{(\infty)}\right\|_{\omega},\quad\forall\phi\in\ell_{\omega}^1,
	\]
	where $W^{(\infty)}(t,s)$ is defined by
	\begin{align}\label{eq:W^inf}
		W^{(\infty)}(t,s)\bydef e^{-(1-\xi)|\mu_\ast |(t-s) + C_\infty g(\|\ba\|){(t-s)^{1-\gamma}}/{(1-\gamma)}}.
	\end{align}
\end{lem}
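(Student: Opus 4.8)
The plan is to mirror the proof of Theorem~\ref{thm:ev_op}, replacing the initial datum $\cQ\psi^{(\infty)}$ by a generic $\phi^{(\infty)}\in\ell^1_\infty$ and using the \emph{non-singular} semigroup estimate \eqref{eq:semigroup_estimate_0} in place of \eqref{eq:semigroup_estimate_q} in the one spot where the initial condition enters. First I would set up the Picard-type map $\cP$ acting on $c^{(\infty)}$ by
\[
\cP c^{(\infty)}(t) \bydef e^{\cL_\infty(t-s)}\phi^{(\infty)} + \int_s^t e^{\cL_\infty(t-r)}({\rm Id}-\varPi^{(\bbm)})\cQ D\cN(\ba(r))c^{(\infty)}(r)\,dr,
\]
on a suitable complete metric space. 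Here the natural choice is $\cX_\infty$ consisting of $c^{(\infty)}\in C((s,\tau],\ell^1_\infty)$ that are merely \emph{bounded} near $t=s$ (no $(t-s)^{-\gamma}$ blow-up is needed, since the leading term $e^{\cL_\infty(t-s)}\phi^{(\infty)}$ is bounded by $\|\phi^{(\infty)}\|_\omega$ thanks to \eqref{eq:semigroup_estimate_0}), equipped with a weighted distance analogous to $\mathbf{d}$ but with weight $e^{(1-\xi)|\mu_\ast|(t-s)-\beta\tau^\gamma C_\infty\int_s^t \tilde H(r)\,dr}$ where $\tilde H(r)\bydef (t-r)^{-\gamma}g(\|\ba(r)\|_\omega)$. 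The contraction estimate then proceeds exactly as in Theorem~\ref{thm:ev_op}: in the integral term the factor $({\rm Id}-\varPi^{(\bbm)})\cQ D\cN(\ba)$ still produces the same $\cQ$ applied \emph{inside} the integral, so the semigroup estimate \eqref{eq:semigroup_estimate_q} yields the $C_\infty(t-r)^{-\gamma}e^{-(1-\xi)|\mu_\ast|(t-r)}$ kernel, and the non-decreasing-in-$t$ prefactor trick with $\beta>1$ closes the contraction. This gives existence and uniqueness of the solution of \eqref{eq:cinf_integral_alt}, hence the well-definedness of $\bU^{(\infty)}(t,s)$ on all of $\ell^1_\omega$.

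Next I would derive the quantitative bound. Setting $y(t)\bydef \|c^{(\infty)}(t)\|_\omega$ and applying \eqref{eq:semigroup_estimate_0} to the first term and \eqref{eq:semigroup_estimate_q} to the integral term, one obtains
\[
e^{(1-\xi)|\mu_\ast|(t-s)}y(t) \le \|\phi^{(\infty)}\|_\omega + C_\infty\int_s^t (t-r)^{-\gamma}g(\|\ba(r)\|_\omega)\,e^{(1-\xi)|\mu_\ast|(r-s)}y(r)\,dr.
\]
This is a weakly singular Gronwall inequality of exactly the type handled in the proof of Theorem~\ref{thm:ev_op} (via \cite[Chap.\,7, p.\,356]{Mitrinovic1991}), except that the extra $(r-s)^{-\gamma}$ factor from the singular \emph{initial} datum is now absent. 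Consequently the Beta-function factor $\mathrm{B}(1-\gamma,1-\gamma)$ coming from $\int_s^t (t-r)^{-\gamma}(r-s)^{-\gamma}\,dr$ is replaced by $\int_s^t (t-r)^{-\gamma}\,dr = (t-s)^{1-\gamma}/(1-\gamma)$. Bounding $g(\|\ba(r)\|_\omega)\le g(\|\ba\|)$ and carrying the Gronwall estimate through as in Theorem~\ref{thm:ev_op} then gives $y(t)\le e^{-(1-\xi)|\mu_\ast|(t-s)+C_\infty g(\|\ba\|)(t-s)^{1-\gamma}/(1-\gamma)}\|\phi^{(\infty)}\|_\omega$, which is precisely $W^{(\infty)}(t,s)$ as defined in \eqref{eq:W^inf}. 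The case $q=0$ is again recovered by the substitution $\gamma=0$, $\xi=0$, $C_\infty=1$, giving $W^{(\infty)}(t,s)=e^{(-|\mu_\ast|+g(\|\ba\|))(t-s)}$, consistent with \eqref{eq:W^inf_q}.

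The only genuinely delicate point is the bookkeeping of the two distinct roles played by $\cQ$: it appears once implicitly inside the nonlinear correction term $({\rm Id}-\varPi^{(\bbm)})\cQ D\cN(\ba)$ — where the $(t-r)^{-\gamma}$ smoothing from \eqref{eq:semigroup_estimate_q} is still needed and still available — but it does \emph{not} appear on the initial datum, which is why the $\gamma$-singularity at $t=s$ disappears and we get the cleaner exponent in \eqref{eq:W^inf}. I would be careful to verify that $({\rm Id}-\varPi^{(\bbm)})\cQ D\cN(\ba(r))c^{(\infty)}(r)\in\ell^1_\omega$ so that \eqref{eq:semigroup_estimate_q} applies: this follows from $D\cN(\ba(r))c^{(\infty)}(r)\in\ell^1_\omega$ via \eqref{eq:N_assumption} together with the fact that $({\rm Id}-\varPi^{(\bbm)})\cQ$ maps into $\ell^1_\omega$ once restricted to the tail (the operator $\cQ$ is only unbounded on $\ell^1_\omega\to\ell^1_\omega$ globally, but $e^{\cL_\infty(t-r)}\cQ$ is bounded by \eqref{eq:semigroup_estimate_q}). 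Everything else is a verbatim repetition of the estimates already carried out in Section~\ref{sec:evolution_op}, so the proof is short modulo citing Theorem~\ref{thm:ev_op} and Lemma~\ref{lem:semigroup_es_tail}.
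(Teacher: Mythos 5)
Your proof is correct and follows essentially the same route as the paper's: both apply the non-singular estimate \eqref{eq:semigroup_estimate_0} to the term carrying $\phi^{(\infty)}$, the singular estimate \eqref{eq:semigroup_estimate_q} inside the Duhamel integral, rescale by $e^{(1-\xi)|\mu_\ast|(t-s)}$, and invoke a weakly singular Gronwall lemma, the only change relative to Theorem~\ref{thm:ev_op} being that $\int_s^t(t-r)^{-\gamma}\,dr=(t-s)^{1-\gamma}/(1-\gamma)$ replaces the Beta-function factor. The one cosmetic difference is that the paper obtains existence by a one-line citation of Theorem~\ref{thm:ev_op} (any tail sequence $\phi^{(\infty)}$ is of the form $\cQ\psi^{(\infty)}$ because $(\bk\bL)^q\ne 0$ for $\bk\notin\Fm$), whereas you re-run the Picard contraction in a space of bounded functions; that elaboration is valid and makes explicit that the solution remains bounded as $t\to s^+$, but it is not strictly required.
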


\begin{proof}
	From Theorem \ref{thm:ev_op}, the unique solution of \eqref{eq:cinf_integral_alt} exists and it yields that
	\begin{align}
		\|c^{(\infty)}(t)\|_\omega \le e^{-|\mu_\ast |(t-s)}\|\phi^{(\infty)}\|_\omega + \int_{s}^t C_\infty(t-r)^{-\gamma}e^{-(1-\xi)|\mu_\ast |(t-r)}g(r) \|c^{(\infty)}(r)\|_\omega dr,
	\end{align}
	where $g(r)$ is the same as that in the proof in Theorem \ref{thm:ev_op}. Letting $y(t)\bydef e^{(1-\xi)|\mu_\ast |(t-s)}\|c^{(\infty)}(t)\|_\omega$, it follows that
	\begin{align}
		y(t)\le e^{-\xi|\mu_\ast |(t-s)}\|\phi^{(\infty)}\|_\omega + C_\infty\int_{s}^t (t-r)^{-\gamma}g(r)y(r)dr.
	\end{align}
	From the Gronwall's inequality we have
	\begin{align}
		y(t)\le \|\phi^{(\infty)}\|_\omega \exp\left(C_\infty\int_{s}^t(t-r)^{-\gamma}g(r)dr\right).
	\end{align}
	This directly yields the bound \eqref{eq:W^inf}.
\end{proof}

\begin{lem}\label{lem:other_bounds_alt}
	Letting
	\begin{align}
		\iota = (1-\xi)|\mu_\ast |,\quad \tilde{\vartheta}=\frac{C_\infty g(\|\ba\|)}{1-\gamma},\quad \vartheta=C_\infty g(\|\ba\|) \mathrm{B}(1-\gamma,1-\gamma),
	\end{align}
	rewrite $W^{(\infty)}(t,s)$ in \eqref{eq:W^inf} and $W^{(\infty)}_q(t,s)$ in \eqref{eq:W^inf_q} as $e^{-\iota(t-s) + \tilde{\vartheta}(t-s)^{1-\gamma}}$ and $C_\infty (t-s)^{-\gamma}e^{-\iota(t-s) + \vartheta(t-s)^{1-\gamma}}$, respectively.	
	Define the constants $\jp{W_{\infty}^{\cS_{J}}}>0$, $\jp{\overline{W}_{\infty}^{\cS_{J}}}\ge0$, $\jp{{\overline{W}}_{\infty,q}^{\prime\cS_{J}}}\ge 0$, $\jp{{\doverline{W}}_{\infty,q}^{\prime\cS_{J}}}\ge0$ as
	\begin{align}
		\jp{W_{\infty}^{\cS_{J}}}&\bydef e^{\tilde{\vartheta}\tau^{1-\gamma}}\\[1mm]
		\jp{\overline{W}_{\infty}^{\cS_{J}}}&\bydef \left(\frac{1-e^{-\iota\tau}}{\iota}\right)e^{\tilde{\vartheta}\tau^{1-\gamma}}\\[1mm]
		\jp{{\overline{W}}_{\infty,q}^{\prime\cS_{J}}}&\bydef \left(\frac{\tau^{1-\gamma}}{1-\gamma}\right)C_\infty e^{{\vartheta}\tau^{1-\gamma}}\\[1mm]
		\jp{{\doverline{W}}_{\infty,q}^{\prime\cS_{J}}}&\bydef\frac{\tau^{2-\gamma}}{(1-\gamma)(2-\gamma)}C_\infty e^{{\vartheta}\tau^{1-\gamma}}\label{eq:Wbar_infty}
	\end{align}
	respectively. Then these bounds obey the inequalities \eqref{eq:ineqW_infty_sup_classic}, \eqref{eq:ineqW_infty_classic}, \eqref{eq:ineqhatW_infty_classic}, and \eqref{eq:ineqbarW_infty_classic}.
\end{lem}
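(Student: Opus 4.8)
\textbf{Proof plan for Lemma~\ref{lem:other_bounds_alt}.}
The plan is to verify each of the four inequalities \eqref{eq:ineqW_infty_sup_classic}, \eqref{eq:ineqW_infty_classic}, \eqref{eq:ineqhatW_infty_classic}, and \eqref{eq:ineqbarW_infty_classic} directly by substituting the rewritten forms $W^{(\infty)}(t,s)=e^{-\iota(t-s)+\tilde\vartheta(t-s)^{1-\gamma}}$ and $W^{(\infty)}_q(t,s)=C_\infty(t-s)^{-\gamma}e^{-\iota(t-s)+\vartheta(t-s)^{1-\gamma}}$, exactly mirroring the structure of the proof of Lemma~\ref{lem:other_bounds_q}. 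In every case the strategy is the same: pull the slowly varying factor $e^{\pm(\cdot)^{1-\gamma}}$ out of the integral by monotonicity (it is maximized at the right endpoint of the relevant interval), bound the remaining decaying exponential $e^{-\iota(\cdot)}$ by $1$, and then evaluate the elementary power integrals that remain; finally replace $(t-s)$ by $\tau$ since $0<t-s\le\tau$.

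First I would handle \eqref{eq:ineqW_infty_sup_classic}: since $-\iota(t-s)\le 0$ and $(t-s)^{1-\gamma}\le\tau^{1-\gamma}$, we get $\sup_{\cS_J}W^{(\infty)}(t,s)\le e^{\tilde\vartheta\tau^{1-\gamma}}=\bm{W_\infty^{\cS_J}}$. Next, for \eqref{eq:ineqW_infty_classic} I would write $\int_s^t W^{(\infty)}(r,s)\,dr=\int_s^t e^{-\iota(r-s)+\tilde\vartheta(r-s)^{1-\gamma}}\,dr\le e^{\tilde\vartheta(t-s)^{1-\gamma}}\int_s^t e^{-\iota(r-s)}\,dr= e^{\tilde\vartheta(t-s)^{1-\gamma}}\frac{1-e^{-\iota(t-s)}}{\iota}$, and since $\frac{1-e^{-\iota x}}{\iota}$ is increasing in $x>0$, this is at most $\left(\frac{1-e^{-\iota\tau}}{\iota}\right)e^{\tilde\vartheta\tau^{1-\gamma}}=\bm{\overline W_\infty^{\cS_J}}$. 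For \eqref{eq:ineqhatW_infty_classic} I would bound $\int_s^t W^{(\infty)}_q(t,r)\,dr = C_\infty\int_s^t (t-r)^{-\gamma}e^{-\iota(t-r)+\vartheta(t-r)^{1-\gamma}}\,dr\le C_\infty e^{\vartheta(t-s)^{1-\gamma}}\int_s^t (t-r)^{-\gamma}\,dr = C_\infty e^{\vartheta(t-s)^{1-\gamma}}\frac{(t-s)^{1-\gamma}}{1-\gamma}$, which after replacing $(t-s)$ by $\tau$ gives $\bm{\overline W_{\infty,q}'^{\cS_J}}$. Finally, for \eqref{eq:ineqbarW_infty_classic} I would treat the two suprema separately: for $\int_s^t\int_s^r W^{(\infty)}_q(r,\sigma)\,d\sigma\,dr$, the inner integral is bounded exactly as in the previous step by $C_\infty e^{\vartheta(r-s)^{1-\gamma}}\frac{(r-s)^{1-\gamma}}{1-\gamma}$, and integrating in $r$ (again pulling out $e^{\vartheta(t-s)^{1-\gamma}}$) yields $C_\infty e^{\vartheta(t-s)^{1-\gamma}}\frac{(t-s)^{2-\gamma}}{(1-\gamma)(2-\gamma)}$; the term $\int_s^t W^{(\infty)}_q(t,r)(r-s)\,dr = C_\infty\int_s^t(t-r)^{-\gamma}e^{-\iota(t-r)+\vartheta(t-r)^{1-\gamma}}(r-s)\,dr$ is bounded by $C_\infty e^{\vartheta(t-s)^{1-\gamma}}\int_s^t(t-r)^{-\gamma}(r-s)\,dr = C_\infty e^{\vartheta(t-s)^{1-\gamma}}(t-s)^{2-\gamma}\mathrm{B}(1-\gamma,2)$, and since $\mathrm{B}(1-\gamma,2)=\frac{1}{(1-\gamma)(2-\gamma)}$ the two bounds coincide; replacing $(t-s)$ by $\tau$ gives $\bm{\doverline W_{\infty,q}'^{\cS_J}}$.

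I do not expect any genuine obstacle here — the lemma is a collection of elementary monotonicity-and-power-integral estimates of the same flavor as Lemmas~\ref{lem:other_bounds} and~\ref{lem:other_bounds_q}, which are already proved in the excerpt. The only points requiring a little care are (i) justifying that the factors $e^{\tilde\vartheta(\cdot)^{1-\gamma}}$ and $e^{\vartheta(\cdot)^{1-\gamma}}$ are indeed monotone increasing (immediate since $\tilde\vartheta,\vartheta\ge 0$ and $1-\gamma>0$), so that evaluating them at the endpoint is an upper bound; (ii) recognizing the Beta-function identity $\int_s^t(t-r)^{-\gamma}(r-s)\,dr=(t-s)^{2-\gamma}\mathrm{B}(1-\gamma,2)$ and $\mathrm{B}(1-\gamma,2)=\tfrac{1}{(1-\gamma)(2-\gamma)}$, which is what makes the two expressions in \eqref{eq:ineqbarW_infty_classic} share the common bound $\bm{\doverline W_{\infty,q}'^{\cS_J}}$; and (iii) confirming the monotonicity of $x\mapsto\frac{1-e^{-\iota x}}{\iota}$ on $x>0$ used for the first bound. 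Once these are in place the proof is a routine string of inequalities, and I would present it compactly in the same display style as the proof of Lemma~\ref{lem:other_bounds_q}.
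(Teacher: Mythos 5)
Your plan reproduces the paper's proof of Lemma~\ref{lem:other_bounds_alt} essentially step by step: pull the monotone factor $e^{\vartheta(\cdot)^{1-\gamma}}$ (or $e^{\tilde\vartheta(\cdot)^{1-\gamma}}$) out of each integral at the right endpoint, bound $e^{-\iota(\cdot)}\le 1$, evaluate the remaining power integrals, and replace $t-s$ by $\tau$, exactly as in Appendix~\ref{sec:Wtau_bound}. The only cosmetic difference is that you invoke the Beta-function identity $\mathrm{B}(1-\gamma,2)=\tfrac{1}{(1-\gamma)(2-\gamma)}$ by name where the paper simply evaluates $\int_s^t(t-r)^{-\gamma}(r-s)\,dr$ directly; the calculations agree.
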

\begin{proof}
	First, it follows from \eqref{eq:W^inf} that
	\begin{align}
		\sup_{(t,s)\in \jp{\cS_{J}}} W^{(\infty)}(t,s) = \sup_{(t,s)\in \jp{\cS_{J}}} e^{-\iota(t-s) + \tilde{\vartheta}(t-s)^{1-\gamma}}\le e^{\tilde{\vartheta} \tau^{1-\gamma}} =\jp{W_{\infty}^{\cS_{J}}}.
	\end{align}
	Second, we note that
	\begin{align}
		\sup _{(t,s)\in \jp{\cS_{J}}} \int_{s}^{t} W^{(\infty)}(r, s) d r 
		&= \sup _{(t,s)\in \jp{\cS_{J}}} \int_{s}^{t}  e^{-\iota(r-s) + \tilde{\vartheta}(r-s)^{1-\gamma}} d r\\
		&\le \sup _{(t,s)\in \jp{\cS_{J}}} e^{\tilde{\vartheta}(t-s)^{1-\gamma}} \int_{s}^{t}e^{-\iota(r-s)}dr\\
		&\le \sup _{(t,s)\in \jp{\cS_{J}}} e^{\tilde{\vartheta}(t-s)^{1-\gamma}}\left(\frac{1-e^{-\iota(t-s)}}{\iota}\right)\le \jp{\overline{W}_{\infty}^{\cS_{J}}}
	\end{align}
	and that
	\begin{align}
		\sup _{(t,s)\in \jp{\cS_{J}}} \int_{s}^{t} W^{(\infty)}_{q}(t, r) d r 
		&= \sup _{(t,s)\in \jp{\cS_{J}}}\int_{s}^{t}C_\infty (t-r)^{-\gamma}e^{-\iota(t-r) + \vartheta(t-r)^{1-\gamma}} d r\\
		&\le C_\infty \sup _{(t,s)\in \jp{\cS_{J}}} e^{\vartheta(t-s)^{1-\gamma}}\int_{s}^{t}  (t-r)^{-\gamma}e^{-\iota(t-r)} d r\\
		&\le C_\infty \sup _{(t,s)\in \jp{\cS_{J}}} e^{\vartheta(t-s)^{1-\gamma}}\frac{(t-s)^{1-\gamma}}{1-\gamma}\le \jp{{\overline{W}}_{\infty,q}^{\prime\cS_{J}}}.
	\end{align}
	Third, note that
	\begin{align}
		\sup _{(t,s)\in \jp{\cS_{J}}}\int_{s}^{t} \int_{s}^{r} W^{(\infty)}_{q}(r, \sigma)d \sigma d r
		&=\sup _{(t,s)\in \jp{\cS_{J}}}\int_{s}^{t} \int_{s}^{r} C_\infty (r-\sigma)^{-\gamma} e^{-\iota(r-\sigma) + \vartheta (r-\sigma)^{1-\gamma}} d \sigma d r\\
		&\le C_\infty \sup _{(t,s)\in \jp{\cS_{J}}}\int_{s}^{t} e^{ \vartheta (r-s)^{1-\gamma}} \int_{s}^{r} (r-\sigma)^{-\gamma} e^{-\iota(r-\sigma)}d \sigma d r\\
		&\le C_\infty \sup _{(t,s)\in \jp{\cS_{J}}}\int_{s}^{t} e^{ \vartheta (r-s)^{1-\gamma}} \frac{(r-s)^{1-\gamma}}{1-\gamma} d r\\
		&\le \frac{C_\infty}{1-\gamma} \sup _{(t,s)\in \jp{\cS_{J}}} e^{\vartheta (t-s)^{1-\gamma}} \frac{(t-s)^{2-\gamma}}{2-\gamma}\le \jp{{\doverline{W}}_{\infty,q}^{\prime\cS_{J}}}.
	\end{align}
	Finally, we have
	\begin{align}
		\sup _{(t,s)\in \jp{\cS_{J}}}\int_{s}^{t} W^{(\infty)}_{q}(t, r)(r -s) d r
		&= \sup _{(t,s)\in \jp{\cS_{J}}} \int_{s}^{t} C_\infty(t-r)^{-\gamma} e^{-\iota(t-r) + \vartheta(t-r)^{1-\gamma}}(r-s)dr\\
		&\le C_\infty \sup _{(t,s)\in \jp{\cS_{J}}} e^{\vartheta (t-s)^{1-\gamma}}\int_{s}^{t} (t-r)^{-\gamma} (r -s)d r\\
		&\le C_\infty \sup _{(t,s)\in \jp{\cS_{J}}} e^{\vartheta (t-s)^{1-\gamma}}\frac{(t-s)^{2-\gamma}}{(1-\gamma)(2-\gamma)}\le\jp{{\doverline{W}}_{\infty,q}^{\prime\cS_{J}}}.
	\end{align}
\end{proof}

\begin{proof}[Proof of Corollary \ref{cor:solutionmap}]
		Taking the $\ell_{\omega}^1$ norm of $b(t)$, it follows from \eqref{eq:Wm_bound}, \eqref{eq:Cm_bound}, \eqref{eq:Cinf_bound} and  \eqref{eq:Winfts_bound_classic} that
	\begin{align}
		\|b^{(\bm{m})}(t)\|_{\omega}&\le \jp{W_{m,0}^{\cS_{J}}}\|\phi^{(\bm{m})}\|_{\omega}+\jp{W_{m,q}^{\cS_{J}}}\jp{\cE_{m,\infty}^{J}}\int_s^t\|b^{(\infty)}(r)\|_{\omega} dr\label{eq:bm_omegaq_classic}\\
		\|b^{(\infty)}(t)\|_{\omega}&\le W^{(\infty)}(t,s)\|\phi^{(\infty)}\|_{\omega}+\jp{\cE_{\infty,m}^{J}}\int_s^tW^{(\infty)}_{{q}}(t,r)\|b^{(\bm{m})}(r)\|_{\omega} dr.\label{eq:binf_omegaq_classic}
	\end{align}
	Plugging each estimate in the other one, we obtain
	\begin{align}
		&\left\|b^{(\bm{m})}(t)\right\|_{\omega}\\
		&\le \jp{W_{m,0}^{\cS_{J}}}\|\phi^{(\bm{m})}\|_{\omega}+\jp{W_{m,q}^{\cS_{J}}}\jp{\cE_{m,\infty}^{J}}\int_s^t\left(W^{(\infty)}(r,s)\|\phi^{(\infty)}\|_{\omega}+\jp{\cE_{\infty,m}^{J}}\int_s^rW^{(\infty)}_{{q}}(r,\sigma)\|b^{(\bm{m})}(\sigma)\|_{\omega} d\sigma\right) dr\\
		&\le \jp{W_{m,0}^{\cS_{J}}}\|\phi^{(\bm{m})}\|_{\omega}+\jp{W_{m,q}^{\cS_{J}}}\jp{\overline{W}_{\infty}^{\cS_{J}}}\jp{\cE_{m,\infty}^{J}}\|\phi^{(\infty)}\|_{\omega}+\jp{W_{m,q}^{\cS_{J}}}\jp{{\doverline{W}}_{\infty,q}^{\prime\cS_{J}}} \jp{\cE_{m,\infty}^{J}}\jp{\cE_{\infty,m}^{J}}\|b^{(\bm{m})}\|
	\end{align}
	and
	\begin{align}
		&\|b^{(\infty)}(t)\|_{\omega}\\
		&\le W^{(\infty)}(t,s)\|\phi^{(\infty)}\|_{\omega}+\jp{\cE_{\infty,m}^{J}}\int_s^tW^{(\infty)}_{{q}}(t,r)\left(\jp{W_{m,0}^{\cS_{J}}}\|\phi^{(\bm{m})}\|_{\omega}+\jp{W_{m,q}^{\cS_{J}}}\jp{\cE_{m,\infty}^{J}}\int_s^r\|b^{(\infty)}(\sigma)\|_{\omega} d\sigma\right) dr\\
		&\le W^{(\infty)}(t,s)\|\phi^{(\infty)}\|_{\omega}+\jp{W_{m,0}^{\cS_{J}}}\jp{{\overline{W}}_{\infty,q}^{\prime\cS_{J}}} \jp{\cE_{\infty,m}^{J}}\|\phi^{(\bm{m})}\|_{\omega} + \jp{W_{m,q}^{\cS_{J}}}\jp{{\doverline{W}}_{\infty,q}^{\prime\cS_{J}}} \jp{\cE_{m,\infty}^{J}}\jp{\cE_{\infty,m}^{J}}\|b^{(\infty)}\|.
	\end{align}
	Since $\tilde{\kappa}=1-\jp{W_{m,q}^{\cS_{J}}}\jp{{\doverline{W}}_{\infty,q}^{\prime\cS_{J}}} \jp{\cE_{m,\infty}^{J}}C_\infty>0$ holds from the sufficient condition of Theorem \ref{thm:solutionmap}, we have
	\begin{align}
		\|b^{(\bm{m})}\| &\le \frac{\jp{W_{m,0}^{\cS_{J}}}\|\phi^{(\bm{m})}\|_{\omega}+\jp{W_{m,q}^{\cS_{J}}}\jp{\overline{W}_{\infty}^{\cS_{J}}} \jp{\cE_{m,\infty}^{J}}\|\phi^{(\infty)}\|_{\omega}}{\tilde{\kappa}}\label{eq:bm_bounds_classic}\\
		\|b^{(\infty)}\| &\le \frac{\jp{W_{\infty}^{\cS_{J}}}\|\phi^{(\infty)}\|_{\omega}+\jp{W_{m,0}^{\cS_{J}}}\jp{{\overline{W}}_{\infty,q}^{\prime\cS_{J}}} \jp{\cE_{\infty,m}^{J}}\|\phi^{(\bm{m})}\|_{\omega}}{\tilde{\kappa}}.\label{eq:binf_bounds_classic}
	\end{align}
	Therefore, it follows that
	\begin{align*}
		\|b\| &\le \|b^{(\bm{m})}\| + \|b^{(\infty)}\|\\
		&=\tilde{\kappa}^{-1}\left\|\begin{pmatrix}
			\jp{W_{m,0}^{\cS_{J}}} & \jp{W_{m,q}^{\cS_{J}}}\jp{\overline{W}_{\infty}^{\cS_{J}}} \jp{\cE_{m,\infty}^{J}}\\
			\jp{W_{m,0}^{\cS_{J}}}\jp{{\overline{W}}_{\infty,q}^{\prime\cS_{J}}} C_\infty & \jp{W_{\infty}^{\cS_{J}}}
		\end{pmatrix}\begin{pmatrix}
			\left\|\psi^{(\bm{m})}\right\|_{\omega}\\\left\|\psi^{(\infty)}\right\|_{\omega}
		\end{pmatrix}
		\right\|_1\\
		&\le \bm{\jp{W^{\cS_{J}}}}\|\psi\|_{\omega}. \qedhere
	\end{align*}
\end{proof}

%%%%%%%%%%%%%%%%
%%%% EQUILIBRIA S-H %%%%
%%%%%%%%%%%%%%%%

\section{Computer-assisted proofs for the equilibria problems in Swift-Hohenberg} \label{sec:equilibria_SH}
From (\ref{eq:the_ODEs}), it is easy to see that the Chebyshev expansion of the equilibrium correspond to the solution of the zero finding problem:
\begin{align}
F_\bk (a)  &= \mu_\bk a_{ \bk} + N_{\bk}(a). 
\end{align}
To prove the existence of a solution, we will used a Newton-Kantorovich argument by using the following Theorem.
\begin{thm}\label{thm::RadiiNonLin}
Let $X$ and $Y$ be Banach spaces and $F: X \rightarrow Y$ be a Fr\'echet differentiable mapping. Suppose $\bx \in X$, $A^\dagger \in B(X,Y)$ and $A \in B(X,Y)$. Moreover assume that $A$ is injective. Let $Y_0,~Z_0$ and $ Z_1$ be positive constants and $Z_2:(0,\infty) \rightarrow [0,\infty)$ be a non-negative function satisfying 
\begin{align*}
\| AF(\bx) \|_X \leq Y_0, \\
\| Id - A A^\dagger \|_{B(X)} \leq Z_0,\\
\| A [DF(\bx) - A^\dagger] \|_{B(X)} \leq Z_1,
\end{align*}
and
\begin{align*}
\|A[DF(c) - DF(\bx)] \|_{B(X)} \leq Z_2(r)r, \quad \text{ for all } \quad c \in \overline{B_r(\bx)} \text{ and all } r>0.
\end{align*}
Define
\begin{align*}
p(r) = Z_2(r)r^2 - (1-Z_0-Z_1)r + Y_0.
\end{align*}
If there exists $r_0>0$ such that $p(r_0) < 0 $, then there exists a unique $\tilde{x} \in \overline{B_{r_0}(\bx)} $ satisfying $F(\tilde{x}) = 0$.
 \end{thm} 
We notice that Theorem \ref{thm::RadiiNonLin} is a variation of Theorem \ref{thm:radii_polynomial} for a non linear map. If $\tilde{a} \in \ell_\omega^1$ is the solution such that $F(\tilde{a} ) = 0$ and let $\ba$ be a numerical approximation such that $F(\ba) \approx 0$. Using Theorem \ref{thm::RadiiNonLin}, we will prove there exists a $r > 0$  such that $\tilde{a} \in \overline{B_r(\ba)}$ similarly as we did in section \ref{sec:fundamental_sol}. Let $\bm{m} = \{ m_1, \hdots , m_d \}$ be the size of the finite set $\Fm$, we define
 \begin{align*}
 F_\bk^{(\bm{m})}(a) \bydef  \left\{ 
 \begin{tabular}{cl}
 $ \mu_\bk a_{ \bk} + N_{\bk}(a)$, & if $\bk \in \Fm$,\\
 $0$, & if $\bk \notin \Fm$.
 \end{tabular}
 \right. 
 \end{align*}
Let $h \in \ell_\omega^1$, we define the operators $A^\dagger: \ell_\omega^1 \rightarrow \ell_\omega^1 $ and $A: \ell_\omega^1 \rightarrow \ell_\omega^1 $ by
\begin{align*}
(A^\dagger h)_\bk \bydef  \left\{ 
 \begin{tabular}{cl}
 $ (DF^{(\bm{m})}(\ba)h^{(m)})_\bk $, & if $\bk \in \Fm$,\\
 $\mu_\bk h_{ \bk}$, & if $\bk \notin \Fm$,
 \end{tabular}
 \right. 
\end{align*}
and 
\begin{align*}
(A h)_\bk \bydef  \left\{ 
 \begin{tabular}{cl}
 $ (A^{(\bm{m})}h^{(m)})_\bk $, & if $\bk \in \Fm$,\\
 $\frac{1}{\mu_\bk}  h_{ \bk}$, & if $\bk \notin \Fm$,
 \end{tabular}
 \right. 
\end{align*}
where $A^{(\bm{m})}$ is the numerical inverse of $DF^{(\bm{m})}(\ba)$. We can now compute the bounds $Y_0, Z_0, Z_1$ and $Z_2$ from Theorem \ref{thm::RadiiNonLin}. \\

\noindent{\bf The bound \boldmath$Y_0$\unboldmath.} 
The bounds $Y_0$ can be define by the inequality
\begin{align*}
 \| (AF(\bar{a})) \|_{{\omega}} &=  \| A^{(\bm{m})} F^{(\bm{m})}(\bar{a})\|_\omega  + \sum_{\bk \notin  \Fm} \left| \frac{1}{\mu_\bk} N_\bk(\ba) \right|  \omega_\bk \\
 & \leq \| A^{(\bm{m})} F^{(\bm{m})}(\bar{a})\|_\omega  + \left( \sup_{\bk \notin  \Fm}  \frac{1}{| \mu_\bk | } \right)  \sum_{\bk \notin  \Fm} \left|  N_\bk(\ba) \right|  \omega_\bk \bydef Y_0
\end{align*}
Since the nonlinear term $N_\bk(\ba)$ is a polynomial and $(\ba)_\bk = 0$ for all $\bk \notin \Fm$, the sum in the second term of the inequality is finite and can be rigorously computed using interval arithmetic.\\

\noindent{\bf The bound \boldmath$Z_0$\unboldmath.} Let $h \in \ell_\omega^1$ , the operator $B \bydef I - A A^\dagger$ is given component-wise by

\begin{align*}
(Bh)_\bk = 
\begin{cases}
\left(  Id^{(\bm{m})}   - A^{(\bm{m})}  DF^{(\bm{m})} (\ba) \right) h^{(\bm{m})}  &\bk \in F_{\bm{m}},
\\
0, & \bk \notin F_{\bm{m}}.
\end{cases}
\end{align*}
Then, the computation of $Z_0$ is finite and given by
\begin{align*}
\| B \|_{B(\ell_\omega^1)}  =  \| Id^{(\bm{m})} - A^{(\bm{m})} DF^{(\bm{m})}(\ba) \|_{B(\ell_\omega^1)}  \bydef Z_0.
\end{align*}
\noindent{\bf The bound \boldmath$Z_1$\unboldmath.} For any $h \in B_1(0)$, let
\[
z \bydef  [DF(\bar x)-A^\dagger]h 
\]
which is given component-wise by 
\begin{align*}
z_{\bk} & = \left\{
\begin{matrix}
\left(DN(\ba)*h^{(\infty)} \right)_\bk & \bk \in \Fm,\\
\left(DN(\ba)*h \right)_\bk & \bk \notin \Fm.
\end{matrix}
\right. 
\end{align*}
To simplify the notation of the bound $Z_1$, lets us first define component-wise uniform bounds $ \hat z_{\bk}$  for $\bk \in  \Fm$. We find
\begin{align*}
|z_\bk| &= \left|  \left(DN(\ba)*h^{(\infty)} \right)_\bk  \right| 
\leq     \max_{\bm{j} \in \N^d } \left\{ \frac{\left(DN(\ba) \right)_{\bk - \bm{j} }}{\omega_{\bm{j}}} \right\}   \bydef \hat z_\bk .
\end{align*}
Since $\ba$ is finite and $DN$ is polynomial, the computation of $\hat z_\bk $ is also finite and computed similarly as in section \ref{sec:fundamental_sol}. We also need to find bounds for the tails given by
\begin{align*}
\|A \pi^{(\infty)} z \|_\omega & = \sum_{\bk \notin \Fm} \left|  \frac{1}{\mu_\bk}  \left(DN(\ba)*h\right)_\bk \right| \omega_\bk \leq \sup_{\bk \notin \Fm} \left( \frac{1}{|\mu_\bk|} \right)  \left\| DN(\ba) \right\|_{B(\ell_\omega^1)}  .
\end{align*}
Let the linear operator $|A|$ represents the absolute value component-wise of $A$, then  
\begin{align*}
\| A [DF(\bar x)-A^\dagger] \|_{B(\ell_\omega^1)} &= \sup_{\|h\|_\omega \leq 1} \| Az \|_\omega ,\\
& = \sup_{\|h\|_\omega \leq 1} \left( \sum_{\bk \in \Fm } | (Az)_\bk| \omega_\bk +  \| A\pi^{(\infty)}z \|_\omega \right) ,\\
&\leq \| |A^{(\bm{m})} |\pi^{(\bm{m}) } \hat z \|_\omega + \sup_{\bk \notin \Fm} \left( \frac{1}{|\mu_\bk|} \right)  \left\| DN(\ba) \right\|_{B(\ell_\omega^1)} \\ 
&\leq \| |A^{(\bm{m})} |\pi^{(\bm{m}) } \hat z \|_\omega + \sup_{\bk \notin \Fm} \left( \frac{1}{|\mu_\bk|} \right) 3 \| \ba^2 \|_\omega \bydef Z_1.
\end{align*}
\noindent{\bf The bound \boldmath$Z_2$\unboldmath.}
Let $h \in \ell_\omega^1$ with  $\| h\|_\omega \le 1$ and $c  \in \overline{B_{r} (\ba)}$, we define 
$$y \bydef  (DF(c) - DF(\ba))h $$
given component-wise by
\begin{align*}
y_\bk = -3[ (c^2 - \ba^2) *h ]_\bk  = -3[(c + \ba)* (c - \ba) *h ]_\bk  .
\end{align*}
Since, $c \in B_r(\ba) $, there exists a $\hat h \in B_1(0)$ such that $c = \ba + r \hat h$ and we can bound $y$ by
\begin{align*}
\| y \|_\omega &=   3 \| (c + \ba)* (c - \ba) *h  \|_\omega, \\
&= 3r \| (2\ba + r \hat h )* \hat h  *h  \|_\omega, \\
&\leq  3r \left(  2 \| \ba \|_\omega  + r \| \hat h \|_\omega \right)    \| \hat h \|_\omega  \| h \|_\omega ,\\
&\leq  3r \left(  2 \| \ba \|_\omega  + r  \right) . 
\end{align*}
Then, we have
\begin{align*}
\| A[ DF(c) - DF(\ba) ]h \|_\omega & =   \| Ay \|_\omega ,\\
&\leq \| A \|_{B(\ell_\omega^1)} \| y \|_\omega ,\\
&\leq  3r \| A \|_{B(\ell_\omega^1)} \left(  2 \| \ba \|_\omega  + r  \right) \bydef Z_2(r)r.
\end{align*}
Using interval arithmetic in MATLAB, we can rigorously compute the bounds $Y_0,~Z_0,~Z_1$ and $Z_2(r)$ and by using Theorem \ref{thm::RadiiNonLin}, we can find a $r>0$ such that $\tilde{a} \in \overline{B_r(\ba)}$.

Finally, we list up the bounds for computer-assisted proofs of the equilibrium solution to the Swift-Hohenberg equation in the following table:
\begin{table}[h]\label{Bounds_Equilibrium_SH}
\centering
\caption{ Bounds of the Swift-Hohenberg equilibria from section \ref{3D Swift-Hohenberg equation} and  \ref{sec:2D-SH} .   } 
\begin{tabular}{|c|c|c|c|c|c|}
\hline
\rowcolor[HTML]{C0C0C0} 
Equilibrium & $Y_0$ & $Z_0$ & $Z_1$ & $Z_2(r)$ & $r$ \\ \hline
3D                          &  $3.0086\cdot 10^{-8}$      &  $1.1275  \cdot 10^{-11}$   &    $ 0.0025$    &   $ 0.0469r +   0.0217$    & $3.0161\cdot 10^{-8}$    \\ \hline
2D (Stripe)                 &     $2.4471 \cdot 10^{-11}$   &   $3.0407 \cdot 10^{-10}$    &     $0.0013$   &  $ 0.0003r +  0.0013$     &  $2.4502 \cdot 10^{-11}$   \\ \hline
2D (Spot)                   &    $1.3240 \cdot 10^{-10}$     &   $3.0407 \cdot 10^{-10}$      &  $ 0.0024 $      &    $0.0003   r+ 0.0024$   &   $1.3271 \cdot 10^{-10}$    \\ \hline
\end{tabular}
\end{table}

%%%%%%%%%%%%%%%%%%
%%%% Acknowledgement %%%%
%%%%%%%%%%%%%%%%%%

\section*{Acknowledgement}
AT is partially supported by Researcher+, JST Comprehensive Support Project for the Strategic Professional Development Program for Young Researchers and JSPS KAKENHI Grant Numbers JP22K03411, JP21H01001, and JP20H01820.

\section*{Declaration}
The authors declare that they have no known competing financial interests or personal relationships that could have appeared to influence the work reported in this paper.

%%%%%%%%%%%%%%%%%
%%%% BIBLIOGRAPHY %%%%
%%%%%%%%%%%%%%%%%

\bibliographystyle{unsrt}
\bibliography{papers}

\end{document}